\theoremstyle{plain}
\newtheorem{theorem}{Theorem}[section]
\newtheorem{lemma}[theorem]{Lemma}
\newtheorem{corollary}[theorem]{Corollary}
\newtheorem{proposition}[theorem]{Proposition}
\newtheorem{remark}[theorem]{Remark}
\theoremstyle{definition}
\newtheorem{definition}[theorem]{Definition}
\newtheorem{conjecture}[theorem]{Conjecture}
\newcommand{\kah}{K\"{a}hler }
\newcommand{\idd}{i\partial\overline{\partial}}
\subjclass[2020]{14F17, 14F18, 32L10, 32L20}
\keywords{ 
$L^2$-estimates, singular Hermitian metrics, cohomology vanishing.}
\begin{document}
\title
[Nakano-Nadel type, Bogomolov-Sommese type vanishing] 
{Nakano-Nadel type, Bogomolov-Sommese type vanishing and singular dual Nakano semi-positivity}
\author{Yuta Watanabe}
\date{}
\address{The University of Tokyo, Komaba, Meguro-ku, Tokyo, Japan}
\email{watayu@g.ecc.u-tokyo.ac.jp, wyuta.math@gmail.com}

\begin{abstract}
   In this article, we get properties for singular (dual) Nakano semi-positivity and obtain vanishing theorems involving $L^2$-subsheaves on weakly pseudoconvex manifolds by $L^2$-estimates and $L^2$-type Dolbeault isomorphisms.
   As applications, Fujita's conjecture type theorem with singular Hermitian metrics is presented.
\end{abstract}

\vspace{-5mm}

\maketitle

\tableofcontents

\vspace{-8mm}

\section{Introduction}\label{section:1}


Throughout this paper, we let $X$ be an $n$-dimensional complex manifold. 
Let $\varphi$ be a plurisubharmonic function on $X$ and let $\mathscr{I}(\varphi)$ be the sheaf of germs of holomorphic functions $f$ such that $|f|^2e^{-\varphi}$ is locally integrable which is called the multiplier ideal sheaf.
For a singular Hermitian metric $h$ on holomorphic line bundles, we define the multiplier ideal sheaf by $\mathscr{I}(h)\!=\!\mathscr{I}(\varphi)$ where $h\!=\!e^{-\varphi}$ locally. 
As an invariant of the singularities of the plurisubharmonic functions, the multiplier ideal sheaf play important role in the study of the several complex variables and complex algebraic geometry.

In vanishing theorems of the type involving multiplier ideal sheaf, the Nadel-Demailly vanishing theorem [5,\,28] 
is well known as an extension of the Kodaira vanishing theorem \cite{Kod53}, and the Kawamata-Viehweg vanishing theorem (cf. [21,\,37] 
and [7,\,Theorem\,6.25]) is a more detailed judgment of the positivity.
These theorems are for $(n,q)$-forms, but recently a version for $(p,n)$-forms was presented in \cite{Wat22b}, which was then given in \cite{LMNWZ22} with a more detailed judgment of positivity analogous to Kawamata-Viehweg. 

That is, the following: Let $X$ be a projective manifold, $\omega$ be a \kah metric on $X$ and $L$ be a holomorphic line bundle equipped with a singular Hermitian metric $h$. 
If $(L,h)$ is big, i.e. $i\Theta_{L,h}\geq\varepsilon\omega$
in the sense of currents for some $\varepsilon>0$. 
Then we have that 
\begin{align*}
    H^q(X,K_X\otimes L\otimes \mathscr{I}(h))&=0  \quad \mathrm{for~any} \quad q>0 \quad (\mathrm{see\, [5,\,28]}),\\
    H^n(X,\Omega_X^p\otimes L\otimes\mathscr{I}(h))&=0 \quad \mathrm{for~any} \quad p>0 \quad (\mathrm{see\, [39]}).
\end{align*}

Note that the above vanishing theorem cannot be extended to the same bidegree $(p,q)$ with $p+q>n$ as the Kodaira-Akizuki-Nakano type vanishing theorem by the Ramanujam's counterexample (cf. \cite{Ram72},\,[39,\,Remark\,2.10]). 

Recently, vanishing theorems for $(n,q)$-forms involving (Demailly) $m$-positive holomorphic vector bundles and multiplier ideal sheaves were shown 
in \cite{Hua20} on compact \kah manifolds by using $L^2$-Hogde isomorphisms. 

In this paper, we first define a $\it{dual}$ $m$-$\it{positivity}$ (see Definition \ref{Def of dual m-posi}) corresponding to $(p,n)$-forms of $m$-positivity and obtain the following vanishing theorems involving (dual) $m$-positive holomorphic vector bundles and multiplier ideal sheaves by using $L^2$-estimates. 
Here, a function $\psi:X\to[-\infty,+\infty)$ is $\it{exhaustive}$ if all sublevel sets $X_c:=$ $\{z\in X\mid \psi(x)<c\},\,c<\sup_X\psi$, are relatively compact.
A complex manifold is said to be $\it{weakly}$ $\it{pseudoconvex}$ if there exists a smooth exhaustive plurisubharmonic function.

\begin{theorem}\label{Ext NDBS V-thm for big for (n,q)}
    Let $X$ be a weakly pseudoconvex \kah manifold, $F$ be a holomorphic vector bundle of rank $r$ over $X$ and $L$ be a holomorphic line bundle over $X$ equipped with a singular Hermitian metric $h$.
    We assume that $F$ is $m$-semi-positive and $h$ is singular positive in the sense of Definition \ref{def of psef & big}.
    Then we have that
        \begin{align*}
            H^q(X,K_X\otimes F\otimes L\otimes\mathscr{I}(h))=0
        \end{align*}
        for $q>0$ with $m\geq\min\{n-q+1,r\}$.
\end{theorem}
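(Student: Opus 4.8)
The plan is to deduce the vanishing from the $L^2$-existence theorem for $\bar\partial$ on weakly pseudoconvex manifolds together with the $L^2$-type Dolbeault isomorphism, in the Nadel--Demailly manner. First I would reformulate the statement analytically: via the $L^2$-type Dolbeault isomorphism, every class in $H^q(X,K_X\otimes F\otimes L\otimes\mathscr{I}(h))$ is represented by a $\bar\partial$-closed $(n,q)$-form $v$ with values in $F\otimes L$ that is locally square-integrable for $\omega$, a fixed smooth metric $h_F$ on $F$, and $h$; it is then enough to produce $u$ with $\bar\partial u=v$ that is again locally $L^2$ against $h$. Since $X$ is only weakly pseudoconvex, I would carry this out with weighted global $L^2$ estimates on the sublevel sets of an exhaustion (using an auxiliary weight $e^{-\chi(\psi)}$ to force global integrability of $v$, which is harmless since $\idd\chi(\psi)\geq0$) and then take limits.

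The crucial input is the sign of the curvature term in the twisted Bochner--Kodaira--Nakano inequality, and here working in bidegree $(n,q)$ is what makes the $K_X$-twist harmless: the curvature operator is simply $[\,i\Theta_{F\otimes L,\,h_F\otimes h},\Lambda_\omega\,]=[\,i\Theta_{F}\otimes\mathrm{Id}_L,\Lambda_\omega\,]+[\,\mathrm{Id}_F\otimes i\Theta_{L,h},\Lambda_\omega\,]$. From $m\geq\min\{n-q+1,r\}$ we get $m\geq r$ \emph{or} $m\geq n-q+1$. If $m\geq r$, every tensor in $T_X\otimes F$ has rank $\leq r\leq m$, so $m$-semi-positivity of $F$ is exactly Nakano semi-positivity and $\langle[i\Theta_F,\Lambda_\omega]u,u\rangle\geq0$ for all $(n,q)$-forms $u$; if $m\geq n-q+1$, i.e. $q\geq n-m+1$, the linear-algebra mechanism behind Demailly $m$-positivity---the inequality on partial sums of curvature eigenvalues restricted to tensors of rank $\leq m$, which is precisely the one exploited in \cite{Hua20}---again gives $\langle[i\Theta_F,\Lambda_\omega]u,u\rangle\geq0$ on $(n,q)$-forms. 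On the other side, $h$ being singular positive provides, on each relatively compact $\Omega\Subset X$, a constant $\varepsilon_\Omega>0$ with $i\Theta_{L,h}\geq\varepsilon_\Omega\,\omega$ in the sense of currents; after Demailly regularization over $\Omega$ one has smooth metrics $h_\nu\downarrow h$ with $i\Theta_{L,h_\nu}\geq\tfrac{1}{2}\varepsilon_\Omega\,\omega$ for $\nu$ large, whence $\langle[i\Theta_{L,h_\nu},\Lambda_\omega]u,u\rangle\geq\tfrac{1}{2}\varepsilon_\Omega\,q\,|u|^2$ on $(n,q)$-forms. Adding the two contributions, the full curvature operator is $\geq\tfrac{1}{2}\varepsilon_\Omega\,q\,\mathrm{Id}$, strictly positive for $q>0$---exactly the invertibility needed to run the estimate.

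With this strict lower bound secured, I would run Demailly's standard $L^2$-scheme on the weakly pseudoconvex \kah manifold $X$: fix a smooth exhaustive plurisubharmonic $\psi$ (arranged $\geq0$) and use the complete \kah metrics $\omega_\delta=\omega+\delta\,\idd(\psi^2)$, $\delta>0$, together with the regularizations $h_\nu$; on each sublevel set solve $\bar\partial u_{\nu,\delta}=v$ by H\"ormander--Andreotti--Vesentini--Demailly with the $L^2$ bound furnished by the curvature term above, let $\delta\downarrow0$ using the monotonicity of the curvature term for $(n,q)$-forms under $\omega\leq\omega_\delta$, exhaust $X$, and finally let $\nu\to\infty$, controlling the limit by monotone convergence and the very definition $\mathscr{I}(h)=\mathscr{I}(\varphi_h)$. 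The result is $u$ with $\bar\partial u=v$ and $\int_{\Omega}|u|^2_{\omega,h_F}e^{-\varphi_h}\,dV_\omega<\infty$ for every $\Omega\Subset X$, so the class of $v$ is trivial, and by the $L^2$-type Dolbeault isomorphism $H^q(X,K_X\otimes F\otimes L\otimes\mathscr{I}(h))=0$.

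The step I expect to be the main obstacle is not the $\bar\partial$-analysis but the curvature bookkeeping for $F$: one must verify carefully that the notion of $m$-semi-positivity in use (Definition~\ref{Def of dual m-posi} and its non-dual counterpart) genuinely forces $\langle[i\Theta_F,\Lambda_\omega]u,u\rangle\geq0$ precisely in the bidegrees $(n,q)$ with $q\geq n-m+1$, and that it degenerates to the Nakano case when $m\geq r$; once that sign is in place, the completeness-via-$\psi^2$, regularization-of-$h$, exhaustion, and weak-limit steps are routine, if a little lengthy. If the metric on $F$ realizing $m$-semi-positivity were itself singular one would have to insert an extra approximation at the same place as for $h$ and the ideal sheaf in the conclusion would change accordingly, so in this theorem $h_F$ is taken smooth.
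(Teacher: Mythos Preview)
Your proposal is correct and follows essentially the same architecture as the paper: the $L^2$-type Dolbeault isomorphism (Theorem~\ref{L2-type Dolbeault isomorphism with sHm}\,$(a)$), the curvature lower bound $A^{n,q}_{F,h_F,\omega}\geq0$ from $m$-semi-positivity (Lemma~\ref{m-posi then A>0 and dual m-posi then A>0}), the strict positivity $i\Theta_{L,h}\geq c\,\omega$ on sublevel sets (Proposition~\ref{big of bounded below}), Demailly regularization of $h$, a convex weight $\chi\circ\psi$ to force global integrability, and weak limits. The only technical variation is that the paper obtains completeness by working on $X_j\setminus Z_\nu$ via Lemma~\ref{complete kah on X_j-Z} (the regularized $\varphi_\nu$ being smooth only off the analytic set $Z_\nu$) and then extends across $Z_\nu$ by Lemma~\ref{Ext d-equation for hypersurface}, whereas you invoke $\omega_\delta=\omega+\delta\,\idd\psi^2$; both are standard routes to the same estimate, so there is nothing to add.
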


If the singular Hermitian metric on line bundle possesses semi-positive, then the following holds by assuming $m$-positivity for the vector bundle.

\begin{theorem}\label{Ext NDBS V-thm for psef for (n,q)}
    Let $X$ be a weakly pseudoconvex manifold and $L$ be a holomorphic line bundle over $X$ equipped with a singular semi-positive Hermitian metric $h$, i.e. $i\Theta_{L,h}\geq0$ in the sense of currents.
    Then we have the following
    \begin{itemize}
        \item [$(a)$] If $X$ has a \kah metric and $A$ is a $k$-positive holomorphic line bundle then 
        \begin{align*}
            H^q(X,K_X\otimes A\otimes L\otimes\mathscr{I}(h))=0 \qquad \mathit{for~any} ~\,q\geq k.
        \end{align*}
        \item [$(b)$] If $F$ is a $m$-positive holomorphic vector bundle of rank $r$ then 
        \begin{align*}
            H^q(X,K_X\otimes F\otimes L\otimes\mathscr{I}(h))=0
        \end{align*}
        for $q>0$ with $m\geq\min\{n-q+1,r\}$.
    \end{itemize}
\end{theorem}

Here, Theorem \ref{Ext NDBS V-thm for big for (n,q)} and $(b)$ of Theorem \ref{Ext NDBS V-thm for psef for (n,q)} are generalizations to weakly pseudoconvex of Theorem 1.14 and Theorem 1.9 in \cite{Hua20}, respectively.
For $(p,n)$-forms, the following is obtained by using $L^2$-estimates in $\S$4 and an $L^2$-type Dolbeault isomorphism in $\S$5.

\begin{theorem}\label{Ext NDBS V-thm for big for (p,n)}
    Let $X$ be a compact \kah manifold, $F$ be a holomorphic vector bundle of rank $r$ over $X$ and $L$ be a holomorphic line bundle over $X$ equipped with a singular Hermitian metric $h$.
    We assume that $F$ is dual $m$-semi-positive and $h$ is singular positive in the sense of Definition \ref{def of psef & big}.
    Then we have that
        \begin{align*}
            H^n(X,\Omega^p_X\otimes F\otimes L\otimes\mathscr{I}(h))=0
        \end{align*}
        for $p>0$ with $m\geq\min\{n-p+1,r\}$.
\end{theorem}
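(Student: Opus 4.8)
The plan is to reduce the vanishing to an $L^2$ solvability statement for $\bar\partial$ on $(p,n)$-forms and then feed it the global $L^2$-estimate for dual $m$-semi-positive bundles. Fix a \kah metric $\omega$ on $X$, a smooth Hermitian metric $h_F$ on $F$ witnessing the dual $m$-semi-positivity of Definition \ref{Def of dual m-posi}, and a smooth Hermitian metric $h_L$ on $L$, and write $h=h_L e^{-\varphi}$ with $\varphi$ a (locally defined, quasi-plurisubharmonic) weight; the hypothesis that $h$ is singular positive in the sense of Definition \ref{def of psef & big} gives $i\Theta_{L,h}=i\Theta_{L,h_L}+\idd\varphi\geq\varepsilon\omega$ in the sense of currents for some $\varepsilon>0$. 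By the $L^2$-type Dolbeault isomorphism of $\S 5$, the group $H^n(X,\Omega^p_X\otimes F\otimes L\otimes\mathscr{I}(h))$ is computed by the complex of $L^2$ $(p,\bullet)$-forms with values in $F\otimes L$ that are square-integrable against $\omega$, $h_F$ and $h$. Since $(p,n)$ is the top antiholomorphic bidegree, every such $(p,n)$-form is automatically $\bar\partial$-closed, so it suffices to show that an arbitrary $L^2$ section $u$ of this complex in bidegree $(p,n)$ is $\bar\partial$-exact within it, i.e. $u=\bar\partial v$ for some $L^2$ $(p,n-1)$-form $v$.

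For the solvability I would invoke the Bochner--Kodaira--Nakano identity transposed to bidegree $(p,n)$: the relevant curvature operator is $\bigl[\,i\Theta_{F,h_F}\otimes\mathrm{id}_L+\mathrm{id}_F\otimes i\Theta_{L,h},\ \Lambda_\omega\,\bigr]$ acting on $(p,n)$-forms. A pointwise computation — diagonalizing $\omega$, $i\Theta_{L,h}$ and the Hermitian form attached to $i\Theta_{F,h_F}$ at a given point — shows that the dual $m$-semi-positivity of $(F,h_F)$ makes the $F$-contribution nonnegative on $(p,n)$-forms precisely when $m\geq\min\{n-p+1,r\}$, while the twist by $L$ contributes terms bounded below by $\varepsilon[\omega,\Lambda_\omega]$, which on $(p,n)$-forms equals $\varepsilon\,p\geq\varepsilon$ times the identity (this is exactly where $p>0$ enters). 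Hence the curvature operator is $\geq\varepsilon\,\mathrm{id}$ on $(p,n)$-forms, which is the global $L^2$-estimate of $\S 4$ for dual $m$-semi-positive bundles and yields, for smooth data, a solution of $\bar\partial v=u$ with $\int_X|v|^2\leq\varepsilon^{-1}\int_X|u|^2$ in the corresponding weighted norms.

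It remains to handle the singularity of $h$. Since $X$ is compact \kah and $i\Theta_{L,h}\geq\varepsilon\omega$ strictly, one can regularize $\varphi$ to a decreasing sequence of smoother weights $\varphi_\nu\downarrow\varphi$ with $i\Theta_{L,h_L}+\idd\varphi_\nu\geq\tfrac{\varepsilon}{2}\omega$. Applying the estimate of the previous paragraph to the smooth metrics $h_\nu=h_Le^{-\varphi_\nu}$ produces solutions $v_\nu$ of $\bar\partial v_\nu=u$ with $\int_X|v_\nu|^2_{h_F\otimes h_L}e^{-\varphi_\nu}\,dV_\omega\leq\tfrac{2}{\varepsilon}\int_X|u|^2_{h_F\otimes h_L}e^{-\varphi_\nu}\,dV_\omega\leq\tfrac{2}{\varepsilon}\int_X|u|^2_{h_F\otimes h}\,dV_\omega<\infty$, a bound uniform in $\nu$ because $\varphi_\nu\geq\varphi$. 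Extracting a weak $L^2$ limit $v$ and using monotone convergence gives $\bar\partial v=u$ with $v$ square-integrable against $h$; thus the class of $u$ in the $\S 5$-complex vanishes, and therefore $H^n(X,\Omega^p_X\otimes F\otimes L\otimes\mathscr{I}(h))=0$.

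I expect the genuinely delicate step to be the curvature positivity in the second paragraph: in contrast with the $(n,q)$ situation, the term $\bigl[i\Theta_{F,h_F},\Lambda_\omega\bigr]$ on $(p,n)$-forms is not controlled by ordinary Nakano semi-positivity, and one must use the $(p,n)$-version (the ``dual'' form) of the Bochner--Kodaira formula, whose eigenvalue combinatorics is what forces the sharp threshold $m\geq\min\{n-p+1,r\}$; obtaining this inequality with the right constant, uniformly over $X$, is the heart of $\S 4$. A second, more routine, point is to check that the weak-limit solution $v$ really represents the zero class in the multiplier-ideal-twisted $L^2$-Dolbeault complex of $\S 5$, i.e. that the $\S 5$ isomorphism is compatible with the $L^2$-estimates used here.
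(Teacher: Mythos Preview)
Your overall architecture is exactly the paper's: reduce to the fine resolution of $\Omega^p_X\otimes F\otimes L\otimes\mathscr{I}(h)$ by the sheaves $\mathscr{L}^{p,\ast}_{F\otimes L,h_F\otimes h}$ (Theorem \ref{L2-type Dolbeault isomorphism with sHm}\,(a)), and then solve $\overline{\partial}$ globally on $(p,n)$-forms via the estimate of Theorem \ref{L2-estimate of m-semi-posi + big}\,(b). Compactness of $X$ makes every section of $\mathscr{L}^{p,n}$ genuinely $L^2$, so no exhaustion trick is needed. Good.

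The gap is in your third paragraph. You assert that one can regularize $\varphi$ to a decreasing sequence of \emph{smooth} weights $\varphi_\nu$ still satisfying $i\Theta_{L,h_L}+\idd\varphi_\nu\geq\tfrac{\varepsilon}{2}\omega$. On a compact \kah manifold this is false as soon as $\varphi$ has a nonzero Lelong number: Demailly's smooth regularization necessarily loses positivity proportional to the Lelong numbers, so you cannot keep a uniform $\tfrac{\varepsilon}{2}\omega$ lower bound with globally smooth potentials. What the paper actually uses (inside Theorem \ref{L2-estimate of m-semi-posi + big}, whose proof mirrors Theorem \ref{L2-estimate of m-posi + psef}) is Demailly's equisingular approximation, Theorem \ref{Demailly smoothing of current}: the $\varphi_\nu$ are smooth only on $X\setminus Z_\nu$ for analytic sets $Z_\nu$, with $i\Theta_{L,h_\nu}\geq(2c-\beta_\nu)\omega$ there. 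One then equips $X\setminus Z_\nu$ with a complete \kah metric (Lemma \ref{complete kah on X_j-Z}), applies the $(p,n)$-version of the H\"ormander estimate (\cite{Wat22b}, Theorem 3.7) on $X\setminus Z_\nu$, and extends the solution across $Z_\nu$ by Lemma \ref{Ext d-equation for hypersurface}. Only after that does the weak-limit argument in $\nu$ go through.

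Incidentally, you have the difficulty inverted. The curvature positivity you flag as delicate is the routine part: it is exactly Lemma \ref{m-posi then A>0 and dual m-posi then A>0}\,(b) combined with Proposition \ref{A_(E+F)>C_E+C_F if A_E>C_E and A_F>C_F}, and the threshold $m\geq\min\{n-p+1,r\}$ drops out of the standard linear-algebra computation. The genuinely technical ingredient is the regularization/extension package above, which your sketch skips.
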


\begin{theorem}\label{Ext NDBS V-thm for psef for (p,n)}
    Let $X$ be a compact manifold and $L$ be a holomorphic line bundle over $X$ equipped with a singular semi-positive Hermitian metric $h$, i.e. $i\Theta_{L,h}\geq0$ in the sense of currents.
    Then we have the following
    \begin{itemize}
        \item [$(a)$] If $X$ has a \kah metric and $A$ is a $k$-positive holomorphic line bundle then 
        \begin{align*}
            H^n(X,\Omega^p_X\otimes A\otimes L\otimes\mathscr{I}(h))=0  \qquad \mathit{for~any} ~\,p\geq k.
        \end{align*}
        \item [$(b)$] If $F$ is a dual $m$-positive holomorphic vector bundle of rank $r$ then 
        \begin{align*}
            H^n(X,\Omega_X^p\otimes F\otimes L\otimes\mathscr{I}(h))=0
        \end{align*}
        for $p>0$ with $m\geq\min\{n-p+1,r\}$.
    \end{itemize}
\end{theorem}

Note, $n$-th cohomology always vanishes on non-compact complex manifolds (cf. [27,\,30]). 

Notions of singular Hermitian metrics for holomorphic vector bundles were introduced and investigated (cf. [2,\,3]) 
and of positivity for singular Hermitian metrics is very interesting subjects. It is known that we cannot always define the curvature currents with measure coefficients \cite{Rau15}. 
Therefore, semi-negativity and semi-positivity for Griffiths and (dual) Nakano (cf. [2,\,11,\,19,\,31,\,33,\,39]) 
were defined naturally without using the curvature currents by using the properties of plurisubharmonicity.

We study properties of singular (dual) Nakano semi-positivity and vanishing theorems. 
Among them we obtain the following dual-type generalization of Demailly and Skoda's theorem (cf. [9,\,26]) 
to singularities. This theorem is already known for $L^2$-type (dual) Nakano semi-positivity (see. [19,\,Theorem\,1.3], [39,\,Theorem\,5.3]), but is presented here for a more natural and stronger definition. 

\begin{theorem}\label{Grif * det Grif is dual Nakano}
    Let $X$ be a complex manifold and $E$ be a holomorphic vector bundle over $X$ equipped with a singular Hermitian metric $h$.
    If $h$ is Griffiths semi-positive then $h\otimes\mathrm{det}\,h$ is dual Nakano semi-positive.
\end{theorem}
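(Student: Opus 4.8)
The plan is to reduce the statement to the classical theorem of Demailly and Skoda for \emph{smooth} Hermitian metrics (cf. \cite{DS80}, \cite{LSY13}) by a local regularization argument, exploiting that both Griffiths and dual Nakano semi-positivity of singular Hermitian metrics are local notions and that the latter is stable under monotone limits of smooth dual Nakano semi-positive metrics.

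First I would reduce to a statement on a small neighborhood $U$ of an arbitrary point of $X$, chosen so that $E|_U$ is trivial and $U$ is Stein. On such a $U$, the hypothesis that $h$ is Griffiths semi-positive --- equivalently, that $h^{*}$ is Griffiths semi-negative in the sense of \cite{BP08}, \cite{deC98} --- allows one to write $h$ as an increasing limit $h=\lim_{\nu\to\infty}h_\nu$ of \emph{smooth} Griffiths semi-positive metrics $h_\nu$ on $E|_U$. This is the vector bundle counterpart of the elementary fact that a plurisubharmonic weight is locally a decreasing limit of smooth plurisubharmonic weights: one regularizes $h^{*}$ decreasingly by smooth Griffiths semi-negative metrics and dualizes.

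Next, for each fixed $\nu$, applying the Demailly--Skoda theorem in the smooth case to $(E^{*},h_\nu^{*})$, which is Griffiths semi-negative, shows that $(E^{*}\otimes\det E^{*},\,h_\nu^{*}\otimes\det h_\nu^{*})=(E\otimes\det E,\,h_\nu\otimes\det h_\nu)^{*}$ is Nakano semi-negative, i.e. $h_\nu\otimes\det h_\nu$ is a smooth dual Nakano semi-positive metric on $(E\otimes\det E)|_U$. It then remains to let $\nu\to\infty$. Since $h_\nu\nearrow h$ in the Loewner order, monotonicity of the determinant on positive Hermitian forms gives $\det h_\nu\nearrow\det h$, whence $h_\nu\otimes\det h_\nu\nearrow h\otimes\det h$; in particular $h\otimes\det h$ is a well-defined singular Hermitian metric on $E\otimes\det E$. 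By the closure of singular dual Nakano semi-positivity under increasing limits of smooth dual Nakano semi-positive metrics, $h\otimes\det h$ is dual Nakano semi-positive on $U$, hence on $X$.

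The main obstacle is the local regularization step: unlike the rank one case, approximating a singular Griffiths semi-positive metric by \emph{smooth} Griffiths semi-positive ones is not purely formal, and it is exactly here that the strength of the ``natural'' definition of singular (dual) Nakano semi-positivity is used, in contrast with the weaker $L^{2}$-type notion of \cite{Ina22}, \cite{Wat22b}. Once the approximation is available, the Demailly--Skoda inequality is applied pointwise to the smooth approximants and the rest is monotone convergence; one should also confirm that the notion of dual Nakano semi-positivity adopted in this paper is genuinely preserved under such monotone limits, which is precisely what yields this stronger conclusion.
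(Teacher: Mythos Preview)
Your strategy and the paper's share the same skeleton: dualize to the equivalent statement ``$h$ Griffiths semi-negative $\Rightarrow$ $h\otimes\det h$ Nakano semi-negative'', mollify locally by $h^\mu:=h\ast\rho_\mu$, apply the smooth Demailly--Skoda theorem to obtain that $h^\mu\otimes\det h^\mu$ is Nakano semi-negative, and then let $\mu\to\infty$. Where you diverge is in how the limit is justified. You appeal to closure of singular Nakano semi-negativity (Definition~\ref{def Nakano semi-negative as Raufi}) under decreasing limits of smooth Nakano semi-negative metrics; this is correct and can be checked directly --- Griffiths semi-negativity makes the entries of $h$ locally bounded (cf.\ [PT18,\,Lemma 2.2.4]), so dominated convergence gives $T^{h^\mu\otimes\det h^\mu}_u\to T^{h\otimes\det h}_u$ in $L^1_{\mathrm{loc}}$, and positivity of $i\partial\bar\partial$ passes to the limit of currents --- but the paper never isolates this closure lemma. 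Instead it routes the limit through Proposition~\ref{key Prop of sing Nakano semi-negative}: it suffices that each mollification $(h\otimes\det h)\ast\rho_\nu$ be \emph{smooth} Nakano semi-negative, and this is established by a second approximation $(h^\mu\otimes\det h^\mu)\ast\rho_\nu\searrow(h\otimes\det h)\ast\rho_\nu$ combined with a smooth-case limit result from [Wat22a,\,Corollary 5.6,\,Theorem 1.7]. Your route is shorter and essentially self-contained once the current-convergence argument for $T^h_u$ is written out; the paper's route trades that for two layers of mollification so as to reduce everything to the smooth statement in [Wat22a] together with its own Propositions~\ref{vect bdl semi-negative sHm smoothing} and~\ref{key Prop of sing Nakano semi-negative}.
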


We also get the following vanishing theorems which are generalizations of Griffiths type vanishing theorem to singularities, (dual) $m$-positivity and weakly pseudoconvex \kah manifolds.
Here, $\mathscr{E}(h)$ is the $L^2$-subsheaf of $\mathcal{O}_X(E)$ with respect to a singular Hermitian metric $h$ on $E$ analogous to multiplier ideal sheaves. In fact, $\mathscr{E}(h)=\mathcal{O}_X(E)\otimes\mathscr{I}(h)$ if $E$ is a holomorphic line bundle.
And if $h$ is Griffiths semi-positive, then it is already known in \cite{HI20} and \cite{Ina22} that $\mathscr{E}(h\otimes\mathrm{det}\,h)$ is coherent.

\begin{theorem}\label{V-thm of Griffiths semi-posi + (dual)-m-posi for (n,q)}
    Let $X$ be a weakly pseudoconvex manifold 
    and $E$ be a holomorphic vector bundle over $X$ equipped with a singular Hermitian metric $h$.
    We assume that $h$ is Griffiths semi-positive on $X$.
    Then we have the following
    \begin{itemize}
        \item [$(a)$] If $X$ has a positive holomorphic line bundle and $A$ is a $k$-positive holomorphic line bundle, then for any $q\geq k$ we have that
        \begin{align*}
            H^q(X,K_X\otimes A\otimes \mathscr{E}(h\otimes\mathrm{det}\,h))=0. 
        \end{align*}
        \item [$(b)$] If $F$ is a $m$-positive holomorphic vector bundle of rank $r$ over $X$ then 
        \begin{align*}
            H^q(X,K_X\otimes F\otimes \mathscr{E}(h\otimes\mathrm{det}\,h))=0
        \end{align*}
        for $q\geq1$ and $m\geq\min\{n-q+1,r\}$.
    \end{itemize}
\end{theorem}

\begin{theorem}\label{V-thm of Griffiths semi-posi + (dual)-m-posi for (p,n)}
    Let $X$ be a projective manifold and $E$ be a holomorphic vector bundle over $X$ equipped with a singular Hermitian metric $h$.
    We assume that $h$ is Griffiths semi-positive on $X$.
    Then we have the following
    \begin{itemize}
        \item [$(a)$] If $A$ is a $k$-positive holomorphic line bundle, then for any $p\geq k$ we have that
        \begin{align*}
            H^n(X,\Omega^p_X\otimes A\otimes \mathscr{E}(h\otimes\mathrm{det}\,h))=0.  
        \end{align*}
        \item [$(b)$] If $F$ is a dual $m$-positive holomorphic vector bundle of rank $r$ over $X$ then 
        \begin{align*}
            H^n(X,\Omega^p_X\otimes F\otimes \mathscr{E}(h\otimes\mathrm{det}\,h))=0
        \end{align*}
        for $p\geq1$ and $m\geq\min\{n-p+1,r\}$.
    \end{itemize}
\end{theorem}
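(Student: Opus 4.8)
The plan is to deduce this from the $(p,n)$-form vanishing for singular dual $m$-(semi-)positive vector bundles, after first replacing the Griffiths semi-positive $(E,h)$ by a dual Nakano semi-positive bundle. Concretely, set $G:=E\otimes\det E$ and $H:=h\otimes\det h$. By Theorem \ref{Grif * det Grif is dual Nakano}, $H$ is a singular dual Nakano semi-positive metric on $G$; moreover, by \cite{HI20} and \cite{Ina22} the $L^2$-subsheaf $\mathscr{E}(h\otimes\det h)=\mathscr{E}_G(H)\subseteq\mathcal{O}_X(G)$ is coherent. Coherence is exactly what is needed to run the $L^2$-type Dolbeault isomorphism of $\S$5 with this sheaf, and projectivity of $X$ is what guarantees the complete \kah metric and the ambient positive line bundle used in the globally $L^2$-estimates of $\S$4.

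Next I would upgrade the twists by tensoring with $G$ and tracking the $L^2$-subsheaf. In case $(b)$, equip $F\otimes G$ with the tensor product of the given $m$-positive metric on $F$ and the metric $H$. As in the smooth theory --- the dual of the statement ``Nakano semi-positive $\otimes$ Nakano positive is Nakano positive'' --- the tensor product of a dual $m$-positive bundle with a dual Nakano semi-positive bundle is dual $m$-positive; carrying this through the smooth regularizations of $H$ that enter Definition \ref{Def of dual m-posi}, the product metric on $F\otimes G$ is singular dual $m$-positive. Since $F$ carries a smooth metric, the $L^2$-integrability condition is unchanged by that factor, so $\mathscr{E}_{F\otimes G}(\text{product})=F\otimes\mathscr{E}_G(H)=F\otimes\mathscr{E}(h\otimes\det h)$. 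In case $(a)$ one argues identically, regarding the $k$-positive line bundle $A$ as dual $k$-positive (the two notions agree in rank one): $A\otimes G$ with the product metric is singular dual $k$-positive and $\mathscr{E}_{A\otimes G}(\text{product})=A\otimes\mathscr{E}(h\otimes\det h)$.

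Finally I would invoke the $(p,n)$-form vanishing theorem for singular dual $m$-semi-positive vector bundles equipped with their $L^2$-subsheaves --- the vector bundle, $L^2$-subsheaf analogue of Theorems \ref{Ext NDBS V-thm for big for (p,n)} and \ref{Ext NDBS V-thm for psef for (p,n)}, obtained from the $L^2$-estimates of $\S$4 and the $L^2$-type Dolbeault isomorphism of $\S$5, exactly as part $(b)$ of Theorem \ref{V-thm of Griffiths semi-posi + (dual)-m-posi for (n,q)} is obtained from its $(n,q)$-counterpart. Applying it to $F\otimes G$ in case $(b)$, with $p\geq1$ and $m\geq\min\{n-p+1,r\}$, and to $A\otimes G$ in case $(a)$, with $p\geq k$ (the index bookkeeping matching that of part $(a)$ of Theorem \ref{Ext NDBS V-thm for psef for (p,n)}), yields $H^n(X,\Omega^p_X\otimes F\otimes\mathscr{E}(h\otimes\det h))=0$ and $H^n(X,\Omega^p_X\otimes A\otimes\mathscr{E}(h\otimes\det h))=0$, respectively. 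The main obstacle is the middle step done honestly: one must check that the tensor product construction is compatible with the regularization procedure defining singular dual $m$-positivity --- so that the product metric genuinely satisfies Definition \ref{Def of dual m-posi} --- and that the $L^2$-type Dolbeault isomorphism really identifies the relevant $L^2$-Dolbeault cohomology group with the sheaf cohomology $H^n(X,\Omega^p_X\otimes F\otimes\mathscr{E}(h\otimes\det h))$ (resp.\ with $A$ in place of $F$). Granting these, the vanishing itself is the usual Bochner--Kodaira--Nakano argument on $(p,n)$-forms, with the dual $m$-positivity supplying the needed sign on the curvature term.
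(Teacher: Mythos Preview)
Your overall strategy---reduce to dual Nakano semi-positivity via Theorem \ref{Grif * det Grif is dual Nakano}, then combine the $L^2$-estimates of \S4 with the $L^2$-type Dolbeault isomorphism of \S5---is exactly what the paper does. The gap is in the packaging: you try to merge $F$ and $G=E\otimes\det E$ into a single bundle $F\otimes G$ carrying a ``singular dual $m$-positive'' metric, and then invoke a vanishing theorem for that class. But no such notion is defined in the paper (Definition \ref{Def of dual m-posi} is for smooth metrics only), and no vanishing theorem for ``singular dual $m$-positive vector bundles with $L^2$-subsheaves'' is ever stated or proved. Your own caveat at the end---that one must check compatibility of the tensor product with the regularization procedure---is not a technicality but the whole content: you would have to invent the definition and then prove the needed closure under tensor product, neither of which is available.

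The paper sidesteps this entirely by keeping the two factors separate throughout. It applies part (c) of Theorem \ref{L2-type Dolbeault isomorphism with sHm} directly with the \emph{smooth} metric $h_F$ on $F$ and the Griffiths semi-positive singular metric $h$ on $E$, obtaining
\[
H^n\bigl(X,\Omega^p_X\otimes F\otimes\mathscr{E}(h\otimes\det h)\bigr)\cong H^n\bigl(\Gamma(X,\mathscr{L}^{p,\ast}_{F\otimes E\otimes\det E,\,h_F\otimes h\otimes\det h})\bigr).
\]
Then, for a $\overline\partial$-closed class, it invokes Theorem \ref{L2-estimates of dual Nakano semi-posi + m-posi on w.p.c}, which is precisely an $L^2$-estimate for the pair (smooth dual $m$-positive $(F,h_F)$, singular dual Nakano semi-positive $(E,h)$)---the dual Nakano semi-positivity of $h\otimes\det h$ coming from Theorem \ref{Grif * det Grif is dual Nakano}. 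The curvature operator $B_{h_F,\omega}=[i\Theta_{F,h_F}\otimes\mathrm{id}_{E\otimes\det E},\Lambda_\omega]$ involves only the smooth factor, so no ``singular dual $m$-positivity'' is ever needed. Part (a) is the same argument with the elementary fact $A^{p,q}_{A,h_A,\omega}>0$ for $p+q>n+k-1$ replacing Lemma \ref{m-posi then A>0 and dual m-posi then A>0}. If you rewrite your argument keeping $F$ smooth and applying Theorem \ref{L2-estimates of dual Nakano semi-posi + m-posi on w.p.c} rather than trying to absorb $F$ into the singular bundle, you recover the paper's proof.
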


Moreover, in $\S$6, we provide vanishing theorems for singular (dual) Nakano semi-positivity twisted by (dual) $m$-positive vector bundles on weakly pseudoconvex manifolds with a positive holomorphic line bundle (resp. projective manifolds). 

Recently, a Fujita Conjecture type theorem involving multiplier ideal sheaves was presented in \cite{SY19} using vanishing theorems.
Finally, we get a vanishing theorem that is a more detailed judgment of positivity by numerically dimension for nef line bundle on projective manifolds.
And we obtain Fujita's conjecture type theorem involving the $L^2$-subsheaf as an application of our vanishing theorems.

\begin{theorem}\label{V-thm of nu(N) and Grif for nu(h)<1 near A}
    Let $X$ be a compact \kah manifold and $E$ be a holomorphic vector bundle equipped with a singular Hermitian metric $h$. 
    Let $N$ be a nef line bundle which is neither big nor numerically trivial, i.e. $\nu(N)\notin\{0,n\}$. 
    If $h$ is Griffiths semi-positive and there exists a smooth ample divisor $A$ such that $\nu(-\log\mathrm{det}\,h|_A,x)<1$ for all points in $A$ and that $\nu(N|_A)=\nu(N)$, then we have that 
    \begin{align*}
        H^q(X,K_X\otimes N\otimes \mathscr{E}(h\otimes\mathrm{det}\,h))=0
    \end{align*}
    for $q>n-\nu(N)$.
\end{theorem}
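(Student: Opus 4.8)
Since $X$ carries an ample divisor it is projective, and I would argue by induction on $\dim X$, restricting to $A$ and using the vanishing already available for positive (more generally $k$-positive) twists. Write $\mathscr{G}:=\mathscr{E}(h\otimes\mathrm{det}\,h)$, a coherent sheaf because $h$ is Griffiths semi-positive (\cite{HI20},\,\cite{Ina22}); recall also that, by the Demailly--Skoda theorem (the Nakano counterpart of Theorem \ref{Grif * det Grif is dual Nakano}), $h\otimes\mathrm{det}\,h$ is Nakano semi-positive. First I set up the short exact sequence of coherent sheaves on $X$
\begin{align*}
0\longrightarrow K_X\otimes N\otimes\mathscr{G}\longrightarrow K_X\otimes\mathcal{O}_X(A)\otimes N\otimes\mathscr{G}\longrightarrow \bigl(K_X\otimes\mathcal{O}_X(A)\otimes N\otimes\mathscr{G}\bigr)\big|_A\longrightarrow 0
\end{align*}
and identify the last term, by adjunction $(K_X\otimes\mathcal{O}_X(A))|_A\cong K_A$ and the restriction isomorphism $\mathscr{G}|_A\cong\mathscr{E}\bigl((h|_A)\otimes\mathrm{det}(h|_A)\bigr)$, with $K_A\otimes N|_A\otimes\mathscr{E}\bigl((h|_A)\otimes\mathrm{det}(h|_A)\bigr)$. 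This restriction isomorphism is where the hypothesis $\nu(-\log\mathrm{det}\,h|_A,x)<1$ for all $x\in A$ enters: by Skoda's lemma it gives $\mathrm{det}\,h|_A\in L^1_{\mathrm{loc}}(A)$, and then an Ohsawa--Takegoshi type extension theorem across $A$ for the Nakano semi-positive bundle $(E\otimes\mathrm{det}\,E,\,h\otimes\mathrm{det}\,h)$ supplies the inclusion $\mathscr{E}((h|_A)\otimes\mathrm{det}(h|_A))\subseteq\mathscr{G}|_A$, the reverse inclusion following from the same integrability bound.

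Because $N$ is nef and $\mathcal{O}_X(A)$ is positive, $N\otimes\mathcal{O}_X(A)$ is a positive (i.e. $1$-positive) line bundle, so Theorem \ref{V-thm of Griffiths semi-posi + (dual)-m-posi for (n,q)}$(a)$ --- with the effective positive divisor $A$ itself playing the role of $D$, and $k=1$ --- yields $H^q\bigl(X,K_X\otimes\mathcal{O}_X(A)\otimes N\otimes\mathscr{G}\bigr)=0$ for all $q\geq 1$. Feeding this into the long exact cohomology sequence of the short exact sequence above gives, for every $q\geq 2$,
\begin{align*}
H^q\bigl(X,K_X\otimes N\otimes\mathscr{G}\bigr)\;\cong\;H^{q-1}\bigl(A,\,K_A\otimes N|_A\otimes\mathscr{E}((h|_A)\otimes\mathrm{det}(h|_A))\bigr).
\end{align*}
On the projective manifold $A$ we have $\dim A=n-1$, $h|_A$ is again Griffiths semi-positive, $N|_A$ is nef with $\nu(N|_A)=\nu(N)$, and the target range $q>n-\nu(N)$ becomes $q-1>(n-1)-\nu(N|_A)$; choosing a smooth ample $A'\subset A$ general in a sufficiently positive linear system --- so that $\nu(N|_{A'})=\nu(N|_A)$ and $\nu(-\log\mathrm{det}(h|_A)|_{A'},\cdot)<1$, a general hyperplane section not raising Lelong numbers above their ambient values --- the inductive hypothesis applies and closes the step whenever $\nu(N|_A)\notin\{0,\dim A\}$.

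The induction therefore reduces, after $q-1$ restrictions, to the case of a projective manifold $B$ with $\dim B=n-q+1\leq\nu(N)$ on which $N|_B$ is nef with $\nu(N|_B)=\dim B$, i.e. nef and big, and one must show $H^1\bigl(B,K_B\otimes N|_B\otimes\mathscr{E}((h|_B)\otimes\mathrm{det}(h|_B))\bigr)=0$ (for $\dim B=1$ this is immediate from Theorem \ref{V-thm of Griffiths semi-posi + (dual)-m-posi for (n,q)}$(a)$, $N|_B$ being ample). For this I would use that a nef line bundle admits a singular Hermitian metric with semi-positive curvature current and vanishing Lelong numbers at every point (its diminished base locus being empty); a sufficiently small convex combination of such a metric with a strictly positive singular metric provided by bigness yields a singular metric $\sigma$ on $N|_B$ with $i\Theta_{N|_B,\sigma}\geq\varepsilon\omega$ and $\mathscr{I}(\sigma)=\mathcal{O}_B$. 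Then $\sigma\otimes((h|_B)\otimes\mathrm{det}(h|_B))$ is a singular metric of positive type on $N|_B\otimes E|_B\otimes\mathrm{det}\,E|_B$ carrying no extra multiplier ideal, and the singular-metric vanishing theorems of $\S6$ (the singular form of Theorem \ref{Ext NDBS V-thm for big for (n,q)}, using Nakano semi-positivity of $(h|_B)\otimes\mathrm{det}(h|_B)$) give exactly the required vanishing, completing the induction.

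The step I expect to be the main obstacle is the restriction isomorphism $\mathscr{G}|_A\cong\mathscr{E}((h|_A)\otimes\mathrm{det}(h|_A))$: proving $\mathscr{E}((h|_A)\otimes\mathrm{det}(h|_A))\subseteq\mathscr{G}|_A$ demands an $L^2$ extension across a hypersurface $A$ along which only $\mathrm{det}\,h$ --- and only up to the Lelong bound $<1$ --- is controlled, while the reverse inclusion needs the same bound to ensure that $L^2$ germs on $X$ restrict to $L^2$ germs on $A$; a further, genuinely delicate point is verifying that at each stage of the induction one can choose a smooth ample divisor that simultaneously preserves the numerical dimension of $N$ and the Lelong bound on $\mathrm{det}$ of the restricted metric.
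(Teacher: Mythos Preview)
Your overall plan—tensor the adjunction sequence for $A$ by $N\otimes\mathscr{G}$, kill the middle term with Theorem~\ref{V-thm of Griffiths semi-posi + (dual)-m-posi for (n,q)}$(a)$ using positivity of $\mathcal{O}_X(A)\otimes N$, then induct on dimension—is exactly the paper's. The paper, however, makes a simplification that dissolves the first obstacle you flag. From $\nu(-\log\det h|_A,x)<1$ Skoda gives that $(\det h)^2|_A$ is locally integrable on $A$; a \emph{scalar} Ohsawa--Takegoshi extension (for the constant $1$ with weight $(\det h)^2$) then shows $(\det h)^2$ is locally integrable on $X$ near every point of $A$. Since $h\otimes\det h=(\det h)^2\cdot\widehat{h^*}$ and every entry of the adjugate $\widehat{h^*}$ is locally bounded by Griffiths semi-positivity ([PT18, Lemma~2.2.4]), one gets $\mathscr{G}=\mathcal{O}_X(E\otimes\det E)$ on a neighbourhood of $A$. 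So both sides of your restriction isomorphism are simply the full bundle $(E\otimes\det E)|_A$—no vector-bundle $L^2$ extension theorem is needed.

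After this single restriction the problem becomes: on the projective manifold $A$ with $h|_A$ Griffiths semi-positive, $\nu(-\log\det(h|_A),\cdot)<1$ everywhere, and $N|_A$ nef with $\nu(N|_A)=\nu(N)$, show $H^q(A,K_A\otimes N|_A\otimes E|_A\otimes\det E|_A)=0$ for $q>\dim A-\nu(N|_A)$. The paper isolates this as a separate lemma (Lemma~\ref{V-thm of nu(N) and Grif for nu(h)<1 on X}) and proves it by the standard Kawamata--Viehweg induction; the nef-and-big base case is handled, as you also propose, by combining a strictly positive singular metric on $N$ with the strong openness property so that the resulting $L^2$-subsheaf is the full bundle (Corollary~\ref{m.m.sheaf for Grif * det Grif * nef big}). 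Your second worry—choosing successive ample divisors that preserve both $\nu(N)$ and the Lelong bound—therefore only enters the proof of this auxiliary lemma, where the $L^2$-subsheaf is already trivial; the paper does not spell this out and relies on taking general hyperplane sections.
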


\begin{theorem}\label{Fujita Conj for Grif}
    Let $X$ be a compact \kah manifold and $E$ be a holomorphic vector bundle equipped with a singular Hermitian metric $h$. 
    Let $L$ be an ample and globally generated line bundle and $N$ be a nef but not numerically trivial line bundle.
    If $h$ is Griffiths semi-positive and there exists a smooth ample divisor $A$ such that $\nu(-\log\mathrm{det}\,h|_A,x)<1$ for all points in $A$ and that $\nu(N|_A)=\nu(N)$,  
    then the coherent sheaf
    \begin{align*}
        K_X\otimes L^{\otimes n}\otimes N\otimes \mathscr{E}(h\otimes\mathrm{det}\,h)
    \end{align*}
    is globally generated.
\end{theorem}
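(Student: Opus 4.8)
The plan is to deduce the global generation statement from a Castelnuovo--Mumford regularity argument, using the vanishing theorems already proved as the sole analytic input. Since $L$ is ample, $X$ is projective, and since $\mathscr{E}(h\otimes\mathrm{det}\,h)$ is coherent (by \cite{HI20}, \cite{Ina22}), all the sheaves below are coherent on a projective manifold. Recall that if $L$ is a globally generated ample line bundle and $\mathcal{G}$ is a coherent sheaf with $H^i(X,\mathcal{G}\otimes L^{\otimes(-i)})=0$ for all $i\geq1$, then $\mathcal{G}$ is globally generated. Applying this to $\mathcal{G}=K_X\otimes L^{\otimes n}\otimes N\otimes\mathscr{E}(h\otimes\mathrm{det}\,h)$, it suffices to establish
\begin{align*}
    H^i\bigl(X,K_X\otimes N\otimes L^{\otimes(n-i)}\otimes\mathscr{E}(h\otimes\mathrm{det}\,h)\bigr)=0
    \qquad\text{for all }1\leq i\leq n,
\end{align*}
the range $i>n$ being automatic by dimension.

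First I would treat the subcritical range $1\leq i\leq n-1$. There $n-i\geq1$, so $L^{\otimes(n-i)}$ is ample; since $N$ is nef, the line bundle $A_i:=L^{\otimes(n-i)}\otimes N$ is ample, hence $1$-positive. Because $L$ is globally generated we may choose an effective divisor $D\in|L|$, so that $\mathcal{O}_X(D)\cong L$ is a positive line bundle, and as $X$ is compact it is weakly pseudoconvex; thus the hypotheses of Theorem~\ref{V-thm of Griffiths semi-posi + (dual)-m-posi for (n,q)} hold. Part $(a)$ of that theorem, applied with $A=A_i$ and $k=1$, gives $H^q(X,K_X\otimes A_i\otimes\mathscr{E}(h\otimes\mathrm{det}\,h))=0$ for every $q\geq1$, in particular for $q=i$. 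This handles all degrees below $n$.

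The essential case is $i=n$, where the twist collapses to $N$ alone, which is merely nef. Here I would invoke Theorem~\ref{V-thm of nu(N) and Grif for nu(h)<1 near A}. Its hypotheses are met: $N$ is not numerically trivial by assumption, and it cannot be big either, since $\nu(N)=\nu(N|_A)\leq\dim A=n-1$; hence $\nu(N)\in\{1,\dots,n-1\}$, and the conditions $\nu(-\log\mathrm{det}\,h|_A,x)<1$ for all $x\in A$ and $\nu(N|_A)=\nu(N)$ are precisely those required. That theorem then yields $H^q(X,K_X\otimes N\otimes\mathscr{E}(h\otimes\mathrm{det}\,h))=0$ for all $q>n-\nu(N)$, and since $\nu(N)\geq1$ we have $n-\nu(N)\leq n-1<n$, so $q=n$ lies in the allowed range. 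Combining this with the previous paragraph verifies the displayed vanishing for all $1\leq i\leq n$, and the regularity criterion concludes the proof.

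The main obstacle is entirely concentrated in the top-degree vanishing $H^n(X,K_X\otimes N\otimes\mathscr{E}(h\otimes\mathrm{det}\,h))=0$: the degrees $i<n$ reduce to Nadel/Griffiths-type vanishing for the ample twist $L^{\otimes(n-i)}\otimes N$, whereas the top degree genuinely requires the numerical-dimension refinement of Theorem~\ref{V-thm of nu(N) and Grif for nu(h)<1 near A}, together with the elementary but crucial observation that the hypothesis $\nu(N|_A)=\nu(N)$ forces $N$ to be non-big. The only remaining points needing attention are the passage to the projective category (supplied by ampleness of $L$) and the coherence of $\mathscr{E}(h\otimes\mathrm{det}\,h)$, both of which are already available.
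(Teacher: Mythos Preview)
Your proof is correct and follows essentially the same route as the paper: reduce to $0$-regularity with respect to $L$ via Castelnuovo--Mumford, dispatch the degrees $1\le i\le n-1$ by Theorem~\ref{V-thm of Griffiths semi-posi + (dual)-m-posi for (n,q)} applied to the ample twist $L^{\otimes(n-i)}\otimes N$, and handle $i=n$ via Theorem~\ref{V-thm of nu(N) and Grif for nu(h)<1 near A}. Your observation that the hypothesis $\nu(N|_A)=\nu(N)$ together with $\nu(N|_A)\le\dim A=n-1$ forces $N$ to be non-big is a small but genuine simplification over the paper, which invokes both Theorem~\ref{V-thm of nu(N) and Grif for nu(h)<1 near A} and Corollary~\ref{V-thm of big and Grif for nu(h)<1 near A} at the $q=n$ step; under the stated assumptions the big case simply cannot occur, so the second reference is redundant.
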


\section{Smooth Hermitian metrics and positivity}

In this section, we define various positivity for holomorphic vector bundles and show equivalence relations with Nakano semi-positivity by using $L^2$-estimates.

Let $\omega$ be 
 a Hermitian metric on $X$ and $(E,h)$ be a holomorphic Hermitian vector bundle of rank $r$ over $X$.
Let $D=D'+\overline{\partial}$ be the Chern connection of $(E,h)$, and $\Theta_{E,h}=[D',\overline{\partial}]=D'\overline{\partial}+\overline{\partial}D'$ be the Chern curvature tensor. 
Let $(U,(z_1,\cdots,z_n))$ be local coordinates. Denote by $(e_1,\cdots,e_r)$ an orthonormal frame of $E$ over $U\subset X$, and
\begin{align*}
    i\Theta_{E,h,x_0}=i\sum_{j,k}\Theta_{jk}dz_j\wedge d\overline{z}_k=i\sum_{j,k,\lambda,\mu}c_{jk\lambda\mu}dz_j\wedge d\overline{z}_k\otimes e^*_\lambda\otimes e_\mu,~\,\, \overline{c}_{jk\lambda\mu}=c_{kj\mu\lambda}.
\end{align*}

To $i\Theta_{E,h}$ corresponds a natural Hermitian form $\theta_{E,h}$ on $T_X\otimes E$ defined by 
\begin{align*}
    \theta_{E,h}(u):=\theta_{E,h}(u,u)&=\sum c_{jk\lambda\mu}u_{j\lambda}\overline{u}_{k\mu},~\,\,\mathrm{for}\,\,~ u=\sum u_{j\lambda}\frac{\partial}{\partial z_j}\otimes e_\lambda\in T_{X,x_0}\otimes E_{x_0},\\
    \mathrm{i.e.}~\,\,\, \theta_{E,h}&=\sum c_{jk\lambda\mu}(dz_j\otimes e^*_\lambda)\otimes\overline{(dz_k\otimes e^*_\mu)}.
\end{align*}

\begin{definition}\label{def of k-posi}
    Let $L$ be a holomorphic line bundle over a complex manifold $X$. 
    We say that $L$ is $k$-$\it{positive}$ if there exists a smooth Hermitian metric $h$ such that $i\Theta_{L,h}$ is semi-positive and has at least $n-k+1$ positive eigenvalues at every point of $X$.
\end{definition}

\begin{definition}$(\mathrm{cf.\,[8,\,ChapterVII],\,[26,\,Definition\,2.1]})$ 
    Let $T$ and $E$ be complex vector spaces of dimensions $n,r$ respectively, and $\Theta$ be a hermitian form on $T\otimes E$.
    Let $(E,h)$ be a holomorphic vector bundle over a complex manifold $X$.
    \begin{itemize}
        \item A tensor $u\in T\otimes E$ is said to be of rank $m$ if $m$ is the smallest $\geq0$ integer such that $u$ can be written $u=\sum^m_{j=1}\xi_j\otimes s_j$, where $\xi_j\in T,~\,s_j\in E$.
        \item $\Theta$ is $m$-$\it{positive}$ (resp. $m$-$\it{semi}$-$\it{positive}$) if $\Theta(u)>0$ (resp. $\Theta(u)\geq0$) for any tensor $0\ne u\in T\otimes E$ of rank $\leq m$. In this case, we write $\Theta>_m0$ (resp. $\geq_m0$).
        \item $(E,h)$ is $m$-$\it{positive}$ (resp. $m$-$\it{semi}$-$\it{positive}$) if $\theta_{E,h}>_m0$ (resp. $\theta_{E,h}\geq_m0$). 
        \item $(E,h)$ is said to be $\it{Griffiths ~positive}$ (resp. $\it{Griffiths ~semi}$-$\it{positive}$) if $(E,h)$ is $1$-positive (resp. $1$-semi-positive).
        \item $(E,h)$ is said to be $\it{Nakano ~positive}$ (resp. $\it{Nakano ~semi}$-$\it{positive}$) if $\theta_{E,h}$ is positive (resp. semi-positive) definite as a Hermitian form on $T_X\otimes E$, 
        i.e. $\theta_{E,h}(u)>0$ (resp. $\geq0$).
        Here, Nakano positivity corresponds to $m\geq\min\{n,r\}$.
        \item $(E,h)$ is said to be $\it{dual~Nakano ~positive}$ (resp. $\it{dual~Nakano ~semi}$-$\it{positive}$) if $(E^*,h^*)$ is Nakano negative (resp. Nakano semi-negative).
    \end{itemize}
\end{definition}

It is clear that the concepts of Griffiths positive, Nakano positive, $1$-positive and positive coincide if $\mathrm{rank}\,E=1$.
We introduce another notion about $m$-positivity that correspond to positivity for $(p,n)$-forms.

\begin{definition}\label{Def of dual m-posi}
    Let $X$ be a complex manifold of dimension $n$ and $(E,h)$ be a holomorphic Hermitian vector bundle of rank $r$ over $X$.
    $(E,h)$ is said to be $\it{dual}$ $m$-$\it{positive}$ (resp. $\it{dual}$ $m$-$\it{semi}$-$\it{positive}$) 
    if $(E^*,h^*)$ is $m$-$\it{negative}$ (resp. $m$-$\it{semi}$-$\it{negative}$). 
\end{definition}

Here, $E$ is said to be $\bullet$-$\it{positive}$ (resp. $\bullet$-$\it{semi}$-$\it{positive}$) if there exists a smooth Hermitian metric $h_E$ such that $(E,h_E)$ is $\bullet$-$\it{positive}$ (resp. $\bullet$-$\it{semi}$-$\it{positive}$), where $\bullet$ is (dual) $m$, Griffiths and Nakano.
Notes that $1$-positivity and dual $1$-positivity are equivalent due to the equivalence between Griffiths-positivity of $(E,h)$ and Griffiths-negativity of $(E^*,h^*)$.

We denote the curvatur operator $[i\Theta_{E,h},\Lambda_\omega]$ on $\Lambda^{p,q}T^*_X\otimes E$ by $A^{p,q}_{E,h,\omega}$ and the fact that $[i\Theta_{E,h},\Lambda_\omega]$ is positive (resp. semi-positive) definite on $\Lambda^{p,q}T^*_X\otimes E$ is simply written as $A^{p,q}_{E,h,\omega}>0$ (resp. $\geq0$).
We obtain the following lemma for the relationship between positivity of curvature operator $A^{p,q}_{E,h,\omega}$ and (dual) $m$-positivity by using [8,\,ChapterVII,\,Lemma\,7.2] and [38,\,Theorem\,2.3\,\,and\,\,2.5]. 

\begin{lemma}\label{m-posi then A>0 and dual m-posi then A>0}
    Let $(X,\omega)$ be a Hermitian manifold and $(E,h)$ be a holomorphic vector bundle over $X$. Then we obtain the following
    \begin{itemize}
        \item [($a$)] If $(E,h)$ is $m$-positive (resp. $m$-semi-positive), then we get 
        \begin{align*}
            A^{n,q}_{E,h,\omega}=[i\Theta_{E,h},\Lambda_\omega]>0\quad(resp.~\geq0)\quad \it{for}~\, q\geq1 ~\,and~\, m\geq\min\{n-q+1,r\}.
        \end{align*}
        \item [($b$)] If $(E,h)$ is dual $m$-positive (resp. dual $m$-semi-positive), then we get 
        \begin{align*}
            A^{p,n}_{E,h,\omega}=[i\Theta_{E,h},\Lambda_\omega]>0\quad(resp.~\geq0)\quad \it{for}~\, p\geq1 ~\,and~\, m\geq\min\{n-p+1,r\}.
        \end{align*}
    \end{itemize}
\end{lemma}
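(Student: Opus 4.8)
The plan is to reduce both parts to a single pointwise computation at a point $x_0\in X$ and to read off the sign of the curvature term of the Bochner--Kodaira formula. First I would choose local coordinates $(z_1,\dots,z_n)$ centered at $x_0$ and an $h$-orthonormal frame $(e_1,\dots,e_r)$ of $E$ near $x_0$ so that at $x_0$ one has $\omega=i\sum_j dz_j\wedge d\bar z_j$ and $i\Theta_{E,h}=i\sum_{j,k,\lambda,\mu}c_{jk\lambda\mu}\,dz_j\wedge d\bar z_k\otimes e^*_\lambda\otimes e_\mu$, as in the setup of this section. Since $A^{n,q}_{E,h,\omega}$ and $A^{p,n}_{E,h,\omega}$ are algebraic in $i\Theta_{E,h}$ and $\omega$, it suffices to evaluate them on an arbitrary $E$-valued form at $x_0$.

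For part $(a)$, I would write a nonzero $(n,q)$-form with values in $E$ as $u=\sum_{|J|=q,\,\lambda}u_{J,\lambda}\,dz_1\wedge\cdots\wedge dz_n\wedge d\bar z_J\otimes e_\lambda$, with $u_{J,\lambda}$ extended antisymmetrically in $J$, and specialize Demailly's formula for the curvature term of the Bochner--Kodaira identity [Dem-book, ChapterVII, Lemma 7.2] to the case where the holomorphic index set is all of $\{1,\dots,n\}$; there the trace correction cancels exactly the contribution of the holomorphic indices, leaving
\begin{align*}
    \langle A^{n,q}_{E,h,\omega}u,u\rangle=\sum_{|S|=q-1}\ \sum_{j,k,\lambda,\mu}c_{jk\lambda\mu}\,u_{jS,\lambda}\,\overline{u_{kS,\mu}}=\sum_{|S|=q-1}\theta_{E,h}(v_S,v_S),
\end{align*}
where $v_S:=\sum_{j,\lambda}u_{jS,\lambda}\,\frac{\partial}{\partial z_j}\otimes e_\lambda\in T_{X,x_0}\otimes E_{x_0}$ and $u_{jS,\lambda}:=0$ when $j\in S$. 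The key point is the elementary observation that the array $(u_{jS,\lambda})_{j,\lambda}$ has at most $n-|S|=n-q+1$ nonzero rows and at most $r$ columns, so $v_S$ has rank $\leq\min\{n-q+1,r\}\leq m$. Thus $m$-semi-positivity of $h$ makes every term $\theta_{E,h}(v_S,v_S)\geq0$, whence $A^{n,q}_{E,h,\omega}\geq0$; and if $h$ is $m$-positive and $u\ne0$, then choosing $J,\lambda$ with $u_{J,\lambda}\ne0$ and writing $J=\{j\}\sqcup S$ gives $v_S\ne0$, so one summand is strictly positive and the others are $\geq0$, hence $A^{n,q}_{E,h,\omega}>0$.

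For part $(b)$, I would argue symmetrically with the anti-holomorphic index set being all of $\{1,\dots,n\}$: writing a $(p,n)$-form as $u=\sum_{|K|=p,\,\lambda}u_{K,\lambda}\,dz_K\wedge d\bar z_1\wedge\cdots\wedge d\bar z_n\otimes e_\lambda$ and invoking [Wat22a, Theorem 2.5] --- equivalently, the curvature term of Bochner--Kodaira with $q=n$, where the trace correction now absorbs the anti-holomorphic contribution and the surviving sum comes from the holomorphic part, with the two bundle indices exchanged --- one obtains
\begin{align*}
    \langle A^{p,n}_{E,h,\omega}u,u\rangle=\sum_{|T|=p-1}\ \sum_{j,k,\lambda,\mu}c_{jk\lambda\mu}\,u_{jT,\mu}\,\overline{u_{kT,\lambda}},\qquad u_{jT,\lambda}:=0\ \text{when}\ j\in T.
\end{align*}
Now by [Wat22a, Theorem 2.3] together with Definition \ref{Def of dual m-posi} --- i.e.\ $m$-semi-negativity of $\theta_{E^*,h^*}$ rewritten via $i\Theta_{E^*,h^*}=-{}^t(i\Theta_{E,h})$ --- $h$ is dual $m$-semi-positive (resp.\ dual $m$-positive) precisely when $\sum_{j,k,\lambda,\mu}c_{jk\lambda\mu}\,w_{j\mu}\,\overline{w_{k\lambda}}\geq0$ for every matrix $w=(w_{j\lambda})$ of rank $\leq m$ (resp.\ $>0$ for every such $w\ne0$). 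Since the matrix $(u_{jT,\lambda})_{j,\lambda}$ has rank $\leq\min\{n-p+1,r\}\leq m$, the same reasoning as in $(a)$ gives $A^{p,n}_{E,h,\omega}\geq0$ in the dual $m$-semi-positive case, and, picking $K=\{j\}\sqcup T$ with $u_{K,\lambda}\ne0$ when $u\ne0$, gives $A^{p,n}_{E,h,\omega}>0$ in the dual $m$-positive case.

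I expect the only genuinely delicate points to be the two reductions of Demailly's general $(p,q)$-curvature term --- checking that for $p=n$ (resp.\ $q=n$) the trace correction cancels exactly the holomorphic (resp.\ anti-holomorphic) part of the sum --- and, in $(b)$, keeping track of the exchange $\lambda\leftrightarrow\mu$ of bundle indices in the surviving term, which is exactly what makes dual $m$-positivity (rather than $m$-positivity) the right hypothesis; the rest is the elementary rank count above. As an alternative in $(b)$, one could instead transport the identity of $(a)$ applied to $(E^*,h^*)$ through a conjugate-linear Hodge-type isomorphism carrying $\Lambda^{p,n}T^*_X\otimes E$ onto a space of $E^*$-valued forms, under which $A^{p,n}_{E,h,\omega}$ becomes (up to sign) the curvature operator of $(E^*,h^*)$ and dual $m$-(semi-)positivity of $(E,h)$ becomes $m$-(semi-)negativity of $(E^*,h^*)$; the direct coefficient computation is preferable since it avoids the sign conventions of the Hodge identities.
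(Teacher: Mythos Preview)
Your proposal is correct. For part $(a)$ you unfold exactly what [Dem-book, Chapter\,VII, Lemma\,7.2] says, which is all the paper invokes; for part $(b)$ you carry out the direct coefficient computation (the $q=n$ specialization of Demailly's formula, with the $\lambda\leftrightarrow\mu$ swap) that is the content of [Wat22a, Theorem\,2.5], whereas the paper's (suppressed) argument instead deduces $(b)$ from $(a)$ applied to $(E^*,h^*)$ and the duality $A^{n,q}_{E^*,h^*,\omega}<0\Leftrightarrow A^{q,n}_{E,h,\omega}>0$ from [Wat22a, Theorems\,2.3 and 2.5]---precisely the Hodge-transport alternative you mention at the end. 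So your route for $(b)$ is the ``unpacked'' version of the same cited input, while the paper takes the transport shortcut; both are equally short once the references are granted.
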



\begin{proposition}\label{A_(E+F)>C_E+C_F if A_E>C_E and A_F>C_F}
    Let $(X,\omega)$ be a Hermitian manifold of dimension $n$ and $p,q$ be fixed integers. Let $(E,h_E)$ and $(F,h_F)$ be holomorphic vector bundles over $X$ and $C_E,C_F$ be non-negative real numbers.
    If $A^{p,q}_{E,h_E,\omega}\geq C_E$ and $A^{p,q}_{F,h_F,\omega}\geq C_F$, then we have that $A^{p,q}_{E\otimes F,h_E\otimes h_F,\omega}\geq C_E+C_F$.
\end{proposition}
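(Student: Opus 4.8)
The plan is to reduce the proposition to two elementary facts: the additivity of the Chern curvature under tensor products of metrics, and the stability of a lower bound for a Hermitian form under tensoring with an identity. Since the asserted inequality $A^{p,q}_{E\otimes F,h_E\otimes h_F,\omega}\geq C_E+C_F$ is pointwise, I would fix $x_0\in X$ and argue on the fibers. First I would recall the classical formula for the product metric,
\[
    i\Theta_{E\otimes F,\,h_E\otimes h_F}=i\Theta_{E,h_E}\otimes\mathrm{id}_F+\mathrm{id}_E\otimes i\Theta_{F,h_F},
\]
an identity of $(1,1)$-forms with values in $\mathrm{End}(E\otimes F)$. The contraction $\Lambda_\omega$ touches only the form part, hence commutes with $\mathrm{id}_E$ and $\mathrm{id}_F$, and $\Theta\mapsto[i\Theta,\Lambda_\omega]$ is linear; therefore
\[
    A^{p,q}_{E\otimes F,\,h_E\otimes h_F,\,\omega}=A^{p,q}_{E,h_E,\omega}\otimes\mathrm{id}_F+\mathrm{id}_E\otimes A^{p,q}_{F,h_F,\omega}
\]
as Hermitian endomorphisms of $\Lambda^{p,q}T^*_X\otimes E\otimes F$, where each summand is the evident extension by the identity on the missing factor (using the transposition isomorphism $\Lambda^{p,q}T^*_X\otimes E\otimes F\cong E\otimes(\Lambda^{p,q}T^*_X\otimes F)$ for the second one).

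Next I would check the two lower bounds separately at $x_0$. Choosing an orthonormal frame $(f_\mu)$ of $F$ and writing $u=\sum_\mu u_\mu\otimes f_\mu$ with $u_\mu\in\Lambda^{p,q}T^*_{X,x_0}\otimes E_{x_0}$, the hypothesis $A^{p,q}_{E,h_E,\omega}\geq C_E$ gives
\[
    \big\langle(A^{p,q}_{E,h_E,\omega}\otimes\mathrm{id}_F)u,u\big\rangle=\sum_\mu\big\langle A^{p,q}_{E,h_E,\omega}u_\mu,u_\mu\big\rangle\geq C_E\sum_\mu\|u_\mu\|^2=C_E\|u\|^2.
\]
Expanding $u$ instead with respect to an orthonormal frame of $E$ and using $A^{p,q}_{F,h_F,\omega}\geq C_F$ yields, symmetrically, $\langle(\mathrm{id}_E\otimes A^{p,q}_{F,h_F,\omega})u,u\rangle\geq C_F\|u\|^2$. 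Adding these two inequalities and invoking the decomposition from the first step gives $\langle A^{p,q}_{E\otimes F,h_E\otimes h_F,\omega}u,u\rangle\geq(C_E+C_F)\|u\|^2$ for every $u\in\Lambda^{p,q}T^*_{X,x_0}\otimes E_{x_0}\otimes F_{x_0}$, which is the claim.

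No genuine obstacle arises; the only thing requiring a little care is the bookkeeping in the first step, namely verifying that the two operators $A^{p,q}_{E,h_E,\omega}\otimes\mathrm{id}_F$ and $\mathrm{id}_E\otimes A^{p,q}_{F,h_F,\omega}$ are being viewed as endomorphisms of one and the same Hermitian space $\Lambda^{p,q}T^*_X\otimes E\otimes F$, so that their sum is meaningful and coincides with $[i\Theta_{E\otimes F,h_E\otimes h_F},\Lambda_\omega]$. The accompanying point — that tensoring a curvature operator with an identity preserves its lower bound — is precisely the frame computation displayed above, and it is what makes the constants add.
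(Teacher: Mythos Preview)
Your proposal is correct and follows essentially the same approach as the paper: both use the curvature additivity $i\Theta_{E\otimes F}=i\Theta_E\otimes\mathrm{id}_F+\mathrm{id}_E\otimes i\Theta_F$, decompose $u$ with respect to orthonormal frames of $F$ (resp.\ $E$) to reduce each summand of $A^{p,q}_{E\otimes F}$ to the single-bundle operator acting componentwise, and then add the resulting lower bounds. The paper spells out the coordinate computation verifying $[i\Theta_{E,h_E}\otimes\mathrm{id}_F,\Lambda_\omega]u=\sum_\tau([i\Theta_{E,h_E},\Lambda_\omega]u^E_\tau)\otimes f_\tau$ more explicitly, but your ``$\Lambda_\omega$ commutes with $\mathrm{id}_F$'' observation is exactly the same content.
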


\begin{proof}
    Let $x_0\in X$ and $(z_1,\ldots,z_n)$ be local coordinates such that $(\partial/\partial z_1,\ldots,\partial/\partial z_n)$ is an orthonormal basis of $(T_X,\omega)$ at $x_0$. 
    Let $(e_1,\ldots,e_r)$ and $(f_1,\ldots,f_r)$ be orthonormal bases of $E_{x_0}$ and $F_{x_0}$, respectively. We can write $\omega_{x_0}=i\sum^n_{j=1} dz_j\wedge d\overline{z}_j$ and
    \begin{align*}
        i\Theta_{E,h_E,x_0}=i\sum c^E_{jk\lambda\mu}dz_j\wedge d\overline{z}_k\otimes e^*_\lambda\otimes e_\mu,\,\,
        i\Theta_{F,h_F,x_0}=i\sum c^F_{jk\lambda\mu}dz_j\wedge d\overline{z}_k\otimes f^*_\lambda\otimes f_\mu.
    \end{align*}
    Let $J,K$ be ordered multi-indices with $|J|=p$ and $|K|=q$.
    For any $(p,q)$-form $ u\in\Lambda^{p,q}T^*_{X,x_0}\otimes E_{x_0}\otimes F_{x_0}$, we can write
    \begin{align*}
        u=\sum_{|J|=p,|K|=q,\lambda,\tau}u_{JK\lambda\tau}dz_J\wedge d\overline{z}_K\otimes e_\lambda\otimes f_\tau=\sum_\tau u^E_\tau\otimes f_\tau=\sum_\lambda u^F_\lambda \otimes e_\lambda,
    \end{align*}
    where $u^E_\tau=\sum_{|J|=p,|K|=q,\lambda}u_{JK\lambda\tau}dz_J\wedge d\overline{z}_K\otimes e_\lambda,\, u^F_\lambda=\sum_{|J|=p,|K|=q,\tau}u_{JK\lambda\tau}dz_J\wedge d\overline{z}_K\otimes f_\tau$.
    As is well known, we have that 
    \begin{align*}
        \Lambda_\omega u&=i(-1)^p\sum_{J,K,\lambda,\tau,s}u_{JK\lambda\tau}\Bigl(\frac{\partial}{\partial z_s}\lrcorner~dz_J\Bigr)\wedge \Bigl(\frac{\partial}{\partial \overline{z}_s}\lrcorner~ d\overline{z}_K\Bigr)\otimes e_\lambda\otimes f_\tau\\
        &=\sum_\tau \bigl(\Lambda_\omega u^E_\tau\bigr)\otimes f_\tau=\sum_\lambda \bigl(\Lambda_\omega u^F_\lambda\bigr)\otimes e_\lambda,
    \end{align*}        
    \begin{align*}
        i\Theta_{E,h_E}\otimes \mathrm{id}_F u&=i\sum_{j,k,\lambda,\mu,\tau}\bigl(c^E_{jk\lambda\mu}dz_j\wedge d\overline{z}_k\otimes e^*_\lambda\otimes e_\mu\otimes f^*_\tau \otimes f_\tau\bigr) u\\
        &=\sum_\tau \Bigl(\bigl(i\Theta_{E,h_E}\bigr)\otimes f^*_\tau \otimes f_\tau \Bigr)\Bigl(\sum_\alpha u^E_\alpha\otimes f_\alpha\Bigr)=\sum_\tau \bigl(i\Theta_{E,h_E}u^E_\tau\bigr)\otimes f_\tau.
    \end{align*}
    Therefore, we get
    \begin{align*}
        [i\Theta_{E,h_E}\otimes \mathrm{id}_F,\Lambda_\omega]u&=\bigl(i\Theta_{E,h_E}\otimes \mathrm{id}_F\bigr) \wedge\Lambda_\omega u-\Lambda_\omega \wedge \bigl(i\Theta_{E,h_E}\otimes \mathrm{id}_F\bigr) u\\
        &=\sum_\tau \Bigl(\bigl(i\Theta_{E,h_E}\bigr)\otimes f^*_\tau \otimes f_\tau \Bigr)\wedge\Bigl(\sum_\alpha \bigl(\Lambda_\omega u^E_\alpha\bigr)\otimes f_\alpha\Bigr)\\
        &\qquad\qquad -\Lambda_\omega \wedge \Bigl(\sum_\tau \bigl(i\Theta_{E,h_E}u^E_\tau\bigr)\otimes f_\tau\Bigr)\\
        &=\sum_\tau \bigl(i\Theta_{E,h_E}\wedge \Lambda_\omega \bigr)u^E_\tau \otimes f_\tau - \sum_\tau \bigl(\Lambda_\omega\wedge i\Theta_{E,h_E}\bigr) u^E_\tau \otimes f_\tau\\
        &=\sum_\tau \Bigl([i\Theta_{E,h_E},\Lambda_\omega]u^E_\tau\Bigr)\otimes f_\tau.
    \end{align*}

    Hence, we obtain the following 
    \begin{align*}
        \langle A^{p,q}_{E\otimes F,h_E\otimes h_F,\omega}u,u\rangle_{h_E\otimes h_F,\omega}
        &=\langle[i\Theta_{E,h_E}\otimes \mathrm{id}_F,\Lambda_\omega]u,u\rangle_{h_E\otimes h_F,\omega}+\langle[i\Theta_{F,h_F}\otimes \mathrm{id}_E,\Lambda_\omega]u,u\rangle_{h_E\otimes h_F,\omega}\\
        &=\Bigl\langle \sum_\tau \Bigl([i\Theta_{E,h_E},\Lambda_\omega]u^E_\tau\Bigr)\otimes f_\tau, \sum_\alpha u^E_\alpha\otimes f_\alpha \Bigr\rangle_{h_E\otimes h_F,\omega} \\
        &\qquad\qquad +\Bigl\langle \sum_\lambda \Bigl([i\Theta_{F,h_F},\Lambda_\omega]u^F_\lambda\Bigr)\otimes e_\lambda, \sum_\beta u^F_\beta\otimes e_\beta \Bigr\rangle_{h_E\otimes h_F,\omega}\\
        &=\sum_\tau \langle A^{p,q}_{E,h_E,\omega}u^E_\tau,u^E_\tau\rangle_{h_E,\omega}+\sum_\lambda \langle A^{p,q}_{F,h_F,\omega}u^F_\lambda,u^F_\lambda\rangle_{h_F,\omega}\\
        &\geq \sum_\tau C_E|u_\tau|^2_{h_E,\omega}+\sum_\lambda C_F|u_\lambda|^2_{h_F,\omega} 
        =(C_E+C_F)|u|^2_{h_E\otimes h_F,\omega}.
    \end{align*}
    This represents $A^{p,q}_{E\otimes F,h_E\otimes h_F,\omega}\geq C_E+C_F$.
\end{proof}

Finally, we present an equivalence relation for smooth Nakano semi-positive Hermitian metric using $L^2$-estimates.
This characterization by $L^2$-estimates for Nakano semi-positivity has been described in [11,\,Theorem\,1.1] and [19,\,Proposition\,2.8], and was also characterized for Nakano semi-negativity in [38,\,Theorem\,1.7]. 
For these characterizations, Proposition \ref{characterization of smooth Nakano} is a generalization involving the invariance of positivity in tensor products, i.e. the tensor product of Nakano semi-positive vector bundle and $m$-semi-positive vector bundle is also $m$-semi-positive.

\begin{proposition}\label{characterization of smooth Nakano}
    Let $E$ be a holomorphic vector bundle and $h$ be a smooth Hermitian metric on $E$. 
    Then the following conditions are equivalent.
    \begin{itemize}
        \item [(1)] $h$ is Nakano semi-positive, i.e. $A^{n,q}_{E,h,\omega}\geq0$ for any $q\geq1$ and \kah metric $\omega$.
        \item [(2)] For any positive integer $k\in\{1,\cdots,n\}$, any Stein coordinate $S$, any K\"ahler metric $\omega_S$ on $S$ and
        any smooth Hermitian metric $h_F$ on any holomorphic vector bundle $F$ such that $A^{n,s}_{F,h_F,\omega_S}>0$ for $s\geq k$,
        we have that for any $q\geq k$ and any $\overline{\partial}$-closed $f\in L^2_{n,q}(S,E\otimes F,h\otimes h_F,\omega_S)$, there exists $u\in L^2_{n,q-1}(S,E\otimes F,h\otimes h_F,\omega_S)$ satisfying $\overline{\partial}u=f$ and 
        \begin{align*}
            \int_S|u|^2_{h\otimes h_F,\omega_S}dV_{\omega_S}\leq\int_S\langle B^{-1}_{h_F,\omega_S}f,f\rangle_{h\otimes h_F,\omega_S}dV_{\omega_S},
        \end{align*}
        provided the right-hand side is finite, where $B_{h_F,\omega_S}=[i\Theta_{F,h_F}\otimes\mathrm{id}_E,\Lambda_{\omega_S}]$.
    \end{itemize}
\end{proposition}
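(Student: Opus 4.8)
Being an equivalence, I would prove the two implications separately, with the bulk of the genuinely new work concentrated in $(1)\Rightarrow(2)$; the converse is extracted from the known local characterisation.

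\textbf{$(1)\Rightarrow(2)$.} Fix $k$, a Stein coordinate $S$, a K\"ahler metric $\omega_S$ on $S$ and a smooth $h_F$ on $F$ with $A^{n,s}_{F,h_F,\omega_S}>0$ for $s\geq k$. From the curvature identity $i\Theta_{E\otimes F,h\otimes h_F}=i\Theta_{E,h}\otimes\mathrm{id}_F+\mathrm{id}_E\otimes i\Theta_{F,h_F}$ one gets, on $\Lambda^{n,q}T^*_S\otimes E\otimes F$,
\begin{align*}
    A^{n,q}_{E\otimes F,h\otimes h_F,\omega_S}=[i\Theta_{E,h}\otimes\mathrm{id}_F,\Lambda_{\omega_S}]+B_{h_F,\omega_S}.
\end{align*}
Since $h$ is Nakano semi-positive, $[i\Theta_{E,h}\otimes\mathrm{id}_F,\Lambda_{\omega_S}]\geq0$ on $(n,q)$-forms — this is exactly the $C_E=0$ instance of the fibrewise computation in the proof of Proposition \ref{A_(E+F)>C_E+C_F if A_E>C_E and A_F>C_F}, which yields $\langle[i\Theta_{E,h}\otimes\mathrm{id}_F,\Lambda_{\omega_S}]u,u\rangle=\sum_\tau\langle[i\Theta_{E,h},\Lambda_{\omega_S}]u^E_\tau,u^E_\tau\rangle\geq0$. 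Hence $A^{n,q}_{E\otimes F,h\otimes h_F,\omega_S}\geq B_{h_F,\omega_S}$, and for $q\geq k$ the operator $B_{h_F,\omega_S}=[i\Theta_{F,h_F}\otimes\mathrm{id}_E,\Lambda_{\omega_S}]$ is positive definite because it is fibrewise conjugate to $A^{n,q}_{F,h_F,\omega_S}>0$. A Stein manifold is weakly pseudoconvex and carries a complete K\"ahler metric, so the Demailly--H\"ormander $L^2$-existence theorem applies on $S$: for every $\overline{\partial}$-closed $f\in L^2_{n,q}(S,E\otimes F,h\otimes h_F,\omega_S)$ with finite right-hand side there is $u$ with $\overline{\partial}u=f$ and $\int_S|u|^2_{h\otimes h_F,\omega_S}dV_{\omega_S}\leq\int_S\langle B^{-1}_{h_F,\omega_S}f,f\rangle_{h\otimes h_F,\omega_S}dV_{\omega_S}$. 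If $\omega_S$ is not complete one first solves over the relatively compact pseudoconvex sublevel sets of a psh exhaustion with respect to the complete K\"ahler metrics $\omega_S+\varepsilon\hat\omega$ and passes to the limit in the usual way. This is precisely $(2)$.

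\textbf{$(2)\Rightarrow(1)$.} Nakano semi-positivity is a pointwise condition, so it suffices to verify $\theta_{E,h}(u,u)\geq0$ for each $x_0\in X$ and each $u\in T_{X,x_0}\otimes E_{x_0}$. Pick a Stein coordinate ball $S$ centred at $x_0$ and specialise $(2)$ to $k=1$, to $F=S\times\mathbb{C}$ the trivial line bundle equipped with $h_F=e^{-\psi}$ for a smooth strictly plurisubharmonic $\psi$ on $S$, and to an arbitrary K\"ahler metric $\omega_S$: then $i\Theta_{F,h_F}=\idd \psi>0$ forces $A^{n,s}_{F,h_F,\omega_S}>0$ for all $s\geq1$, and $(2)$ reduces to the optimal $L^2$-estimate for $\overline{\partial}$ on $(n,1)$-forms with values in $E$ over $S$ with weight $\psi$. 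It is exactly this family of optimal estimates (over all balls $S$ around $x_0$ and all weights $\psi$) that characterises Nakano semi-positivity by \cite[Theorem~1.1]{DNWZ20} (see also \cite[Proposition~2.8]{Ina22}); invoking that result gives $(1)$. For completeness I would recall the mechanism: if $\theta_{E,h}(u_0,u_0)<0$ for some unit tensor $u_0$, choose a holomorphic frame of $E$ normal at $x_0$ and coordinates in which $\omega_S$ is Euclidean at $x_0$, and run a blow-up rescaling $z\mapsto\varepsilon z$ together with a weight $\psi_\varepsilon$ concentrating at $x_0$; in the rescaled limit the problem becomes the constant-coefficient model on $\mathbb{C}^n$ with curvature $i\Theta_{E,h}(x_0)$, whose sharp $L^2$-constant is governed by the least eigenvalue of the Nakano form, so a suitably chosen $\overline{\partial}$-closed test $(n,1)$-form $f_\varepsilon$ has minimal solution of squared norm strictly exceeding $\int\langle B^{-1}_{h_{F,\varepsilon},\omega_S}f_\varepsilon,f_\varepsilon\rangle$ for small $\varepsilon$, contradicting $(2)$.

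\textbf{Main obstacle.} The direction $(1)\Rightarrow(2)$ is the standard $L^2$-machinery once the operator inequality $A^{n,q}_{E\otimes F,h\otimes h_F,\omega_S}\geq B_{h_F,\omega_S}$ is established, which is the only place the ``invariance of positivity in tensor products'' is used. The substance is the converse $(2)\Rightarrow(1)$ — reconstructing pointwise Nakano semi-positivity from an $L^2$-solvability statement — and the point to check carefully is that restricting the test data to trivial line bundles with plurisubharmonic weights already suffices to trigger the local argument of \cite{DNWZ20}, so that the extra generality built into $(2)$ (arbitrary $F$, arbitrary $k$, arbitrary $q\geq k$) neither strengthens nor weakens the condition relative to that known criterion.
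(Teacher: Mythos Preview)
Your $(1)\Rightarrow(2)$ is the same as the paper's: the operator inequality $A^{n,q}_{E\otimes F,h\otimes h_F,\omega_S}\geq B_{h_F,\omega_S}$ via Proposition~\ref{A_(E+F)>C_E+C_F if A_E>C_E and A_F>C_F}, followed by the standard $L^2$-existence theorem (Theorem~\ref{L^2-estimate completeness (n,q)-forms}) on the Stein manifold.

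For $(2)\Rightarrow(1)$ you take a genuinely different route. You specialise $(2)$ to $k=1$ and $F$ the trivial line bundle with weight $e^{-\psi}$, observe that this recovers exactly the optimal $L^2$-estimate condition of \cite{DNWZ20}/\cite{Ina22}, and then cite that characterisation. This is correct and economical; one small slip is that the specialisation yields the estimate for \emph{all} $q\geq1$, not just $(n,1)$-forms, and the cited criterion indeed uses all $q$. The paper, by contrast, does not reduce to a citation but gives a self-contained contradiction argument: assuming $\langle[i\Theta_{E,h},\Lambda_\omega]\xi_0,\xi_0\rangle<0$ at some point, it first extracts from the $L^2$-estimate in $(2)$ (via Bochner--Kodaira--Nakano and the substitution $\alpha=B_{h_F,\omega}^{-1}f$) the a priori inequality $\langle\langle[i\Theta_{E,h}\otimes\mathrm{id}_F,\Lambda_\omega]\alpha,\alpha\rangle\rangle+\|D'^*_{h\otimes h_F}\alpha\|^2\geq0$, and then builds explicit product-type Stein sets $S_R=B^k_R\times D^k_R$, trivial $F$, the $k$-convex weight $\psi_k=\sum_{j\leq n-k+1}|z_j|^2-R^2/4$, and a concrete $(n,q-1)$-form $v$ with constant-coefficient $\overline{\partial}v$; letting $h_F=I_Fe^{-m\psi_k}$ and sending $m\to\infty$ forces the inequality to fail. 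Your ``blow-up rescaling $z\mapsto\varepsilon z$'' sketch is not how that mechanism actually runs --- the localisation is by scaling the \emph{weight} ($m\psi_k$, $m\to\infty$), not the coordinates --- so if you keep the ``for completeness'' paragraph, adjust the description accordingly. What your approach buys is brevity; what the paper's buys is independence from \cite{DNWZ20} and an argument that is visibly uniform in $k$ and $F$, which is the form later reused when defining $L^2$-type Nakano semi-positivity (Definition~\ref{def Nakano semi-posi sing}).
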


\begin{proof}
    First, we show $(1) \Longrightarrow (2)$. Here, $h$ is Nakano semi-positive if and only if $A^{n,q}_{E,h,\omega_S}\geq0$ for $q\geq1$. From the proof of Proposition \ref{A_(E+F)>C_E+C_F if A_E>C_E and A_F>C_F}, we have that
    \begin{align*}
        A^{n,q}_{E\otimes F,h\otimes h_F,\omega_S}&=[i\Theta_{E,h}\otimes\mathrm{id}_F,\Lambda_{\omega_S}]+[i\Theta_{F,h_F}\otimes\mathrm{id}_E,\Lambda_{\omega_S}]\\
        &\geq [i\Theta_{F,h_F}\otimes\mathrm{id}_E,\Lambda_{\omega_S}]=B_{h_F,\omega_S}>0
    \end{align*}
    on $S$ for any $q\geq k$.
    Since [8,\,ChapterVIII,\,Theorem\,6.1], i.e. $L^2$-estimates for $(n,q)$-forms without completeness of \kah metrics, 
    for any $q\geq k$ and any $\overline{\partial}$-closed $f\in L^2_{n,q}(S,E\otimes F,h\otimes h_F,\omega_S)$, there exists $u\in L^2_{n,q-1}(S,E\otimes F,h\otimes h_F,\omega_S)$ such that $\overline{\partial}u=f$ and that
    \begin{align*}
        \int_S|u|^2_{h\otimes h_F,\omega}dV_{\omega_S}&\leq\int_S\langle(A^{n,q}_{E\otimes F,h\otimes h_F,\omega_S})^{-1}f,f\rangle_{h\otimes h_F,\omega_S}dV_{\omega_S}\\
        &\leq\int_S\langle B^{-1}_{h_F,\omega_S}f,f\rangle_{h\otimes h_F,\omega_S}dV_{\omega_S}.
    \end{align*}

    Second, we consider $(2) \Longrightarrow (1)$. 
    We take a Stein coordinate $S$, a \kah metric $\omega$ on $S$ and a holomorphic vector bundle $F$.
    Let $h_F$ be a smooth Hermitian metric on $F$ such that $A^{n,s}_{F,h_F,\omega}>0$ for $s\geq k$. 
    Then for any $q\geq k$ and any $\overline{\partial}$-closed $f\in \mathcal{E}^{n,q}(S,E\otimes F)\subset L^2_{n,q}(S,E\otimes F,h\otimes h_F,\omega)$, there is $u\in L^2_{n,q-1}(S,E\otimes F,h\otimes h_F,\omega)$ such that $\overline{\partial}u=f$ and 
    \begin{align*}
        ||u||^2_{h\otimes h_F,\omega}=\int_S|u|^2_{h\otimes h_F,\omega}dV_\omega\leq\int_S\langle B^{-1}_{h_F,\omega}f,f\rangle_{h\otimes h_F,\omega}dV_\omega,
    \end{align*}
    where we assume that the right-hand side is finite.

    From the Bochner-Kodaira-Nakano identity, for any $\alpha\in\mathcal{E}^{n,q}(S,E\otimes F)$  
    we have that
    \begin{align*}
        |\langle\langle f,\alpha\rangle\rangle_{h\otimes h_F,\omega}|^2&=|\langle\langle\overline{\partial}u,\alpha\rangle\rangle_{h\otimes h_F,\omega}|^2=|\langle\langle u,\overline{\partial}^*_{h\otimes h_F}\alpha\rangle\rangle_{h\otimes h_F,\omega}|^2\\
        &\leq ||u||^2_{h\otimes h_F,\omega}||\overline{\partial}^*_{h\otimes h_F}\alpha||^2_{h\otimes h_F,\omega}\\
        &\leq \int_S\langle B^{-1}_{h_F,\omega}f,f\rangle_{h\otimes h_F,\omega}dV_\omega\cdot\Bigl(||D'^*_{h\otimes h_F}\alpha||^2_{h\otimes h_F,\omega}\\
        &\qquad+\langle\langle[i\Theta_{E,h}\otimes\mathrm{id}_F,\Lambda_\omega]\alpha,\alpha\rangle\rangle_{h\otimes h_F,\omega}+\langle\langle B_{h_F,\omega}\alpha,\alpha\rangle\rangle_{h\otimes h_F,\omega}\Bigr),
    \end{align*}
    where $D'_{h\otimes h_F}$ is the $(1,0)$ part of the Chern connection on $E\otimes F$ with respect to the metric $h\otimes h_F$. 
    Let $\alpha=B^{-1}_{h_F,\omega}f$, i.e. $f=B_{h_F,\omega}\alpha$. Then the above inequality becomes
    \begin{align*}
        |\langle\langle B_{h_F,\omega}\alpha,\alpha\rangle\rangle_{h\otimes h_F,\omega}|^2
        &\leq \langle\langle \alpha,B_{h_F,\omega}\alpha\rangle\rangle_{h\otimes h_F,\omega}\bigl(||D'^*_{h\otimes h_F}\alpha||^2_{h\otimes h_F,\omega}\\
        &\qquad+\langle\langle[i\Theta_{E,h}\otimes\mathrm{id}_F,\Lambda_\omega]\alpha,\alpha\rangle\rangle_{h\otimes h_F,\omega}+\langle\langle B_{h_F,\omega}\alpha,\alpha\rangle\rangle_{h\otimes h_F,\omega}\bigr).
    \end{align*}

    Therefore we get
    \begin{align*}
        \langle\langle[i\Theta_{E,h}\otimes\mathrm{id}_F,\Lambda_\omega]\alpha,\alpha\rangle\rangle_{h\otimes h_F,\omega}+||D'^*_{h\otimes h_F}\alpha||^2_{h\otimes h_F,\omega}\geq0. \tag{$\ast$}
    \end{align*}
    Using this formula $(\ast)$, we show the proposition by contradiction.

    Suppose that $A^{n,q}_{E,h,\omega}$ is not semi-positive for any $q\geq1$. We fixed this integer $q\geq1$. Then there is $x_0\in X$ and $\xi_0\in \Lambda^{n,q}T^*_{X,x_0}\otimes E_{x_0}$ such that 
    \begin{align*}
        \langle[i\Theta_{E,h},\Lambda_\omega]\xi_0,\xi_0\rangle_{h,\omega}=-2c
    \end{align*}
    for some $c>0$. 

    For any $k\in\{1,\cdots,n\}$ and any $R>0$, we define the following Stein subsets of $\mathbb{C}^n$;
    \begin{center}
        $B^k_R:=\{(z_1,\cdots,z_{n-k+1})\in\mathbb{C}^{n-k+1}\mid \sum_{j=1}^{n-k+1}|z_j|^2<R\}\subset\mathbb{C}^{n-k+1}$,

        $D^k_R:=\{(z_{n-k+2},\cdots,z_n)\in\mathbb{C}^{k-1}\mid \sum_{j=n-k+2}^n|z_j|^2<R\}\subset\mathbb{C}^{k-1},\quad\,$
    \end{center}
    where $B^1_R:=B_R$.
    Let $(B_{2R},(z_1,\cdots,z_n))$ be a holomorphic coordinate in $X$ centered at $x_0$ and $\omega=\idd|z|^2$ be a \kah metric on $B_{2R}$. Here, we can choose $R$ such that $E|_{B_{2R}}$ is trivial on $B_{2R}$. 
    For any trivial holomorphic vector bundle $F=B_{2R}\times\mathbb{C}^t$ where $t=\mathrm{rank}\,F$, let $I_F$ be a trivial Hermitian metric on $F$.
    Fixed an integer $k$ with $q\geq k$. We define the Stein subset $S_R:=B^k_R\times D^k_R\subset B_{2R}$ and define the plurisubharmonic function $\psi_k:=\sum_{j=1}^{n-k+1}|z_j|^2-R^2/4$ which is $k$-convex and define the smooth Hermitian metric $I_F^{m\psi_k}$ by $I_Fe^{-m\psi_k}$. Then for any $s\geq k$, we get
    \[
        A^{n,s}_{F,I_F^{m\psi_k},\omega}=[i\Theta_{F,I_F^{m\psi_k}},\Lambda_\omega]=m[\idd\psi_k\otimes\mathrm{id}_F,\Lambda_\omega]\geq m>0.
    \]

    Let $e=(e_1,\cdots,e_r), b=(b_1,\cdots,b_t)$ be holomorphic frames of $E,F$ respectively, where $b$ is orthonormal frame with respect to $I_F$.
    For any $u=\sum u_{J\lambda}dz_N\wedge d\overline{z}_J\otimes e_\lambda\in \mathcal{E}^{n,q}(B_{2R},E)$, let $u_F=\sum u\otimes b_\tau\in \mathcal{E}^{n,q}(B_{2R},E\otimes F)$. Then we have that 
    \begin{align*}
        B_{I_F^{m\psi_k},\omega}u_F&
        =m[\idd\psi_k\otimes\mathrm{id}_{E\otimes F},\Lambda_\omega]u_F=m\sum_\tau\Bigl([\idd\psi_k\otimes\mathrm{id}_{E},\Lambda_\omega]u\Bigr)\otimes b_\tau\\
        &=m\sum_\tau\Bigl(\Bigl(\sum_{j\in J\cap I_k}1\Bigr)u_{J\lambda}dz_N\wedge d\overline{z}_J\otimes e_\lambda \Bigr)\otimes b_\tau\\
        &=m\sum_{J,\lambda,\tau}|J\cap I_k|u_{J\lambda}dz_N\wedge d\overline{z}_J\otimes e_\lambda\otimes b_\tau,\quad\mathrm{and}\\
        B^{-1}_{I_F^{m\psi_k},\omega}u_F&=\frac{1}{m}\sum_{J,\lambda,\tau}|J\cap I_k|^{-1}u_{J\lambda}dz_N\wedge d\overline{z}_J\otimes e_\lambda\otimes b_\tau,
    \end{align*}
    where $I_k=\{1,\cdots,n-k+1\},\,J\cap I_k\ne\emptyset$ and $|J\cap I_k|:=\#\{J\cap I_k\}$.

    Let $\xi=\sum \xi_{J\lambda}dz_N\wedge d\overline{z}_J\otimes e_\lambda\in\mathcal{E}^{n,q}(B_{2R},E)$ with constant coefficients such that $\xi(x_0)=\xi_0$ and let $\xi_F:=\sum_\tau \xi\otimes b_\tau$. 
    We may assume
    \begin{align*}
        \langle[i\Theta_{E,h},\Lambda_\omega]\xi,\xi\rangle_{h,\omega}<-c
    \end{align*}
    on $B_{2R}$, for any small number $R>0$.

    For any ordered multi-index $I$, we define $\varepsilon(s,I)\in\{-1,0,1\}$ (see [38,\,Definition\,2.1]) 
    as the number that satisfies $\xi_s\lrcorner\,\xi^*_I=\varepsilon(s,I)\xi^*_{I\setminus s}$, where $(\xi_1,\ldots,\xi_n)$ is an orthonormal basis of $T_X$.
    Here, the symbol $\bullet\lrcorner\,\bullet$ represents the interior product, i.e. $\xi_s\lrcorner\,\xi^*_I=\iota_{\xi_s}\xi^*_I$.

    Choose $\chi_B\in\mathscr{D}(B^k_R,\mathbb{R}_{\geq 0})$ such that $\chi_B|_{B^k_{R/2}}=1$ and that $\mathrm{supp}\,\chi_B\subset B^k_{3R/4}$. Let 
    \begin{align*}
        v=\sum_{J,\lambda}\sum_{1\leq j\leq n-k+1}(-1)^n\varepsilon(j,J)\xi_{J\lambda}\overline{z}_j\chi_B(z)dz_N\wedge d\overline{z}_{J\setminus j}\otimes e_\lambda\in\mathcal{E}^{n,q-1}(S_R,E),
    \end{align*}
    then from $(-1)^n\varepsilon(j,J)d\overline{z}_j\wedge dz_N\wedge d\overline{z}_{J\setminus j}=dz_N\wedge d\overline{z}_J$, we have that 
    \begin{align*}
        \overline{\partial}v_F|_{G_{R/2}}&=\sum_\tau\overline{\partial}v|_{G_{R/2}}\otimes b_\tau
        =\sum_{J,\lambda,\tau}\sum_{1\leq j\leq n-k+1}\sum_{1\leq l\leq n}(-1)^n\varepsilon(j,J)\xi_{J\lambda}\delta_{jl}d\overline{z}_l\wedge dz_N\wedge d\overline{z}_{J\setminus j}\otimes e_\lambda\otimes b_\tau\\
        &=\sum_{J,\lambda,\tau}\sum_{1\leq j\leq n-k+1}(-1)^n\varepsilon(j,J)\xi_{J\lambda}d\overline{z}_j\wedge dz_N\wedge d\overline{z}_{J\setminus j}\otimes e_\lambda\otimes b_\tau\\
        &=\sum_{J,\lambda,\tau}\sum_{j\in J\cap I_k}\xi_{J\lambda} dz_N\wedge d\overline{z}_J\otimes e_\lambda\otimes b_\tau
        =B_{I_F^{\psi_k},\omega}\xi_F,
    \end{align*}
    where $G_{R/2}:=B^k_{R/2}\times D^k_R$ and $j\notin J$ then $\varepsilon(j,J)=0$.

    Let $f:=\overline{\partial}v\in\mathcal{E}^{n,q}(S_R,E)$ and $f_F:=\sum_\tau f\otimes b_\tau=\sum_\tau\overline{\partial}v\otimes b_\tau=\overline{\partial}v_F\in\mathcal{E}^{n,q}(S_R,E\otimes F)$ then we get $\overline{\partial}f_F=0$ on $S_R$ and $f_F=B_{I_F^{\psi_k},\omega}\xi_F$ with constant coefficients on $G_{R/2}$.
    We define $\alpha_m:=B^{-1}_{I_F^{m\psi_k},\omega}f_F=\frac{1}{m}B^{-1}_{I_F^{\psi_k},\omega}f_F\in\mathcal{E}^{n,q}(S_R,E\otimes F)$, where $\alpha_m|_{G_{R/2}}=\frac{1}{m}\xi_F$. Here,
    \begin{align*}
        f_F=\chi_B(z)B_{I_F^{\psi_k},\omega}\xi_F+\sum_{J,\lambda,\tau}\sum_{j\in J\cap I_k,l\in I_k} \xi_{J\lambda}\overline{z}_j\frac{\partial\chi_B(z)}{\partial\overline{z}_l}(-1)^n\varepsilon(j,J)d\overline{z}_l\wedge dz_N\wedge d\overline{z}_{J\setminus j}\otimes e_\lambda\otimes b_\tau.
    \end{align*}

    Since $v$ depends only on the variables $z_1,\cdots,z_{n-k+1}$, then $f_F=\overline{\partial}v_F$ is also depends only on the variables $z_1,\cdots,z_{n-k+1}$.
    By smoothness of $h$ on $X$, $h$ is bounded on $S_R$. Hence, from $\chi_B$ and $\psi_k$ depend only on the variables $z_1,\cdots,z_{n-k+1}$ and $\mathrm{supp}\,f_F\subset\mathrm{supp}\,\chi_B\subset\subset B^k_{3R/4}\times D_R^k$, we have that 
    \begin{align*}
        \int_{S_R}\langle B^{-1}_{I_F^{m\psi_k},\omega}f_F,f_F\rangle_{h\otimes I_F,\omega}e^{-m\psi_k}dV_\omega=\int_{S_R}\frac{1}{m}\langle B^{-1}_{I_F^{\psi_k},\omega}f_F,f_F\rangle_{h\otimes I_F,\omega}e^{-m\psi_k}dV_\omega<+\infty,
    \end{align*}
    for any $m>0$.
    From $i[\Lambda_\omega,\overline{\partial}]=D'^*_{h\otimes I_F^{m\psi_k}}$ (cf. [7,\,Chapter\,4]), we have that 
    \begin{align*}
        D'^*_{h\otimes I_F^{m\psi_k}}\alpha_m&=\frac{1}{m}D'^*_{h\otimes I_F^{m\psi_k}}\xi_F=0, \quad\mathrm{and}\\
        \langle[i\Theta_{E,h}\otimes\mathrm{id}_F,\Lambda_\omega]\alpha_m,\alpha_m\rangle_{h\otimes I_F,\omega}&=\frac{1}{m^2}\langle[i\Theta_{E,h}\otimes\mathrm{id}_F,\Lambda_\omega]\xi_F,\xi_F\rangle_{h\otimes I_F,\omega}\\
        &=\frac{1}{m^2}\langle\sum_\tau([i\Theta_{E,h}\otimes\mathrm{id}_F,\Lambda_\omega]\xi)\otimes b_\tau,\sum_\sigma\xi\otimes b_\sigma\rangle_{h\otimes I_F,\omega}\\
        &=\frac{1}{m^2}\langle[i\Theta_{E,h},\Lambda_\omega]\xi,\xi\rangle_{h,\omega}\sum_{\tau,\sigma}\langle b_\tau,b_\sigma\rangle_{I_F}
        <
        -\frac{c}{m^2}\mathrm{rank}\,F
    \end{align*}
    on $G_{R/2}$, where $\langle b_\tau,b_\sigma\rangle_{I_F}=\delta_{\tau,\sigma}$. Since $f_F$ depends only on the variables $z_1,\cdots,z_{n-k+1}$ and $\mathrm{supp}\,f_F\subset\subset B^k_{3R/4}\times D_R^k\subset S_R$, there is a constant $C$, such that 
    \begin{align*}
        |\langle[i\Theta_{E,h}\otimes\mathrm{id}_F,\Lambda_\omega]\alpha_m,\alpha_m\rangle_{h\otimes I_F,\omega}|\leq\frac{C}{m^2},\,\,|D'^*_{h\otimes I_F^{m\psi_k}}\alpha_m|^2_{h\otimes I_F,\omega}\leq\frac{C}{m^2}
    \end{align*}
    on $S_R$ for any $m>0$.

    Then we consider the left-hand side of $(\ast)$ with respect to $(S,h_F,\alpha)=(S_R,I_F^{m\psi_k},\alpha_m)$.
    \begin{align*}
        m^2\Bigl(\langle\langle[&i\Theta_{E,h}\otimes\mathrm{id}_F,\Lambda_\omega]\alpha_m,\alpha_m\rangle\rangle_{h\otimes I_F^{m\psi_k},\omega}+||D'^*_{h\otimes I_F^{m\psi_k}}\alpha_m||^2_{h\otimes I_F^{m\psi_k},\omega}\Bigr)\\
        &=m^2\Bigl(\int_{G_{R/2}}\langle[i\Theta_{E,h}\otimes\mathrm{id}_F,\Lambda_\omega]\alpha_m,\alpha_m\rangle_{h\otimes I_F,\omega}e^{-m\psi_k}dV_\omega\\
        &\qquad+\int_{S_R\setminus G_{R/2}}\langle[i\Theta_{E,h}\otimes\mathrm{id}_F,\Lambda_\omega]\alpha_m,\alpha_m\rangle_{h\otimes I_F,\omega}e^{-m\psi_k}dV_\omega\\
        &\qquad+\int_{S_R\setminus G_{R/2}}|D'^*_{h\otimes I_F^{m\psi_k}}\alpha_m|^2_{h\otimes I_F,\omega}e^{-m\psi_k}dV_\omega\Bigr)\\
        &\leq-c\cdot\mathrm{rank}\,F\int_{G_{R/2}}e^{-m\psi_k}dV_\omega+2C\int_{S_R\setminus G_{R/2}}e^{-m\psi_k}dV_\omega\\
        &= \mathrm{Vol}(D^k_R)\Bigl(-c\cdot\mathrm{rank}\,F\int_{B^k_{R/2}}e^{-m\psi_k}dV_{\omega_k}+2C\int_{B^k_R\setminus B^k_{R/2}}e^{-m\psi_k}dV_{\omega_k}\Bigr),
    \end{align*}
    where $\omega_k=\idd\psi_k$.
    Since $\lim_{m\to+\infty}m\psi_k(z)=+\infty$ for $z\in B_R^k\setminus \overline{B}_{R/2}^k$, and $\psi_k(z)\leq0$ for $z\in B_{R/2}^k$. Therefore we obtain that 
    \begin{align*}
        \langle\langle[i\Theta_{E,h}\otimes\mathrm{id}_F,\Lambda_\omega]\alpha_m,\alpha_m\rangle\rangle_{h\otimes I_F^{m\psi_k},\omega}+||D'^*_{h\otimes I_F^{m\psi_k}}\alpha_m||^2_{h\otimes I_F^{m\psi_k},\omega}<0
    \end{align*}
    for $m>>1$, which contradicts to the inequality $(\ast)$.
\end{proof} 

\section{Singular Hermitian metrics}

In this section, we consider the case where a Hermitian metric of a holomorphic vector bundle has singularities
and investigate its approximation and properties.

\subsection{Definition of positivity}

First, we introduce singular Hermitian metrics on holomorphic line bundles and define its positivity.

\begin{definition}$(\mathrm{cf.~[5],\,[7,\,Chapter\,3]})$ 
    A $\it{singular}$ $\it{Hermitian}$ $\it{metric}$ $h$ on a line bundle $L$ is a metric which is given in any trivialization $\tau:L|_U\xrightarrow{\simeq} U\times\mathbb{C}$ by 
    \begin{align*}
        ||\xi||_h=|\tau(\xi)|e^{-\varphi}, \qquad x\in U,\,\,\xi\in L_x
    \end{align*}
    where $\varphi\in\mathcal{L}^1_{loc}(U)$ is an arbitrary function, called the weight of the metric with respect to the trivialization $\tau$.
\end{definition}

\begin{definition}\label{def of psef & big}
    Let $L$ be a holomorphic line bundle on a complex manifold $X$ equipped with a singular Hermitian metric $h$. 
    \begin{itemize}
        \item [($a$)] $h$ is $\it{singular}$ $\it{semi}$-$\it{positive}$ if $i\Theta_{L,h}\geq0$ in the sense of currents,
        i.e. the weight of $h$ with respect to any trivialization coincides with some plurisubharmonic function almost everywhere.
        \item [($b$)] $h$ is $\it{singular}$ $\it{positive}$ if the weight of $h$ with respect to any trivialization coincides with some strictly plurisubharmonic function almost everywhere.
        \item [($c$)] Let $\omega$ be a \kah metric on $X$. Then $h$ is $\it{strictly}$ $\delta_\omega$-$\it{positive}$ if for any open subset $U$ and any \kah potential $\varphi$ of $\omega$ on $U$, $he^{\delta\varphi}$ is singular semi-positive.
    \end{itemize}
\end{definition}

Clearly, singular semi-positivity is coincides with pseudo-effective on compact complex manifolds. 
Furthermore, singular positivity and strictly $\delta_\omega$-positivity also coincide with big on compact \kah manifolds by Demailly's definition and characterization (see \cite{Dem93},\,[7,\,Chapter\,6]), where $\omega$ is a \kah metric. 

The Lelong number of a plurisubharmonic function $\varphi$ on $X$ is defined by 
\begin{align*}
    \nu(\varphi,x):=\liminf_{z\to x}\frac{\varphi(z)}{\log|z-x|}
\end{align*}
for some coordinate $(z_1,\cdots,z_n)$ around $x\in X$.
For the relationship between the Lelong number of $\varphi$ and the integrability of $e^{-\varphi}$, the following important result obtained by Skoda in \cite{Sko72} is known; If $\nu(\varphi,x)<1$ then $e^{-2\varphi}$ is integrable around $x$.
From this, particularly if $\nu(-\log h,x)<2$ then $\mathscr{I}(h)=\mathcal{O}_{X,x}$ immediately.

For holomorphic vector bundles, we introduce the definition of singular Hermitian metrics $h$ and 
the $L^2$-subsheaf $\mathscr{E}(h)$ of $\mathscr{O}(E)$ analogous to the multiplier ideal sheaf.

\begin{definition}$(\mathrm{cf.~[2,~Section~3],~~[31,~Definition,~2.2.1]~and~[33,~Definition~1.1]})$ 
    We say that $h$ is a $\it{singular~Hermitian~metric}$ on $E$ if $h$ is a measurable map from the base manifold $X$ to the space of non-negative Hermitian forms on the fibers satisfying $0<\mathrm{det}\,h<+\infty$ almost everywhere.
\end{definition}

\begin{definition}\label{def of multiplier submodule}$(\mathrm{cf.\,[3,\,Definition\,2.3.1]})$ 
    Let $h$ be a singular Hermitian metric on $E$. We define the $L^2$-subsheaf $\mathscr{E}(h)$ of germs of local holomorphic sections of $E$ as follows:
    \begin{align*}
        \mathscr{E}(h)_x:=\{s_x\in\mathscr{O}(E)_x\mid|s_x|^2_h~ \mathrm{is ~locally ~integrable ~around~} x\}.
    \end{align*}
\end{definition}

Moreover, we introduce the definitions of positivity and negativity, such as Griffiths and Nakano, for singular Hermitian metrics.

\begin{definition}\label{def Griffiths semi-posi sing}$(\mathrm{cf.~[4,~Definition~3.1],~[34,~Definition~2.2.2]~and~[36,~Definition~1.2]})$ 
    We say that a singular Hermitian metric $h$ is 
    \begin{itemize}
        \item [(1)] $\textit{Griffiths semi-negative}$ if $||u||_h$ is plurisubharmonic for any local holomorphic section $u\in\mathscr{O}(E)$ of $E$.
        \item [(2)] $\textit{Griffiths semi-positive}$ if the dual metric $h^*$ on $E^*$ is Griffiths semi-negative.
    \end{itemize}
\end{definition}

Let $h$ be a smooth Hermitian metric on $E$ and $u=(u_1,\cdots,u_n)$ be an $n$-tuple of locally holomorphic sections of $E$. We define $T^h_u$, an $(n-1,n-1)$-form through
\begin{align*}
    T^h_u=\sum_{1\leq j,k\leq n}(u_j,u_k)_h\widehat{dz_j\wedge d\overline{z}_k}
\end{align*}
where $(z_1,\cdots,z_n)$ are local coordinates on $X$ and $\widehat{dz_j\wedge d\overline{z}_k}$ 
satisfying $idz_j\wedge d\overline{z}_k\wedge\widehat{dz_j\wedge d\overline{z}_k}=dV_{\mathbb{C}^n}$.
Then a short computation yields that $(E,h)$ is Nakano semi-negative if and only if $T^h_u$ is plurisubharmonic in the sense that $\idd T^h_u\geq0$ (see [1,\,33]). 
In the case of $u_j=\xi_ju$ for any $\xi\in\mathbb{C}^n$, 
$(E,h)$ is Griffiths semi-negative.

Let $h$ be a singular Hermitian metric of $E$. For any $n$-tuple of locally holomorphic sections $u=(u_1,\cdots,u_n)$, we say that the $(n-1,n-1)$-form $T^h_u$ is plurisubharmonic if $\idd T^h_u\geq0$ in the sense of currents.        
From the above, we introduce the definitions of Nakano semi-negativity and dual Nakano semi-positivity for singular Hermitian metrics.

\begin{definition}\label{def Nakano semi-negative as Raufi}$\mathrm{(cf.\,[33,\,Section\,1]})$ 
    We say that a singular Hermitian metric $h$ on $E$ is $\it{Nakano}$ $\it{semi}$-$\it{negative}$ if the $(n-1,n-1)$-form $T^h_u$ is plurisubharmonic for any $n$-tuple of locally holomorphic sections $u=(u_1,\cdots,u_n)$.
\end{definition}

\begin{definition}\label{def dual Nakano semi-posi sing}$\mathrm{(cf.\,[39,\,Definition\,4.5]})$ 
    We say that a singular Hermitian metric $h$ on $E$ is $\it{dual}$ $\it{Nakano}$ $\it{semi}$-$\it{positive}$ if the dual metric $h^*$ on $E^*$ is Nakano semi-negative.
\end{definition}

Since the dual of a Nakano negative bundle in general is not Nakano positive, we cannot define Nakano semi-positivity for singular Hermitian metrics as in the case of Griffiths, 
but this definition of dual Nakano semi-positivity is natural.
We already know one definition of Nakano semi-positivity for singular Hermitian metrics in \cite{Ina22} as follows, which is based on the optimal $L^2$-estimate condition in [11,\,16] 
and is equivalent to the usual definition for the smooth case.

\begin{definition}\label{def Nakano semi-posi sing by Inayama}$\mathrm{(cf.\,[19,\,Definition\,1.1]})$ 
    Assume that $h$ is a Griffiths semi-positive singular Hermitian metric. We say that $h$ is 
    \textit{Nakano semi-positive} if for any Stein coordinate $S$ such that $E|_S$ is trivial, any \kah metric $\omega_S$ on $S$,
    any smooth strictly plurisubharmonic function $\psi$ on $S$, any positive integer $q\in\{1,\cdots,n\}$ and any $\overline{\partial}$-closed $f\in L^2_{n,q}(S,E,he^{-\psi},\omega_S)$ 
    there exists $u\in L^2_{n,q-1}(S,E,he^{-\psi},\omega_S)$ satisfying $\overline{\partial}u=f$ and 
    \begin{align*}
        \int_S|u|^2_{h,\omega_S}e^{-\psi}dV_{\omega_S}\leq\int_S\langle B^{-1}_{\psi,\omega_S}f,f\rangle_{h,\omega_S}e^{-\psi}dV_{\omega_S},
    \end{align*}
    where $B_{\psi,\omega_S}=[\idd\psi\otimes\mathrm{id}_E,\Lambda_{\omega_S}]$. Here we assume that the right-hand side is finite.
\end{definition}

However, this definition loses the invariance of the positivity in tensor products, i.e. the tensor product of Nakano semi-positive and $m$-semi-positive is also $m$-semi-positive.
By using Proposition \ref{characterization of smooth Nakano}, we can introduce the definition including this invariance.

\begin{definition}\label{def Nakano semi-posi sing}
    Assume that $h$ is a Griffiths semi-positive singular Hermitian metric. We say that $h$ is 
    $L^2$-$\it{type}$ $\it{Nakano}$ $\it{semi}$-$\it{positive}$ if for any positive integer $k\in\{1,\cdots,n\}$, any Stein coordinate $S$, any \kah metric $\omega_S$ on $S$ and
    any smooth Hermitian metric $h_F$ on any holomorphic vector bundle $F$ such that $A^{n,s}_{F,h_F,\omega_S}>0$ for $s\geq k$,
    we have that any positive integer $q\geq k$ and any $\overline{\partial}$-closed $f\in L^2_{n,q}(S,E\otimes F,h\otimes h_F,\omega_S)$ 
    there exists $u\in L^2_{n,q-1}(S,E\otimes F,h\otimes h_F,\omega_S)$ satisfying $\overline{\partial}u=f$ and 
    \begin{align*}
        \int_S|u|^2_{h\otimes h_F,\omega_S}dV_{\omega_S}\leq\int_S\langle B^{-1}_{h_F,\omega_S}f,f\rangle_{h\otimes h_F,\omega_S}dV_{\omega_S},
    \end{align*}
    where $B_{h_F,\omega_S}=[i\Theta_{F,h_F}\otimes\mathrm{id}_E,\Lambda_{\omega_S}]$. Here we assume that the right-hand side is finite.
\end{definition}

Notes that from Lemma \ref{trivial of E on Stein-H} and \ref{Ext d-equation for hypersurface}, the assumption of triviality for the vector bundle in Definition \ref{def Nakano semi-posi sing by Inayama} can be excluded.
Furthermore, obviously $h$ is Nakano semi-positive in the sense of Definition \ref{def Nakano semi-posi sing by Inayama} if it is $L^2$-type Naknao semi-positive, and the above definitions \ref{def Griffiths semi-posi sing}-\ref{def Nakano semi-posi sing} does not require the use of curvature currents.
For singular Hermitian metrics we cannot always define the curvature currents with measure coefficients \cite{Rau15}. 

In \cite{Nad89}, Nadel proved that $\mathscr{I}(h)$ is coherent by using the H\"ormander $L^2$-estimate. 
After that, as vector bundles case, Hosono and Inayama proved that $\mathscr{E}(h)$ is coherent if $h$ has Nakano semi-positivity in the sense of Definition \ref{def Nakano semi-posi sing by Inayama} (or \ref{def Nakano semi-posi sing}) in [16,\,19]. 

Finally we introduce the strictly positivity for Griffiths and Nakano is known.

\begin{definition}\label{def sing strictly positive of Grif and (dual) Nakano}$\mathrm{(cf.\,[18,\,Definition\,2.6],\,[19,\,Definition\,2.16],\,[39,\,Definition\,4.11])}$  
    Let $(X,\omega)$ be a \kah manifold and $h$ be a singular Hermitian metric on $E$.
    \begin{itemize}
        \item We say that $h$ is $\textit{strictly Griffiths}$ $\delta_\omega$-$\textit{positive}$ if for any open subset $U$ and any \kah potential $\varphi$ of $\omega$ on $U$, $he^{\delta\varphi}$ is Griffiths semi-positive on $U$.
        \item We say that $h$ is $L^2$-$\it{type}$ $\textit{strictly Nakano}$ $\delta_\omega$-$\textit{positive}$ if for any open subset $U$ and any \kah potential $\varphi$ of $\omega$ on $U$, $he^{\delta\varphi}$ is $L^2$-type Nakano semi-positive on $U$ in the sense of Definition \ref{def Nakano semi-posi sing}.
        \item We say that $h$ is $\textit{strictly dual Nakano}$ $\delta_\omega$-$\textit{positive}$ if for any open subset $U$ and any \kah potential $\varphi$ of $\omega$ on $U$, $he^{\delta\varphi}$ is dual Nakano semi-positive on $U$.
    \end{itemize}
\end{definition}

On projective manifolds, it is known that these strictly $\delta_\omega$-positivity for Griffiths and (dual) Nakano give $L^2$-estimates (see. [19,\,Theorem\,1.4], [39,\,Theorem\,4.12]) 
and induce vanishing theorems involving $L^2$-subsheaves (see. [19,\,Theorem\,1.5], [39,\,Theorem\,1.3]). 
In this paper, we consider the $L^2$-estimates and vanishing theorems for singular type (dual) Nakano semi-positive Hermitian metrics twisted by smooth (dual) $m$-positive Hermitian metrics as more natural positivity on weakly pseudoconvex \kah manifolds.

\subsection{Approximation and properties of singular Hermite metrics}

For singular semi-positivity on line bundles, the following Demailly's approximation is known.

\begin{theorem}\label{Demailly smoothing of current}$\mathrm{(cf.\,[6,\,Theorem\,6.1]})$ 
    Let $(X,\omega)$ be a complex manifold equipped with a Hermitian metric $\omega$ and $\Omega\!\Subset \!X$ be an open subset.
    Assume that $T=\alpha+\frac{i}{\pi}\partial\overline{\partial}\varphi$ is a closed $(1,1)$-current on $X$, where $\alpha$ is a smooth real $(1,1)$-form in the same $\partial\overline{\partial}$-cohomology class as $T$ and $\varphi$ is a quasi-plurisubharmonic function.
    Let $\gamma$ be a continuous real $(1,1)$-form such that $T\geq\gamma$.
    Suppose that the Chern curvature tensor of $T_X$ satisfies
    \begin{align*}
        (i\Theta_{T_X}+\theta\otimes\mathrm{id}_{T_X})(\kappa_1\otimes\kappa_2,\kappa_1\otimes\kappa_2)\geq0 \quad \forall\kappa_1,\kappa_2\in T_X~\mathrm{with}~\langle\kappa_1,\kappa_2\rangle=0
    \end{align*}
    on a neighborhood of $\overline{\Omega}$, for some continuous nonnegative $(1,1)$-form $\theta$ on $X$. Then for every $c>0$, there is a family of closed $(1,1)$-currents $T_{c,\varepsilon}=\alpha+\frac{i}{\pi}\partial\overline{\partial}\varphi_{c,\varepsilon}$ such that 
    \begin{itemize}
        \item [(i)] $\varphi_{c,\varepsilon}$ is quasi-plurisubharmonic on a neighborhood of $\overline{\Omega}$, smooth on $X\setminus E_c(T)$, increasing with respect to $\varepsilon$ on $\Omega$, and converges to $\varphi$ on $\Omega$ as $\varepsilon\to0$,
        \item [(ii)] $T_{c,\varepsilon}\geq\gamma-c\theta-\delta_\varepsilon\omega$ on $\Omega$,
    \end{itemize}
    where $\varepsilon\in(0,\varepsilon_0)$, $E_c(T)=\{x\in X\mid\nu(T,x)\geq c\}$ is the $c$-upperlevel set of Lelong numbers and $(\delta_\varepsilon)_{\varepsilon>0}$ is an increasing family of positive numbers such that $\lim_{\varepsilon\to0}\delta_\varepsilon=0$.
\end{theorem}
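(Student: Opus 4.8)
The plan is to follow Demailly's Bergman-kernel regularization scheme: first produce, on small coordinate patches, canonical plurisubharmonic approximations of $\varphi$ by means of the Ohsawa--Takegoshi extension theorem, and then glue these local models into a single global quasi-plurisubharmonic function, the curvature hypothesis on $T_X$ being exactly what controls the error produced by the gluing.

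\emph{Step 1 (local models).} Cover a neighbourhood of $\overline{\Omega}$ by finitely many coordinate balls $U_j$. After subtracting a local potential of $\alpha$ and adding a bounded smooth strictly plurisubharmonic correction (absorbing $\gamma-\alpha\geq -C_j\omega$ on the small ball), we may assume that $T=\idd\varphi/\pi$ with $\varphi$ plurisubharmonic on each $U_j$. For $m\in\mathbb{N}$ let $(\sigma_{m,k})_k$ be an orthonormal basis of the Hilbert space of holomorphic functions $f$ on $U_j$ with $\int_{U_j}|f|^2e^{-2m\varphi}\,dV<\infty$, and set
\begin{align*}
    \varphi_{m,j}:=\frac{1}{2m}\log\sum_k|\sigma_{m,k}|^2.
\end{align*}
The Ohsawa--Takegoshi extension theorem gives the lower bound $\varphi_{m,j}\geq\varphi-C_1/m$, while the sub-mean-value inequality for $e^{2m\varphi}$ gives $\varphi_{m,j}(x)\leq\sup_{B(x,r)}\varphi+\frac{1}{m}\log(C_2 r^{-n})$; together these force $\varphi_{m,j}\to\varphi$ in $L^1_{loc}$ and pointwise, and $\nu(\varphi,x)-n/m\leq\nu(\varphi_{m,j},x)\leq\nu(\varphi,x)$. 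Moreover $\varphi_{m,j}$ is real-analytic off the common zero set of the $\sigma_{m,k}$, and $\idd\varphi_{m,j}\geq-\tfrac{C_3}{m}\omega$ on $U_j'\Subset U_j$; in the class of $T$ this reads $T_{m,j}:=\alpha+\idd\varphi_{m,j}/\pi\geq\gamma-\delta_m\omega$ with $\delta_m\to0$.

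\emph{Step 2 (gluing).} On each overlap $U_j\cap U_k$ the functions $\varphi_{m,j}$ and $\varphi_{m,k}$ differ only by an $O(1/m)$-bounded term, by the two-sided estimates above. One cannot patch them with a partition of unity, since that destroys plurisubharmonicity; instead one glues with Demailly's regularized maximum, which preserves plurisubharmonicity but produces an error in $\idd$ coming from the transition cocycle of the coordinate changes — concretely, a contribution controlled by the Hessian of $\log$ of the holomorphic Jacobians, i.e. by $i\Theta_{T_X}$ evaluated on tensors of rank $\leq 2$ of the shape $\kappa_1\otimes\kappa_2$. This is precisely why the hypothesis
\begin{align*}
    (i\Theta_{T_X}+\theta\otimes\mathrm{id}_{T_X})(\kappa_1\otimes\kappa_2,\kappa_1\otimes\kappa_2)\geq0,\qquad \langle\kappa_1,\kappa_2\rangle=0,
\end{align*}
is imposed: it absorbs the gluing error at the cost of $-c\theta$, yielding a global quasi-plurisubharmonic $\varphi_{c,\varepsilon}$ (with $\varepsilon\sim 1/m$) with $T_{c,\varepsilon}\geq\gamma-c\theta-\delta_\varepsilon\omega$ on $\Omega$. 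I expect this gluing step, and the careful bookkeeping of the curvature error inside it, to be the main obstacle.

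\emph{Step 3 (regularity and monotonicity).} The resulting $\varphi_{c,\varepsilon}$ is real-analytic off the union of the common zero sets of the Bergman sections; normalising $m$ against $c$ and invoking the Lelong-number comparison of Step 1, one checks this bad locus lies in $E_c(T)=\{x\in X\mid \nu(T,x)\geq c\}$. Finally, the monotonicity of $\varphi_{c,\varepsilon}$ in $\varepsilon$ and the convergence $\varphi_{c,\varepsilon}\searrow\varphi$ on $\Omega$ are arranged by the standard device of replacing $\varphi_{m,j}$ by a regularized supremum over a dyadic range of parameters (using that ball averages of a plurisubharmonic function are monotone in the radius and decrease to it), which stabilises the construction and produces the increasing family $(\delta_\varepsilon)$ with $\delta_\varepsilon\to0$. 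Assembling Steps 1--3 proves the theorem.
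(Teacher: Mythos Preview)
The paper does not prove this statement; it is quoted verbatim from \cite{Dem94} and used as a black box (the accompanying remark merely notes that Demailly's argument localises to $\Omega\Subset X$). So there is no ``paper's own proof'' to compare against beyond the original reference.

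That said, your sketch does not reproduce Demailly's argument in \cite{Dem94}, and Step~2 contains a genuine mismatch. What you outline in Step~1 is the \emph{Bergman-kernel} approximation (Ohsawa--Takegoshi plus mean-value), which is Demailly's \emph{other} regularisation scheme. In \cite{Dem94} the smoothing is instead performed by convolving $\varphi$ along the fibres of $T_X$ via the exponential map of the Chern connection,
\[
\varphi_\varepsilon(x)=\int_{\zeta\in T_xX}\varphi\bigl(\exp^h_x(\zeta)\bigr)\,\chi\!\left(\tfrac{|\zeta|^2}{\varepsilon^2}\right)\,d\lambda(\zeta),
\]
and the hypothesis $(i\Theta_{T_X}+\theta\otimes\mathrm{id}_{T_X})(\kappa_1\otimes\kappa_2,\kappa_1\otimes\kappa_2)\geq0$ for $\langle\kappa_1,\kappa_2\rangle=0$ enters precisely because $\exp^h_x$ is not holomorphic: its $\overline{\partial}$ is governed by the curvature of $T_X$, and the loss of positivity in $i\partial\overline{\partial}\varphi_\varepsilon$ is a Kiselman-type Legendre transform of the Lelong numbers against this curvature term, producing exactly $-c\theta$. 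Your claim that this same curvature term arises from ``the Hessian of $\log$ of the holomorphic Jacobians'' when patching local Bergman approximations by a regularised maximum is not correct: the regularised-max gluing of the $\varphi_{m,j}$ costs only $O(1/m)\cdot\omega$, coming from the overlap estimates of Step~1, and does not see $i\Theta_{T_X}$ at all. So with your method you would obtain $T_{c,\varepsilon}\geq\gamma-\delta_\varepsilon\omega$ (absorbing $c$ into $\delta_\varepsilon$ via $m\sim 1/c$), but not the stated separation into a $-c\theta$ piece tied to the curvature hypothesis and an independent $-\delta_\varepsilon\omega$ piece with $\delta_\varepsilon\to0$. If you want the theorem exactly as stated, you need the Chern-connection flow; if you are content with the weaker bound, your Bergman-kernel route works but then the curvature hypothesis on $T_X$ is irrelevant and should be dropped from the statement.
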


\begin{remark}$\mathrm{(cf.\,[41,\,Remark\,3.1]})$ 
    Although Theorem \ref{Demailly smoothing of current} is stated in \cite{Dem94} when $X$ is compact, almost the same proof as in \cite{Dem94} shows that Theorem \ref{Demailly smoothing of current} holds in the noncompact case while uniform estimates are obtained only on the relatively compact subset. 
\end{remark}

We consider the approximation of singular Hermitian metrics using convolution by the mollifier.
Let $S$ be a Stein manifold. We may assume that $S$ is a submanifold of $\mathbb{C}^{N}$.  
By the theorem of Docquier and Grauert, there exists an open neighborhood $W \subset \mathbb{C}^{N}$ of $S$ and a holomorphic retraction $\mu: W \to S$ (cf. ChapterV\,of\,\cite{Hor90}).
Let $\rho:\mathbb{C}^N\rightarrow \mathbb{R}_{\geq0}$ be a smooth function depending only on $|z|$ such that $\mathrm{supp}\,\rho\subset \mathbb{B}^N$ and that $\int_{\mathbb{C}^N} \rho(z)dV=1$, where $\mathbb{B}^N$ is the unit ball. 
Define the mollifier $\rho_\nu(z)=\nu^{2N}\rho(\nu z)$ for $\nu>0$, where $\rho_\nu\to \delta$ delta distribution if $\nu\to+\infty$. 
For any subset $D\subset\mathbb{C}^N$, let $D^\nu:=\{z\in D\mid d_N(z,\partial D)>1/\nu\}\subset\subset D$.
Then for any function $\varphi$ over $D$, the convolution $\varphi_\nu:=\varphi\ast\rho_\nu$ is a smooth function defined on $D^\nu$.

Here, for any Stein manifold $S$, we say that the mollifier sequence $(\rho_\nu)_{\nu\in\mathbb{N}}$ is an $\it{approximate}$ $\it{identity}$ $\it{with}$ $\it{respect}$ $\it{to}$ $S$.

The following are known for approximations of singular Hermitian metrics on holomorphic vector bundles.

\begin{proposition}\label{vect bdl semi-negative sHm smoothing}$\mathrm{(cf.\,[2,\,Proposition\,3.1],\,[39,\,Proposition\,4.10]})$ 
    Let $S$ be a Stein manifold and $E$ be a holomorphic vector bundle over $S$ equipped with a singular Hermitian metric $h$. 
    We assume that $E$ is trivial over $S$. Then we have the following
    \begin{itemize}
        \item [($a$)] $h$ is Griffiths semi-negative if and only if there exists a sequence of smooth Griffiths semi-negative Hermitian metrics $(h_\nu)_{\nu\in\mathbb{N}}$ decreasing to $h$ $a.e.$ pointwise on any relatively compact Stain subset of $S$.
        \item [($b$)] If $h$ is Nakano semi-negative then there exists a sequence of smooth Nakano semi-negative Hermitian metrics $(h_\nu)_{\nu\in\mathbb{N}}$ decreasing to $h$ $a.e.$ pointwise on any relatively compact Stain subset of $S$, where $h_\nu=h\ast\rho_\nu$.
    \end{itemize}

    Here, we can always construct smooth Hermite metrics $h_\nu$ on $E$ over any relatively compact Stain open subset of $S$ 
    by convolving the function $\rho_\nu$, i.e. $h_\nu=h\ast\rho_\nu$, where $(\rho_\nu)_{\nu\in\mathbb{N}}$ is an approximate identity with respect to $S$.
\end{proposition}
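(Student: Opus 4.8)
The plan is to carry out the convolution-smoothing of Berndtsson--P\u aun, treating the two curvature notions in parallel. Fix a global holomorphic frame $(e_1,\dots,e_r)$ of $E$ over $S$ and identify $h$ with the measurable map $z\mapsto H(z)=\bigl(\langle e_j,e_k\rangle_h(z)\bigr)_{j,k}$ into positive semi-definite Hermitian $r\times r$ matrices with $\det H>0$ a.e. Using the Docquier--Grauert neighbourhood $W\subset\mathbb{C}^N$ of $S$, the holomorphic retraction $\mu\colon W\to S$ and the approximate identity $(\rho_\nu)_{\nu\in\mathbb N}$ fixed above, put $\widetilde H:=H\circ\mu$ on $W$ and let $h_\nu$ be the metric whose matrix on $W^\nu$ is $\widetilde H\ast\rho_\nu$ (entrywise convolution in $\mathbb{C}^N$), restricted to $S^\nu:=S\cap W^\nu$. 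First I check $h_\nu$ is a genuine smooth Hermitian metric: each diagonal entry $h_{jj}=\|e_j\|_h^2$ is plurisubharmonic by semi-negativity applied to the constant section $e_j$, hence locally bounded above, so by Cauchy--Schwarz all entries of $\widetilde H$ lie in $L^\infty_{\mathrm{loc}}$ and $h_\nu$ is $C^\infty$; and $\widetilde H$ is positive definite a.e. ($\mu$ being a submersion near $S$, the $\mu$-preimage of a null set is null), so $h_\nu(z)=\int\widetilde H(z-w)\rho_\nu(w)\,dV$ is positive definite for every $z$. This proves the last assertion of the proposition.

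For $(a)$ I use the characterisation that a metric on the trivial bundle --- smooth or singular --- is Griffiths semi-negative iff $z\mapsto\log\langle H(z)a,a\rangle$ is plurisubharmonic for every constant $a\in\mathbb{C}^r\setminus\{0\}$; for the singular definition this is the standard equivalence between plurisubharmonicity of $\|u\|_h$ and of $\log\|u\|_h$ (multiply $u$ by units $e^g$ with prescribed pluriharmonic $\mathrm{Re}\,g$). Given it, $\log\langle H(\cdot)a,a\rangle$ is psh, hence so is $\log\langle\widetilde H(\cdot)a,a\rangle=\log\langle H(\cdot)a,a\rangle\circ\mu$ on $W$, and then
\[
    \log\langle h_\nu(z)a,a\rangle=\log\int e^{\log\langle\widetilde H(z-w)a,a\rangle}\,\rho_\nu(w)\,dV
\]
is psh in $z$, since $\log$ of an integral of exponentials of functions psh in $z$ is again psh in $z$; thus $h_\nu$ is smooth Griffiths semi-negative on $W^\nu$, and its restriction to the submanifold $S$ stays so. The decrease $h_\nu\searrow h$ on $S$ is the classical fact $v\ast\rho_\nu\searrow v$ for psh $v$, applied to $v(z)=\langle\widetilde H(z)a,a\rangle$ (psh, being $e^{\mathrm{psh}}$) together with $\mu|_S=\mathrm{id}$. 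Conversely, if smooth Griffiths semi-negative $h_\nu\searrow h$ a.e., then for any local holomorphic $u\not\equiv 0$ the psh functions $\log\|u\|_{h_\nu}^2$ decrease a.e. to $\log\|u\|_h^2$, so $\|u\|_h$ is psh; this gives the ``if'' direction.

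For $(b)$ the analogous convolution-stable input is: a smooth metric on the trivial bundle is Nakano semi-negative iff $\idd T^h_a\ge0$ for every constant $n$-tuple $a=(a_1,\dots,a_n)$, $a_j\in\mathbb{C}^r$ --- a pointwise curvature computation shows $\idd T^h_a\ge0$ expresses $\theta_{E,h}(u,u)\le0$ for $u=\sum_j(\partial/\partial z_j)\otimes(\sum_\lambda a_{j\lambda}e_\lambda)$ --- while a singular $h$ that is Nakano semi-negative in the sense of Definition~\ref{def Nakano semi-negative as Raufi} has $\idd T^h_a\ge0$ as a current for all such $a$, constants being holomorphic sections. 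As the coefficients of $T^{(\cdot)}_a$ are $\mathbb{C}$-linear in the metric, after extending by $\mu$ one has $T^{h_\nu}_a=T^{\mu^*h}_a\ast\rho_\nu$ ($N$-tuples on $W$), hence $\idd T^{h_\nu}_a=(\idd T^{\mu^*h}_a)\ast\rho_\nu\ge0$, convolution with the nonnegative kernel $\rho_\nu$ preserving positivity of currents; so $h_\nu$ is smooth Nakano semi-negative on $W^\nu$, and restriction to $S$ preserves this. The convergence $h_\nu\searrow h$ is exactly as in $(a)$, since Nakano semi-negativity of $h$ forces Griffiths semi-negativity.

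I expect the crux to be isolating the curvature descriptions that pass through convolution and running the extend--convolve--restrict scheme without losing semi-negativity. For Griffiths one must work with $\log\langle Ha,a\rangle$ rather than $\langle Ha,a\rangle$ (whose plurisubharmonicity is strictly weaker than semi-negativity) and invoke that $\log$ of a mollification of $e^{\mathrm{psh}}$ is psh; for Nakano one needs the constant-$n$-tuple condition $\idd T^h_a\ge0$ to coincide with the full curvature condition in the smooth case, and --- the genuinely delicate point, where \cite{BP08} is upgraded to the Stein setting in \cite{Wat22b} --- that singular Nakano (and Griffiths) semi-negativity is stable under holomorphic pullback by the retraction $\mu$ and under restriction to $S$. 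Smoothness and positive-definiteness of $h_\nu$, and the monotone decrease, are then routine.
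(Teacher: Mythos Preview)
Your overall strategy---extend by the retraction $\mu$, convolve, restrict---is exactly the one in the cited references (the paper itself gives no argument beyond citing \cite{BP08} and \cite{Wat22b}). But there is a genuine gap: the claimed characterizations of smooth Griffiths and Nakano semi-negativity by \emph{constant} sections are false. For Griffiths, take $n=1$, $r=2$, and on $\mathbb{C}$
\[
H(z)=I+zA+\bar zA^*+|z|^2B,\qquad A=\begin{pmatrix}0&1\\0&0\end{pmatrix},\quad B=\begin{pmatrix}1&0\\0&\tfrac14\end{pmatrix}.
\]
One computes $\partial_z\partial_{\bar z}\log(a^*Ha)=\bigl((a^*Ba)|a|^2-|a^*Aa|^2\bigr)/(a^*Ha)^2\ge0$ for every constant $a$, yet the curvature at $0$ is $A^*A-B=\mathrm{diag}(-1,\tfrac34)$, not semi-negative (equivalently, test with the holomorphic section $s(z)=(-z,1)$, chosen so that $D'_Hs(0)=0$: then $\partial_z\partial_{\bar z}\log\|s\|_H^2(0)=-\tfrac34$). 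The Nakano claim fails for the same reason: for constant tuples the identity you invoke actually reads $\idd T^h_a=\bigl(-\theta_{E,h}(u)+|\sum_j(\partial_jH)a_j|^2_{H^{-1}}\bigr)dV$ at a point where $H=I$, and the extra first-order term does not vanish.

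The fix is the one the references use: run the convolution with \emph{arbitrary holomorphic} sections. For any holomorphic $u$ (resp.\ $n$-tuple $(u_1,\dots,u_n)$) one still has
\[
\|u\|_{h_\nu}^2(z)=\int\|u(z)\|^2_{h^{(w)}}\,\rho_\nu(w)\,dV_w,\qquad T^{h_\nu}_u(z)=\int T^{h^{(w)}}_u(z)\,\rho_\nu(w)\,dV_w,
\]
with $h^{(w)}(z):=\widetilde H(z-w)$ again Griffiths (resp.\ Nakano) semi-negative; each integrand is log-psh (resp.\ has $\idd\ge0$) in $z$, hence so is the integral, and this now tests $h_\nu$ against all holomorphic data, which \emph{is} the correct smooth characterization. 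The rest of your write-up---positive-definiteness and smoothness of $h_\nu$, the monotone decrease via plurisubharmonicity of $\langle\widetilde Ha,a\rangle$, the converse in $(a)$, and stability under $\mu$-pullback and restriction---is fine.
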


The following lemma shows that the difference between the above assumption of triviality for vector bundles and not assuming it is only about a hypersurface.

\begin{lemma}\label{trivial of E on Stein-H}
    Let $S$ be a Stein manifold and $E$ be a holomorphic vector bundle on $S$. Then there exists a hypersurface $H$ such that $E|_{S\setminus H}$ is trivial, where $S\setminus H$ is also Stein.
\end{lemma}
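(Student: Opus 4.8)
The plan is to reduce to a standard fact: over a Stein manifold, a holomorphic vector bundle becomes trivial after removing the zero locus of finitely many sections whose common zero set is empty, and one such section suffices to cut out a hypersurface (once we pass to the complement of enough sections the bundle is trivial, and the union of their zero loci is a hypersurface). More precisely, first I would recall that on a Stein manifold $S$, the bundle $E$ of rank $r$ is generated by global sections (Cartan's Theorem A), so there exist $s_1,\dots,s_N\in H^0(S,E)$ spanning each fiber. By a Bertini/general-position argument (choosing generic linear combinations), one can in fact find $r+1$ global sections $t_0,\dots,t_r\in H^0(S,E)$ with no common zero such that each $t_j$ vanishes on a hypersurface $H_j:=\{t_j=0\}$ (a divisor, i.e.\ of pure codimension one, after a generic choice), and such that on $U_j:=S\setminus H_j$ the remaining sections trivialize — actually the cleaner route is: a single generic section $t$ of $E$ vanishes on a subvariety of codimension $r$ in general, but twisting by enough very ample-type data on the Stein $S$ is not available, so instead I would argue as follows.

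Take $t_0,\dots,t_r$ generating $E$ with empty common zero locus. Over the Stein open set $U_0:=S\setminus\{t_0=0\}$ the section $t_0$ is nowhere zero, giving a trivial sub-line-bundle; iterating, one shows $E|_{S\setminus H}$ is trivial where $H=\bigcup_j\{t_j=0\}$ is a hypersurface (a proper analytic subset which one may take of pure codimension one by discarding components of higher codimension, since the complement only grows). Then $S\setminus H$ is again Stein: it is the complement in a Stein manifold of an analytic hypersurface, hence Stein by the Docquier–Grauert / Stein-ness of complements of divisors (a hypersurface $H=\{f=0\}$ locally, and globally one uses that $S\setminus H$ is the intersection of $S$ with sets of the form $\{\,|f|>0\}$, or invokes that the complement of an analytic set of codimension one in a Stein space is Stein). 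This last point is the standard fact: if $A\subset S$ is an analytic hypersurface in a Stein manifold, then $S\setminus A$ is Stein.

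Concretely, the proof has three steps, carried out in this order. \emph{Step 1:} use Theorem A to get finitely many global sections spanning $E$; combine and, by a genericity argument, arrange a configuration $s_1,\dots,s_m$ whose vanishing loci are hypersurfaces and whose union's complement trivializes $E$ — here the relevant observation is that over $S\setminus\{s_i=0\}$ one can run Gram–Schmidt-type elimination using the non-vanishing of $s_i$ to split off trivial summands. \emph{Step 2:} set $H:=\bigcup\{s_i=0\}$, a hypersurface, and conclude $E|_{S\setminus H}$ is trivial. \emph{Step 3:} invoke the theorem that the complement of an analytic hypersurface in a Stein manifold is Stein, so $S\setminus H$ is a Stein manifold.

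\emph{Main obstacle.} The subtle point is Step 1: ensuring simultaneously that the removed set is exactly a \emph{hypersurface} (pure codimension one, not higher) and that its complement trivializes $E$. The cleanest fix is to not insist on minimality — remove the zero loci of a spanning family of sections, take any pure-codimension-one hypersurface containing the singular part of the resulting trivialization locus's complement boundary, and observe that enlarging $H$ only helps both conclusions (trivial bundle restricts to trivial, complement of a larger hypersurface is still Stein). Thus I expect the write-up to spend most of its effort phrasing Step 1 so that the hypersurface claim is automatic, after which Steps 2 and 3 are immediate citations.
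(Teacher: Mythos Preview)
Your approach can be made to work, but you are making this harder than necessary, and the ``main obstacle'' you identify (ensuring the removed set is a genuine hypersurface) is something the paper sidesteps entirely with one clean trick you are missing.

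The paper simply takes $r$ global sections $\sigma_1,\dots,\sigma_r\in H^0(S,E)$ and sets
\[
H:=\{\,z\in S : (\sigma_1\wedge\cdots\wedge\sigma_r)(z)=0\,\}.
\]
The point is that $\sigma_1\wedge\cdots\wedge\sigma_r$ is a section of the \emph{line bundle} $\det E$, so its zero locus is automatically a hypersurface (provided the section is not identically zero, which one arranges by choosing the $\sigma_j$ linearly independent at a single point---possible since $E$ is globally generated on a Stein manifold). On $S\setminus H$ the sections $\sigma_1,\dots,\sigma_r$ are fibrewise independent, hence form a global holomorphic frame, and the map $(z,\xi)\mapsto\sum_j\xi_j\sigma_j(z)$ trivializes $E|_{S\setminus H}$. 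Stein-ness of $S\setminus H$ is then the standard fact you already cite.

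Your route via zero loci of individual sections $t_j\in H^0(S,E)$ runs into exactly the codimension problem you flag: $\{t_j=0\}$ has expected codimension $r$, not $1$, and your iterative splitting argument (``$t_0$ nowhere zero gives a trivial sub-line-bundle; iterating\dots'') is vague---what bundle are you applying the next section to, and why do the remaining $t_j$ still do the job on the quotient? Your proposed fix of enlarging the bad locus to a hypersurface is correct in principle (on a Stein manifold every proper analytic subset lies in a hypersurface, by applying Cartan's Theorem~A to its ideal sheaf), but this extra step and the splitting gymnastics before it are all unnecessary once you pass to $\det E$ from the start.
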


\begin{proof}
    Let $r$ be a rank of $E$, $f=(\sigma_1,\cdots,\sigma_r)$ be a $r$-tuple of globally holomorphic sections of $E$, i.e. $\sigma_j\in H^0(S,E)$, for $1\leq \alpha\leq n+1$. We define the hypersurface by $H:=\{z\in S\mid \Lambda^r_{j=1}\sigma_j(z)=0\}$, 
    where $\Lambda^r_{j=1}\sigma_j\in H^0(S,\mathrm{det}\,E)$. Here, $S\setminus H$ is also Stein (see \cite{Die96}).
    We define the holomorphic map $\tau:S\setminus H\times \mathbb{C}^r\to E|_{S\setminus H}$ by $\tau(z,\xi)=f(z)\cdot\xi=\sum^r_{j=1}\xi_j\sigma_j(z)$ where $\xi=\!\,^t(\xi_1,\cdots,\xi_r)$, 
    then it is holomorphic isomorphism by $f$ is globally holomorphic frame on $S\setminus H$. Hence $E|_{S\setminus H}$ is trivial.
\end{proof}

We propose one effective method of determining singular Nakano semi-negativity.

\begin{proposition}\label{key Prop of sing Nakano semi-negative}
    Let $S$ be a Stein manifold and $E$ be a holomorphic vector bundle which is trivial over $S$ equipped with a singular Hermitian metric $h$. 
    If $(h_\nu)_{\nu\in\mathbb{N}}$ is a sequence of smooth Nakano semi-negative Hermitian metrics  
    then $h$ is Nakano semi-negative, where $h_\nu=\!h\ast\rho_\nu$ and $(\rho_\nu)_{\nu\in\mathbb{N}}$ is an approximate identity with respect to $S$.
\end{proposition}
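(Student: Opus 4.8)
The plan is to fix an arbitrary $n$-tuple $u=(u_1,\dots,u_n)$ of local holomorphic sections of $E$ and deduce $\idd T^h_u\geq 0$ in the sense of currents by a regularization argument, the corresponding pointwise statement being available for the smooth metrics $h_\nu$ precisely because they are Nakano semi-negative. First I would use that $E$ is trivial over $S$: fixing a global holomorphic frame, I identify $h$ and each $h_\nu=h*\rho_\nu$ with Hermitian-matrix-valued functions (on $S$, resp.\ on the exhausting open subsets where the convolution is defined), the convolution being taken entrywise, and I identify holomorphic sections of $E$ with $\mathbb{C}^r$-valued holomorphic functions. The mere fact that each $h*\rho_\nu$ is a finite smooth metric forces the entries of $h$ to lie in $L^1_{\mathrm{loc}}(S)$; hence, for $u$ as above, the form $T^h_u=\sum_{j,k}(u_j,u_k)_h\,\widehat{dz_j\wedge d\overline{z}_k}$ has $L^1_{\mathrm{loc}}$ coefficients (because $|(u_j,u_k)_h|\leq|u_j|\,|u_k|\,\mathrm{tr}\,h$), so it is a well-defined positive current and $\idd T^h_u$ is a well-defined current.

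The heart of the proof is the limit passage. Since plurisubharmonicity of $T^h_u$ is a local condition, I may work in a coordinate chart $U$; fix $u$ holomorphic on $U$ and an arbitrary test function $\chi\in\mathscr{D}(U,\mathbb{R}_{\geq0})$, the goal being $\langle\idd T^h_u,\chi\rangle\geq0$. For all large $\nu$ the \emph{smooth} metric $h_\nu$ is Nakano semi-negative on a neighbourhood of $\mathrm{supp}\,\chi$, hence $\idd T^{h_\nu}_u\geq0$ there as a smooth $(n,n)$-form, and integration by parts yields $0\leq\langle\idd T^{h_\nu}_u,\chi\rangle=\int_U T^{h_\nu}_u\wedge\idd\chi$. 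Writing $\widehat{dz_j\wedge d\overline{z}_k}\wedge\idd\chi=\phi_{jk}\,dV$ with $\phi_{jk}\in\mathscr{D}(U)$, this integral equals $\sum_{j,k,\lambda,\mu}\int_U(h_{\lambda\mu}*\rho_\nu)\,(u_j)_\lambda\overline{(u_k)_\mu}\,\phi_{jk}\,dV$. Since $h_{\lambda\mu}*\rho_\nu\to h_{\lambda\mu}$ in $L^1$ on every compact subset of $U$ while the factors $(u_j)_\lambda\overline{(u_k)_\mu}\,\phi_{jk}$ are bounded with fixed compact support, these integrals converge to $\sum_{j,k,\lambda,\mu}\int_U h_{\lambda\mu}\,(u_j)_\lambda\overline{(u_k)_\mu}\,\phi_{jk}\,dV=\int_U T^h_u\wedge\idd\chi=\langle\idd T^h_u,\chi\rangle$. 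Therefore $\langle\idd T^h_u,\chi\rangle\geq0$ for every nonnegative $\chi\in\mathscr{D}(U)$, i.e.\ $\idd T^h_u\geq0$; as $U$ and $u$ were arbitrary, $T^h_u$ is plurisubharmonic for every $n$-tuple of holomorphic sections, which is exactly the assertion that $h$ is Nakano semi-negative in the sense of Definition \ref{def Nakano semi-negative as Raufi}.

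I do not expect a genuine obstacle; the two points that want a little care are purely technical. One is the distributional integration-by-parts identity $\langle\idd T,\chi\rangle=\int T\wedge\idd\chi$ for an $(n-1,n-1)$-current $T$, which is routine sign bookkeeping. The other, and the only place where the argument really uses the specific form $h_\nu=h*\rho_\nu$, is that the coefficient functions of $T^{h_\nu}_u$ are not literally mollifications of those of $T^h_u$: they carry extra bounded continuous factors coming from the sections, so one must observe that multiplying by such factors preserves convergence in $L^1_{\mathrm{loc}}$. If one prefers a dominated-convergence formulation, one can instead first note (by the argument underlying Proposition \ref{vect bdl semi-negative sHm smoothing}(a), using $h_\nu\to h$ in $L^1_{\mathrm{loc}}$ and the stability of plurisubharmonicity under such limits) that $h$ is Griffiths semi-negative, hence locally bounded above together with all the $h_\nu$, and then pass to the limit pointwise a.e.; either way, crucially, no curvature current is invoked, which is the whole point of the statement.
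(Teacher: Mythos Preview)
Your argument is correct and in fact takes a cleaner route than the paper's own proof. The paper proceeds in two stages: first it establishes that $(h_\nu)_{\nu\in\mathbb{N}}$ is decreasing (via a double-convolution trick, commuting $\rho_\nu$ and $\rho_\delta$) and hence that $h$ is Griffiths semi-negative; this gives local \emph{boundedness} of the entries of $h$ by \cite{PT18}. Only then does the paper set up the auxiliary function $F(w)=\idd T^{h^{(w)}}_u(\phi)$, prove its continuity near $0$ by dominated convergence (which is where the boundedness is used), and conclude $F(0)\geq 0$ from $\idd T^{h_\nu}_u(\phi)=\langle\rho_\nu,F\rangle\to\langle\delta_0,F\rangle$.

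You bypass the Griffiths step entirely: the hypothesis that $h*\rho_\nu$ is a smooth metric already forces the diagonal entries $h_{\lambda\lambda}\geq 0$ to lie in $L^1_{\mathrm{loc}}$, whence all entries do by $|h_{\lambda\mu}|^2\leq h_{\lambda\lambda}h_{\mu\mu}$, and then the standard fact $h_{\lambda\mu}*\rho_\nu\to h_{\lambda\mu}$ in $L^1_{\mathrm{loc}}$, paired against the bounded compactly supported factors $(u_j)_\lambda\overline{(u_k)_\mu}\phi_{jk}$, gives the limit directly. This is shorter and uses only the mollifier property, whereas the paper's detour through Griffiths semi-negativity buys local boundedness (a stronger conclusion than $L^1_{\mathrm{loc}}$) at the cost of a more elaborate argument; for the purpose of this proposition your approach is more efficient.
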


\begin{proof}
    First, we show Griffiths semi-negativity of $h$. It is sufficient to show that $(h_\nu)_{\nu\in\mathbb{N}}$ decreases to $h$ $a.e.$ pointwise.
    By smooth Griffiths semi-negativity of $h_\delta$, for any locally constant section $s\in\mathcal{O}_S(E)$, the smooth function $||s||^2_{h_\delta}=||s||^2_h\ast\rho_\delta$ is plurisubharmonic. 
    
    For any positive integers $\nu>\mu$, we have that 
    \begin{align*}
        ||s||^2_{h_\delta}\ast\rho_\nu\geq||s||^2_{h_\delta}\ast\rho_\mu \quad \mathrm{and} \quad ||s||^2_{h_\delta}\ast\rho_\nu=||s||^2_h\ast\rho_\delta\ast\rho_\nu=||s||^2_{h_\nu}\ast\rho_\delta.
    \end{align*}
    Therefore, we obtain $||s||^2_{h_\nu}\geq||s||^2_{h_\mu}$ by taking the limit of $||s||^2_{h_\nu}\ast\rho_\delta\geq||s||^2_{h_\mu}\ast\rho_\delta$ as $\delta\to+\infty$. Hence, $(h_\nu)_{\nu\in\mathbb{N}}$ is decreasing and converges to $h$ a.e. pointwise.

    For any fixed point $x_0\in S$, there exist a open neighborhood $U$ of $x_0$ and $\nu_0\in\mathbb{N}$ such that $U\subset S^{\nu_0}\subset S^\nu$ for any $\nu\geq\nu_0$.
    For any $n$-tuple locally holomorphic sections $u=(u_1,\cdots,u_n)$ of $E$, i.e. $u_j\in H^0(U,E)$, we have that 
    \begin{align*}
        (u_j,u_k)_{h_\nu}(z)=\int (u_j,u_k)_{h^{(w)}}(z)\rho_\nu(w)dV_w, \quad
        T^{h_\nu}_u(z)=\int T^{h^{(w)}}_u(z)\rho_\nu(w)dV_w,
    \end{align*} 
    where $h^{(w)}(z)=h(z-w)$.
    For any nonnegative test form $\phi\in\mathscr{D}(U)$ we have that 
    \begin{align*}
        0\leq\idd T^{h_\nu}_u(\phi)&=\int\phi\idd T^{h_\nu}_u=\int T^{h_\nu}_u\wedge\idd\phi\\
        &=\int_z\Bigl\{\int_w T^{h^{(w)}}_u(z)\rho_\nu(w)dV_w\Bigr\}\wedge\idd\phi\\
        &=\int_w\Bigl\{\int_zT^{h^{(w)}}_u(z)\wedge\idd\phi\Bigr\}\rho_\nu(w)dV_w  \\  
        &=\int_{w\in\mathrm{supp}\,\rho_\nu}\idd T^{h^{(w)}}_u(\phi)\rho_\nu(w)dV_w.
    \end{align*}

    Define the function $F=\idd T^{h^{(\bullet)}}_u(\phi):\mathrm{int}(\mathrm{supp}\,\rho_\nu)\to\mathbb{R}$. 
    We show $F(0)=\idd T^h_u(\phi)\geq0$, i.e. Nakano semi-negativity of $h$. 
    For any $\zeta\in\mathbb{C}^n$ enough close to $0$, we obtain
    \begin{align*}
        F(\zeta)&=\idd T^{h^{(\zeta)}}_u(\phi)=\int_{w\in U}T^{h^{(\zeta)}}_u(w)\wedge\idd\phi(w)\\
        &=\int_U\sum (u_j(w),u_k(w))_{h^{(\zeta)}(w)}\widehat{dz_j\wedge d\overline{z}_k}\wedge\idd\phi(w)\\
        &=\int_U\sum (u_j(w),u_k(w))_{h(w-\zeta)}\phi_{jk}(w)dV_w\\
        &=\int_{U-\zeta}\sum (u_j(\xi+\zeta),u_k(\xi+\zeta))_{h(\xi)}\phi_{jk}(\xi+\zeta)dV_\xi\\
        &=\int_U\sum (u_j(w+\zeta),u_k(w+\zeta))_{h(w)}\phi_{jk}(w+\zeta)dV_w
    \end{align*}
    where $\phi_{jk}=\frac{\partial^2\phi}{\partial z_j\partial\overline{z}_k}$, $\xi=w-\zeta$ 
    and we take a enough small $\zeta$ satisfying $\mathrm{supp}\,\phi+\zeta\subset U$. 

    For any $\zeta\in\mathrm{int}(\mathrm{supp}\,\rho_\nu)$ enough close to $0$ and any $w\in U$, we define the function
    \begin{align*}
        g(\zeta,w)=\sum (u_j(w+\zeta),u_k(w+\zeta))_{h(w)}\phi_{jk}(w+\zeta) \quad \mathrm{then} \quad F(\zeta)=\int_Ug(\zeta,w)dV_w.
    \end{align*}

    Here, from Griffiths semi-negativity of $h$, each element $h_{jk}$ is bounded (see [31,\,Lemma\,2.2.4]). 
    Therefore, there exists a integrable function $M:U\to\mathbb{R}_{\geq0}$ such that $|g(\zeta,w)|\leq M(w)$ for any $w\in U$ and any $\zeta$ enough close to $0$.
    Since Lebesgue's dominated convergence theorem, for any $\zeta_0$ enough close to $0$ we have that 
    \begin{align*}
        \lim_{\zeta\to+0}F(\zeta_0+\zeta)&=\lim_{\zeta\to+0}\int_Ug(\zeta_0+\zeta,w)dV_w\\
        &=\int_U\lim_{\zeta\to+0}g(\zeta_0+\zeta,w)dV_w=\int_Ug(\zeta_0,w)dV_w=F(\zeta_0),
    \end{align*}
    where for any $w\in U$, $g(\zeta,w)$ is smooth as to $\zeta$ by smoothness of $u_j$ and $\phi_{jk}$.
    Therefore $F$ is continuous near $0$. 
    Since smooth Nakano semi-negativity of $h_\nu$, we obtain that 
    \begin{align*}
        0\leq\lim_{\nu_0\leq\nu\to+\infty}\idd T^{h_\nu}_u(\phi) 
        &=\lim_{\nu\to+\infty}\int_wF(w)\rho_\nu(w)dV_w\\
        &=\lim_{\nu\to+\infty}\langle\rho_\nu,F\rangle
        =\langle\delta_0,F\rangle
        =F(0).
    \end{align*}
    Hence, $h$ is Nakano semi-negative.
\end{proof}

We obtain the following basic properties for semi-positivity in the sense of Griffiths, $L^2$-type Nakano and dual Nakano with respect to tensor products of line bundles.

\begin{theorem}\label{psef * Nak semi-posi then Nak semi-posi}
    Let $X$ be a complex manifold, $L$ be a holomorphic line bundle over $X$ equipped with a singular Hermitian metric $h_L$ 
    and $E$ be a holomorphic vector bundle over $X$ equipped with a singular Hermitian metric $h_E$. We have the following
    \begin{itemize}
        \item [($a$)] If $h_L$ is singular semi-positive and $h_E$ is Griffiths semi-positive then $h_E\otimes h_L$ is also Griffiths semi-positive.
        \item [($b$)] If $h_L$ is singular semi-positive and $h_E$ is $L^2$-type Nakano semi-positive then $h_E\otimes h_L$ is also $L^2$-type Nakano semi-positive.
        \item [($c$)] If $h_L$ is singular semi-positive and $h_E$ is dual Nakano semi-positive then $h_E\otimes h_L$ is also dual Nakano semi-positive.
    \end{itemize}
\end{theorem}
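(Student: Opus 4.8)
The plan is to treat all three items by reducing, over small Stein (e.g.\ polydisc) charts on which $E$ and $L$ trivialize, to the model situation in which $h_L$ is given by a single plurisubharmonic weight $\varphi_L$, i.e.\ $h_L=e^{-\varphi_L}$, so that $h_E\otimes h_L$ becomes $h_Ee^{-\varphi_L}$ and, passing to duals, $(h_E\otimes h_L)^{*}$ becomes $h_E^{*}e^{\varphi_L}$. The common engine is a monotone smoothing: approximate $h_E^{*}$ from above by smooth Griffiths (resp.\ Nakano) semi-negative metrics $g_\nu\downarrow h_E^{*}$ via Proposition~\ref{vect bdl semi-negative sHm smoothing}, approximate $\varphi_L$ from above by smooth plurisubharmonic $\varphi_{L,\nu}\downarrow\varphi_L$ by convolution with an approximate identity, and study the products $g_\nu e^{\varphi_{L,\nu}}$; dually, for $(b)$ one approximates $h_L$ from below by smooth positive metrics $h_{L,\nu}\uparrow h_L$ via Proposition~\ref{line bdl sHm smoothing}.

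For $(a)$ and $(c)$ each $g_\nu e^{\varphi_{L,\nu}}$ is a \emph{smooth} metric with curvature $i\Theta_{g_\nu e^{\varphi_{L,\nu}}}=i\Theta_{g_\nu}-\idd\varphi_{L,\nu}\otimes\mathrm{id}$, and since $-\idd\psi\otimes\mathrm{id}$ is Griffiths semi-negative, and in fact Nakano semi-negative (because $\idd\psi\otimes\mathrm{id}$ is Nakano semi-positive), for any plurisubharmonic $\psi$, the metric $g_\nu e^{\varphi_{L,\nu}}$ is smooth Griffiths (resp.\ Nakano) semi-negative. Moreover $g_\nu e^{\varphi_{L,\nu}}$ is decreasing, being the product of the decreasing family of positive semidefinite forms $g_\nu$ and the decreasing family of positive scalars $e^{\varphi_{L,\nu}}$, and it converges a.e.\ to $h_E^{*}e^{\varphi_L}=(h_E\otimes h_L)^{*}$ (still a singular Hermitian metric, since $0<\det h_E^{*}\cdot e^{r\varphi_L}<+\infty$ a.e.). For $(a)$ the converse direction of Proposition~\ref{vect bdl semi-negative sHm smoothing}$(a)$ then gives that $(h_E\otimes h_L)^{*}$ is Griffiths semi-negative, i.e.\ $h_E\otimes h_L$ is Griffiths semi-positive. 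For $(c)$ I will use that a \emph{decreasing} sequence of smooth Nakano semi-negative metrics converging a.e.\ has Nakano semi-negative limit: by Cauchy--Schwarz the coefficients of $T^{g_\nu e^{\varphi_{L,\nu}}}_u$ are dominated by those of the smooth form $T^{g_1 e^{\varphi_{L,1}}}_u$, so dominated convergence yields $T^{g_\nu e^{\varphi_{L,\nu}}}_u\to T^{(h_E\otimes h_L)^{*}}_u$ in the sense of currents and hence $\idd T^{(h_E\otimes h_L)^{*}}_u\ge 0$; thus $(h_E\otimes h_L)^{*}$ is Nakano semi-negative in the sense of Definition~\ref{def Nakano semi-negative as Raufi}, i.e.\ $h_E\otimes h_L$ is dual Nakano semi-positive by Definition~\ref{def dual Nakano semi-posi sing}.

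For $(b)$, note first that $h_E\otimes h_L$ is Griffiths semi-positive by $(a)$, so Definition~\ref{def Nakano semi-posi sing} is meaningful for it. Fix $k$, a Stein coordinate $S$ (exhausted by relatively compact Stein subsets), a \kah metric $\omega_S$, a smooth $h_F$ on a bundle $F$ with $A^{n,s}_{F,h_F,\omega_S}>0$ for $s\ge k$, and a $\overline{\partial}$-closed $f\in L^2_{n,q}(S,E\otimes L\otimes F,h_E\otimes h_L\otimes h_F,\omega_S)$ with $q\ge k$ and $C:=\int_S\langle B_{h_F,\omega_S}^{-1}f,f\rangle_{h_E\otimes h_L\otimes h_F,\omega_S}\,dV_{\omega_S}<+\infty$, where $B_{h_F,\omega_S}=[i\Theta_{F,h_F}\otimes\mathrm{id}_{E\otimes L},\Lambda_{\omega_S}]$. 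Take smooth positive $h_{L,\nu}\uparrow h_L$ from Proposition~\ref{line bdl sHm smoothing} and apply the $L^2$-type Nakano semi-positivity of $h_E$ with the auxiliary bundle $F\otimes L$ carrying the smooth metric $h_F\otimes h_{L,\nu}$, whose operator $A^{n,s}_{F\otimes L,h_F\otimes h_{L,\nu},\omega_S}=[i\Theta_{F,h_F}\otimes\mathrm{id}_{E\otimes L},\Lambda_{\omega_S}]+[i\Theta_{L,h_{L,\nu}}\otimes\mathrm{id}_{E\otimes F},\Lambda_{\omega_S}]$ is $>0$ for $s\ge k$ by Proposition~\ref{A_(E+F)>C_E+C_F if A_E>C_E and A_F>C_F} (the last term being $\ge 0$ since $i\Theta_{L,h_{L,\nu}}\ge 0$). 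This produces $u_\nu$ with $\overline{\partial}u_\nu=f$ and $\int_S|u_\nu|^2_{h_E\otimes h_{L,\nu}\otimes h_F,\omega_S}\,dV_{\omega_S}\le\int_S\langle\widetilde B_\nu^{-1}f,f\rangle_{h_E\otimes h_{L,\nu}\otimes h_F,\omega_S}\,dV_{\omega_S}$, where $\widetilde B_\nu=[i\Theta_{F\otimes L,h_F\otimes h_{L,\nu}}\otimes\mathrm{id}_E,\Lambda_{\omega_S}]\ge B_{h_F,\omega_S}>0$ on $(n,q)$-forms; hence $\widetilde B_\nu^{-1}\le B_{h_F,\omega_S}^{-1}$ and, using also $h_{L,\nu}\le h_L$, the right-hand side is $\le C$ for every $\nu$. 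Thus $(u_\nu)$ is bounded in the fixed Hilbert space with weight $h_E\otimes h_{L,1}\otimes h_F$; a weakly convergent subsequence $u_\nu\rightharpoonup u$ solves $\overline{\partial}u=f$, and weak lower semicontinuity together with $h_{L,\nu_0}\le h_{L,\nu}$ for $\nu\ge\nu_0$ gives $\int_S|u|^2_{h_E\otimes h_{L,\nu_0}\otimes h_F,\omega_S}\,dV_{\omega_S}\le C$; letting $\nu_0\to\infty$ and using monotone convergence ($h_{L,\nu_0}\uparrow h_L$) yields $\int_S|u|^2_{h_E\otimes h_L\otimes h_F,\omega_S}\,dV_{\omega_S}\le C$, as required.

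The routine parts are $(a)$ and $(c)$, which are monotone smoothing plus the known approximation Proposition~\ref{vect bdl semi-negative sHm smoothing} (with the elementary remark that a decreasing sequence of smooth Nakano semi-negative metrics has Nakano semi-negative limit). The main obstacle is the bookkeeping in $(b)$: one must track that the metrics $h_{L,\nu}$ increase to $h_L$ while the curvature corrections stay nonnegative, verify the comparison $\widetilde B_\nu\ge B_{h_F,\omega_S}$ and the induced reversed inequality for the inverses on $(n,q)$-forms, and run the weak-compactness argument over an exhaustion of a possibly non-relatively-compact Stein coordinate; the finiteness hypothesis on the right-hand side is precisely what makes the uniform $L^2$-bound available.
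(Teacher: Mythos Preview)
Your proof is correct; part $(b)$ is essentially the paper's argument, while $(a)$ and $(c)$ take somewhat different routes.

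For $(a)$, the paper bypasses approximation entirely: it uses the characterization that $h_E$ is Griffiths semi-positive iff $\log|u|^2_{h_E^*}$ is plurisubharmonic for every local holomorphic section $u$ of $E^*$, and then directly computes $i\partial\overline{\partial}\log|u|^2_{h_E^*\otimes h_L^*}=i\partial\overline{\partial}\log|u|^2_{h_E^*}+i\Theta_{L,h_L}\geq 0$ in the sense of currents. Your approximation argument is fine but longer.

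For $(c)$, your approach is in fact more direct than the paper's. The paper routes through Proposition~\ref{key Prop of sing Nakano semi-negative}: to conclude that $h_E^*\otimes h_L^*$ is Nakano semi-negative, it shows that each convolution $(h_E^*\otimes h_L^*)\ast\rho_\nu$ is smooth Nakano semi-negative, which in turn requires first treating the case of a smooth line-bundle metric, then approximating $h_L^*$ by $h_L^\mu$ and proving via reverse Fatou that $((h_E^*\otimes h_L^\mu)\ast\rho_\nu)_\mu$ decreases to $(h_E^*\otimes h_L^*)\ast\rho_\nu$. Your dominated-convergence argument---the coefficients $(u_j,u_k)_{g_\nu e^{\varphi_{L,\nu}}}$ are bounded by the smooth function $e^{\varphi_{L,1}}\|u_j\|_{g_1}\|u_k\|_{g_1}$ and converge a.e., hence $T^{g_\nu e^{\varphi_{L,\nu}}}_u\to T^{(h_E\otimes h_L)^*}_u$ as currents---achieves the same conclusion in one step. (One nitpick: ``dominated by those of $T^{g_1 e^{\varphi_{L,1}}}_u$'' is not literally correct for the off-diagonal coefficients, but the Cauchy--Schwarz bound you clearly intend works.) The paper's more elaborate machinery is not wasted, however: Proposition~\ref{key Prop of sing Nakano semi-negative} is reused to prove Theorem~\ref{Grif * det Grif is dual Nakano}.

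For $(b)$, your sketch matches the paper's argument. One point to make explicit: since the $h_{L,\nu}$ from Proposition~\ref{line bdl sHm smoothing} are defined only on relatively compact Stein subsets, the $L^2$-type Nakano semi-positivity of $h_E$ must first be applied on each $S_j$ in a Stein exhaustion of $S$ (as the paper does), producing $u_{j,\nu}$, before passing to weak limits in $\nu$ and then in $j$; you cannot literally write $\int_S|u_\nu|^2_{h_E\otimes h_{L,\nu}\otimes h_F,\omega_S}$ over all of $S$. You acknowledge this in your final paragraph, so this is a notational sloppiness rather than a gap.
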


\begin{proof}
    $(a)$ We already know that $h_E$ is Griffiths semi-positive if and only if $\log|u|_{h_E^*}$ is plurisubharmonic, i.e. $\idd\log|u|^2_{h_E^*}\geq0$ in the sense of currents, for any local holomorphic section $u\in\mathcal{O}(E^*)$ of $E^*$.
    Let $U$ be any open subset such that $L$ is trivial on $U$.
    Hence, for any local holomorphic section $u\in\mathcal{O}(E^*\otimes L^*)(U)=\mathcal{O}(E^*)(U)$, we have that $\idd\log|u|^2_{h_E^*\otimes h_L^*}=\idd\log|u|^2_{h_E^*}-\idd\log h_L\geq i\Theta_{L,h_L}\geq0$ in the sense of currents.

    $(b)$ We fix any positive integer $k\in\{1,\cdots,n\}$, any Stein coordinate $S$, any \kah metric $\omega_S$ on $S$ and any smooth Hermitian metric $h_F$ on any holomorphic vector bundle $F$ such that $A^{n,s}_{F,h_F,\omega_S}>0$ for $s\geq k$.
    
    By Lemma \ref{trivial of E on Stein-H}, there is a hypersurface $H$ such that $S_H\!:=\!S\!\setminus\!H$ is also Stein and $L|_{S_H}$ is trivial.
    There is a strictly plurisubharmonic function $\psi$ on $S_H$ which is smooth exhaustive and $\sup_{S_H}\psi=\!+\infty$.  
    Let $S_H(j):=\{z\in S_H\mid \psi(z)<j\}$ be Stein sublevel sets.
    Fixed $j\in\mathbb{N}$. There is $\nu_1\in\mathbb{N}$ such that for any integer $\nu\geq\nu_1$, $S_H(j)\Subset S^{\nu_1}_H \Subset S^\nu_H$. 
    
    By Proposition \ref{vect bdl semi-negative sHm smoothing}, there is a sequence of smooth semi-positive metrics $(h_\nu)_{\nu\in\mathbb{N}}$ increasing to $h_L$, 
    where $h_\nu:=(h_L^*\ast\rho_\nu)^*$ defined on $S^\nu_H$. 
    For any $\nu\in\mathbb{N}$, we have that $A^{n,t}_{L,h_\nu,\omega_S}\geq0$ for $t\geq1$ 
    and $A^{n,s}_{F\otimes L,h_F\otimes h_\nu,\omega_S}\geq0$ for $s\geq k$ by Proposition \ref{A_(E+F)>C_E+C_F if A_E>C_E and A_F>C_F} and that 
    \begin{align*}
        B_{h_F\otimes h_\nu,\omega_S}
        &=[i\Theta_{F,h_F}\otimes\mathrm{id}_{L\otimes E},\Lambda_{\omega_S}]+[i\Theta_{L,h_\nu}\otimes\mathrm{id}_{F\otimes E},\Lambda_{\omega_S}]\\
        &\geq [i\Theta_{F,h_F}\otimes\mathrm{id}_{L\otimes E},\Lambda_{\omega_S}]=B_{h_F,\omega_S},
    \end{align*}
    where $B_{h_F,\omega_S}>0$ for any $(n,q)$-forms with $q\geq k$.

    Here, for any positive integers $q\geq k$ and any $\overline{\partial}$-closed $f\in L^2_{n,q}(S,E\otimes L\otimes F,h_E\otimes h_L\otimes h_F,\omega_S)$ such that $\int_S\langle B^{-1}_{h_F,\omega_S}f,f\rangle_{h_E\otimes h_L\otimes h_F,\omega_S}dV_{\omega_S}<+\infty$,
    we have that 
    \begin{align*}
        \int_S\langle B^{-1}_{h_F,\omega_S}f,f\rangle_{h_E\otimes h_L\otimes h_F,\omega_S}dV_{\omega_S}
        &\geq\int_{S_H(j)}\langle B^{-1}_{h_F,\omega_S}f,f\rangle_{h_E\otimes h_\nu\otimes h_F,\omega_S}dV_{\omega_S}\\
        &\geq\int_{S_H(j)}\langle B^{-1}_{h_F\otimes h_\nu,\omega_S}f,f\rangle_{h_E\otimes h_\nu\otimes h_F,\omega_S}dV_{\omega_S}.
    \end{align*}

    By $L^2$-type Nakano semi-positivity of $h_E$, there exists $u_{j,\nu}\in L^2_{n,q-1}(S_H(j),E\otimes L\otimes F,h_E\otimes h_\nu\otimes h_F,\omega_S)$ such that $\overline{\partial}u_{j,\nu}=f$ on $S_H(j)$ and
    \begin{align*}
        \int_{S_H(j)}|u_{j,\nu}|^2_{h_E\otimes h_\nu\otimes h_F,\omega_S}dV_{\omega_S}&\leq\int_{S_H(j)}\langle B^{-1}_{h_F\otimes h_\nu,\omega_S}f,f\rangle_{h_E\otimes h_\nu\otimes h_F,\omega_S}dV_{\omega_S}\\
        &\leq\int_S\langle B^{-1}_{h_F,\omega_S}f,f\rangle_{h_E\otimes h_L\otimes h_F,\omega_S}dV_{\omega_S}<+\infty.
    \end{align*}

    Since the monotonicity to $\nu$ of $|\bullet|^2_{h_E\otimes h_\nu\otimes h_F,\omega}$ by the increase in $(h_\nu)_{\nu\in\mathbb{N}}$, the sequence
    $(u_{j,\nu})_{\nu_1\leq\nu\in\mathbb{N}}$ forms a bounded sequence in $L^2_{n,q-1}(S_H(j),E\otimes L\otimes F,h_E\otimes h_{\nu_1}\otimes h_F,\omega_S)$.
    Thus, we can obtain a weakly convergence subsequence in $L^2_{n,q-1}(S_H(j),E\otimes L\otimes F,h_E\otimes h_{\nu_1}\otimes h_F,\omega_S)$. 
    By using a diagonal argument, we get a subsequence $(u_{j,\nu_k})_{k\in\mathbb{N}}$ of $(u_{j,\nu})_{\nu\in\mathbb{N}}$ converging weakly in $L^2_{n,q-1}(S_H(j),E\otimes L\otimes F,h_E\otimes h_{\nu_1}\otimes h_F,\omega_S)$ for any $\nu_1$, where
    $u_{j,\nu_k}\in L^2_{n,q-1}(S_H(j),E\otimes L\otimes F,h_E\otimes h_{\nu_k}\otimes h_F,\omega_S)\subset L^2_{n,q-1}(S_H(j),E\otimes L\otimes F,h_E\otimes h_{\nu_1}\otimes h_F,\omega_S)$.

    Denote $u_j$ by the weakly limit of $(u_{j,\nu_k})_{k\in\mathbb{N}}$. Then $u_j$ satisfies $\overline{\partial}u_j=f$ on $S_H(j)$ and 
    \begin{align*}
        \int_{S_H(j)}|u_j|^2_{h_E\otimes h_{\nu_k}\otimes h_F,\omega_S}\leq\int_S\langle B^{-1}_{h_F,\omega_S}f,f\rangle_{h_E\otimes h_L\otimes h_F,\omega_S}dV_{\omega_S}<+\infty,
    \end{align*}
    for any $k\in\mathbb{N}$. Taking weakly limit $k\to+\infty$ and using the monotone convergence theorem, we have the following estimate
    \begin{align*}
        \int_{S_H(j)}|u_j|^2_{h_E\otimes h_L\otimes h_F,\omega_S}\leq\int_S\langle B^{-1}_{h_F,\omega_S}f,f\rangle_{h_E\otimes h_L\otimes h_F,\omega_S}dV_{\omega_S}<+\infty.
    \end{align*}

    Here, let $\chi_j\in\mathscr{D}(S_H(j),\mathbb{R}_{\geq0})$ be a cut-off function satisfies $\chi_j\equiv1$ on $S_H(j-1)$, $\mathrm{supp}\,\chi_j\subset\subset S_H(j)$ and $0\leq\chi_j\leq1$.
    We define $v_j:=\chi_ju_j\in L^2_{n,q-1}(S_H,E\otimes L\otimes F,h_E\otimes h_L\otimes h_F,\omega_S)$ then $v_j$ satisfies $\overline{\partial}v_j=f$ on $S_H(j-1)$ and 
    \begin{align*}
        \int_{S_H(j)}|v_j|^2_{h_E\otimes h_L\otimes h_F,\omega_S}\leq\int_{S_H(j)}|u_j|^2_{h_E\otimes h_L\otimes h_F,\omega_S}\leq\int_S\langle B^{-1}_{h_F,\omega_S}f,f\rangle_{h_E\otimes h_L\otimes h_F,\omega_S}dV_{\omega_S}<+\infty.
    \end{align*}

    Repeating the above argument and taking weak limit $j\to+\infty$, we get the solution $v \in L^2_{n,q-1}(S_H,E\otimes L\otimes F,h_E\otimes h_L\otimes h_F,\omega_S)$ of $\overline{\partial}v=f$ on $S_H$ such that 
    \begin{align*}
        \int_{S_H}|v|^2_{h_E\otimes h_L\otimes h_F,\omega_S}dV_{\omega_S}=\int_S|v|^2_{h_E\otimes h_L\otimes h_F,\omega_S}dV_{\omega_S}\leq\int_S\langle B^{-1}_{h_F,\omega_S}f,f\rangle_{h_E\otimes h_L\otimes h_F,\omega_S}dV_{\omega_S},
    \end{align*}
    where Lebesgue measure of $H$ is zero.
    By Lemma \ref{Ext d-equation for hypersurface} in $\S4.1$, letting $v=0$ on $H$ then we have that $\overline{\partial}v=f$ on $S$.
    Hence, 
    $h_E\otimes h_L$ is also $L^2$-type Nakano semi-positive.

    $(c)$ For simplicity, let $h_L^*$ be singular semi-positive and $h_E$ be Nakano semi-negative. We prove that $h_E\otimes h_L$ is Nakano semi-negative.
    Since Proposition \ref{key Prop of sing Nakano semi-negative} and Nakano semi-negativity is locally property, it is sufficient to show that $(h_E\otimes h_L)\ast\rho_\nu$ is Nakano semi-negative on any locally open subset for each $\nu\in\mathbb{N}$.

    First, for a smooth semi-negative Hermitian metric $h$ on $L$, we show that $h\otimes h_E$ is Nakano semi-negative.
    For any fixed point $x_0\in X$, there exists an open Stein neighborhood $U$ of $x_0$ such that $E|_U$ and $L|_U$ are trivial. Let $h_E^\nu:=h_E\ast\rho_\nu$, where $(\rho_\nu)_{\nu\in\mathbb{N}}$ is an approximate identity with respect to $U$.
    By Proposition \ref{vect bdl semi-negative sHm smoothing}, $h_E^\nu$ is smooth Nakano semi-negative Hermitian metric on $E$ over $U^\nu$.
    For any $n$-tuple locally holomorphic sections $u=(u_1,\cdots,u_n)$ of $E$, i.e. $u_j\in H^0(U,E)$, we get
    \begin{align*}
        (u_j,u_k)_{h_E^\nu}(z)=\int (u_j,u_k)_{h_E^{(w)}}(z)\rho_\nu(w)dV_w, \quad
        T^{h_E^\nu}_u(z)=\int T^{h_E^{(w)}}_u(z)\rho_\nu(w)dV_w,
    \end{align*}
    where $h_E^{(w)}(z)=h_E(z-w)$.
    By Nakano semi-negativity of $h\otimes h_E^\nu$, for any test form $\phi$ we have that $\idd T_u^{h\otimes h_E^\nu}(\phi)\geq0$ and that
    \begin{align*}
        0\leq\lim_{\nu\to+\infty}\idd T_u^{h\otimes h_E^\nu}(\phi)&=\lim\int\phi\idd T_u^{h\otimes h_E^\nu}=\lim\int T_u^{h\otimes h_E^\nu}\wedge\idd\phi
        =\lim\int h\cdot T_u^{h_E^\nu}\wedge\idd\phi\\
        &=\lim\int_z \Bigl\{ h\cdot \int_wT^{h_E^{(w)}}_u(z)\rho_\nu(w)dV_w\Bigr\}\wedge\idd\phi\\
        &=\lim\int_w \Bigl\{ \int_z h\cdot T^{h_E^{(w)}}_u(z)\wedge\idd\phi\Bigr\}\rho_\nu(w)dV_w\\
        &=\lim\int_w \Bigl\{ \int_z T^{h\otimes h_E^{(w)}}_u(z)\wedge\idd\phi\Bigr\}\rho_\nu(w)dV_w\\
        &=\lim\int_w \idd T^{h\otimes h_E^{(w)}}_u(\phi)\rho_\nu(w)dV_w\\
        &=\lim \langle \rho_\nu,\idd T^{h\otimes h_E^{(\bullet)}}_u(\phi)\rangle=\langle \delta_0,\idd T^{h\otimes h_E^{(\bullet)}}_u(\phi)\rangle\\
        &=\idd T^{h\otimes h_E^{(0)}}_u(\phi)=\idd T^{h\otimes h_E}_u(\phi),
    \end{align*}
    i.e. $\idd T^{h\otimes h_E}_u\geq0$, where the function $F=\idd T_u^{h\otimes h_E^{(\bullet)}}(\phi):\mathrm{int}(\mathrm{supp}\,\rho_\nu)\to\mathbb{R}$ is continuous near $0$ by smoothness of $h$, similar to the proof of Proposition \ref{key Prop of sing Nakano semi-negative}.

    Finally, we show that $(h_E\otimes h_L)\ast\rho_\nu$ is Nakano semi-negative.
    Let $h_L^\mu:=h_L\ast\rho_\mu$ then $(h_L^\mu)_{\mu\in\mathbb{N}}$ is a sequence of smooth semi-negative Hermitian metrics decreasing to $h_L$ a.e. pointwise by Griffiths semi-negativity of $h_L$ and Proposition \ref{vect bdl semi-negative sHm smoothing}.
    By the above, the sequence of singular Hermitian metrics $(h_E\otimes h_L^\mu)_{\mu\in\mathbb{N}}$ is a sequence of Nakano semi-negative Hermitian metrics decreasing to $h_E\otimes h_L$ a.e. pointwise.

    Therefore, for any locally constant section $s\in\mathcal{O}_x(E\otimes L)$ and any positive integers $\lambda>\mu$, we get the inequality $||s||^2_{h_E\otimes h_L^\mu}\geq||s||^2_{h_E\otimes h_L^\lambda}$, i.e. $f_{s,\mu,\lambda}:=||s||^2_{h_E\otimes h^\mu}-||s||^2_{h_E\otimes h^\lambda}\geq0.$
    In particular, $f_{s,\mu,+\infty}=||s||^2_{h_E\otimes h_L^\mu}-||s||^2_{h_E\otimes h_L}\geq0$ a.e. as the case where $\lambda=+\infty$.

    We fixed a positive integer $\nu$. For any positive integers $\lambda>\mu$, we have that
    \begin{align*}
        0\leq f_{s,\mu,\lambda}\ast\rho_\nu&=(||s||^2_{h_E\otimes h_L^\mu}-||s||^2_{h_E\otimes h_L^\lambda})\ast\rho_\nu
        =||s||^2_{(h_E\otimes h_L^\mu)\ast\rho_\nu}-||s||^2_{(h_E\otimes h_L^\lambda)\ast\rho_\nu}
    \end{align*}
    and that $0\leq f_{s,\mu,+\infty}\ast\rho_\nu=||s||^2_{(h_E\otimes h_L^\mu)\ast\rho_\nu}-||s||^2_{(h_E\otimes h_L)\ast\rho_\nu}$.
    From reverse Fatou's lemma, the decreasing sequence of smooth semi-positive functions $(f_{s,\mu,+\infty}\ast\rho_\nu)_{\mu\in\mathbb{N}}$ converges to $0$ pointwise.
    In fact, $f_{s,\mu,+\infty}\ast\rho_\nu-f_{s,\mu+1,+\infty}\ast\rho_\nu=f_{s,\mu,\mu+1}\ast\rho_\nu\geq0$ and 
    \begin{align*}
        0&\leq\lim_{\mu\to+\infty}f_{s,\mu,+\infty}\ast\rho_\nu(z)=\lim_{\mu\to+\infty}\int f_{s,\mu,+\infty}(z-w)\rho_\nu(w)dV_w\\
        &\leq\limsup_{\mu\to+\infty}\int f_{s,\mu,+\infty}(z-w)\rho_\nu(w)dV_w\leq\int\limsup_{\mu\to+\infty} f_{s,\mu,+\infty}(z-w)\rho_\nu(w)dV_w=0,
    \end{align*}
    where $0\leq f_{s,1,+\infty}\leq||s||^2_{h_E\otimes h_L^1}=h_L^1||s||^2_{h_E}$ is locally integrable by smoothness of $h_L^1$ and plurisubharmonicity of $||s||^2_{h_E}$.

    Hence, the sequence of smooth Hermitian metrics $((h_E\otimes h_L^\mu)\ast\rho_\nu)_{\mu\in\mathbb{N}}$ decreases to $(h_E\otimes h_L)\ast\rho_\nu$ pointwise and each metric $(h_E\otimes h_L^\mu)\ast\rho_\nu$ is smooth Nakano semi-negative by $(b)$ of Proposition \ref{vect bdl semi-negative sHm smoothing}. 
    Thus $(h_E\otimes h_L)\ast\rho_\nu$ is smooth Nakano semi-negative for each $\nu$ (see [38,\,Corollary\,5.6\, and \,Theorem\,1.7]).    
\end{proof}


\begin{corollary}\label{s d-posi * Nak semi-posi then s Nak d-posi}
    Let $X$ be a \kah manifold and $\omega$ be a \kah metric. 
    Let $L$ and $E$ be a holomorphic line bundle and a holomorphic vector bundle over $X$ equipped with singular Hermitian metrics $h_L$ and $h_E$, respectively.
    We have the following
    \begin{itemize}
        \item [$(a)$] If $h_L$ is singular semi-positive and $h_E$ is $L^2$-type strictly Nakano $\delta_\omega$-positive then $h_E\otimes h_L$ is also $L^2$-type strictly Nakano $\delta_\omega$-positive.
        \item [$(b)$] If $h_L$ is strictly $\delta_\omega$-positive 
        and $h_E$ is $L^2$-type Nakano semi-positive then $h_E\otimes h_L$ is $L^2$-type strictly Nakano $\delta_\omega$-positive.
        \item [$(c)$] If $h_L$ is singular semi-positive and $h_E$ is strictly dual Nakano $\delta_\omega$-positive then $h_E\otimes h_L$ is also strictly dual Nakano $\delta_\omega$-positive.
        \item [$(d)$] If $h_L$ is strictly $\delta_\omega$-positive 
        and $h_E$ is dual Nakano semi-positive then $h_E\otimes h_L$ is strictly dual Nakano $\delta_\omega$-positive.
    \end{itemize}
\end{corollary}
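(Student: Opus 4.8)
The plan is to deduce each of the four assertions directly from the matching part of Theorem~\ref{psef * Nak semi-posi then Nak semi-posi}, by unwinding Definition~\ref{def sing strictly positive of Grif and (dual) Nakano} and exploiting that the exponential weight $e^{\delta\varphi}$ may be attached to either tensor factor. First I would fix an arbitrary open subset $U\subseteq X$ together with a \kah potential $\varphi$ of $\omega$ on $U$; by Definition~\ref{def sing strictly positive of Grif and (dual) Nakano} (resp. Definition~\ref{def of psef & big} for the hypotheses on $h_L$ in $(b)$ and $(d)$) it suffices to verify the relevant semi-positivity of $(h_E\otimes h_L)e^{\delta\varphi}$ on $U$ in each case. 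All the positivity notions appearing here are local, so restricting the global conclusions of Theorem~\ref{psef * Nak semi-posi then Nak semi-posi} to $U$ is legitimate.

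For $(a)$: since $h_E$ is $L^2$-type strictly Nakano $\delta_\omega$-positive, the metric $h_Ee^{\delta\varphi}$ is $L^2$-type Nakano semi-positive on $U$, while $h_L$ is singular semi-positive there. Writing $(h_E\otimes h_L)e^{\delta\varphi}=(h_Ee^{\delta\varphi})\otimes h_L$ and applying Theorem~\ref{psef * Nak semi-posi then Nak semi-posi}$(b)$ on $U$ gives that $(h_E\otimes h_L)e^{\delta\varphi}$ is $L^2$-type Nakano semi-positive on $U$, hence $h_E\otimes h_L$ is $L^2$-type strictly Nakano $\delta_\omega$-positive. Part $(c)$ is identical with "$L^2$-type Nakano" replaced by "dual Nakano" throughout and Theorem~\ref{psef * Nak semi-posi then Nak semi-posi}$(c)$ used in place of $(b)$.

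For $(b)$: here $h_E$ is only $L^2$-type Nakano semi-positive, but $h_L$ is strictly $\delta_\omega$-positive, so by Definition~\ref{def of psef & big} the metric $h_Le^{\delta\varphi}$ is singular semi-positive on $U$. Now write $(h_E\otimes h_L)e^{\delta\varphi}=h_E\otimes(h_Le^{\delta\varphi})$ and apply Theorem~\ref{psef * Nak semi-posi then Nak semi-posi}$(b)$ with $h_Le^{\delta\varphi}$ in the role of the singular semi-positive line bundle metric; the output is $L^2$-type Nakano semi-positive on $U$, which is exactly what is needed. Part $(d)$ is the same argument with Theorem~\ref{psef * Nak semi-posi then Nak semi-posi}$(c)$ in place of $(b)$.

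I do not expect a genuine obstacle: the only point requiring a moment's care is the bookkeeping of where the weight $e^{\delta\varphi}$ is placed — on the (dual) Nakano factor $h_E$ in $(a)$ and $(c)$, and on the line bundle factor $h_L$ in $(b)$ and $(d)$ — after which the statement is an immediate consequence of Theorem~\ref{psef * Nak semi-posi then Nak semi-posi} together with the locality of all the positivity notions involved.
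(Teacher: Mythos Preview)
Your proposal is correct and is exactly the argument the paper has in mind: the corollary is stated right after Theorem~\ref{psef * Nak semi-posi then Nak semi-posi} with the remark that it follows immediately from that theorem together with Definition~\ref{def sing strictly positive of Grif and (dual) Nakano}, and your write-up simply makes this explicit by placing the weight $e^{\delta\varphi}$ on $h_E$ in $(a)$, $(c)$ and on $h_L$ in $(b)$, $(d)$ before invoking the appropriate part of the theorem.
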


Since Griffiths and dual Nakano semi-positivity are local properties, we get following.

\begin{proposition}\label{s-positivity for two metric}
    Let $X$ be a \kah manifold and $\omega,\gamma$ be \kah metrics on $X$.
    Let $E$ be a holomorphic vector bundle over $X$ equipped with a singular Hermitian metric $h$. 
    We assume that there exists a positive number $c>0$ such that $\omega\geq c\gamma$. Then we get 
    \begin{itemize}
        \item [$(a)$] If $h$ is strictly Griffiths $\delta_\omega$-positive then 
        $h$ is strictly Griffiths $c\delta_\gamma$-positive.
        \item [$(b)$] If $h$ is strictly dual Nakano $\delta_\omega$-positive then 
        $h$ is strictly dual Nakano $c\delta_\gamma$-positive.
    \end{itemize}
\end{proposition}

\begin{proof}
    $(b)$ We show that 
    for any open subset $U$ and any \kah potential $\psi$ of $\gamma$ on $U$, $he^{c\delta\psi}$ is dual Nakano semi-positive on $U$.
    Since dual Nakano semi-positivity is a local property, it is sufficient to show that for any $x_0\in U$, there exists a neighborhood $B$ of $x_0$ such that $B\subset U$ and $he^{c\delta\psi}$ is dual Nakano semi-positive on $B$.
    Here, we take $B$ such that the \kah potential $\varphi$ of $\omega$ on $B$ exists. Then $\varphi-c\psi$ is plurisubharmonic by $\omega\geq c\gamma$ and $e^{-\delta(\varphi-c\psi)}$ is semi-positive Hermitian metric on trivial line bundle.
    From the assumption, we have that $he^{\delta\varphi}$ is dual Nakano semi-positive on $B$.
    By $(c)$ of Theorem \ref{psef * Nak semi-posi then Nak semi-posi}, we have that $he^{\delta\varphi}\otimes e^{-\delta(\varphi-c\psi)}=he^{c\delta\psi}$ is dual Nakano semi-positive on $B$.

    $(a)$ It is shown in the same way as $(b)$.
\end{proof}

Finally, we obtain the following dual-type generalization of Demailly and Skoda's theorem \cite{DS80} to singularities.

\begin{theorem}$\mathrm{(=Theorem\,\ref{Grif * det Grif is dual Nakano})}$
    Let $X$ be a complex manifold and $E$ be a holomorphic vector bundle over $X$ equipped with a singular Hermitian metric $h$.
    If $h$ is Griffiths semi-positive then $h\otimes\mathrm{det}\,h$ is dual Nakano semi-positive.
\end{theorem}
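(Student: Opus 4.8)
The plan is to dualise and reduce the statement to a singular form of Demailly and Skoda's theorem for Griffiths semi-negative metrics, and then to prove that by a two-step smoothing argument that feeds Proposition \ref{key Prop of sing Nakano semi-negative}. First I would set $G:=E^{*}$ and $g:=h^{*}$. By Definition \ref{def Griffiths semi-posi sing}, $h$ is Griffiths semi-positive precisely when $g$ is Griffiths semi-negative; and under the canonical identifications $(E\otimes\mathrm{det}\,E)^{*}=E^{*}\otimes(\mathrm{det}\,E)^{*}=G\otimes\mathrm{det}\,G$, with dual metric $(h\otimes\mathrm{det}\,h)^{*}=h^{*}\otimes\mathrm{det}(h^{*})=g\otimes\mathrm{det}\,g$, Definition \ref{def dual Nakano semi-posi sing} turns the desired conclusion into the statement that $g\otimes\mathrm{det}\,g$ is a Nakano semi-negative singular Hermitian metric on $G\otimes\mathrm{det}\,G$. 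Since Nakano semi-negativity is a local notion, it is enough to prove this over an arbitrary relatively compact Stein open set $U$ on which $G$, hence also $\mathrm{det}\,G$ and $G\otimes\mathrm{det}\,G$, is trivial.

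Next I would smooth $g$. Fixing an approximate identity $(\rho_\nu)_{\nu\in\mathbb{N}}$ with respect to $U$, Proposition \ref{vect bdl semi-negative sHm smoothing}$(a)$ provides smooth Griffiths semi-negative Hermitian metrics $g_\nu:=g\ast\rho_\nu$ on $G$ over $U^{\nu}$ with $(g_\nu)_{\nu\in\mathbb{N}}$ decreasing to $g$ $a.e.$ pointwise. The classical Demailly--Skoda theorem, in the semi-negative form obtained from \cite{DS80} and \cite{LSY13} by passing to duals, then gives that $g_\nu\otimes\mathrm{det}\,g_\nu$ is a smooth \emph{Nakano} semi-negative Hermitian metric on $G\otimes\mathrm{det}\,G$ for each $\nu$. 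Moreover $g_\nu\geq g_{\nu+1}>0$ as Hermitian forms forces $\mathrm{det}\,g_\nu\geq\mathrm{det}\,g_{\nu+1}$ and hence $g_\nu\otimes\mathrm{det}\,g_\nu\geq g_{\nu+1}\otimes\mathrm{det}\,g_{\nu+1}$, so $(g_\nu\otimes\mathrm{det}\,g_\nu)_{\nu\in\mathbb{N}}$ is a decreasing sequence; by continuity of the determinant and of the tensor product it decreases to $g\otimes\mathrm{det}\,g$ $a.e.$ pointwise, dominated by the smooth metric $g_1\otimes\mathrm{det}\,g_1$.

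Then I would push this through a second convolution to meet the hypothesis of Proposition \ref{key Prop of sing Nakano semi-negative}. Fix $\mu\in\mathbb{N}$. Since each $g_\nu\otimes\mathrm{det}\,g_\nu$ is smooth Nakano semi-negative, $(g_\nu\otimes\mathrm{det}\,g_\nu)\ast\rho_\mu$ is again smooth Nakano semi-negative by Proposition \ref{vect bdl semi-negative sHm smoothing}$(b)$; convolution with the nonnegative kernel $\rho_\mu$ preserves the ordering, so this sequence decreases in $\nu$, and by dominated convergence it decreases pointwise to the smooth metric $(g\otimes\mathrm{det}\,g)\ast\rho_\mu$. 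By the stability of smooth Nakano semi-negativity under decreasing limits with smooth limit — as used in the proof of Theorem \ref{psef * Nak semi-posi then Nak semi-posi}$(c)$, cf. [Wat22a,\,Corollary\,5.6\, and \,Theorem\,1.7] — the metric $(g\otimes\mathrm{det}\,g)\ast\rho_\mu$ is smooth Nakano semi-negative for every $\mu$. Applying Proposition \ref{key Prop of sing Nakano semi-negative} to the trivial bundle $G\otimes\mathrm{det}\,G$ over $U$ equipped with $g\otimes\mathrm{det}\,g$, one concludes that $g\otimes\mathrm{det}\,g$ is Nakano semi-negative, which by the first paragraph is exactly the assertion that $h\otimes\mathrm{det}\,h$ is dual Nakano semi-positive.

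The part I expect to require the most care is the interaction between monotonicity and the algebraic operations: one must check that $g\ast\rho_\nu$ is genuinely decreasing (the plurisubharmonic-regularisation monotonicity behind Proposition \ref{vect bdl semi-negative sHm smoothing}$(a)$), that forming determinants and tensor products preserves the ordering of positive Hermitian forms, and that every intermediate metric stays dominated by the fixed smooth metric $g_1\otimes\mathrm{det}\,g_1$, so that the two successive limits (first in $\nu$, then via Proposition \ref{key Prop of sing Nakano semi-negative}) are legitimate. The duality bookkeeping in the first paragraph — the identifications $(\mathrm{det}\,E)^{*}\cong\mathrm{det}(E^{*})$ and $(h\otimes\mathrm{det}\,h)^{*}\cong h^{*}\otimes\mathrm{det}(h^{*})$, compatibly with all metrics, so that the correct semi-negative version of Demailly--Skoda is the one invoked — is routine but should be stated explicitly.
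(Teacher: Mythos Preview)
Your proposal is correct and follows essentially the same approach as the paper: dualise to the Griffiths semi-negative setting, smooth the metric by convolution, apply the smooth Demailly--Skoda theorem to get a decreasing sequence of smooth Nakano semi-negative metrics $g_\nu\otimes\mathrm{det}\,g_\nu$, convolve once more and pass to the limit to conclude that $(g\otimes\mathrm{det}\,g)\ast\rho_\mu$ is smooth Nakano semi-negative for each $\mu$, and then invoke Proposition \ref{key Prop of sing Nakano semi-negative}. The only cosmetic differences are that the paper swaps the roles of your indices $\nu$ and $\mu$, and justifies the limiting step via a reverse Fatou argument (as in the proof of Theorem \ref{psef * Nak semi-posi then Nak semi-posi}(c)) rather than dominated convergence; since the sequence is monotone and dominated by the smooth $g_1\otimes\mathrm{det}\,g_1$, both routes yield the same conclusion.
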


\begin{proof}
    For simplicity, we prove that if $h$ is Griffiths semi-negative then $h\otimes\mathrm{det}\,h$ is Nakano semi-negative.
    Since Proposition \ref{key Prop of sing Nakano semi-negative}, it is sufficient to show that $(h\otimes\mathrm{det}\,h)\ast\rho_\nu$ is smooth Nakano semi-negative for each $\nu\in\mathbb{N}$.

    By Griffiths semi-negativity of $h^{\mu}:=h\ast\rho_\mu$, we get smooth Nakano semi-negativity of $h^\mu\otimes\mathrm{det}\,h^\mu$.
    Moreover, the sequence of smooth Nakano semi-negative Hermitian metrics $(h^\mu\otimes\mathrm{det}\,h^\mu)_{\mu\in\mathbb{N}}$ decreases and converges to $h\otimes\mathrm{det}\,h$ a.e. pointwise by $(h^\mu)_{\mu\in\mathbb{N}}$ decreasing to $h$. 
    Therefore, for any locally constant section $s\in\mathcal{O}_x(E\otimes\mathrm{det}\,E)$ and any two positive integers $\lambda>\mu$, we get the inequality 
    \begin{align*}
        ||s||^2_{h^\mu\otimes\mathrm{det}\,h^\mu}\geq||s||^2_{h^\lambda\otimes\mathrm{det}\,h^\lambda} \quad \mathrm{i.e.} \quad f_{s,\mu,\lambda}:=||s||^2_{h^\mu\otimes\mathrm{det}\,h^\mu}-||s||^2_{h^\lambda\otimes\mathrm{det}\,h^\lambda}\geq0.
    \end{align*}
    In particular, $f_{s,\mu,+\infty}=||s||^2_{h^\mu\otimes\mathrm{det}\,h^\mu}-||s||^2_{h\otimes\mathrm{det}\,h}\geq0$ a.e. as the case where $\lambda=+\infty$.

    From reverse Fatou's lemma, the decreasing sequence of smooth semi-positive functions $(f_{s,\mu,+\infty}\ast\rho_\nu)_{\mu\in\mathbb{N}}$ converges to $0$ pointwise, a similar to the proof of $(c)$ of Theorem \ref{psef * Nak semi-posi then Nak semi-posi}.
    Hence, the sequence of smooth Hermitian metrics $((h^\mu\otimes\mathrm{det}\,h^\mu)\ast\rho_\nu)_{\mu\in\mathbb{N}}$ decreases to $(h\otimes\mathrm{det}\,h)\ast\rho_\nu$ pointwise and each metric $(h^\mu\otimes\mathrm{det}\,h^\mu)\ast\rho_\nu$ is smooth Nakano semi-negative by $(b)$ of Proposition \ref{vect bdl semi-negative sHm smoothing}.  
    Thus $(h\otimes\mathrm{det}\,h)\ast\rho_\nu$ is smooth Nakano semi-negative for each $\nu$ (see [38,\,Corollary\,5.6\, and \,Theorem\,1.7]). 
\end{proof}

\section{$L^2$-estimates with singular Hermitian metrics}

\subsection{$L^2$-estimates for line bundles possessing singular Hermitian metrics}

In this subsection, we show $L^2$-estimates on weakly pseudoconvex \kah manifolds when a holomorphic line bundle has a singular (semi)-positive Hermitian metric.
First, we give semi-positive case using following lemmas and Demailly's approximation.

\begin{lemma}\label{complete kah on X_j-Z}$\mathrm{(cf.\,[4,\,Theorem\,1.5]})$ 
    Let $X$ be a \kah manifold and $Z$ be an analytic subset of $X$. Assume that $\Omega$ is a relatively open subset of $X$ possessing a complete \kah metric. Then $\Omega\setminus Z$ carries a complete \kah metric.
\end{lemma}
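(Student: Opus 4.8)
The plan is to keep the given complete \kah metric $\omega$ on $\Omega$ and to add to it $\idd$ of a function that vanishes off a neighbourhood of $Z$ and has a mild ($\log\log$-type) pole along $Z$, so that $Z$ is pushed to infinite distance while $d$-closedness (hence the \kah property) and completeness ``towards $\partial\Omega$'' are preserved. Throughout I use the standard criterion: a Hermitian metric $\widetilde\omega$ on a manifold $M$ is complete as soon as there is a function $\tau\colon M\to\mathbb{R}$, bounded below and proper (all sublevel sets relatively compact in $M$), with $|d\tau|_{\widetilde\omega}$ bounded. So it suffices to build on $\Omega\setminus Z$ a \kah metric $\widetilde\omega\geq\tfrac12\omega$ together with such a $\tau$.

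First, the local model along $Z$. Cover a neighbourhood of $Z$ in $\Omega$ by a locally finite family $(U_j)$ on which $Z\cap U_j=\{g_{j,1}=\cdots=g_{j,N_j}=0\}$ with $g_{j,k}$ holomorphic, and, after shrinking, $a_j:=\sum_k|g_{j,k}|^2\leq e^{-1}$. Put $\psi_j:=-\log(-\log a_j)$ on $U_j\setminus Z$ and $L_j:=-\log a_j\geq1$. From the pointwise Cauchy--Schwarz inequality $i\partial a_j\wedge\overline{\partial}a_j\leq a_j\,\idd a_j$ one computes $\idd\psi_j=\tfrac{1}{a_j^2 L_j}\bigl(a_j\,\idd a_j-i\partial a_j\wedge\overline{\partial}a_j\bigr)+i\partial\psi_j\wedge\overline{\partial}\psi_j\geq i\partial\psi_j\wedge\overline{\partial}\psi_j\geq0$, so $\psi_j$ is plurisubharmonic, $\psi_j\leq0$, and $\psi_j\to-\infty$ along $Z$ (for $a_j=|z|^2$ this is exactly the Poincaré metric on the punctured disc). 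Moreover, since any two such systems of defining functions generate ideals with the same radical, the Nullstellensatz/{\L}ojasiewicz inequality shows $a_i$ and $a_j$ are comparable up to powers on $U_i\cap U_j$, whence $\psi_i-\psi_j$ is bounded near $Z$.

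Next, globalize. Choose $\theta_j\geq0$ with $\mathrm{supp}\,\theta_j\subset U_j$, $\sum_j\theta_j\equiv1$ on a neighbourhood of $Z$, and $|\partial\theta_j|^2\leq C\theta_j$ (take $\theta_j=\chi_j^2\big(\sum_k\chi_k^2\big)^{-1}$ for smooth cutoffs $\chi_j$), and set $\Phi:=\sum_j\theta_j\psi_j$. Near $Z$ one expands $\idd\Phi=\sum_j\theta_j\,\idd\psi_j+\sum_j\psi_j\,\idd\theta_j+\sum_j\bigl(i\partial\theta_j\wedge\overline{\partial}\psi_j+i\partial\psi_j\wedge\overline{\partial}\theta_j\bigr)$; using $\sum_j\idd\theta_j=0=\sum_j\partial\theta_j$ to replace $\psi_j$ by the bounded $\psi_j-\psi_{j_0}$, and then absorbing the cross terms with the help of $|\partial\theta_j|^2\leq C\theta_j$ and $\idd\psi_j\geq i\partial\psi_j\wedge\overline{\partial}\psi_j$, one obtains $\idd\Phi\geq-C'\omega+\tfrac12\sum_j\theta_j\,\idd\psi_j$ near $Z$. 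Let $\chi$ be a smooth cutoff equal to $1$ near $Z$ and supported where $\Phi$ is defined, and set $\widetilde\omega:=\omega+\idd(\varepsilon\chi\Phi)$ with $\varepsilon>0$ small. This is a $d$-closed real $(1,1)$-form equal to $\omega$ off a neighbourhood of $Z$; on $\mathrm{supp}(d\chi)$, which avoids $Z$, the extra term is smooth and bounded, and near $Z$ one has $\widetilde\omega=\omega+\varepsilon\,\idd\Phi\geq(1-\varepsilon C')\omega+\tfrac{\varepsilon}{2}\sum_j\theta_j\,\idd\psi_j$. Hence for $\varepsilon$ small $\widetilde\omega\geq\tfrac12\omega>0$ everywhere on $\Omega\setminus Z$, so $\widetilde\omega$ is a \kah metric there, with $\widetilde\omega\geq c\sum_j\theta_j\,\idd\psi_j$ near $Z$ for some $c>0$.

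Finally, the exhaustion. Since $\omega$ is complete, fix a proper $\sigma\colon\Omega\to\mathbb{R}$, bounded below, with $|d\sigma|_\omega\leq1$; then $|d\sigma|_{\widetilde\omega}\leq1$. Put $\lambda:=-\varepsilon\chi\Phi\geq0$ on $\Omega\setminus Z$; near $Z$, $\lambda=\varepsilon\sum_j\theta_j\log(-\log a_j)\to+\infty$ along $Z$, and from $\sum_j\theta_j\equiv1$, Cauchy--Schwarz and $\widetilde\omega\geq c\sum_j\theta_j\,\idd\psi_j$ one gets $i\partial\lambda\wedge\overline{\partial}\lambda\leq C\sum_j\theta_j\,i\partial\psi_j\wedge\overline{\partial}\psi_j+C\omega\leq C''\widetilde\omega$ near $Z$ (and $|d\lambda|_{\widetilde\omega}$ is trivially bounded away from $Z$). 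Then $\tau:=\max(\sigma,\lambda)$ (regularized to be smooth) is bounded below, has $|d\tau|_{\widetilde\omega}$ bounded, and is proper on $\Omega\setminus Z$: a sublevel set $\{\tau\leq c\}$ is contained in the compact set $\{\sigma\leq c\}$ and, since $\lambda\to+\infty$ along $Z$, all its limit points lie in $\Omega\setminus Z$, so it is compact in $\Omega\setminus Z$. By the criterion, $\widetilde\omega$ is complete. The delicate point is the globalization step: the local Poincaré weights $\psi_j$ must be patched so that the errors produced by $\partial\theta_j$ and $\idd\theta_j$ stay $\geq-C'\omega$ uniformly up to $Z$ without cancelling the positive term $\sum_j\theta_j\,\idd\psi_j$ carrying the completeness along $Z$ — exactly what the comparability of the $a_j$'s and the choice $|\partial\theta_j|^2\leq C\theta_j$ are there to secure.
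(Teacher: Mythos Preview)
The paper does not supply its own proof of this lemma; it is quoted as a known result from \cite{Dem82}. Your argument is correct and is precisely the classical construction of Demailly: add to the given complete \kah metric a small multiple of $\idd$ of a globalized $\log\log$ potential along $Z$, use the pointwise inequality $i\partial a\wedge\overline{\partial}a\leq a\,\idd a$ to get $\idd\psi_j\geq i\partial\psi_j\wedge\overline{\partial}\psi_j$, patch with a partition of unity satisfying $|\partial\theta_j|^2\leq C\theta_j$, and verify completeness via an exhaustion with bounded gradient. One minor point worth making explicit in your write-up: the constant $C'$ in the lower bound $\idd\Phi\geq -C'\omega+\tfrac12\sum_j\theta_j\,\idd\psi_j$ is a priori only locally uniform (it depends on the finitely many $U_j$ meeting a given neighbourhood and on the local comparison constants between the $a_j$'s), so on a noncompact $\Omega$ one should either let $\varepsilon$ be a suitable positive function or, as in Demailly's original argument, arrange the cover and cutoffs so that the relevant bounds are uniform; this does not affect the validity of the construction.
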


\begin{lemma}\label{Ext d-equation for hypersurface}$\mathrm{(cf.\,[4,\,Lemma\,6.9])}$  
    Let $\Omega$ be an open subset of $\mathbb{C}^n$ and $Z$ be a complex analytic subset of $\Omega$. Assume that $u$ is a $(p,q-1)$-form with $L^2_{loc}$ coefficients and $g$ is a $(p,q)$-form with $L^1_{loc}$ coefficients such that $\overline{\partial}u=g$ on $\Omega\setminus Z$. 
    Then $\overline{\partial}u=g$ on $\Omega$.
\end{lemma}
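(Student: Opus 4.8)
The plan is to reduce to a purely local removable-singularity statement for $\overline{\partial}$ and to run a cut-off argument of capacity type, the key point being that a proper analytic set carries cut-off functions whose $\overline{\partial}$ is small in $L^2$. Since ``$\overline{\partial}u=g$ on $\Omega$'' is a local condition and is already assumed on the open set $\Omega\setminus Z$, it suffices to fix $a\in Z$ and prove $\overline{\partial}u=g$ on a small ball $B=B(a,r)\subset\subset\Omega$. After shrinking $B$ we may assume $Z\cap B$ is a proper analytic subset of $B$, and then choose a holomorphic function $\eta\not\equiv 0$ on $B$ with $Z\cap B\subset\{\eta=0\}$ (a nonzero element of the ideal of $Z$). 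We may replace $Z\cap B$ by the hypersurface $\{\eta=0\}$: the hypothesis still holds on $B\setminus\{\eta=0\}\subset B\setminus Z$, and proving $\overline{\partial}u=g$ across $\{\eta=0\}$ gives it across $Z$. Finally $\{\eta=0\}$ is Lebesgue-null, so $u\in L^2_{\mathrm{loc}}(B)$, $g\in L^1_{\mathrm{loc}}(B)$ and the current $\overline{\partial}u$ on $B$ are all well defined.

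Next I would build cut-offs adapted to $\{\eta=0\}$. Fix $\theta\in C^\infty(\mathbb{R},[0,1])$ with $\theta\equiv 1$ on $(-\infty,1]$ and $\theta\equiv 0$ on $[2,+\infty)$, and for $0<\varepsilon<e^{-e}$ set $\chi_\varepsilon:=\theta\!\left(\dfrac{\log(-\log|\eta|)}{\log(-\log\varepsilon)}\right)$ on $\{0<|\eta|<1\}$, extended by $0$ across $\{\eta=0\}$ and by $1$ on $\{|\eta|\ge 1\}$; this is a smooth function on $B$ with $0\le\chi_\varepsilon\le 1$, equal to $1$ on $\{|\eta|\ge\varepsilon\}$ and vanishing on a neighbourhood of $Z$, so $\chi_\varepsilon\to 1$ pointwise on $B\setminus Z$ as $\varepsilon\to 0$. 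Moreover $\overline{\partial}\chi_\varepsilon=\theta'(\,\cdot\,)\,\big(\log(-\log\varepsilon)\big)^{-1}\,\overline{\partial}\log(-\log|\eta|)$ is supported in the thin shell $\{e^{-(\log\varepsilon)^2}\le|\eta|\le\varepsilon\}$, so for every compact $K\subset\subset B$,
\[
\|\overline{\partial}\chi_\varepsilon\|_{L^2(K)}^2\ \le\ \frac{\|\theta'\|_\infty^2}{\big(\log(-\log\varepsilon)\big)^2}\int_{K\cap\{|\eta|\le 1/2\}}\big|\overline{\partial}\log(-\log|\eta|)\big|^2\,dV ,
\]
and the right-hand side tends to $0$ as $\varepsilon\to 0$ provided $\overline{\partial}\log(-\log|\eta|)\in L^2_{\mathrm{loc}}$ near $Z$, that is, $\int_K|\partial\eta|^2\,|\eta|^{-2}(\log|\eta|)^{-2}\,dV<+\infty$ for $K\subset\subset B$. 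This integrability is the classical fact underlying removability of analytic sets; it is checked by a log-resolution of $\{\eta=0\}$, which reduces it to the one-variable estimate $\int_0^{1/2} r^{-1}(\log r)^{-2}\,dr<+\infty$.

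Finally, fix a test form $\psi$ of bidegree $(n-p,n-q)$ with compact support in $B$. Since $\chi_\varepsilon\psi$ is smooth, compactly supported in $B$ and vanishes near $Z$, it is a legitimate test form on $B\setminus Z$; applying the hypothesis there and the Leibniz rule $\overline{\partial}(\chi_\varepsilon\psi)=\overline{\partial}\chi_\varepsilon\wedge\psi+\chi_\varepsilon\,\overline{\partial}\psi$ gives, with $c_{p,q}=\pm 1$ the usual sign,
\[
\int_B g\wedge\chi_\varepsilon\psi\ =\ c_{p,q}\int_B u\wedge\chi_\varepsilon\,\overline{\partial}\psi\ +\ c_{p,q}\int_B u\wedge\overline{\partial}\chi_\varepsilon\wedge\psi .
\]
Let $\varepsilon\to 0$. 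As $\chi_\varepsilon\to 1$ off the null set $Z$ with $0\le\chi_\varepsilon\le 1$ and $g\wedge\psi,\ u\wedge\overline{\partial}\psi\in L^1(B)$ (because $u,g\in L^1_{\mathrm{loc}}$), dominated convergence sends the left side to $\langle g,\psi\rangle$ and the first term on the right to $c_{p,q}\int_B u\wedge\overline{\partial}\psi=\langle\overline{\partial}u,\psi\rangle$; while Cauchy--Schwarz gives
\[
\Big|\int_B u\wedge\overline{\partial}\chi_\varepsilon\wedge\psi\Big|\ \le\ C\,\|\psi\|_\infty\,\|u\|_{L^2(\mathrm{supp}\,\psi)}\,\|\overline{\partial}\chi_\varepsilon\|_{L^2(\mathrm{supp}\,\psi)}\ \longrightarrow\ 0,
\]
which is precisely where $u\in L^2_{\mathrm{loc}}$ (and not merely $u\in L^1_{\mathrm{loc}}$) is used. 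Hence $\langle g,\psi\rangle=\langle\overline{\partial}u,\psi\rangle$ for all such $\psi$, i.e. $\overline{\partial}u=g$ on $B$, and therefore on $\Omega$. \emph{The main obstacle} is the construction of the cut-offs $\chi_\varepsilon$, i.e. the $L^2$-capacity property of analytic sets / the local integrability of $|\partial\eta|^2|\eta|^{-2}(\log|\eta|)^{-2}$; once that is granted the argument is just dominated convergence and Cauchy--Schwarz.
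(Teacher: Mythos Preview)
The paper does not prove this lemma; it is quoted from \cite{Dem82} and \cite{Ber10}. Your argument is precisely the classical one found in those references: localise, enlarge $Z$ to a hypersurface $\{\eta=0\}$, build the $\log(-\log|\eta|)$ cut-offs whose $\overline{\partial}$ tends to $0$ in $L^2_{\mathrm{loc}}$, and conclude by dominated convergence together with Cauchy--Schwarz (which is where $u\in L^2_{\mathrm{loc}}$ rather than $L^1_{\mathrm{loc}}$ enters). One small cosmetic point: after picking the ball $B$ around $a\in Z$ you may as well shrink it so that $|\eta|<1/2$ on $B$, which makes the definition of $\chi_\varepsilon$ cleaner and avoids the artificial extension on $\{|\eta|\ge 1\}$.
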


\begin{theorem}\label{L2-estimate of m-posi + psef}
    Let $X$ be a weakly pseudoconvex \kah manifold and $\omega$ be a \kah metric on $X$.
    Let $(F,h_F)$ be a holomorphic vector bundle of rank $r$ and $L$ be a holomorphic line bundle equipped with a singular semi-positive Hermitian metric $h$, i.e. $i\Theta_{L,h}\geq0$ in the sense of currents.
    Then we have the following
    \begin{itemize}
        \item [($a$)] If $h_F$ is $m$-positive, then for any $q\geq1$ with $m\geq\min\{n-q+1,r\}$ and any $f\in L^2_{n,q}(X,F\otimes L,h_F\otimes h,\omega)$ satisfying $\overline{\partial}f=0$ and 
        $\int_X\langle B^{-1}_{h_F,\omega}f,f\rangle_{h_F\otimes h,\omega}dV_\omega<+\infty$,
        there exists $u\in L^2_{n,q-1}(X,F\otimes L,h_F\otimes h,\omega)$ such that $\overline{\partial}u=f$ and 
        \begin{align*}
            \int_X|u|^2_{h_F\otimes h,\omega}dV_\omega\leq\int_X\langle B^{-1}_{h_F,\omega}f,f\rangle_{h_F\otimes h,\omega}dV_\omega,
        \end{align*}
    \end{itemize}
    \begin{itemize}
        \item [($b$)] If $h_F$ is dual $m$-positive, then for any $p\geq1$ with $m\geq\min\{n-p+1,r\}$ and any $f\in L^2_{p,n}(X,F\otimes L,h_F\otimes h,\omega)$ satisfying $\overline{\partial}f=0$ and 
        $\int_X\langle B^{-1}_{h_F,\omega}f,f\rangle_{h_F\otimes h,\omega}dV_\omega<+\infty$,
        there exists $u\in L^2_{p,n-1}(X,F\otimes L,h_F\otimes h,\omega)$ such that $\overline{\partial}u=f$ and 
        \[
            \int_X|u|^2_{h_F\otimes h,\omega}dV_\omega\leq\int_X\langle B^{-1}_{h_F,\omega}f,f\rangle_{h_F\otimes h,\omega}dV_\omega,
        \]
    \end{itemize}
    where $B_{h_F,\omega}=[i\Theta_{F,h_F}\otimes\mathrm{id}_L,\Lambda_\omega]$.
\end{theorem}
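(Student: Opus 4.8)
The plan is to combine Demailly's regularization of the singular metric $h$ (Theorem \ref{Demailly smoothing of current}) with the $L^2$-existence theorem on complete \kah manifolds (Theorem \ref{L^2-estimate completeness (n,q)-forms}), carried out on an exhaustion of $X$ by relatively compact sublevel sets of a plurisubharmonic exhaustion $\psi$. I treat $(a)$; part $(b)$ is the same argument with $(p,n)$-forms in place of $(n,q)$-forms, using part $(b)$ of Lemma \ref{m-posi then A>0 and dual m-posi then A>0} and the $(p,n)$-analogue of Theorem \ref{L^2-estimate completeness (n,q)-forms}.

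First I would reduce to a relatively compact sublevel set $X_j=\{\psi<j\}$: since $X_j$ carries a complete \kah metric (e.g.\ $\omega+\idd\chi(\psi)$ for a convex increasing $\chi$ exploding at $j$), it suffices to solve $\overline{\partial}u_j=f$ on $X_j$ with $\int_{X_j}|u_j|^2_{h_F\otimes h,\omega}\,dV_\omega\le\int_X\langle B^{-1}_{h_F,\omega}f,f\rangle_{h_F\otimes h,\omega}\,dV_\omega$, a global solution on $X$ then following by a weak-limit and diagonal argument over $j$. Next, on a relatively compact neighborhood $\Omega$ of $\overline{X_j}$, I would write $i\Theta_{L,h}=i\Theta_{L,h_0}+\idd\phi$ with $h_0$ a fixed smooth metric and $\phi$ quasi-plurisubharmonic on $\overline{\Omega}$, and apply Theorem \ref{Demailly smoothing of current} (the curvature condition on $T_X$ holding on $\overline{\Omega}$ with $\theta=C\omega$, $C\gg1$, by compactness): this produces smooth metrics $h_{c,\varepsilon}$ on $L|_\Omega$, smooth off the analytic set $E_c(T)$ where $T=i\Theta_{L,h}$, increasing to $h$ as $\varepsilon\to0$, and with $i\Theta_{L,h_{c,\varepsilon}}\ge-(cC+\delta_\varepsilon)\omega$ on $\Omega$.

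Then, by Lemma \ref{m-posi then A>0 and dual m-posi then A>0}$(a)$ and compactness of $\overline{X_j}$, there is $c_0>0$ with $B_{h_F,\omega}=A^{n,q'}_{F,h_F,\omega}\ge c_0\,\mathrm{id}$ on $X_j$ for every $q'\ge q$; combining with the computation of Proposition \ref{A_(E+F)>C_E+C_F if A_E>C_E and A_F>C_F} and the bracket inequality $[i\Theta_{L,h_{c,\varepsilon}}\otimes\mathrm{id},\Lambda_\omega]\ge-(cC+\delta_\varepsilon)q'\,\mathrm{id}$ on $(n,q')$-forms (cf.\ [Dem-book, ChapterVII, Lemma 7.2]) gives
\begin{align*}
    A^{n,q'}_{F\otimes L,\,h_F\otimes h_{c,\varepsilon},\,\omega}=B_{h_F,\omega}+[i\Theta_{L,h_{c,\varepsilon}}\otimes\mathrm{id}_F,\Lambda_\omega]\ge\bigl(c_0-(cC+\delta_\varepsilon)q'\bigr)\mathrm{id}>0
\end{align*}
for all $q\le q'\le n$ once $c$, then $\varepsilon$, are taken small. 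Since $X_j\setminus E_c(T)$ carries a complete \kah metric by Lemma \ref{complete kah on X_j-Z}, Theorem \ref{L^2-estimate completeness (n,q)-forms} (reference metric $\omega$, $k=q$) yields $u_{c,\varepsilon}$ on $X_j\setminus E_c(T)$ with $\overline{\partial}u_{c,\varepsilon}=f$ and, using $h_{c,\varepsilon}\le h$ and the elementary bound $(B-t\,\mathrm{id})^{-1}\le\tfrac{c_0}{c_0-t}B^{-1}$ valid for $B\ge c_0\,\mathrm{id}$, $0<t<c_0$,
\begin{align*}
    \int_{X_j\setminus E_c(T)}|u_{c,\varepsilon}|^2_{h_F\otimes h_{c,\varepsilon},\omega}\,dV_\omega\le\frac{c_0}{c_0-(cC+\delta_\varepsilon)q}\int_X\langle B^{-1}_{h_F,\omega}f,f\rangle_{h_F\otimes h,\omega}\,dV_\omega.
\end{align*}
By Lemma \ref{Ext d-equation for hypersurface} the identity $\overline{\partial}u_{c,\varepsilon}=f$ extends across $E_c(T)$ to all of $X_j$. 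Letting $\varepsilon\to0$ (the $h_{c,\varepsilon}$ increasing, so $(u_{c,\varepsilon})_\varepsilon$ is bounded in a fixed $L^2$ space), then $c\to0$, then $j\to\sup_X\psi$, extracting a weak limit and applying the monotone convergence theorem at each stage, yields the global solution $u$ on $X$ with the asserted estimate.

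The main obstacle is the bookkeeping of the three nested limits ($\varepsilon\to0$, $c\to0$, $j\to\sup_X\psi$) while the metrics $h_{c,\varepsilon}$ on $L$ themselves vary: one must keep these metrics monotone (increasing to $h$) so that weak compactness is available in a fixed $L^2$ space at each stage, and one must verify that the error term $[i\Theta_{L,h_{c,\varepsilon}}\otimes\mathrm{id},\Lambda_\omega]$ is bounded below by a multiple of $-(cC+\delta_\varepsilon)\,\mathrm{id}$ in the relevant bidegree — automatic for $(n,q)$-forms, but for $(b)$ requiring the $(p,n)$-form version of [Dem-book, ChapterVII, Lemma 7.2]. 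A secondary technical point is that Demailly's regularized metrics are singular along the analytic set $E_c(T)$, which is precisely why Lemmas \ref{complete kah on X_j-Z} and \ref{Ext d-equation for hypersurface} enter: to solve $\overline{\partial}$ away from $E_c(T)$ on a complete \kah manifold and then extend the equation across it.
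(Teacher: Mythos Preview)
Your proposal is correct and follows essentially the same approach as the paper: exhaust $X$ by sublevel sets $X_j$, apply Demailly's regularization (Theorem \ref{Demailly smoothing of current}) on a neighborhood of $\overline{X_j}$ to approximate $h$ by metrics smooth off an analytic set $Z_\nu$ with curvature $\ge -\beta_\nu\omega$, solve $\overline{\partial}$ on $X_j\setminus Z_\nu$ via Lemma \ref{complete kah on X_j-Z} and Theorem \ref{L^2-estimate completeness (n,q)-forms}, extend across $Z_\nu$ by Lemma \ref{Ext d-equation for hypersurface}, and pass to weak limits; the only cosmetic difference is that the paper collapses your two regularization parameters $(c,\varepsilon)$ into a single sequence $(\nu)$ with $Z_\nu=E_{c_\nu}(T)$ and $\beta_\nu=c_\nu C+\delta_{\varepsilon_\nu}\to 0$, reducing your three nested limits to two.
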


\begin{proof}
    There exists a smooth exhaustive plurisubharmonic function $\psi$ on $X$ such that $\sup_X\psi=+\infty$. Let $X_j:=\{x\in X\mid \psi(z)<j\}$ be a sublevel set which is weakly pseudoconvex. 
    Let $h_0$ be a smooth Hermitian metric on $L$ then $h=h_0e^{-\varphi}$, where $\varphi$ is quasi-plurisubharmonic function on $X$ and $i\Theta_{L,h}=i\Theta_{L,h_0}+\idd\varphi\geq0$ as currents.

    By Theorem \ref{Demailly smoothing of current}, there is a sequence of quasi-plurisubharmonic functions $(\varphi_\nu)_{\nu\in\mathbb{N}}$ defined on $X_j$ such that 
    \begin{itemize}
        \item [$(i)$] $\varphi_\nu$ is smooth in the complement $X_j\setminus Z_\nu$ of an analytic set $Z_\nu\subset X_j$,
        \item [$(ii)$] $(\varphi_\nu)_{\nu\in\mathbb{N}}$ is a decreasing sequence and $\varphi|_{X_j}=\lim_{\nu\to+\infty}\varphi_\nu$,
        \item [$(iii)$] $i\Theta_{L,h_0}+\idd\varphi_\nu\geq-\beta_\nu\omega$, where $\lim_{\nu\to+\infty}\beta_\nu=0$.
    \end{itemize}
    Here, we can find a sequence of Hermitian metric $h_\nu=h_0e^{-\varphi_\nu}$ on $L|_{X_j}$. Then $h_\nu$ is smooth on $X_j\setminus Z_\nu$, $h_\nu\leq h$ and $i\Theta_{L,h_\nu}\geq-\beta_\nu\omega$.

    $(a)$ By $m$-positivity of $h_F$, Lemma \ref{m-posi then A>0 and dual m-posi then A>0} and Proposition \ref{A_(E+F)>C_E+C_F if A_E>C_E and A_F>C_F}, we get $A^{n,q}_{F,h_F,\omega}>0$ on $X$ and $B_{h_F,\omega}=[i\Theta_{F,h_F}\otimes\mathrm{id}_L,\Lambda_\omega]>0$ for $q\geq1$ with $m\geq\min\{n-q+1,r\}$.
    Fixed a positive integer $q\geq1$ with $m\geq\min\{n-q+1,r\}$. 
    From $B_{h_F,\omega}>0$ on $X$, $\lim_{\nu\to+\infty}\beta_\nu=0$ and relatively compact-ness of $X_j$, there exist $c>0$ and $\nu_0\in\mathbb{N}$ such that for any $\nu\geq\nu_0$, $0<q\beta_\nu<c<B_{h_F,\omega}$ on $X_j$. 
    Then by smooth-ness of $h_\nu$, we have that 
    \begin{align*}
        A^{n,q}_{F\otimes L,h_F\otimes h_\nu,\omega}&=[i\Theta_{F,h_F}\otimes\mathrm{id}_L,\Lambda_\omega]+[i\Theta_{L,h_\nu}\otimes\mathrm{id}_F,\Lambda_\omega]\\
        &\geq B_{h_F,\omega}-\beta_\nu[\omega\otimes\mathrm{id}_F,\Lambda_\omega]=B_{h_F,\omega}-q\beta_\nu\\
        &\geq\bigl(1-\frac{q\beta_\nu}{c}\bigr)B_{h_F,\omega}>0
    \end{align*}
    on $X_j\setminus Z_\nu$. Hence, for any $f\in L^2_{n,q}(X,F\otimes L,h_F\otimes h,\omega)$ satisfying $\overline{\partial}f=0$ and $\int_X\langle B^{-1}_{h_F,\omega}f,f\rangle_{h_F\otimes h,\omega}dV_\omega<+\infty$, we have that
    \begin{align*}
        \int_{X_j\setminus Z_\nu}\langle[i\Theta_{F\otimes L,h_F\otimes h_\nu},\Lambda_\omega]^{-1}f,f\rangle_{h_F\otimes h_\nu,\omega}dV_\omega&\leq\frac{c}{c-q\beta_\nu}\int_{X_j\setminus Z_\nu}\langle B_{h_F,\omega}^{-1}f,f\rangle_{h_F\otimes h_\nu,\omega}dV_\omega\\
        &\leq\frac{c}{c-q\beta_\nu}\int_X\langle B_{h_F,\omega}^{-1}f,f\rangle_{h_F\otimes h,\omega}dV_\omega<+\infty.
    \end{align*}

    By Lemma \ref{complete kah on X_j-Z}, $X_j\setminus Z_\nu$ has a complete \kah metric. From [8,\,ChapterVIII,\,Theorem\,6.1], 
    i.e. $L^2$-estimates for $(n,q)$-forms without completeness of $\omega$, we obtain a solution $u_{j,\nu}\in L^2_{n,q-1}(X_j\setminus Z_\nu,F\otimes L,h_F\otimes h_\nu,\omega)$ of $\overline{\partial}u_{j,\nu}=f$ on $X_j\setminus Z_\nu$ satisfying 
    \begin{align*}
        \int_{X_j\setminus Z_\nu}|u_{j,\nu}|^2_{h_F\otimes h_\nu}dV_\omega  &\leq\int_{X_j\setminus Z_\nu}\langle[i\Theta_{F\otimes L,h_F\otimes h_\nu},\Lambda_\omega]^{-1}f,f\rangle_{h_F\otimes h_\nu,\omega}dV_\omega\\
        &\leq\frac{c}{c-q\beta_\nu}\int_X\langle B_{h_F,\omega}^{-1}f,f\rangle_{h_F\otimes h,\omega}dV_\omega<+\infty.
    \end{align*}

    From Lemma \ref{Ext d-equation for hypersurface}, letting $u_{j,\nu}=0$ on $Z_\nu$ then we obtain that $u_{j,\nu}\in L^2_{n,q-1}(X_j,F\otimes L,h_F\otimes h_\nu,\omega)$, $\overline{\partial}u_{j,\nu}=f$ on $X_j$ and that 
    \begin{align*}
        \bigl(1-\frac{q\beta_\nu}{c}\bigr)\int_{X_j}|u_{j,\nu}|^2_{h_F\otimes h_\nu}dV_\omega\leq\int_X\langle B_{h_F,\omega}^{-1}f,f\rangle_{h_F\otimes h,\omega}dV_\omega.
    \end{align*}

    Since the monotonicity to $\nu$ of $|\bullet|^2_{h_F\otimes h_\nu,\omega}$ by $(ii)$, we get a solution $u\in L^2_{n,q-1}(X,F\otimes L,h_F\otimes h,\omega)$ of $\overline{\partial}u=f$ on $X$ such that 
    \begin{align*}
        \int_X|u|^2_{h_F\otimes h}dV_\omega\leq\int_X\langle B_{h_F,\omega}^{-1}f,f\rangle_{h_F\otimes h,\omega}dV_\omega<+\infty,
    \end{align*}
    using the same method as in the proof of $(b)$ of Theorem \ref{psef * Nak semi-posi then Nak semi-posi}.

    $(b)$ 
    It is shown in the same way as above using [39,\,Theorem\,3.7]. 
\end{proof}


\begin{corollary}\label{L2-estimate of k-posi + psef}
    Let $X$ be a weakly pseudoconvex \kah manifold and $\omega$ be a \kah metric on $X$.
    Let $(A,h_A)$ be a $k$-positive holomorphic line bundle and $L$ be a holomorphic line bundle equipped with a singular semi-positive Hermitian metric $h$, i.e. $i\Theta_{L,h}\geq0$ in the sense of currents.
    Then we have the following
    \begin{itemize}
        \item [($a$)] For any $q\geq k$ and any $f\in L^2_{n,q}(X,A\otimes L,h_A\otimes h,\omega)$ satisfying $\overline{\partial}f=0$ and 
        $\int_X\langle B^{-1}_{h_A,\omega}f,f\rangle_{h_A\otimes h,\omega}dV_\omega<+\infty$,
        there exists $u\in L^2_{n,q-1}(X,A\otimes L,h_A\otimes h,\omega)$ such that $\overline{\partial}u=f$ and 
        \begin{align*}
            \int_X|u|^2_{h_A\otimes h,\omega}dV_\omega\leq\int_X\langle B^{-1}_{h_A,\omega}f,f\rangle_{h_A\otimes h,\omega}dV_\omega,
        \end{align*}
        \item [($b$)] For any $p\geq k$ and any $f\in L^2_{p,n}(X,A\otimes L,h_A\otimes h,\omega)$ satisfying $\overline{\partial}f=0$ and 
        $\int_X\langle B^{-1}_{h_A,\omega}f,f\rangle_{h_A\otimes h,\omega}dV_\omega<+\infty$,
        there exists $u\in L^2_{p,n-1}(X,A\otimes L,h_A\otimes h,\omega)$ such that $\overline{\partial}u=f$ and 
        \begin{align*}
            \int_X|u|^2_{h_A\otimes h,\omega}dV_\omega\leq\int_X\langle B^{-1}_{h_A,\omega}f,f\rangle_{h_A\otimes h,\omega}dV_\omega,
        \end{align*}
    \end{itemize}
    where $B_{h_A,\omega}=[i\Theta_{A,h_A}\otimes\mathrm{id}_L,\Lambda_\omega]$.
\end{corollary}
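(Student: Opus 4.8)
The plan is to deduce this from the proof of Theorem~\ref{L2-estimate of m-posi + psef}, taking there the holomorphic vector bundle $(F,h_F)$ to be the rank-one bundle $(A,h_A)$. The only property of the $m$-positivity (resp. dual $m$-positivity) of $h_F$ that actually enters that proof is the positivity of the curvature operators $A^{n,q}_{F,h_F,\omega}=[i\Theta_{F,h_F},\Lambda_\omega]$ and $B_{h_F,\omega}=[i\Theta_{F,h_F}\otimes\mathrm{id}_L,\Lambda_\omega]$ on $(n,q)$-forms for the relevant $q$ (resp. on $(p,n)$-forms for the relevant $p$), which is supplied there by Lemma~\ref{m-posi then A>0 and dual m-posi then A>0}. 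So it suffices to check that a $k$-positive line bundle $(A,h_A)$ realizes exactly this curvature-operator positivity in the bidegrees $q\ge k$ and $p\ge k$, and then copy the argument.

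First I would record the elementary eigenvalue computation. Fix $x\in X$ and diagonalize $i\Theta_{A,h_A}$ with respect to $\omega$, with eigenvalues $0\le\lambda_1\le\cdots\le\lambda_n$; by Definition~\ref{def of k-posi} at most $k-1$ of them vanish, so $\lambda_k>0$. For a multi-index $J$ with $|J|=q$ the operator $A^{n,q}_{A,h_A,\omega}$ acts on $dz_1\wedge\cdots\wedge dz_n\wedge d\overline{z}_J$ by the scalar $\sum_{j\in J}\lambda_j\ge\lambda_k>0$ when $q\ge k$; hence $A^{n,q}_{A,h_A,\omega}>0$ for $q\ge k$, and dually (replacing $A$ by $A^{*}$, $k$ by $k$ and $q$ by $n-p$) $A^{p,n}_{A,h_A,\omega}>0$ for $p\ge k$ — more precisely $A^{p,q}_{A,h_A,\omega}>0$ whenever $p+q>n+k-1$, which is [Dem-book,\,ChapterVII]. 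Since $L$ does not contribute to the curvature, $B_{h_A,\omega}=[i\Theta_{A,h_A}\otimes\mathrm{id}_L,\Lambda_\omega]$ equals $A^{n,q}_{A,h_A,\omega}\otimes\mathrm{id}_L$ (resp. $A^{p,n}_{A,h_A,\omega}\otimes\mathrm{id}_L$) and is positive as well.

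With these facts in hand the argument of Theorem~\ref{L2-estimate of m-posi + psef} applies verbatim with $(F,h_F)$ replaced by $(A,h_A)$: exhaust $X$ by relatively compact weakly pseudoconvex \kah submanifolds $X_j$; write $h=h_0e^{-\varphi}$ and apply Theorem~\ref{Demailly smoothing of current} to obtain metrics $h_\nu=h_0e^{-\varphi_\nu}$ on $L|_{X_j}$, smooth off an analytic set $Z_\nu$, decreasing to $h$, with $i\Theta_{L,h_\nu}\ge-\beta_\nu\omega$ and $\beta_\nu\to0$; use Proposition~\ref{A_(E+F)>C_E+C_F if A_E>C_E and A_F>C_F} to get $A^{n,q}_{A\otimes L,h_A\otimes h_\nu,\omega}\ge B_{h_A,\omega}-q\beta_\nu>0$ on $X_j\setminus Z_\nu$ for $\nu\gg 1$ (by relative compactness of $X_j$ and $B_{h_A,\omega}>0$); solve $\overline{\partial}u_{j,\nu}=f$ with the stated estimate on the complete \kah manifold $X_j\setminus Z_\nu$ via Lemma~\ref{complete kah on X_j-Z} and Theorem~\ref{L^2-estimate completeness (n,q)-forms} (resp. [Wat22b,\,Theorem\,3.7] in case $(b)$); extend the solution across $Z_\nu$ by Lemma~\ref{Ext d-equation for hypersurface}; and pass to weak limits first in $\nu$ and then in $j$, exactly as in the proof of part $(b)$ of Theorem~\ref{psef * Nak semi-posi then Nak semi-posi}, using the monotonicity of $|\,\cdot\,|^2_{h_A\otimes h_\nu,\omega}$ in $\nu$ and the monotone convergence theorem.

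The only point needing a word of care — and the main reason this is not a literal one-line citation — is that for a line bundle $m$-positivity coincides with ordinary (Griffiths) positivity, so a $k$-positive line bundle with $k\ge 2$ does \emph{not} satisfy the hypothesis of Theorem~\ref{L2-estimate of m-posi + psef} as stated. What rescues the deduction is precisely the observation above: that proof never uses anything beyond the bidegree-specific curvature-operator positivity $A^{n,q}_{A,h_A,\omega}>0$, $B_{h_A,\omega}>0$ on $(n,q)$-forms with $q\ge k$ (resp. on $(p,n)$-forms with $p\ge k$), which $k$-positivity of $A$ does provide. All the remaining ingredients — Demailly's regularization, the $L^2$-estimate on the complement of an analytic subset, the extension lemma, and the two weak-limit passages — are insensitive to the rank and carry over without change.
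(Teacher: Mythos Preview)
Your proposal is correct and follows essentially the same approach as the paper: the paper states just before the corollary that it ``follows immediately'' from the fact that $k$-positivity of $(A,h_A)$ implies $A^{p,q}_{A,h_A,\omega}>0$ for $p+q>n+k-1$, and then implicitly reruns the proof of Theorem~\ref{L2-estimate of m-posi + psef}. Your write-up simply makes explicit (via the eigenvalue computation and the observation that only bidegree-specific positivity of the curvature operator is used) what the paper leaves to the reader.
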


Here, $A^{p,q}_{L,h,\omega}>0$ for $p+q\geq n+k$ if $h$ is $k$-positive.
Second, we obtain $L^2$-estimates when singular Hermitian metrics 
have positivity by using the following proposition.

\begin{proposition}\label{big of bounded below}
    Let $X$ be a weakly pseudoconvex \kah manifold and $\omega$ be a \kah metric on $X$.
    Let $L$ be a holomorphic line bundle over $X$ equipped with a singular positive Hermitian metric $h$.
    Then there exists a positive smooth function $c:X\to\mathbb{R}_{>0}$ such that $i\Theta_{L,h}\geq c\omega$ in the sense of currents.
\end{proposition}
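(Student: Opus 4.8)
The plan is to reduce the global statement to a patching problem: first produce, on every relatively compact open subset, a positive \emph{constant} lower bound $i\Theta_{L,h}\ge c\,\omega$, and then assemble these constants into a single globally smooth positive function using the smooth exhaustion afforded by weak pseudoconvexity.

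For the local bound, fix a relatively compact open $\Omega\subset\subset X$ and cover $\overline{\Omega}$ by finitely many coordinate balls $U_i$ on which $L$ is trivial, together with balls $V_i\subset\subset U_i$ still covering $\overline{\Omega}$. In the trivialization over $U_i$ the weight of $h$ agrees almost everywhere with a strictly plurisubharmonic function $\widetilde{\varphi}_i$ (by Definition \ref{def of psef & big}), so $\widetilde{\varphi}_i$ and the weight coincide in $L^1_{loc}$ and hence as currents; consequently $i\Theta_{L,h}=\idd\widetilde{\varphi}_i$ on $U_i$ in the sense of currents, up to the usual positive constant factor which plays no role here. Since $\widetilde{\varphi}_i$ is strictly plurisubharmonic, near each point of $U_i$ the function $\widetilde{\varphi}_i-\varepsilon|z|^2$ is plurisubharmonic for some $\varepsilon>0$; covering the compact set $\overline{V_i}$ by finitely many such neighbourhoods and using the comparability of the Euclidean form with $\omega$ on $\overline{V_i}$, we obtain a constant $\varepsilon_i>0$ with $\idd\widetilde{\varphi}_i\ge\varepsilon_i\,\omega$ on $V_i$. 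As $i\Theta_{L,h}\ge\varepsilon_i\,\omega$ is a local property of the positive current $i\Theta_{L,h}-\varepsilon_i\omega$, setting $c_\Omega:=\min_i\varepsilon_i>0$ yields $i\Theta_{L,h}\ge c_\Omega\,\omega$ on $\Omega$.

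Now let $\psi$ be a smooth exhaustive plurisubharmonic function on $X$ with $\sup_X\psi=+\infty$, which we may also take to satisfy $\psi\ge 0$, and put $X_j:=\{x\in X\mid \psi(x)<j\}$, so the $X_j$ are relatively compact and increase to $X$. By the previous paragraph there are constants $c_j>0$, which we may take nonincreasing in $j$, with $i\Theta_{L,h}\ge c_j\,\omega$ on $X_j$. Choose a smooth, positive, nonincreasing function $\chi\colon\mathbb{R}\to\mathbb{R}_{>0}$ with $\chi(t)\le c_{\lfloor t\rfloor+1}$ for all $t\ge 0$ (possible since every $c_j>0$ and the sequence is nonincreasing), and set $c:=\chi\circ\psi$, which is smooth and positive on $X$. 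Given $x\in X$, put $j_0:=\lfloor\psi(x)\rfloor+1$; then $x\in X_{j_0}$, hence $i\Theta_{L,h}\ge c_{j_0}\,\omega$ on a neighbourhood of $x$, while $c(x)=\chi(\psi(x))\le \chi(\lfloor\psi(x)\rfloor)\le c_{j_0}$. Since $i\Theta_{L,h}\ge c\,\omega$ is a local condition, this proves the proposition.

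The step requiring the most care is the local one: one must use that ``strictly plurisubharmonic'' is meant in the uniform local sense (so that, after restricting to a relatively compact set and comparing the Euclidean form with $\omega$, one genuinely gets a \emph{constant} lower bound $i\Theta_{L,h}\ge\varepsilon_i\omega$ rather than mere pointwise positivity). Once the constants $c_j$ associated with an exhaustion are in hand, the passage to a globally smooth positive $c$ via the exhaustion function $\psi$ is routine.
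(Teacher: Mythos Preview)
Your proof is correct and follows essentially the same route as the paper's: obtain a constant lower bound $i\Theta_{L,h}\ge c_\Omega\,\omega$ on each relatively compact set via the local strict plurisubharmonicity of the weight, pass to the exhaustion $X_j$ to get a nonincreasing sequence $c_j$, and then smooth out to a global positive function. Your patching via $c=\chi\circ\psi$ is a slightly more explicit version of the paper's step ``choose a smooth $c$ with $0<c<\widetilde{c}$''; the only place to tighten is the last sentence, where from the pointwise inequality $c(x)\le c_{j_0}$ you should note that on the open set $\{\,\lfloor\psi(x)\rfloor<\psi<j_0\,\}\subset X_{j_0}$ one also has $c\le c_{j_0}$ (by monotonicity of $\chi$), so that $i\Theta_{L,h}-c\,\omega\ge i\Theta_{L,h}-c_{j_0}\,\omega\ge 0$ holds as currents on a genuine neighbourhood of $x$.
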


This proposition can be shown by taking a relatively compact increasing sequence of $X$.
Similar to the proof of Theorem \ref{L2-estimate of m-posi + psef}, we get the following theorem. 

\begin{theorem}\label{L2-estimate of m-semi-posi + big}
    Let $X$ be a weakly pseudoconvex \kah manifold and $\omega$ be a \kah metric on $X$.
    Let $(F,h_F)$ be a holomorphic vector bundle of rank $r$ and $L$ be a holomorphic line bundle equipped with a singular positive Hermitian metric $h$.
    Here, there exists a positive smooth function $c:X\to\mathbb{R}_{>0}$ such that $i\Theta_{L,h}\geq 2c\omega$ in the sense of currents by Proposition \ref{big of bounded below}.
    Then we have the following
    \begin{itemize}
        \item [($a$)] If $h_F$ is $m$-semi-positive, then for any $q\geq1$ with $m\geq\min\{n-q+1,r\}$ and any $f\in L^2_{n,q}(X,F\otimes L,h_F\otimes h,\omega)$ satisfying $\overline{\partial}f=0$
        and $\int_X\frac{1}{c}|f|^2_{h_F\otimes h,\omega}dV_\omega<+\infty$,
        there exists $u\in L^2_{n,q-1}(X,F\otimes L,h_F\otimes h,\omega)$ such that $\overline{\partial}u=f$ and 
        \begin{align*}
            \int_X|u|^2_{h_F\otimes h,\omega}dV_\omega\leq\frac{1}{q}\int_X\frac{1}{c}|f|^2_{h_F\otimes h,\omega}dV_\omega.
        \end{align*}
        \item [($b$)] If $h_F$ is dual $m$-semi-positive, then for any $p\geq1$ with $m\geq\min\{n-p+1,r\}$ and any $f\in L^2_{p,n}(X,F\otimes L,h_F\otimes h,\omega)$ satisfying $\overline{\partial}f=0$
        and $\int_X\frac{1}{c}|f|^2_{h_F\otimes h,\omega}dV_\omega<+\infty$,
        there exists $u\in L^2_{p,n-1}(X,F\otimes L,h_F\otimes h,\omega)$ such that $\overline{\partial}u=f$ and 
        \begin{align*}
            \int_X|u|^2_{h_F\otimes h,\omega}dV_\omega\leq\frac{1}{p}\int_X\frac{1}{c}|f|^2_{h_F\otimes h,\omega}dV_\omega.
        \end{align*}
    \end{itemize}
\end{theorem}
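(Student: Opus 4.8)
The plan is to follow the proof of Theorem \ref{L2-estimate of m-posi + psef} verbatim in its skeleton, the single new feature being that the strict positivity $i\Theta_{L,h}\geq 2c\omega$ of the line bundle metric now has to compensate for the mere (dual) $m$-\emph{semi}-positivity of $h_F$. First I fix a smooth exhaustive plurisubharmonic function $\psi$ on $X$ with $\sup_X\psi=+\infty$, set $X_j:=\{\psi<j\}$ (relatively compact weakly pseudoconvex \kah submanifolds exhausting $X$, each carrying a complete \kah metric as in the proof of Theorem \ref{L2-estimate of m-posi + psef}), and write $h=h_0e^{-\varphi}$ with $h_0$ smooth and $\varphi$ quasi-plurisubharmonic, so that $i\Theta_{L,h_0}+\idd\varphi=i\Theta_{L,h}\geq 2c\omega$ in the sense of currents, where $c$ is obtained from Proposition \ref{big of bounded below} (apply it to get $i\Theta_{L,h}\geq c_0\omega$ and relabel $c:=c_0/2$). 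Applying Demailly's regularization Theorem \ref{Demailly smoothing of current} on each $X_j$ with $\gamma:=2c\omega$ and with $\theta$ a smooth form dominating the curvature defect of $T_X$ on the relatively compact $\overline{X_j}$, I obtain smooth Hermitian metrics $h_\nu=h_0e^{-\varphi_\nu}$ on $L|_{X_j}$, smooth outside an analytic set $Z_\nu\subset X_j$, with $h_\nu\leq h$, decreasing to $h$ $a.e.$ on $X_j$, and satisfying $i\Theta_{L,h_\nu}\geq(2c-\beta_\nu)\omega$ on $X_j\setminus Z_\nu$ for some $\beta_\nu\to 0$.

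Next I estimate the curvature operator. By the pointwise decomposition used in Proposition \ref{A_(E+F)>C_E+C_F if A_E>C_E and A_F>C_F}, on $\Lambda^{n,q}T^*_X\otimes F\otimes L$ over $X_j\setminus Z_\nu$,
\begin{align*}
    A^{n,q}_{F\otimes L,h_F\otimes h_\nu,\omega}=[i\Theta_{F,h_F}\otimes\mathrm{id}_L,\Lambda_\omega]+[i\Theta_{L,h_\nu}\otimes\mathrm{id}_F,\Lambda_\omega]\geq 0+q(2c-\beta_\nu)\,\mathrm{id},
\end{align*}
where the first summand is $\geq 0$ because $h_F$ is $m$-semi-positive with $m\geq\min\{n-q+1,r\}$ (Lemma \ref{m-posi then A>0 and dual m-posi then A>0}$(a)$ together with Proposition \ref{A_(E+F)>C_E+C_F if A_E>C_E and A_F>C_F} applied with $C_F=0$), and the second summand equals $q(2c-\beta_\nu)\,\mathrm{id}$ on $(n,q)$-forms since $[\omega\otimes\mathrm{id},\Lambda_\omega]$ acts there as multiplication by $q$. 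Since $X_j$ is relatively compact and $c>0$ is continuous, for $\nu\geq\nu_0(j)$ we have $\beta_\nu\leq\inf_{X_j}c$, hence $2c-\beta_\nu\geq c$ on $X_j$ and
\begin{align*}
    (A^{n,q}_{F\otimes L,h_F\otimes h_\nu,\omega})^{-1}\leq\frac{1}{q(2c-\beta_\nu)}\mathrm{id}\leq\frac{1}{qc}\mathrm{id}\quad\text{on }X_j\setminus Z_\nu.
\end{align*}

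Now let $f\in L^2_{n,q}(X,F\otimes L,h_F\otimes h,\omega)$ be $\overline{\partial}$-closed with $\int_X\frac1c|f|^2_{h_F\otimes h,\omega}dV_\omega<+\infty$. As $X_j\setminus Z_\nu$ carries a complete \kah metric (Lemma \ref{complete kah on X_j-Z}), Theorem \ref{L^2-estimate completeness (n,q)-forms} produces $u_{j,\nu}\in L^2_{n,q-1}(X_j\setminus Z_\nu,F\otimes L,h_F\otimes h_\nu,\omega)$ with $\overline{\partial}u_{j,\nu}=f$ and, using $|f|^2_{h_F\otimes h_\nu,\omega}\leq|f|^2_{h_F\otimes h,\omega}$,
\begin{align*}
    \int_{X_j\setminus Z_\nu}|u_{j,\nu}|^2_{h_F\otimes h_\nu,\omega}dV_\omega\leq\int_{X_j\setminus Z_\nu}\langle(A^{n,q}_{F\otimes L,h_F\otimes h_\nu,\omega})^{-1}f,f\rangle_{h_F\otimes h_\nu,\omega}dV_\omega\leq\frac1q\int_X\frac1c|f|^2_{h_F\otimes h,\omega}dV_\omega.
\end{align*}
By Lemma \ref{Ext d-equation for hypersurface} the trivial extension of $u_{j,\nu}$ across $Z_\nu$ still solves $\overline{\partial}u_{j,\nu}=f$ on all of $X_j$ with the same bound. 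I then pass to the limit exactly as in the proof of Theorem \ref{psef * Nak semi-posi then Nak semi-posi}$(b)$: the norms $|\cdot|_{h_F\otimes h_\nu,\omega}$ increase with $\nu$, so a diagonal weak-compactness argument yields $u_j$ on $X_j$ with $\overline{\partial}u_j=f$ and $\int_{X_j}|u_j|^2_{h_F\otimes h,\omega}dV_\omega\leq\frac1q\int_X\frac1c|f|^2_{h_F\otimes h,\omega}dV_\omega$ (monotone convergence for the metric as $\nu\to\infty$), and a second diagonal argument over $j$ produces the desired $u\in L^2_{n,q-1}(X,F\otimes L,h_F\otimes h,\omega)$. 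Part $(b)$ is identical after replacing $(n,q)$ by $(p,n)$, invoking Lemma \ref{m-posi then A>0 and dual m-posi then A>0}$(b)$, noting that $[\omega\otimes\mathrm{id},\Lambda_\omega]$ acts as multiplication by $p$ on $(p,n)$-forms, and using the $(p,n)$-analogue of Theorem \ref{L^2-estimate completeness (n,q)-forms} (cf. [Wat22b, Theorem 3.7]).

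There is no single hard step: the proof is a careful assembly of Demailly's regularization, the $L^2$-extension lemma across the analytic sets $Z_\nu$, and a nested weak limit in $\nu$ and then $j$. The one place demanding genuine care is the curvature bookkeeping — one must ensure that tensoring the merely semi-positive operator $A^{n,q}_{F,h_F,\omega}\geq 0$ with the strictly positive $L$-curvature produces a \emph{uniform} lower bound $q(2c-\beta_\nu)\,\mathrm{id}$ on the relatively compact $X_j$, with $\nu$ large enough that the clean constant $\tfrac1q\int_X\tfrac1c|f|^2$ survives both limits.
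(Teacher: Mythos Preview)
Your proof is correct and follows essentially the same approach as the paper: exhaust by the sublevel sets $X_j$, apply Demailly's regularization with $\gamma=2c\omega$ to obtain $h_\nu$ smooth off $Z_\nu$ with $i\Theta_{L,h_\nu}\geq(2c-\beta_\nu)\omega$, combine the $m$-semi-positivity of $h_F$ (giving $B_{h_F,\omega}\geq0$) with the line-bundle positivity to get $A^{n,q}_{F\otimes L,h_F\otimes h_\nu,\omega}\geq q(2c-\beta_\nu)>qc$ for $\nu$ large on the relatively compact $X_j$, and then run the same extension-across-$Z_\nu$ and nested weak-limit argument as in Theorem \ref{L2-estimate of m-posi + psef}. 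The paper's own proof (sketched by reference to Theorem \ref{L2-estimate of m-posi + psef}) is identical in structure and in the key curvature estimate.
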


\subsection{$L^2$-estimates with singular (dual) Nakano semi-positivity}

In this subsection, 
we obtain $L^2$-estimates on weakly pseudoconvex \kah manifolds with a positive line bundle for two cases where the singular Hermitian metric of holomorphic vector bundles has $L^2$-type Nakano semi-positivity and dual Nakano semi-positivity.

\begin{theorem}\label{L2-estimates of L2-type Nakano semi-posi + m-posi on w.p.c}
    Let $X$ be a weakly pseudoconvex manifold and 
    $E$ be a holomorphic vector bundle equipped with a singular Hermitian metric $h$ and $\omega$ be a \kah metric. 
    We assume that $h$ is $L^2$-type Nakano semi-positive on $X$. Then we have the following
    \begin{itemize}
        \item [($a$)] If $X$ has a positive holomorphic line bundle and $(A,h_A)$ is a $k$-positive line bundle, then for any $q\geq k$ and any $f\in L^2_{n,q}(X,A\otimes E,h_A\otimes h,\omega)$ satisfying $\overline{\partial}f=0$ and $\int_X\langle B^{-1}_{h_A,\omega}f,f\rangle_{h_A\otimes h,\omega}dV_\omega<+\infty$,
        there exists $u\in L^2_{n,q-1}(X,A\otimes E,h_A\otimes h,\omega)$ satisfies $\overline{\partial}u=f$ and 
        \begin{align*}
            \int_X|u|^2_{h_A\otimes h,\omega}dV_{\omega}\leq\int_X\langle B^{-1}_{h_A,\omega}f,f\rangle_{h_A\otimes h,\omega}dV_\omega,
        \end{align*}
        where $B_{h_A,\omega}=[i\Theta_{A,h_A}\otimes\mathrm{id}_E,\Lambda_\omega]$.
        \item [($b$)] If $(F,h_F)$ is a $m$-positive holomorphic vector bundle of rank $r$, then for any $q\geq1$ with $m\geq\min\{n-q+1,r\}$ and any $f\in L^2_{n,q}(X,F\otimes E,h_F\otimes h,\omega)$ satisfying $\overline{\partial}f=0$ and $\int_X\langle B^{-1}_{h_F,\omega}f,f\rangle_{h_F\otimes h,\omega}dV_\omega<+\infty$,
        there exists $u\in L^2_{n,q-1}(X,F\otimes E,h_F\otimes h,\omega)$ satisfies $\overline{\partial}u=f$ and 
        \begin{align*}
            \int_X|u|^2_{h_F\otimes h,\omega}dV_{\omega}\leq\int_X\langle B^{-1}_{h_F,\omega}f,f\rangle_{h_F\otimes h,\omega}dV_\omega.
        \end{align*}
        where $B_{h_F,\omega}=[i\Theta_{F,h_F}\otimes\mathrm{id}_E,\Lambda_\omega]$.
    \end{itemize}
\end{theorem}

\begin{proof}
    $(b)$ There exists a smooth exhaustive plurisubharmonic function $\Psi$ on $X$ such that $\sup_X\Psi=+\infty$. Let $X_j:=\{x\in X\mid\Psi(z)<j\}$ be a sublevel set.
    From $m$-positivity of $h_F$ and Lemma \ref{m-posi then A>0 and dual m-posi then A>0}, we get $A^{n,q}_{F,h_F,\omega}>0$ on $X$ for $q\geq1$ with $m\geq\min\{n-q+1,r\}$ and that $(\mathrm{det}\,F,\mathrm{det}\,h_F)$ is a positive line bundle.
    By [36,\,Theorem 1.2], there exists a holomorphically embedding map $\Phi:X\to\mathbb{P}^{2n+1}$. Here, there exists a hypersurface $H$ on $\mathbb{P}^{2n+1}$ such that $\Phi(X)\cap H\ne\emptyset$ and that $\mathbb{P}^{2n+1}\setminus H$ is a Stein subset.
    In particular, there is a strictly plurisubharmonic function $\psi$ on $\mathbb{P}^{2n+1}\setminus H$ which is smooth and exhaustive, i.e. $\psi(z)\nearrow+\infty$ as $z$ tends to $H$.
    Then we have that $X_j\setminus \Phi^{-1}(H)$ is Stein submanifold. 
    
    In fact, the smooth function $\Phi^*\psi$ on $X\setminus\Phi^{-1}(H)$ is also strictly plurisubharmonic and satisfies $\Phi^*\psi(z)\nearrow+\infty$ as $z$ tends to $\Phi^{-1}(H)$. Therefore, the smooth function $\Phi^*\psi-\log(j-\Psi)$ on $X_j\setminus \Phi^{-1}(H)$ is strictly plurisubharmonic and exhaustive.

    Fixed a positive integer $q\geq1$ with $m\geq\min\{n-q+1,r\}$. Since $L^2$-type Nakano semi-positivity of $h$,
    for any $f\in L^2_{n,q}(X,F\otimes E,h_F\otimes h,\omega)$ satisfying $\overline{\partial}f=0$ and $\int_X\langle B^{-1}_{h_F,\omega}f,f\rangle_{h_F\otimes h,\omega} dV_\omega<+\infty$, 
    there exists $u_j\in L^2_{n,q-1}(X_j\setminus\Phi^{-1}(H),F\otimes E,h_F\otimes h,\omega)$ satisfies $\overline{\partial}u_j=f$ on $X_j\setminus\Phi^{-1}(H)$ and 
    \begin{align*}
        \int_{X_j\setminus\Phi^{-1}(H)}|u_j|^2_{h_F\otimes h,\omega}dV_{\omega}&\leq\int_{X_j\setminus\Phi^{-1}(H)}\langle B^{-1}_{h_F,\omega}f,f\rangle_{h_F\otimes h,\omega}dV_\omega\leq\int_X\langle B^{-1}_{h_F,\omega}f,f\rangle_{h_F\otimes h,\omega}dV_\omega,
    \end{align*}
    where Lebesgue measure of $\Phi^{-1}(H)$ is zero.

    Let $u_j=0$ on $\Phi^{-1}(H)$ then by Lemma \ref{Ext d-equation for hypersurface},
    we have that $u_j\in L^2_{n,q-1}(X_j,F\otimes E,h_F\otimes h,\omega)$, $\overline{\partial}u_j=f$ on $X_j$ and 
    \begin{align*}
        \int_{X_j}|u_j|^2_{h_F\otimes h,\omega}dV_\omega\leq\int_X\langle B^{-1}_{h_F,\omega}f,f\rangle_{h_F\otimes h,\omega}dV_\omega.
    \end{align*}

    Hence, we get a solution $u\in L^2_{n,q-1}(X,F\otimes E,h_F\otimes h,\omega)$ of $\overline{\partial}u=f$ on $X$ such that 
    \begin{align*}
        \int_X|u|^2_{h_F\otimes h,\omega}dV_\omega\leq\int_X\langle B^{-1}_{h_F,\omega}f,f\rangle_{h_F\otimes h,\omega}dV_\omega,
    \end{align*}
    using the same method as in the proof of $(b)$ of Theorem \ref{psef * Nak semi-posi then Nak semi-posi}.

    $(a)$ It is shown from the fact $A^{p,q}_{A,h_A,\omega}>0$ for $p+q>n+k-1$ as above.
\end{proof}

\begin{theorem}\label{L2-estimates of dual Nakano semi-posi + m-posi on w.p.c}
    Let $X$ be a weakly pseudoconvex manifold and 
    $E$ be a holomorphic vector bundle equipped with a singular Hermitian metric $h$ and $\omega$ be a \kah metric.
    We assume that $h$ is dual Nakano semi-positive on $X$. Then we have the following
    \begin{itemize}
        \item [($a$)] If $X$ has a positive holomorphic line bundle and $(A,h_A)$ is a $k$-positive line bundle, then for any $p\geq k$ and any $f\in L^2_{p,n}(X,A\otimes E,h_A\otimes h,\omega)$ satisfying $\overline{\partial}f=0$ and $\int_X\langle B^{-1}_{h_A,\omega}f,f\rangle_{h_A\otimes h,\omega}dV_\omega<+\infty$,
        there exists $u\in L^2_{p,n-1}(X,A\otimes E,h_A\otimes h,\omega)$ satisfies $\overline{\partial}u=f$ and 
        \begin{align*}
            \int_X|u|^2_{h_A\otimes h,\omega}dV_{\omega}\leq\int_X\langle B^{-1}_{h_A,\omega}f,f\rangle_{h_A\otimes h,\omega}dV_\omega,
        \end{align*}
        where $B_{h_A,\omega}=[i\Theta_{A,h_A}\otimes\mathrm{id}_E,\Lambda_\omega]$.
        \item [($b$)] If $(F,h_F)$ is a dual $m$-positive holomorphic vector bundle of rank $r$, then for any $p\geq1$ with $m\geq\min\{n-p+1,r\}$ and any $f\in L^2_{p,n}(X,F\otimes E,h_F\otimes h,\omega)$ satisfying $\overline{\partial}f=0$ and $\int_X\langle B^{-1}_{h_F,\omega}f,f\rangle_{h_F\otimes h,\omega}dV_\omega<+\infty$,
        there exists $u\in L^2_{p,n-1}(X,F\otimes E,h_F\otimes h,\omega)$ satisfies $\overline{\partial}u=f$ and 
        \begin{align*}
            \int_X|u|^2_{h_F\otimes h,\omega}dV_{\omega}\leq\int_X\langle B^{-1}_{h_F,\omega}f,f\rangle_{h_F\otimes h,\omega}dV_\omega.
        \end{align*}
        where $B_{h_F,\omega}=[i\Theta_{F,h_F}\otimes\mathrm{id}_E,\Lambda_\omega]$.
    \end{itemize}
\end{theorem}

\begin{proof}
    $(b)$ There exists a smooth exhaustive plurisubharmonic function $\Psi$ on $X$ such that $\sup_X\Psi=+\infty$. Let $X_j:=\{x\in X\mid\Psi(z)<j\}$ be a sublevel set.
    From dual $m$-positivity of $h_F$, $(\mathrm{det}\,F,\mathrm{det}\,h_F)$ is a positive line bundle.

    Similarly to the proof of Theorem \ref{L2-estimates of L2-type Nakano semi-posi + m-posi on w.p.c}, there exists an analytic subset $Z$ such that $X_j\setminus Z$ is Stein submanifold for any $j>0$.
    By Lemma \ref{trivial of E on Stein-H}, there exists a hypersurface $H_j$ such that $S_j:=(X_j\setminus Z)\setminus H_j$ is also Stein and $E|_{S_j}$ is trivial. 
    From Steinness of $S_j$, there exists a increasing sequence of open Stein subsets $(S_j(k))_{k\in\mathbb{N}}$ such that $S_j(k)$ is relatively compact. 
    Fiexd $k\in\mathbb{N}$. There exists $\nu_0\in\mathbb{N}$ such that for any integer $\nu\geq\nu_0$, $S_j(k)\subset\subset S^{\nu_0}_j \subset\subset S^\nu_j$ where $S^\nu_j$ is the notation at $\S3.2$. 
    For an approximate identity $(\rho_\nu)_{\nu\in\mathbb{N}}$ with respect to $S_j$, we define the smooth Hermitian metric $h_\nu:=(h^*\ast\rho_\nu)^*$ on $E$ over $S_j^\nu$. Here, $h_\nu$ is dual Nakano semi-positive by Proposition \ref{vect bdl semi-negative sHm smoothing}.

    From dual $m$-positivity of $h_F$, Lemma \ref{m-posi then A>0 and dual m-posi then A>0} and Proposition \ref{A_(E+F)>C_E+C_F if A_E>C_E and A_F>C_F}, we obtain that $A^{p,n}_{F,h_F,\omega}>0$ and $B_{h_F,\omega}=[i\Theta_{F,h_F}\otimes\mathrm{id}_E,\Lambda_\omega]>0$ for $p\geq1$ with $m\geq\min\{n-p+1,r\}$.
    Fixed a positive integer $p\geq1$ with $m\geq\min\{n-p+1,r\}$. By dual Nakano semi-positivity of $h_\nu$ and Proposition \ref{A_(E+F)>C_E+C_F if A_E>C_E and A_F>C_F}, we have that $A^{k,n}_{E,h_\nu,\omega}\geq0$ for $k\geq1$ and that 
    \begin{align*}
        A^{p,n}_{E\otimes F,h_\nu\otimes h_F,\omega}&=[i\Theta_{E,h_\nu}\otimes\mathrm{id}_F,\Lambda_\omega]+[i\Theta_{F,h_F}\otimes\mathrm{id}_E,\Lambda_\omega]\\
        &\geq[i\Theta_{F,h_F}\otimes\mathrm{id}_E,\Lambda_\omega]=B_{h_F,\omega}>0,
    \end{align*}
    i.e. $0<(A^{p,n}_{E\otimes F,h_\nu\otimes h_F,\omega})^{-1}\leq B_{h_F,\omega}^{-1}$ on $S_j(k)$ for any $\nu\geq\nu_0$.
    For any $\overline{\partial}$-closed $f\in L^2_{p,n}(X,F\otimes E,h_F\otimes h,\omega)$ satisfying $\int_X\langle B^{-1}_{h_F,\omega}f,f\rangle_{h_F\otimes h,\omega}dV_\omega<+\infty$, we have that 
    \begin{align*}
        \int_{S_j(k)}\langle[i\Theta_{F\otimes E,h_F\otimes h_\nu},\Lambda_{\omega}]^{-1}f,f\rangle_{h_F\otimes h_\nu,\omega} dV_{\omega}&\leq\int_{S_j(k)}\langle B^{-1}_{h_F,\omega}f,f\rangle_{h_F\otimes h_\nu,\omega} dV_{\omega}\\
        &\leq\int_X\langle B^{-1}_{h_F,\omega}f,f\rangle_{h_F\otimes h,\omega} dV_{\omega}<+\infty.
    \end{align*}

    By [39,\,Theorem\,3.7], 
    i.e. $L^2$-estimates for $(p,n)$-forms without completeness of $\omega$, 
    we get a solution $u_{j,k,\nu}\in L^2_{p,n-1}(S_j(k),F\otimes E,h_F\otimes h_\nu,\omega)$ of $\overline{\partial}u_{j,k,\nu}=f$ on $S_j(k)$ satisfying
    \begin{align*}
        \int_{S_j(k)}|u_{j,k,\nu}|^2_{h_F\otimes h_\nu,\omega}dV_\omega&\leq\int_{S_j(k)}\langle[i\Theta_{F\otimes E,h_F\otimes h_\nu},\Lambda_\omega]^{-1}f,f\rangle_{h_F\otimes h_\nu,\omega}dV_{\omega}\\
        &\leq\int_X\langle B^{-1}_{h_F,\omega}f,f\rangle_{h_F\otimes h,\omega} dV_{\omega}<+\infty.
    \end{align*}

    Here, $h_\nu$ increasing to $h$ a.e. as $\nu$ tends to $+\infty$.
    By repeating the discussion in the proof of $(b)$ of Theorem \ref{psef * Nak semi-posi then Nak semi-posi}, i.e. taking weak limit $\nu\to+\infty$ and $k\to+\infty$, 
    we get a solution $u_j\in L^2_{p,n-1}(S_j,F\otimes E,h_F\otimes h,\omega)$ of $\overline{\partial}u_j=f$ on $S_j$ satisfying
    \begin{align*}
        \int_{S_j}|u_j|^2_{h_F\otimes h,\omega}dV_{\omega}&\leq\int_{S_j}\langle B^{-1}_{h_F,\omega}f,f\rangle_{h_F\otimes h,\omega} dV_{\omega}=\int_X\langle B^{-1}_{h_F,\omega}f,f\rangle_{h_F\otimes h,\omega} dV_{\omega}<+\infty,
    \end{align*}
    where Lebesgue measures of $Z$ and $H_j$ are zero.
    From Lemma \ref{Ext d-equation for hypersurface}, letting $u_j=0$ on $Z$ and $H_j$ then we have that $u_j\in L^2_{p,n-1}(X_j,F\otimes E,h_F\otimes h,\omega)$ of $\overline{\partial}u_j=f$ on $X_j$ satisfying
    \begin{align*}
        \int_{X_j}|u_j|^2_{h_F\otimes h,\omega}dV_{\omega}&\leq\int_X\langle B^{-1}_{h_F,\omega}f,f\rangle_{h_F\otimes h,\omega} dV_{\omega}<+\infty.
    \end{align*}

    From repeating the same discussion as above, i.e. taking weak limit $j\to+\infty$, 
    we get a solution $u\in L^2_{p,n-1}(X,F\otimes E,h_F\otimes h,\omega)$ of $\overline{\partial}u=f$ on $X$ satisfying
    \begin{align*}
        \int_X|u_j|^2_{h_F\otimes h,\omega}dV_{\omega}&\leq\int_X\langle B^{-1}_{h_F,\omega}f,f\rangle_{h_F\otimes h,\omega} dV_{\omega}<+\infty.
    \end{align*}

    $(a)$ It is shown from the fact $A^{p,q}_{A,h_A,\omega}>0$ for $p+q>n+k-1$ as above.
\end{proof}

\section{An $L^2$-type Dolbeault isomorphism}

In this section, we provide $L^2$-type Dolbeault isomorphisms including $L^2$-subsheaves 
by using the following lemma and theorem.

\begin{lemma}\label{Regularity of currents for (p,0)-forms}$(\mathrm{Dolbeault}$-$\mathrm{Grothendieck \,lemma},\, \mathrm{cf.\,[8,\,ChapterI]})$ 
    Let $T$ be a current of type $(p,0)$ on some open subset $U\subset \mathbb{C}^n$. If $T$ is $\overline{\partial}$-closed then it is a holomorphic differential form, i.e. a smooth differential form with holomorphic coefficients.
\end{lemma}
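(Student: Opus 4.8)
The plan is to reduce the statement to the scalar case and then invoke elliptic regularity for the Cauchy--Riemann system. First I would write $T$ in the coordinates $(z_1,\dots,z_n)$ as $T=\sum_{|I|=p}T_I\,dz_I$, where the sum runs over increasing multi-indices $I=(i_1<\cdots<i_p)$ and each coefficient $T_I$ is a distribution on $U$, i.e. a current of bidegree $(0,0)$. Since $dz_I$ contains no antiholomorphic differentials, one computes
\begin{align*}
    \overline{\partial}T=\sum_{|I|=p}\sum_{j=1}^{n}\frac{\partial T_I}{\partial\overline{z}_j}\,d\overline{z}_j\wedge dz_I,
\end{align*}
and the $(p,1)$-forms $d\overline{z}_j\wedge dz_I$ occurring here are, up to sign, the standard basis of $\Lambda^{p,1}T^*_{\mathbb{C}^n}$, hence $\mathbb{C}$-linearly independent as $j$ and $I$ vary. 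Therefore $\overline{\partial}T=0$ forces $\partial T_I/\partial\overline{z}_j=0$ in the sense of distributions for every index $j$ and every multi-index $I$.

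Next I would observe that each $T_I$ is then harmonic: applying $\partial/\partial z_j$ to the identity $\partial T_I/\partial\overline{z}_j=0$ and summing over $j$ yields
\begin{align*}
    \Delta T_I=4\sum_{j=1}^{n}\frac{\partial^2 T_I}{\partial z_j\,\partial\overline{z}_j}=0
\end{align*}
as distributions. By Weyl's lemma a distribution annihilated by the Laplacian is represented by a $C^\infty$ (indeed real-analytic) function, so each $T_I$ is smooth. A smooth function satisfying the homogeneous Cauchy--Riemann equations $\partial T_I/\partial\overline{z}_j=0$ for all $j$ is holomorphic, and hence $T=\sum_{|I|=p}T_I\,dz_I$ is a holomorphic $p$-form on $U$, as claimed.

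The only genuine analytic input is the regularity step, and that is the point I expect to be the crux; the rest is linear bookkeeping, and no global or compactness hypothesis on $U$ is needed because the assertion is local. If one prefers to avoid quoting Weyl's lemma directly, the same conclusion follows by mollification: with a radial mollifier $\rho_\varepsilon$, the convolutions $T_{I,\varepsilon}:=T_I\ast\rho_\varepsilon$ are smooth and still satisfy the Cauchy--Riemann equations on slightly shrunken subsets, hence are holomorphic there; using the mean value property of holomorphic functions together with the radial symmetry of $\rho$, one gets $T_{I,\varepsilon}=T_{I,\varepsilon'}$ on common domains for all sufficiently small $\varepsilon,\varepsilon'$, so this family is locally independent of $\varepsilon$, and letting $\varepsilon\to 0$ shows that $T_I$ equals that holomorphic function as a distribution. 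Either route completes the proof.
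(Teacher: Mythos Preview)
Your argument is correct. The paper does not supply its own proof of this lemma; it is simply quoted from Demailly's book (Chapter~I) as a known fact, so there is nothing in the paper to compare against. Both routes you sketch---reducing to the scalar distributional Cauchy--Riemann system and then invoking either Weyl's lemma or the mollification/mean-value argument---are standard and valid proofs of this regularity statement, and the mollification variant is essentially the argument one finds in the cited reference.
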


\begin{theorem}\label{Ext Hor Thm to vect bdl}$(\mathrm{cf.\,[39,\, Theorem\, 6.1]})$
    Let $X$ be a complex manifold and $E$ be a holomorphic vector bundle equipped with a singular Hermitian metric $h$.
    We assume that $h$ is Griffiths semi-positive. Then for any $x_0\in X$, there exist an open neighborhood $U$ of $x_0$ and a \kah metric $\omega$ on $U$ satisfying that 
    for any $\overline{\partial}$-closed $f\in L^2_{p,q}(U,E\otimes\mathrm{det}\,E,h\otimes\mathrm{det}\,h,\omega)$, there exists $u\in L^2_{p,q-1}(U,E\otimes\mathrm{det}\,E,h\otimes\mathrm{det}\,h,\omega)$ such that $\overline{\partial}u=f$.
\end{theorem}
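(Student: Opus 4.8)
The assertion is local, so the plan is to fix $x_0$, choose holomorphic coordinates identifying a neighborhood of $x_0$ with a ball $B=B(0,R)\subset\mathbb{C}^n$ on which $E$ is trivial, put $\omega:=\idd|z|^2$ (the flat \kah metric), and work on a slightly smaller ball $U\subset\subset B$; one may assume $q\geq1$, the case $q=0$ being the Dolbeault--Grothendieck Lemma \ref{Regularity of currents for (p,0)-forms}. The first reduction is to bidegree $(n,q)$: since $(T_U,\omega)$ is flat, for each $0\leq p\leq n$ there is a fibrewise isometric, $\overline{\partial}$-compatible isomorphism
\begin{align*}
\Lambda^{p,q}T^*_U\otimes E\otimes\mathrm{det}\,E\ \cong\ \Lambda^{n,q}T^*_U\otimes\Lambda^{n-p}T_U\otimes E\otimes\mathrm{det}\,E,
\end{align*}
in which $\Lambda^{n-p}T_U$ is isometrically trivial on $U$. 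Hence solving $\overline{\partial}u=f$ for a $\overline{\partial}$-closed $f\in L^2_{p,q}(U,E\otimes\mathrm{det}\,E,h\otimes\mathrm{det}\,h,\omega)$ amounts to solving finitely many copies of the same equation for $(n,q)$-forms with values in $(E\otimes\mathrm{det}\,E,h\otimes\mathrm{det}\,h)$, so it suffices to treat the case where $f$ is a $\overline{\partial}$-closed $(n,q)$-form with values in $(E\otimes\mathrm{det}\,E,h\otimes\mathrm{det}\,h)$.

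The key input is the Demailly--Skoda phenomenon in the singular setting: since $h$ is Griffiths semi-positive, $h^*$ is Griffiths semi-negative, so Proposition \ref{vect bdl semi-negative sHm smoothing}$(a)$ gives smooth Griffiths semi-negative metrics $h^*_\nu\searrow h^*$ on $U$, whence $h_\nu:=(h^*_\nu)^{-1}$ are smooth Griffiths semi-positive and increase pointwise to $h$; by the classical Demailly--Skoda theorem (\cite{DS80}) each $h_\nu\otimes\mathrm{det}\,h_\nu$ is smooth Nakano semi-positive, and these metrics increase pointwise to $h\otimes\mathrm{det}\,h$ and are dominated by it. On the Stein ball $U$ I would then twist by the strictly plurisubharmonic weight $\psi:=|z|^2$, for which $[\idd\psi\otimes\mathrm{id},\Lambda_\omega]=[\omega\otimes\mathrm{id},\Lambda_\omega]=q\cdot\mathrm{id}$ on $(n,q)$-forms, so that (Proposition \ref{A_(E+F)>C_E+C_F if A_E>C_E and A_F>C_F}) the curvature operator on $(n,q)$-forms with values in $(E\otimes\mathrm{det}\,E,\,h_\nu\otimes\mathrm{det}\,h_\nu\,e^{-\psi})$ is $\geq q\cdot\mathrm{id}>0$; Theorem \ref{L^2-estimate completeness (n,q)-forms} (applicable since $U$ carries a complete \kah metric) then gives $u_\nu$ with $\overline{\partial}u_\nu=f$ and
\begin{align*}
\int_U|u_\nu|^2_{h_\nu\otimes\mathrm{det}\,h_\nu,\omega}\,e^{-|z|^2}\,dV_\omega\ \leq\ \frac1q\int_U|f|^2_{h_\nu\otimes\mathrm{det}\,h_\nu,\omega}\,e^{-|z|^2}\,dV_\omega\ \leq\ \frac1q\int_U|f|^2_{h\otimes\mathrm{det}\,h,\omega}\,dV_\omega,
\end{align*}
a bound independent of $\nu$. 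Since $e^{-|z|^2}$ is bounded below on the relatively compact $U$, the $u_\nu$ are uniformly bounded in $L^2_{n,q-1}(U,E\otimes\mathrm{det}\,E,h_1\otimes\mathrm{det}\,h_1,\omega)$, and a weak limit together with the monotone convergence theorem --- exactly as in the proof of $(b)$ of Theorem \ref{psef * Nak semi-posi then Nak semi-posi} --- yields $u\in L^2_{n,q-1}(U,E\otimes\mathrm{det}\,E,h\otimes\mathrm{det}\,h,\omega)$ with $\overline{\partial}u=f$; undoing the reduction of the first paragraph gives the stated $u$. (These last steps can be shortened by invoking directly that $h\otimes\mathrm{det}\,h$ is $L^2$-type Nakano semi-positive in the sense of Definition \ref{def Nakano semi-posi sing} --- equivalently Nakano semi-positive, \cite{HI20}, \cite{Ina22}, via Proposition \ref{characterization of smooth Nakano} --- and applying that definition with $S=U$ and the weight $\psi=|z|^2$, for which $B_{\psi,\omega}=q\cdot\mathrm{id}$.)

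The $\overline{\partial}$-solving is not the difficulty; obtaining the solution in the correct $L^2$-class is. Because $h\otimes\mathrm{det}\,h$ is merely semi-positive, its curvature operator is only $\geq0$ and no $L^2$-existence theorem applies to it verbatim; the remedy --- twisting by a strictly plurisubharmonic weight, then using relative compactness of $U$ to remove the weight, combined with a weak-limit and monotone-convergence bootstrap across the increasing metrics $h_\nu\otimes\mathrm{det}\,h_\nu$ --- is the technical heart of the proof. The two conceptual inputs that make local $\overline{\partial}$-solvability possible at all are the bidegree reduction from $(p,q)$ to $(n,q)$ via flatness of $\omega$, and the Demailly--Skoda passage from Griffiths to Nakano semi-positivity in the singular category, i.e. Theorem \ref{Grif * det Grif is dual Nakano} and its Nakano analogue.
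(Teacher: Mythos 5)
Your proposal is essentially correct, but there is nothing internal to compare it against: in this paper Theorem \ref{Ext Hor Thm to vect bdl} is quoted from [Wat22b, Theorem 6.1] and no proof is given here, so the comparison can only be with the paper's general toolkit. Measured against that toolkit your argument is a faithful reconstruction: the dual smoothing $h^*_\nu\searrow h^*$ comes from Proposition \ref{vect bdl semi-negative sHm smoothing}(a), the smooth Demailly--Skoda theorem upgrades $h_\nu$ to Nakano semi-positive $h_\nu\otimes\mathrm{det}\,h_\nu$, the twist by $\psi=|z|^2$ with $B_{\psi,\omega}=q\cdot\mathrm{id}$ on $(n,q)$-forms and Theorem \ref{L^2-estimate completeness (n,q)-forms} give uniform estimates, and the weak-limit/monotone-convergence bootstrap is exactly the one used in the proof of Theorem \ref{psef * Nak semi-posi then Nak semi-posi}(b); your parenthetical shortcut via $L^2$-type Nakano semi-positivity of $h\otimes\mathrm{det}\,h$ ([HI20], [Ina22]) is precisely how the paper itself argues in the proof of Theorem \ref{L2-type Dolbeault isomorphism with sHm}(b), so either route is in the spirit of the paper. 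The one genuinely non-routine ingredient you supply is the flat-metric reduction from bidegree $(p,q)$ to $(n,q)$ via $\Lambda^{p,q}T^*_U\otimes G\cong\Lambda^{n,q}T^*_U\otimes\Lambda^{n-p}T_U\otimes G$, and it is sound: on a coordinate ball with $\omega=\idd|z|^2$ the factor $\Lambda^{n-p}T_U$ has a constant orthonormal holomorphic frame, the isomorphism commutes with $\overline{\partial}$ and preserves $L^2$ norms up to universal constants, and the problem splits over the holomorphic multi-index $I$ into independent $(n,q)$-problems with values in $(E\otimes\mathrm{det}\,E,h\otimes\mathrm{det}\,h)$; this is needed because Nakano-type positivity alone controls nothing in bidegree $(p,q)$ with $p<n$, $q<n$. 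Minor points to tidy: the $q=0$ case is vacuous (there are no $(p,-1)$-forms), so the appeal to the Dolbeault--Grothendieck lemma is out of place; you should take $\nu$ large enough that $h_\nu$ (defined a priori only on the relatively compact exhaustion of the trivializing ball) is smooth on a neighborhood of $\overline{U}$, and extract the weak limit in the fixed space $L^2_{n,q-1}(U,E\otimes\mathrm{det}\,E,h_{\nu_1}\otimes\mathrm{det}\,h_{\nu_1},\omega)$ for one such fixed $\nu_1$ rather than ``$h_1$''; and the smoothness and positive-definiteness of $h_\nu=(h^*_\nu)^{-1}$ deserves the one-line remark that the convolution of an a.e.\ positive definite, locally integrable metric is positive definite. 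None of these affects the validity of the argument.
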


For singular Hermitian metrics $h$ on $E$, we define the subsheaf $\mathscr{L}^{p,q}_{E,h}$ of germs of $(p,q)$-forms $u$ with values in $E$ and with measurable coefficients such that both $|u|^2_{h}$ and $|\overline{\partial}u|^2_h$ are locally integrable, where $\mathscr{L}^{p,q}_{E,h}$ is a fine sheaf.

\begin{theorem}\label{L2-type Dolbeault isomorphism with sHm}
    Let $X$ be a complex manifold of dimension $n$ and $(F,h_F)$ be a holomorphic vector bundle over $X$. 
    Let $L$ be a holomorphic line bundle over $X$ equipped with a singular Hermitian metric $h_L$ and $E$ be a holomorphic vector bundle over $X$ equipped with a singular Hermitian metric $h_E$.
    Then we have the following
    \begin{itemize}
        \item [($a$)] If $h_L$ is singular semi-positive, then we have an exact sequence of sheaves 
        \begin{align*}
            0\longrightarrow \Omega_X^p\otimes \mathcal{O}_X(F\otimes L)\otimes \mathscr{I}(h_L) \longrightarrow \mathscr{L}^{p,\ast}_{F\otimes L,h_F\otimes h_L}.
        \end{align*}
        \item [($b$)] If $h_E$ is $L^2$-type Nakano semi-positive, then we get an exact sequence of sheaves 
        \begin{align*}
            0\longrightarrow K_X\otimes \mathcal{O}_X(F)\otimes \mathscr{E}(h_E) \longrightarrow \mathscr{L}^{n,\ast}_{F\otimes E,h_F\otimes h_E}.
        \end{align*}
        \item [($c$)] If $h_E$ is Griffiths semi-positive, then we have an exact sequence of sheaves 
        \begin{align*}
            0\longrightarrow \Omega_X^p\otimes \mathcal{O}_X(F)\otimes \mathscr{E}(h_E\otimes\mathrm{det}\,h_E) \longrightarrow \mathscr{L}^{p,\ast}_{F\otimes E\otimes\mathrm{det}\,E,h_F\otimes h_E\otimes\mathrm{det}\,h_E}.
        \end{align*}
    \end{itemize}
    In particular, $L^2$-type Dolbeault isomorphisms are obtained from these. For example, $H^q(X,\Omega_X^p\otimes F\otimes L\otimes \mathscr{I}(h_L))\cong H^q(\Gamma(X,\mathscr{L}^{p,\ast}_{F\otimes L,h_F\otimes h_L}))$ in the case of $(a)$.
\end{theorem}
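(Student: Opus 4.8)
The plan is to show that in each of the three cases the complex $(\mathscr{L}^{p,\ast},\overline\partial)$ (with $p=n$ in case $(b)$, and the bundle and singular metric as displayed) is a resolution of the kernel sheaf appearing on the left, and then to read off the Dolbeault-type isomorphism from the fact that the $\mathscr{L}^{p,q}$ are fine. Since $X$ is paracompact and fine sheaves are $\Gamma$-acyclic, applying $\Gamma(X,-)$ to such a resolution yields $H^q(X,\mathscr{K})\cong H^q\bigl(\Gamma(X,\mathscr{L}^{p,\ast})\bigr)$, $\mathscr{K}$ being the kernel; so the whole theorem reduces to checking exactness of the $\overline\partial$-complex on stalks. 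The injectivity at the left end is the tautological inclusion of holomorphic bundle-valued $p$-forms with locally square-integrable coefficients.

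For exactness at the degree-zero spot, take a $\overline\partial$-closed germ $u$ of $\mathscr{L}^{p,0}$ (resp. $\mathscr{L}^{n,0}$). In a local holomorphic trivialization, the Dolbeault--Grothendieck Lemma \ref{Regularity of currents for (p,0)-forms} applied componentwise shows that $u$ is a holomorphic section of $\Omega^p_X\otimes F\otimes L$ (resp. of $K_X\otimes F\otimes E$, resp. of $\Omega^p_X\otimes F\otimes E\otimes\det E$). As $h_F$ is smooth, hence uniformly comparable on relatively compact sets to a flat metric, the integrability requirement $|u|^2_{h_F\otimes\,\cdot\,}\in L^1_{\mathrm{loc}}$ is equivalent to $|u|^2_{h_L}\in L^1_{\mathrm{loc}}$ in $(a)$, to $|u|^2_{h_E}\in L^1_{\mathrm{loc}}$ in $(b)$, and to $|u|^2_{h_E\otimes\det h_E}\in L^1_{\mathrm{loc}}$ in $(c)$; by the definitions of $\mathscr{I}$ and of the $L^2$-subsheaf $\mathscr{E}$ (Definition \ref{def of multiplier submodule}) this identifies the kernel with the stated sheaves.

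The heart of the matter is exactness at $\mathscr{L}^{p,q}$ for $q\ge1$: local solvability of $\overline\partial u=f$ keeping $u$ inside the $\mathscr{L}$-sheaf. In every case, given a $\overline\partial$-closed germ $f$ at $x_0$, I first pass to a small coordinate ball $S\ni x_0$ on which the relevant bundles are trivial and on which $f$ is genuinely square-integrable with respect to the relevant metric (possible because $|f|^2_h\in L^1_{\mathrm{loc}}$), and I then produce $u\in L^2_{\mathrm{loc}}(S)$ with $\overline\partial u=f$; since $|\overline\partial u|^2_h=|f|^2_h\in L^1_{\mathrm{loc}}$ automatically, such a $u$ is a germ of the required $\mathscr{L}$-sheaf. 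In case $(a)$ one applies Hörmander's Theorem \ref{Hor Thm 4.4.2} on $S$ with weight the plurisubharmonic local weight of $h_L$, absorbing the smooth factor $h_F$ into the bound and handling the bundle $F$ componentwise. In case $(b)$ one invokes the defining property of $L^2$-type Nakano semi-positivity of $h_E$ (Definition \ref{def Nakano semi-posi sing}) with $\omega_S=\idd|z-x_0|^2$ and the auxiliary metric $h_Fe^{-t\psi}$ on $F$, $\psi=|z-x_0|^2$ and $t\gg1$ chosen so that $i\Theta_{F,h_Fe^{-t\psi}}=i\Theta_{F,h_F}+t\,\omega_S\otimes\mathrm{id}_F>0$ on $S$; then $A^{n,s}_{F,h_Fe^{-t\psi},\omega_S}>0$ for all $s\ge1$, so the definition applies with $k=1$ for every $q\ge1$, and its right-hand side $\int_S\langle B^{-1}_{h_Fe^{-t\psi},\omega_S}f,f\rangle$ is finite because for $t$ large $B_{h_Fe^{-t\psi},\omega_S}$ is bounded below by a positive constant, whence it is controlled by $\int_S|f|^2_{h_E\otimes h_F}<+\infty$. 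In case $(c)$ one uses Theorem \ref{Ext Hor Thm to vect bdl} for $E\otimes\det E$ equipped with the Griffiths semi-positive metric $h_E\otimes\det h_E$, reducing the additional twist by the smooth bundle $F$ to a componentwise application over a trivialization of $F$ on $S$, the smooth bounded factors being harmless for the $L^2_{\mathrm{loc}}$-conclusion.

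I expect this last step to be the main obstacle, and within it the delicate points to be: shrinking the domain so the given germ becomes genuinely square-integrable, so that the cited \emph{global} solvability statements apply; arranging the auxiliary twist in case $(b)$ so that it both matches the exact format of Definition \ref{def Nakano semi-posi sing} and forces finiteness of the estimate's right-hand side; and, in cases $(b)$ and $(c)$, absorbing the extra smooth factor $h_F$ (via triviality of $F$ on a small ball and uniform comparability with a flat metric) without affecting the hypotheses on $h_E$. Granting the exactness of these fine complexes, the displayed Dolbeault-type isomorphisms, such as $H^q(X,\Omega^p_X\otimes F\otimes L\otimes\mathscr{I}(h_L))\cong H^q\bigl(\Gamma(X,\mathscr{L}^{p,\ast}_{F\otimes L,h_F\otimes h_L})\bigr)$, follow at once.
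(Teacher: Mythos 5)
Your proposal is correct and takes essentially the same route as the paper: restrict to a bounded trivializing Stein ball where the smooth $h_F$ is comparable to a flat metric, identify the degree-zero kernel via the Dolbeault--Grothendieck lemma componentwise, solve $\overline{\partial}$ locally with weighted estimates (H\"ormander's Theorem \ref{Hor Thm 4.4.2} in $(a)$, the defining $L^2$-property of $L^2$-type Nakano semi-positivity in $(b)$, Theorem \ref{Ext Hor Thm to vect bdl} in $(c)$), and conclude by acyclicity of the fine sheaves $\mathscr{L}^{p,q}$. The only cosmetic difference is in $(b)$: the paper splits $f$ componentwise and applies the estimate with the flat metric $I_Fe^{-\psi}$, $\psi=|z|^2$ (so $B_{\psi,\omega}=q\,\mathrm{id}$), whereas you twist $h_F$ itself by $e^{-t\psi}$ with $t\gg1$, an equivalent application of Definition \ref{def Nakano semi-posi sing}.
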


To simplify the proof, we introduce the following definition. 

\begin{definition}
    Let $E$ be a holomorphic vector bundle on a complex manifold $X$. Consider two singular Hermitian metrics $h_1$ and $h_2$ on $E$. 
    For any open set $U$ of $X$, we will write $h_1\thicksim h_2$ on $U$, 
    if there is a constant $C>0$ such that $C^{-1}h_2\leq h_1\leq Ch_2$. 
\end{definition}

$\textit{Proof of Theorem \ref{L2-type Dolbeault isomorphism with sHm}}$.
    For any fixed point $x_0\in X$, there exist a Stein open neighborhood $U$ of $x_0$ such that $F$ is trivial on $U$, i.e. $F|_U=\mathbb{C}^r\times U:=\underline{\mathbb{C}^r}$, where $r=\mathrm{rank}\,F$. 
    Let $(U;z_1,\cdots,z_n)$ be a local coordinate, $I_F$ be a trivial Hermitian metric on $F|_U$ and $\omega=\sum^n_{j=1} dz_j\wedge d\overline{z}_j$ be a \kah metric. 
    By smoothness of $h_F$, we get $h_F\thicksim I_{F}$ on $U$.

    We consider the following sheaves sequence: 
    \begin{align*}
        \xymatrix{
        0 \ar[r] & \mathrm{ker}\,\overline{\partial}_0 \hookrightarrow \mathscr{L}^{p,0}_{F\otimes E,h_F\otimes h_E} \ar[r]^-{\overline{\partial}_0} & \mathscr{L}^{p,1}_{F\otimes E,h_F\otimes h_E} \ar[r]^-{\overline{\partial}_1} & \cdots \ar[r]^-{\overline{\partial}_{n-1}} & \mathscr{L}^{p,n}_{F\otimes E,h_F\otimes h_E} \ar[r] & 0,
    }
    \end{align*}
    where $\overline{\partial}_j=\overline{\partial}_{F\otimes E}=\overline{\partial}\otimes\mathrm{id}_{F\otimes E}$.
    From $h_F\thicksim I_F$ on $U$, we get $h_F\otimes h_E\thicksim I_F\otimes h_E$ on $U$ and $L^2_{p,q}(U,F\otimes E,h_F\otimes h_E,\omega)=L^2_{p,q}(U,\underline{\mathbb{C}^r}\otimes E,I_F\otimes h_E,\omega)$. Therefore, $\mathscr{L}^{p,q}_{F\otimes E,h_F\otimes h_E}(U)=\mathscr{L}^{p,q}_{\underline{\mathbb{C}^r}\otimes E,I_F\otimes h_E}(U)$.
    Since Lemma \ref{Regularity of currents for (p,0)-forms}, the kernel of $\overline{\partial}_0$ consists of all germs of holomorphic $(p,0)$-forms with values in $F\otimes E$ which satisfy the integrability condition. 
    
    We prove that $\mathrm{ker}\,\overline{\partial}_0=\Omega^p_X\otimes \mathcal{O}_X(F)\otimes \mathscr{E}(h_E)$. 
    Let $e=(e_1,\cdots,e_r)$ and $b=(b_1,\cdots,b_s)$ be holomorphic frames of $\underline{\mathbb{C}^r}$ and $F$ on $U$ respectively, where $s=\mathrm{rank}\,E$, $e_j=(0,\cdots,0,1,0,\cdots,0)$ and $e$ is orthonormal with respect to $I_F$.
    For any $f\in H^0(U,\Omega^p_X\otimes \underline{\mathbb{C}^r}\otimes E)=H^0(U,\Omega^p_X\otimes F\otimes E)$, we can write 
    \begin{align*}
        f=\sum_{|I|=p,j,\lambda} f_{Ij\lambda}dz_I\otimes e_j\otimes b_\lambda=\sum_j f_j\otimes e_j=\sum_{|I|=p,j}f_{jI}dz_I\otimes e_j,
    \end{align*}
    where $f_j=\sum_{|I|=p,\lambda} f_{Ij\lambda}dz_I\otimes b_\lambda=\sum_{|I|=p} f_{jI} dz_I \in H^0(U,\Omega^p_X\otimes E)$ and $f_{jI}=\sum_\lambda f_{Ij\lambda}\otimes b_\lambda \in H^0(U,E)$.
    We can calculate the following
    \begin{align*}
        |f|^2_{I_F\otimes h_E,\omega}=\sum_j|f_j|^2_{h_E,\omega}=\sum_{j,I}|f_{jI}|^2_{h_E}, \quad
        ||f||^2_{I_F\otimes h_E,\omega}
        =\sum_{j,I}\int_U|f_{jI}|^2_{h_E}dV_\omega.
    \end{align*}

    Therefore, we get $f\in\mathrm{ker}\,\overline{\partial}_0(U) \iff ||f||^2_{I_F\otimes h_E,\omega}=\sum_{j,I}\int_U|f_{jI}|^2_{h_E}dV_\omega<+\infty,$
    i.e. for any $f_{jI}\in H^0(U,E)$ satisfy the condition $f_{jI}\in\mathscr{E}(h_E)(U)$. Hence, we have that $\mathrm{ker}\,\overline{\partial}_0=\Omega^p_X\otimes \mathcal{O}_X(F)\otimes \mathscr{E}(h_E)$.
    In particular, from the fact $\mathscr{E}(h_L)=\mathcal{O}_X(L)\otimes\mathscr{I}(h_L)$, if $(E,h_E)=(L,h_L)$ then we obtain $\mathrm{ker}\,\overline{\partial}_0=\Omega^p_X\otimes \mathcal{O}_X(F\otimes L)\otimes \mathscr{I}(h_L)$.

    From the above, the sheaves sequences of $(a)$-$(c)$ are exact at $q=0$. Finally, we prove the exactness of the sheaves sequences of $(a)$-$(c)$ at $q\geq1$.

    $(a)$ We can retake $U$ such that $L$ is also trivial on $U$.\! By the assumption, $\varphi\!:=\!-\!\log h_L$ is plurisubharmonic on $U$.
    Since $\mathscr{L}^{p,q}_{F\otimes L,h_F\otimes h_L}(U)=\mathscr{L}^{p,q}_{\underline{\mathbb{C}^r}\otimes L,I_F\otimes h_L}(U)$, it is sufficient to show that for any $\overline{\partial}$-closed $f\in L^2_{p,q}(U,\underline{\mathbb{C}^r}\otimes L,I_F\otimes h_L,\omega)$, there is $u\in L^2_{p,q-1}(U,\underline{\mathbb{C}^r}\otimes L,I_F\otimes h_L,\omega)$ such that $\overline{\partial}u=f$.
    Let $\widetilde{e}$ be a holomorphic frame of $L|_U$. We can write 
    \begin{align*}
        f=\sum_{|I|=p,|J|=q,k}f_{IJk}dz_I\wedge d\overline{z}_J\otimes e_k\otimes \widetilde{e}=\sum_k f_k \otimes e_k,\quad f_k=\sum_{|I|=p,|J|=q}f_{IJk}dz_I\wedge d\overline{z}_J\otimes \widetilde{e}.
    \end{align*}
    Here, $||f||^2_{I_F\otimes h_L,\omega}=\sum_k\int_U|f_k|^2 e^{-\varphi}dV_\omega<+\infty$, 
    i.e. $f_k \in L^2_{p,q}(U,L,h_L,\omega)=L^2_{p,q}(U,\varphi,\omega)$ for any $k$. From holomorphicity of $e$, we get 
    $0=\overline{\partial}f=\overline{\partial}\sum_k f_k \otimes e_k=\sum_k \overline{\partial}f_k \otimes e_k$.
    
    Thus, $\overline{\partial}f_k=0$. 
    By [15, Theorem\,4.4.2], there is a solution $u_k$ of $\overline{\partial}u_k=f_k$ satisfying
    \begin{align*}
        \inf_U(1+|z|^2)^{-2}\int_U|u_k|^2 e^{-\varphi}dV_\omega\leq\int_U|u_k|^2 e^{-\varphi}(1+|z|^2)^{-2}dV_\omega\leq\int_U|f_k|^2 e^{-\varphi}dV_\omega,
    \end{align*}
    where $\inf_U(1+|z|^2)^{-2}>0$.
    We define the $(p,q-1)$-forms $u=\sum_ku_k\otimes e_k$, then we get
    \begin{align*}
        &\overline{\partial}u=\overline{\partial}\sum u_k\otimes e_k=\sum \overline{\partial}u_k\otimes e_k=\sum f_k\otimes e_k=f,\\    
        &\inf_U(1+|z|^2)^{-2}||u||^2_{I_F\otimes h_L,\omega}
        =\inf_U(1+|z|^2)^{-2}\sum\int_U|u_k|^2 e^{-\varphi}dV_\omega\\
        &\leq\sum\int_U|u_k|^2 e^{-\varphi}(1+|z|^2)^{-2}dV_\omega\leq\sum\int_U|f_k|^2 e^{-\varphi}dV_\omega<+\infty.
    \end{align*}

    $(b)$ Let $\psi:=|z|^2$ be a smooth strictly plurisubharmonic on $U$ such that $\idd\psi=\omega$. From $h_F\thicksim I_F\thicksim I_Fe^{-\psi}:=I_F^\psi$ on $U$, we get $\mathscr{L}^{n,q}_{F\otimes E,h_F\otimes h_E}(U)=\mathscr{L}^{n,q}_{\underline{\mathbb{C}^r}\otimes E,I_F^\psi\otimes h_E}(U)$. 
    Then, it is sufficient to show that for any $\overline{\partial}$-closed $f\in L^2_{n,q}(U,\underline{\mathbb{C}^r}\otimes E,I_F^\psi\otimes h_E,\omega)$, there exists $u\in L^2_{n,q-1}(U,\underline{\mathbb{C}^r}\otimes E,I_F^\psi\otimes h_E,\omega)$ such that $\overline{\partial}u=f$.
    We can write 
    \begin{align*}
        f=\sum f_{Jk\lambda}dz_N\wedge d\overline{z}_J\otimes e_k\otimes b_\lambda=\sum f_k \otimes e_k, \,\,f_k=\sum f_{Jk\lambda}dz_N\wedge d\overline{z}_J\otimes b_\lambda.
    \end{align*}
    
    Then we get $f_k \in L^2_{n,q}(U,E,h_Ee^{-\psi},\omega)$ and $\overline{\partial}f_k=0$ on $U$ for any $k$. 
    Here, we obtain $B_{\psi,\omega}=q$ on $\Lambda^{n,q}T^*_U\otimes E$.
    By $L^2$-type Nakano semi-positivity of $h_E$, for any $\overline{\partial}$-closed $f_k\in L^2_{n,q}(U,E,h_Ee^{-\psi},\omega)$, there is $u_k\in L^2_{n,q-1}(U,E,h_Ee^{-\psi},\omega)$ satisfying $\overline{\partial}u_k=f$ and 
    \begin{align*}
        \int_U|u_k|^2_{h_E,\omega}e^{-\psi}dV_\omega\leq\int_U\langle B_{\psi,\omega}^{-1}f_k,f_k\rangle_{h_E,\omega}e^{-\psi}dV_\omega=\frac{1}{q}\int_U|f_k|^2_{h_E,\omega}e^{-\psi}dV_\omega<+\infty.
    \end{align*}
    We define the $(n,q-1)$-form $u:=\sum_ku_k\otimes e_k$. Then we have that $\overline{\partial}u=f$ and
    \begin{align*}
        ||u||^2_{I_F^\psi\otimes h_E,\omega}&=\int_U|u|^2_{I_F^\psi\otimes h_E,\omega}dV_\omega=\sum_k\int_U|u_k|^2_{h_E,\omega}e^{-\psi}dV_\omega<+\infty.
    \end{align*}

    $(c)$ It is shown using Theorem \ref{Ext Hor Thm to vect bdl} in the same way as $(a)$. \qed

\section{Main results and proofs for vanishing theorems}

In this section, we prove main results and additionally give vanishing theorems for the cases where a singular Hermitian metric is $L^2$-type Nakano semi-positive and dual Nakano semi-positive. 

\vspace{2mm}

$\textit{Proof of Theorem \ref{Ext NDBS V-thm for psef for (n,q)}}.$
    Let $h_F$ be a smooth Hermitian metric on $F$. By $(a)$ of Theorem \ref{L2-type Dolbeault isomorphism with sHm}, the complex of sheaves $(\mathscr{L}^{p,\ast}_{F\otimes L,h_F\otimes h},\overline{\partial})$ defined by $\overline{\partial}$-operator is a resolution of the sheaf $\Omega_X^p\otimes\mathcal{O}_X(F\otimes L)\otimes\mathscr{I}(h)$.
    Since acyclicity of each $\mathscr{L}^{p,q}_{F\otimes L,h_F\otimes h}$, we have that 
    \begin{align*}
        H^q(X,\Omega_X^p\otimes F\otimes L\otimes \mathscr{I}(h))\cong H^q(\Gamma(X,\mathscr{L}^{p,\ast}_{F\otimes L,h_F\otimes h})).
    \end{align*} 
    Let $\psi$ be a smooth exhaustive plurisubharmonic function on $X$. For any convex increasing function $\chi\in\mathcal{C}^{\infty}(\mathbb{R},\mathbb{R})$, we define the smooth Hermitian metric $h_F^\chi:=h_Fe^{-\chi\circ\psi}$.

    $(b)$ We assume $m$-positivity of $h_F$. Then $h_F^\chi$ is also $m$-positive, $h_F$ is Griffiths positive and $(\mathrm{det}\,F,\mathrm{det}\,h_F)$ is positive. Thus, there exists a \kah metric $\omega$ on $X$. 
    
    From Lemma \ref{m-posi then A>0 and dual m-posi then A>0}, for any positive integer $q\geq1$ with $m\geq\min\{n-q+1,r\}$ we have that $A^{n,q}_{F,h_F,\omega}>0$ and $A^{n,q}_{F,h_F^\chi,\omega}>0$.
    Moreover, any $(n,q)$-form $v\in\Lambda^{n,q}T^*_X\otimes F\otimes L$, we get $\langle B_{h_F,\omega}v,v\rangle_{h_F\otimes h,\omega}>0$ and $\langle B_{h_F^\chi,\omega}v,v\rangle_{h_F^\chi\otimes h,\omega}>0$ by the proof of Proposition \ref{A_(E+F)>C_E+C_F if A_E>C_E and A_F>C_F}, where $B_{h_F,\omega}=[i\Theta_{F,h_F}\otimes\mathrm{id}_L,\Lambda_\omega], B_{h_F^\chi,\omega}=[i\Theta_{F,h_F^\chi}\otimes\mathrm{id}_L,\Lambda_\omega]$.
    Then, from the inequality $\langle B_{h_F^\chi,\omega}v,v\rangle_{h_F^\chi\otimes h,\omega}\geq\langle B_{h_F,\omega} v,v\rangle_{h_F^\chi\otimes h,\omega}=\langle B_{h_F,\omega} v,v\rangle_{h_F\otimes h,\omega}e^{-\chi\circ\psi}>0$,
    we have that
    \begin{align*}
        0<\langle B_{h_F^\chi,\omega}^{-1}v,v\rangle_{h_F^\chi\otimes h,\omega}\leq\langle B_{h_F,\omega}^{-1}v,v\rangle_{h_F\otimes h,\omega}e^{-\chi\circ\psi}.
    \end{align*}

    In fact, for any $(n,q)$-forms $u,v\in \Lambda^{n,q} T^*_X\otimes F\otimes L$, we get
    \begin{align*}
        |\langle v,u\rangle_{h_F^\chi\otimes h,\omega}|^2=|\langle v,u\rangle_{h_F\otimes h,\omega}|^2e^{-2\chi\circ\psi}&\leq\langle B_{h_F,\omega}^{-1}v,v\rangle_{h_F\otimes h,\omega}\langle B_{h_F,\omega}u,u\rangle_{h_F\otimes h,\omega}e^{-2\chi\circ\psi}\\
        &\leq\langle B_{h_F,\omega}^{-1}v,v\rangle_{h_F\otimes h,\omega}\langle B_{h_F^\chi,\omega}u,u\rangle_{h_F^\chi\otimes h,\omega}e^{-\chi\circ\psi},
    \end{align*}
    and the choose $u=B_{h_F^\chi,\omega}^{-1}v$ implies 
    \begin{align*}
        |\langle v,B_{h_F^\chi,\omega}^{-1}v\rangle_{h_F^\chi\otimes h,\omega}|^2\leq \langle B_{h_F,\omega}^{-1}v,v\rangle_{h_F\otimes h,\omega}e^{-\chi\circ\psi} \cdot\langle v,B_{h_F^\chi,\omega}^{-1}v\rangle_{h_F^\chi\otimes h,\omega}.
    \end{align*}

    For any $\overline{\partial}$-closed $f\in \Gamma(X,\mathscr{L}^{n,q}_{F\otimes L,h_F\otimes h})$, 
    the integrals
    \begin{align*}
        \int_X|f|^2_{h_F^\chi\otimes h,\omega}dV_\omega&=\int_X|f|^2_{h_F\otimes h,\omega}e^{-\chi\circ\psi}dV_\omega \quad \mathrm{and}\\
        \int_X\langle B_{h_F^\chi,\omega}^{-1}f,f\rangle_{h_F^\chi\otimes h,\omega}dV_\omega&\leq\int_X\langle B_{h_F,\omega}^{-1}f,f\rangle_{h_F\otimes h,\omega}e^{-\chi\circ\psi}dV_\omega
    \end{align*}
    become convergent if $\chi$ grows fast enough. By Theorem \ref{L2-estimate of m-posi + psef}, there exists $u\in L^2_{n,q-1}(X,F\otimes L,h_F^\chi\otimes h,\omega)$ such that $\overline{\partial}u=f$ and 
    \[
        \int_X|u|^2_{h_F\otimes h,\omega}e^{-\chi\circ\psi}dV_\omega\leq\int_X\langle B_{h_F^\chi,\omega}^{-1}f,f\rangle_{h_F\otimes h,\omega}e^{-\chi\circ\psi}dV_\omega<+\infty,
    \]
    where $|u|^2_{h_F\otimes h,\omega}$ is locally integrable.
    Hence, we have that $u\in \Gamma(X,\mathscr{L}^{n,q-1}_{F\otimes L,h_F\otimes h})$ and that $H^q(X,K_X\otimes F\otimes L\otimes \mathscr{I}(h))=0$.

    $(a)$ This is shown in the same way as above using Corollary \ref{L2-estimate of k-posi + psef}. \qed\\

    Similarly, Theorem \ref{Ext NDBS V-thm for psef for (p,n)} can be shown by using Theorem \ref{L2-estimate of m-posi + psef} and Corollary \ref{L2-estimate of k-posi + psef}, and Theorem \ref{Ext NDBS V-thm for big for (n,q)} and \ref{Ext NDBS V-thm for big for (p,n)} can be shown by using Theorem \ref{L2-type Dolbeault isomorphism with sHm} and \ref{L2-estimate of m-semi-posi + big}.
    Furthermore, we obtain the following theorem and corollary for $L^2$-type Nakano semi-positive similarly to the above using $(b)$ of Theorem \ref{L2-type Dolbeault isomorphism with sHm} and Theorem \ref{L2-estimates of L2-type Nakano semi-posi + m-posi on w.p.c}.

\begin{theorem}\label{V-thm of L2-type Nakano semi-posi + m-posi}
    Let $X$ be a weakly pseudoconvex manifold 
    and $E$ be a holomorphic vector bundle equipped with a singular Hermitian metric $h$ which is $L^2$-type Nakano semi-positive on $X$. 
    Then we have the following 
    \begin{itemize}
        \item [$(a)$] If $X$ has a positive holomorphic line bundle and $A$ is a $k$-positive line bundle, then for any $q\geq k$ we have that
        \begin{align*}
            H^q(X,K_X\otimes A\otimes \mathscr{E}(h))=0.
        \end{align*}
        \item [$(b)$] If $F$ is a $m$-positive holomorphic vector bundle of rank $r$ then 
        \begin{align*}
            H^q(X,K_X\otimes F\otimes \mathscr{E}(h))=0
        \end{align*}
        for $q\geq1$ with $m\geq\min\{n-q+1,r\}$.
    \end{itemize}
\end{theorem}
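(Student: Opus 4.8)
The plan is to run the argument of the proof of Theorem~\ref{Ext NDBS V-thm for psef for (n,q)}, but with part~$(b)$ of the $L^2$-type Dolbeault isomorphism (Theorem~\ref{L2-type Dolbeault isomorphism with sHm}) in place of part~$(a)$, and with the $L^2$-estimate of Theorem~\ref{L2-estimates of L2-type Nakano semi-posi + m-posi on w.p.c} in place of Theorem~\ref{L2-estimate of m-posi + psef}; it is precisely the hypothesis that $X$ carries an effective divisor $D$ with $\mathcal{O}_X(D)$ positive that makes the latter estimate applicable to the singular $L^2$-type Nakano semi-positive metric $h$ on $E$.

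First, a smooth metric on $\mathcal{O}_X(D)$ with positive curvature provides a \kah metric $\omega$ on $X$, and weak pseudoconvexity supplies a smooth exhaustive plurisubharmonic function $\psi$ on $X$. Consider case~$(b)$, and let $h_F$ be a smooth $m$-positive Hermitian metric on $F$. By part~$(b)$ of Theorem~\ref{L2-type Dolbeault isomorphism with sHm}, the complex of fine sheaves $(\mathscr{L}^{n,\ast}_{F\otimes E,h_F\otimes h},\overline{\partial})$ is a resolution of $K_X\otimes\mathcal{O}_X(F)\otimes\mathscr{E}(h)$, so that
\[
H^q(X,K_X\otimes F\otimes\mathscr{E}(h))\cong H^q\bigl(\Gamma(X,\mathscr{L}^{n,\ast}_{F\otimes E,h_F\otimes h})\bigr).
\]
For a convex increasing $\chi\in\mathcal{C}^\infty(\mathbb{R},\mathbb{R})$ put $h_F^\chi:=h_Fe^{-\chi\circ\psi}$, which is again $m$-positive. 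Exactly as in the proof of Theorem~\ref{Ext NDBS V-thm for psef for (n,q)}$(b)$, a Cauchy--Schwarz estimate gives, for every $(n,q)$-form $v$ with $q\geq1$ and $m\geq\min\{n-q+1,r\}$,
\[
0<\langle B_{h_F^\chi,\omega}^{-1}v,v\rangle_{h_F^\chi\otimes h,\omega}\leq\langle B_{h_F,\omega}^{-1}v,v\rangle_{h_F\otimes h,\omega}\,e^{-\chi\circ\psi},
\]
where $B_{h_F,\omega}=[i\Theta_{F,h_F}\otimes\mathrm{id}_E,\Lambda_\omega]$ and $B_{h_F^\chi,\omega}=[i\Theta_{F,h_F^\chi}\otimes\mathrm{id}_E,\Lambda_\omega]$ are positive on $\Lambda^{n,q}T^*_X\otimes F\otimes E$ by Lemma~\ref{m-posi then A>0 and dual m-posi then A>0} and Proposition~\ref{A_(E+F)>C_E+C_F if A_E>C_E and A_F>C_F}.

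Now take a $\overline{\partial}$-closed $f\in\Gamma(X,\mathscr{L}^{n,q}_{F\otimes E,h_F\otimes h})$, so that $|f|^2_{h_F\otimes h,\omega}$ is locally integrable. Choosing $\chi$ to grow fast enough along $\psi$, the integrals $\int_X|f|^2_{h_F^\chi\otimes h,\omega}\,dV_\omega=\int_X|f|^2_{h_F\otimes h,\omega}e^{-\chi\circ\psi}\,dV_\omega$ and $\int_X\langle B_{h_F^\chi,\omega}^{-1}f,f\rangle_{h_F^\chi\otimes h,\omega}\,dV_\omega\leq\int_X\langle B_{h_F,\omega}^{-1}f,f\rangle_{h_F\otimes h,\omega}e^{-\chi\circ\psi}\,dV_\omega$ are finite. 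Since $h$ is $L^2$-type Nakano semi-positive and $h_F^\chi$ is $m$-positive, Theorem~\ref{L2-estimates of L2-type Nakano semi-posi + m-posi on w.p.c}$(b)$ produces $u\in L^2_{n,q-1}(X,F\otimes E,h_F^\chi\otimes h,\omega)$ with $\overline{\partial}u=f$; since $e^{\chi\circ\psi}$ is bounded on compact sets, $|u|^2_{h_F\otimes h,\omega}$ is locally integrable, i.e. $u\in\Gamma(X,\mathscr{L}^{n,q-1}_{F\otimes E,h_F\otimes h})$. Hence the cohomology above vanishes, which proves~$(b)$. Part~$(a)$ is identical, with $F$ replaced by the $k$-positive line bundle $A$, using $A^{p,q}_{A,h_A,\omega}>0$ for $p+q>n+k-1$ (so for $q\geq k$) and Theorem~\ref{L2-estimates of L2-type Nakano semi-posi + m-posi on w.p.c}$(a)$.

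The two non-formal ingredients — the fine resolution of $K_X\otimes F\otimes\mathscr{E}(h)$ and the global $L^2$-solvability of $\overline{\partial}$ on the weakly pseudoconvex $X$ — are both already available, so I expect no conceptual obstacle. The only step deserving care is the standard one: arranging the twist $h_F\mapsto h_F^\chi$ so that a form with merely \emph{locally} integrable norm becomes globally square-integrable while the right-hand side of the $L^2$-estimate stays finite; this is exactly what the displayed inequality for $B_{h_F^\chi,\omega}^{-1}$ guarantees, after which the proof is a routine transcription of that of Theorem~\ref{Ext NDBS V-thm for psef for (n,q)}.
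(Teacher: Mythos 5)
Your proposal is correct and coincides with the paper's own argument: the paper proves this theorem precisely by repeating the proof of Theorem \ref{Ext NDBS V-thm for psef for (n,q)} with part $(b)$ of Theorem \ref{L2-type Dolbeault isomorphism with sHm} and Theorem \ref{L2-estimates of L2-type Nakano semi-posi + m-posi on w.p.c} substituted in, which is exactly the twist-by-$e^{-\chi\circ\psi}$ argument you carry out. Your expansion of the details (the $m$-positivity of $h_F^\chi$, the $B^{-1}$-comparison inequality, and the local integrability of the solution) matches the paper's template faithfully.
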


\begin{corollary}\label{V-thm of L2-type Nakano posi on w.p.c}
    Let $X$ be a weakly pseudoconvex manifold and $E$ be a holomorphic vector bundle equipped with a singular Hermitian metric $h$. 
    We assume that there exists a holomorphic positive line bundle $(L,h_L)$ such that the singular Hermitian metric $h\otimes h_L^*$ on $E\otimes L^*$ is $L^2$-type Nakano semi-positive on $X$. 
    Then we have the following vanishing
    \begin{align*}
        H^q(X,K_X\otimes\mathscr{E}(h))=0
    \end{align*}
    for any $q>0$.
\end{corollary}

Theorem \ref{V-thm of Griffiths semi-posi + (dual)-m-posi for (n,q)} is proved using Theorem \ref{V-thm of L2-type Nakano semi-posi + m-posi} and [19,\,Theorem\,1.3], i.e. if $h$ is Griffiths semi-positive then $h\otimes\mathrm{det}\,h$ is $L^2$-type Nakano semi-positive. 
The following 
theorem for strictly dual Nakano positivity 
is obtained, 
which is generalized from Hodge to \kah using the results of $\S 3.2$, and allows for more singularity then in [39,\,Theorem\,1.2]. 

\begin{theorem}\label{V-thm of s-dual Nak d-posi on proj alg}
    Let $X$ be a projective manifold equipped with a \kah metric $\omega$. 
    Let $E$ be a holomorphic vector bundle over $X$ equipped with a singular Hermitian metric $h$.
    We assume that $h$ is strictly dual Nakano $\delta_\omega$-positive on $X$ and that $\nu(-\log\mathrm{det}\,h,x)<2$ for any point $x\in X$.
    Then for any $p>0$, we have the cohomology vanishing
    \begin{align*}
        H^n(X,\Omega_X^p\otimes E)=0.
    \end{align*}
\end{theorem}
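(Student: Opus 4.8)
$\textit{Proof of Theorem \ref{V-thm of s-dual Nak d-posi on proj alg}.}$
The plan is to derive the statement from the $L^2$-vanishing theorem for strictly dual Nakano $\delta_\omega$-positive singular metrics on projective manifolds --- namely [Wat22b,\,Theorem\,1.3], which for our data $(X,\omega,E,h)$ gives $H^n(X,\Omega^p_X\otimes E\otimes\mathscr{E}(h))=0$ for every $p>0$ --- by showing that the Lelong number hypothesis forces the $L^2$-subsheaf to be as large as possible, that is, $\mathscr{E}(h)=\mathcal{O}_X(E)$. Granting this identity the theorem is immediate, so the whole new content is the computation of $\mathscr{E}(h)$; everything else is the $L^2$/Dolbeault bookkeeping already carried out in the proofs of Theorems \ref{Ext NDBS V-thm for psef for (p,n)} and \ref{V-thm of Griffiths semi-posi + (dual)-m-posi for (p,n)}.

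For the identity $\mathscr{E}(h)=\mathcal{O}_X(E)$ I would argue pointwise. Strict dual Nakano $\delta_\omega$-positivity implies in particular that $h$ is Griffiths semi-positive and that $h^*$ is Griffiths semi-negative (dual Nakano semi-positivity of $h$ means $h^*$ is Nakano semi-negative, hence Griffiths semi-negative). By [PT18,\,Lemma\,2.2.4] the local coefficients of $h^*$, equivalently those of the matrix $h^{-1}$, are locally bounded; hence the smallest eigenvalue of $h$ is locally bounded away from $0$, and, using $0<\det h<+\infty$ almost everywhere, one obtains a local bound $\|h(x)\|_{\mathrm{op}}\leq C\,\det h(x)$ for the operator norm. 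Consequently $|s|^2_h\leq C\,\det h\cdot|s|^2$ for every local holomorphic section $s$ of $E$. On the other hand $-\log\det h$ is plurisubharmonic --- the determinant metric of a Griffiths semi-positive metric has semi-positive curvature current --- and since $\nu(-\log\det h,x)<2$ for every $x$, Skoda's integrability theorem \cite{Sko72}, applied to $\tfrac12(-\log\det h)$ (whose Lelong numbers are all $<1$), shows that $\det h$ is locally integrable near every point. Therefore $|s|^2_h\in L^1_{\mathrm{loc}}$ for every local holomorphic section $s$, i.e.\ $\mathscr{E}(h)=\mathcal{O}_X(E)$.

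If one prefers to keep the proof self-contained rather than quoting [Wat22b,\,Theorem\,1.3] directly, the vanishing can be reassembled as follows: since $h$ is Griffiths (indeed dual Nakano) semi-positive, the dual-Nakano analogue of the local $\overline{\partial}$-solvability in Theorem \ref{Ext Hor Thm to vect bdl} makes $(\mathscr{L}^{p,\ast}_{E,h},\overline{\partial})$ a resolution of $\Omega^p_X\otimes\mathcal{O}_X(E)\otimes\mathscr{E}(h)$ by fine, hence acyclic, sheaves, so that $H^n(X,\Omega^p_X\otimes E\otimes\mathscr{E}(h))\cong H^n\big(\Gamma(X,\mathscr{L}^{p,\ast}_{E,h})\big)$. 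Compactness of $X$, together with the global $L^2$-estimate for $(p,n)$-forms attached to a strictly dual Nakano $\delta_\omega$-positive metric (valid for $p\geq1$, via the analogue of Lemma \ref{m-posi then A>0 and dual m-posi then A>0} applied to $he^{\delta\varphi}$), then lets one solve $\overline{\partial}u=f$ within $\Gamma(X,\mathscr{L}^{p,n-1}_{E,h})$ for every $\overline{\partial}$-closed $f\in\Gamma(X,\mathscr{L}^{p,n}_{E,h})$; this is where the hypothesis $p>0$ enters. Combined with the second paragraph, the theorem follows.

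The step I expect to be the main obstacle is the passage from Griffiths semi-negativity of $h^*$ to the operator-norm estimate $\|h(x)\|_{\mathrm{op}}\leq C\,\det h(x)$ in the singular category: one must restrict to the complement of the measure-zero set $\{\det h=0\}\cup\{\det h=+\infty\}$, check that the coefficient bound of [PT18,\,Lemma\,2.2.4] is uniform on a full neighborhood of each point, and make sure $-\log\det h$ is a genuine plurisubharmonic function (not merely a current positive on a dense open set) so that Skoda's theorem applies without modification. Once these local points are settled, the remainder is the routine $L^2$/Dolbeault argument developed above.
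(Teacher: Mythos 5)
Your computation of the $L^2$-subsheaf is correct and is in fact the paper's own key step: strict dual Nakano $\delta_\omega$-positivity makes $h$ (locally, hence globally) Griffiths semi-positive, so by [PT18, Lemma 2.2.4] the entries of $h^*$ are locally bounded; equivalently to your eigenvalue estimate, $h=\det h\cdot\widehat{h^*}$ with $\widehat{h^*}$ locally bounded, and Skoda's theorem \cite{Sko72} applied to $\tfrac12(-\log\det h)$ (whose Lelong numbers are $<1$) makes $\det h$, hence $|s|^2_h$, locally integrable. This gives $\mathscr{E}(h)=\mathcal{O}_X(E)$ and, more importantly for the paper, an inclusion of all smooth $E$-valued $(p,q)$-forms into $L^2_{\mathrm{loc}}(h)$. (Minor slip: the sheaf you want is $\Omega^p_X\otimes\mathscr{E}(h)$, not $\Omega^p_X\otimes E\otimes\mathscr{E}(h)$, since $\mathscr{E}(h)$ is already a subsheaf of $\mathcal{O}_X(E)$.)

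The gap is in how you pass from this to the vanishing. Your main route attributes to [Wat22b, Theorem 1.3] the statement that strict dual Nakano $\delta_\omega$-positivity alone yields $H^n(X,\Omega^p_X\otimes\mathscr{E}(h))=0$; the present paper only describes that result as a vanishing ``involving $L^2$-subsheaves'', and its own proof conspicuously does not reduce to it — for good reason: a vanishing for $\Omega^p_X\otimes\mathscr{E}(h)$ proved along your lines would require $(\mathscr{L}^{p,\bullet}_{E,h},\overline{\partial})$ to be a resolution of $\Omega^p_X\otimes\mathscr{E}(h)$, i.e. local $\overline{\partial}$-solvability in $L^2(h)$ in every bidegree $(p,q)$ with $1\leq q\leq n$, whereas dual Nakano positivity furnishes estimates only in bidegree $(p,n)$. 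Your ``self-contained'' fallback makes exactly this unsupported claim: there is no ``dual-Nakano analogue'' of Theorem \ref{Ext Hor Thm to vect bdl} (that theorem concerns $h\otimes\mathrm{det}\,h$ with $h$ Griffiths semi-positive), and without exactness at the intermediate degrees the asserted isomorphism $H^n(X,\Omega^p_X\otimes\mathscr{E}(h))\cong H^n(\Gamma(X,\mathscr{L}^{p,\bullet}_{E,h}))$ does not follow. The paper avoids the resolution altogether: using the integrability step it embeds smooth forms into $L^2_{\mathrm{loc}}(h)$, takes a Leray cover by polydiscs, solves $\overline{\partial}$ for $(p,n)$-forms on the pseudoconvex intersections via the $L^2$-estimate [Wat22b, Theorem 4.12], invokes the singular isomorphism theorem of [Ina20, Corollary 1.2] to identify $H^n(X,\Omega^p_X\otimes E)$ with the top-degree $L^2_{\mathrm{loc}}$-Dolbeault quotient, and then kills that quotient by applying [Wat22b, Theorem 4.12] globally. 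To repair your argument you must either verify that [Wat22b, Theorem 1.3] genuinely has the form you use (which cannot be taken for granted, since the available local solvability machinery only supports such resolutions for $\mathscr{E}(h\otimes\mathrm{det}\,h)$ under Griffiths-type hypotheses), or replace your resolution step by this Čech/$L^2_{\mathrm{loc}}$ comparison.
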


\begin{proof}
    Let $\mathcal{E}^{p,q}(X,E)$ be the space of smooth $E$-valued $(p,q)$-forms on $X$ and $\mathcal{U}=\{U_j\}_{j\in I}$ be a locally finite open cover of $X$ such that $U_j$ are biholomorphic to a polydisc.
    By the assumption, $\mathrm{det}\,h$ is locally integrable from the results of Skoda (see \cite{Sko72}). Since $h=\mathrm{det}\,h\cdot \widehat{h^*}$ and each element of $\widehat{h^*}$ is locally bounded (see [31,\,Lemma\,2.2.4]), for any $s\in\mathcal{E}^{p,q}(X,E)$ the function $|s|^2_h$ is also locally integrable. 
    Here, $\widehat{h^*}$ is the adjugate matrix of $h^*$. Thus, there is an inclusion map $\mathcal{E}^{p,q}(X,E)\hookrightarrow L^2_{\mathrm{loc}(p,q)}(X,E,h,\omega)$.

    We know that $U_{j_0}\cap\cdots\cap U_{j_l}$ is a pseudoconvex domain for all $\{j_0,\ldots,j_l\}\subset I$.
    Since [39,\,Theorem\,4.12], we can solve the $\overline{\partial}$-equation on $U_{j_0}\cap\cdots\cap U_{j_l}$ with respect to $h$. 

    Hence, we have the isomorphism $H^n(X,\Omega_X^p\otimes E)$
    \begin{align*}
        \cong\frac{\{f\in L^2_{\mathrm{loc}(p,n)}(X,E,h,\omega);\overline{\partial}f=0\}}{\{g\in L^2_{\mathrm{loc}(p,n)}(X,E,h,\omega);\mathrm{there ~is~ an~} \gamma\in L^2_{\mathrm{loc}(p,n-1)}(X,E,h,\omega) \,\mathrm{satisfying}\, \overline{\partial}\gamma=g\}}
    \end{align*}
    from the results of sheaf cohomology. This is a singular version of isomorphism theorems (see \cite{Ohs82}) and was first mentioned in [18,\,Corollary\,1.2]. 
    By the projectivity of $X$ and Proposition \ref{s-positivity for two metric}, $h$ has strictly dual Nakano positivity for a Hodge metric on $X$.
    Therefore, we obtain $H^n(X,\Omega_X^p\otimes E)=0$ from [39,\,Theorem\,4.12]. 
\end{proof}

Using Theorem \ref{L2-estimates of dual Nakano semi-posi + m-posi on w.p.c} and the proof method of Theorem \ref{V-thm of s-dual Nak d-posi on proj alg}, we obtain the following theorem for dual Nakano semi-positivity.

\begin{theorem}\label{V-thm of dual Nakano semi-posi + dual m-posi}
    Let $X$ be a projective manifold, $F$ be a holomorphic vector bundle of rank $r$ and $E$ be a holomorphic vector bundle equipped with a singular Hermitian metric $h$.
    We assume that $h$ is dual Nakano semi-positive on $X$ satisfying $\nu(-\log\mathrm{det}\,h,x)<2$ for any point $x\in X$.
    Then we have the following
    \begin{itemize}
        \item [$(a)$] If $A$ is a $k$-positive line bundle then, for any $p\geq k$ we have that
        \begin{align*}
            H^n(X,\Omega^p_X\otimes A\otimes E)=0.
        \end{align*}
        \item [$(b)$] If $F$ is a dual $m$-positive holomorphic vector bundle of rank $r$ then 
        \begin{align*}
            H^n(X,\Omega^p_X\otimes F\otimes E)=0
        \end{align*}
        for $p\geq1$ with $m\geq\min\{n-p+1,r\}$.
    \end{itemize}
\end{theorem}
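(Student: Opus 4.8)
The plan is to run the argument of Theorem \ref{V-thm of s-dual Nak d-posi on proj alg} with the auxiliary bundle carried along, replacing its two $L^2$-solvability inputs by Theorem \ref{L2-estimates of dual Nakano semi-posi + m-posi on w.p.c}. I describe case $(b)$; case $(a)$ is word-for-word the same once $(F,h_F)$ is replaced by a $k$-positive $(A,h_A)$ and Lemma \ref{m-posi then A>0 and dual m-posi then A>0}$(b)$ is replaced by the elementary fact $A^{p,n}_{A,h_A,\omega}>0$ for $p+n>n+k-1$, i.e. for $p\geq k$. Fix a Kähler metric $\omega$ on the projective manifold $X$ and a smooth dual $m$-positive Hermitian metric $h_F$ on $F$. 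Since $\nu(-\log\mathrm{det}\,h,x)<2$ for all $x$, Skoda's theorem gives $\mathrm{det}\,h\in L^1_{\mathrm{loc}}$; writing $h=\mathrm{det}\,h\cdot\widehat{h^*}$ with the adjugate $\widehat{h^*}$ locally bounded ([PT18,\,Lemma\,2.2.4]), the function $|s|^2_{h_F\otimes h}$ is then locally integrable for every smooth local section $s$ of $\Omega^p_X\otimes F\otimes E$. In particular $\mathscr{E}(h)=\mathcal{O}_X(E)$, so $\Omega^p_X\otimes F\otimes E$ is exactly the kernel subsheaf $\mathrm{ker}\,\overline{\partial}_0\subset\mathscr{L}^{p,0}_{F\otimes E,h_F\otimes h}$, and there is an inclusion $\mathcal{E}^{p,q}(X,F\otimes E)\hookrightarrow L^2_{\mathrm{loc}(p,q)}(X,F\otimes E,h_F\otimes h,\omega)$.

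The next step is the $L^2_{\mathrm{loc}}$-Dolbeault isomorphism. Take a locally finite cover $\{U_j\}$ of $X$ by coordinate polydiscs, so that every finite intersection $W$ is a pseudoconvex domain. On such a $W$ one solves $\overline{\partial}u=f$ in $L^2_{\mathrm{loc}}$ for $\overline{\partial}$-closed $f$ of bidegree $(p,n)$ with values in $F\otimes E$, by exactly the argument used for Theorem \ref{L2-estimates of dual Nakano semi-posi + m-posi on w.p.c}: remove a hypersurface $H$ so that $F\otimes E$ is trivial on the Stein manifold $W\setminus H$, which then carries a complete Kähler metric (Lemma \ref{complete kah on X_j-Z}); mollify $h$ into smooth dual Nakano semi-positive metrics $h_\nu$ (Proposition \ref{vect bdl semi-negative sHm smoothing} applied to $h^*$), so that Lemma \ref{m-posi then A>0 and dual m-posi then A>0}$(b)$ and Proposition \ref{A_(E+F)>C_E+C_F if A_E>C_E and A_F>C_F} give $A^{p,n}_{F\otimes E,h_F\otimes h_\nu,\omega}\geq B_{h_F,\omega}>0$; apply [Wat22b,\,Theorem\,3.7] on $W\setminus H$; pass to a weak limit in $\nu$ as in the proof of Theorem \ref{psef * Nak semi-posi then Nak semi-posi}$(b)$; and extend the solution across $H$ by Lemma \ref{Ext d-equation for hypersurface}. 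Since the sheaves $\mathscr{L}^{p,q}_{F\otimes E,h_F\otimes h}$ are fine and the smooth Dolbeault complex embeds into them, the sheaf-cohomological argument of \cite{Ohs82} and [Ina20,\,Corollary\,1.2] yields
\begin{align*}
    H^n(X,\Omega_X^p\otimes F\otimes E)\cong\frac{\{f\in L^2_{\mathrm{loc}(p,n)}(X,F\otimes E,h_F\otimes h,\omega)\mid\overline{\partial}f=0\}}{\overline{\partial}\,L^2_{\mathrm{loc}(p,n-1)}(X,F\otimes E,h_F\otimes h,\omega)}.
\end{align*}

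It remains to kill the right-hand side. For $\overline{\partial}$-closed $f$ in the numerator, compactness of $X$ together with the local integrability of $|f|^2_{h_F\otimes h}$ forces $f\in L^2_{p,n}(X,F\otimes E,h_F\otimes h,\omega)$, and since $B_{h_F,\omega}=[i\Theta_{F,h_F}\otimes\mathrm{id}_E,\Lambda_\omega]$ is positive definite on the compact manifold $X$ one also gets $\int_X\langle B^{-1}_{h_F,\omega}f,f\rangle_{h_F\otimes h,\omega}dV_\omega<+\infty$. Choosing an effective divisor $D$ on $X$ with $\mathcal{O}_X(D)$ positive (a very ample divisor works), Theorem \ref{L2-estimates of dual Nakano semi-posi + m-posi on w.p.c}$(b)$ supplies $u\in L^2_{p,n-1}(X,F\otimes E,h_F\otimes h,\omega)\subset L^2_{\mathrm{loc}(p,n-1)}$ with $\overline{\partial}u=f$, whence $H^n(X,\Omega^p_X\otimes F\otimes E)=0$.

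The hard part, I expect, will be the middle step: making sure the local $\overline{\partial}$-solvability in bidegree $(p,n)$ coming from the twisted dual Nakano positivity is precisely what the $L^2_{\mathrm{loc}}$-isomorphism of \cite{Ohs82}, [Ina20] requires, and keeping the bound $A^{p,n}_{F\otimes E,h_F\otimes h_\nu,\omega}\geq B_{h_F,\omega}>0$ uniform in the mollification parameter so that the weak-limit passage goes through. The remaining steps are formal once Theorem \ref{L2-estimates of dual Nakano semi-posi + m-posi on w.p.c} is granted.
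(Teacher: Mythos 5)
Your proposal is essentially the paper's own proof: the paper settles this theorem in one line (``using Theorem \ref{L2-estimates of dual Nakano semi-posi + m-posi on w.p.c} and the proof method of Theorem \ref{V-thm of s-dual Nak d-posi on proj alg}''), and you fill in exactly that route --- Skoda's theorem plus boundedness of the adjugate $\widehat{h^*}$ for local integrability, the singular $L^2_{\mathrm{loc}}$-Dolbeault isomorphism of \cite{Ohs82} and [Ina20,\,Corollary\,1.2] via local solvability on pseudoconvex intersections, and finally the global estimate of Theorem \ref{L2-estimates of dual Nakano semi-posi + m-posi on w.p.c} (with a very ample divisor playing the role of $D$ and compactness guaranteeing $\int_X\langle B^{-1}_{h_F,\omega}f,f\rangle_{h_F\otimes h,\omega}dV_\omega<+\infty$) to kill the class. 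The only cosmetic difference is that you re-run the proof of Theorem \ref{L2-estimates of dual Nakano semi-posi + m-posi on w.p.c} on each intersection (hypersurface removal, mollification, [Wat22b,\,Theorem\,3.7], weak limits, Lemma \ref{Ext d-equation for hypersurface}) rather than quoting it there, which changes nothing of substance.
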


Applying Theorem \ref{Grif * det Grif is dual Nakano} and \ref{L2-estimates of dual Nakano semi-posi + m-posi on w.p.c} and $(c)$ of Theorem \ref{L2-type Dolbeault isomorphism with sHm}, we can prove Theorem \ref{V-thm of Griffiths semi-posi + (dual)-m-posi for (p,n)}.

\vspace{2mm}

$\textit{Proof of Theorem \ref{V-thm of Griffiths semi-posi + (dual)-m-posi for (p,n)}}.$
    Let $h_F$ be a smooth Hermitian metric on $F$. By $(c)$ of Theorem \ref{L2-type Dolbeault isomorphism with sHm}, 
    we have that $H^q(X,\Omega_X^p\otimes F\otimes \mathscr{E}(h\otimes\mathrm{det}\,h))\cong H^q(\Gamma(X,\mathscr{L}^{p,\ast}_{F\otimes E\otimes\mathrm{det}\,E,h_F\otimes h\otimes\mathrm{det}\,h}))$.

    $(b)$ Let $\omega$ be a \kah metric on $X$. We assume dual $m$-positivity of $h_F$. 
    From Lemma \ref{m-posi then A>0 and dual m-posi then A>0}, for any positive integer $p\geq1$ with $m\geq\min\{n-p+1,r\}$ we have that $A^{p,n}_{F,h_F,\omega}>0$.
    Moreover, any $(n,q)$-form $v\in\Lambda^{n,q}T^*_X\otimes F\otimes E\otimes\mathrm{det}\,E$ we get $\langle B_{h_F,\omega}v,v\rangle_{h_F\otimes h\otimes\mathrm{det}\,h,\omega}>0$ by the proof of Proposition \ref{A_(E+F)>C_E+C_F if A_E>C_E and A_F>C_F}, where $B_{h_F,\omega}=[i\Theta_{F,h_F}\otimes\mathrm{id}_{E\otimes\mathrm{det}\,E},\Lambda_\omega]$.

    By compactness of $X$, for any $\overline{\partial}$-closed $f\in \Gamma(X,\mathscr{L}^{n,q}_{F\otimes E\otimes\mathrm{det}\,E,h_F\otimes h\otimes\mathrm{det}\,h})$ satisfies $\int_X\langle B_{h_F,\omega}^{-1}f,f\rangle_{h_F\otimes h\otimes\mathrm{det}\,h,\omega}dV_\omega<+\infty$.
    Since Theorem \ref{Grif * det Grif is dual Nakano} and \ref{L2-estimates of dual Nakano semi-posi + m-posi on w.p.c}, there exists $u\in L^2_{p,n-1}(X,F\otimes E\otimes\mathrm{det}\,E,h_F\otimes h\otimes\mathrm{det}\,h,\omega)$ such that $\overline{\partial}u=f$ and 
    \begin{align*}
        \int_X|u|^2_{h_F\otimes h\otimes\mathrm{det}\,h,\omega}dV_\omega\leq\int_X\langle B_{h_F,\omega}^{-1}f,f\rangle_{h_F\otimes h\otimes\mathrm{det}\,h,\omega}dV_\omega<+\infty,
    \end{align*}
    where $|u|^2_{h_F\otimes h\otimes\mathrm{det}\,h,\omega}$ is locally integrable.
    Hence, we have that $u\in \Gamma(X,\mathscr{L}^{p,n-1}_{F\otimes E\otimes\mathrm{det}\,E,h_F\otimes h\otimes\mathrm{det}\,h})$ and that $H^n(X,\Omega^p_X\otimes F\otimes \mathscr{E}(h\otimes\mathrm{det}\,h))=0$.

    $(a)$ This is shown as above using the fact $A^{p,q}_{A,h_A,\omega}>0$ for $p+q>n+k-1$. 
\qed

\section{Fujita's conjecture type theorem with singular Hermitian metrics}

In \cite{Fuj88}, Fujita proposed the following conjecture which is a open question in classical algebraic geometry.
Recall that, $X$ is an $n$-dimensional complex manifold.

\begin{conjecture}
    Let $X$ be a smooth projective variety and $L$ be an ample line bundle. 
    \begin{itemize}
        \item $K_X\otimes L^{\otimes(\mathrm{dim}\,X+1)}$ is globally generated;
        \item $K_X\otimes L^{\otimes(\mathrm{dim}\,X+2)}$ is very ample.
    \end{itemize}
\end{conjecture}

The global generation conjecture has been proved (cf. [12,\,22,\,40]) 
up to dimension $5$.
Recently, Fujita's conjecture type theorems was obtained in \cite{SY19} for the case of pseudo-effective involving the multiplier ideal sheaf and for the case of nef involving Nakano semi-positive vector bundles, as follows.

\begin{theorem}\label{Fujita Conj thm for psef}$(\mathrm{cf.\,[35,\,Theorem\,1.3]})$ 
    Let $X$ be a compact \kah manifold, $L$ be an ample and globally generated line bundle and $(B,h)$ be a pseudo-effective line bundle.
    If the numerical dimension of $(B,h)$ is not zero, i.e. $\mathrm{nd}(B,h)\ne0$. then
    \begin{align*}
        K_X\otimes L^{\otimes n}\otimes B\otimes \mathscr{I}(h)
    \end{align*}
    is globally generated.
\end{theorem}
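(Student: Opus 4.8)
The plan is to deduce the global generation from a Castelnuovo--Mumford regularity argument, reducing it to a short list of cohomology vanishings which are then supplied by Nadel-type theorems. Note first that since $X$ carries an ample line bundle it is projective (Kodaira embedding), so Mumford's regularity machinery with respect to the ample and globally generated bundle $L$ is available. Put $\mathscr{G}:=K_X\otimes B\otimes\mathscr{I}(h)$, a coherent sheaf, so that the sheaf in question is $\mathscr{G}\otimes L^{\otimes n}$. By Mumford's theorem it suffices to prove that $\mathscr{G}$ is $n$-regular with respect to $L$, i.e.
\[
 H^i\bigl(X,\,K_X\otimes B\otimes L^{\otimes(n-i)}\otimes\mathscr{I}(h)\bigr)=0\qquad\text{for all }i\ge 1 ,
\]
which, as these groups vanish automatically for $i>n$, is the finite list $i=1,\dots,n$.

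For $1\le i\le n-1$ the line bundle $B\otimes L^{\otimes(n-i)}$ is big: equipping it with $h\otimes h_L^{\otimes(n-i)}$, where $h_L$ is a smooth metric on $L$ with $i\Theta_{L,h_L}\ge\varepsilon\omega$, the curvature current is $\ge (n-i)\varepsilon\omega>0$, so this singular metric is singular positive in the sense of Definition \ref{def of psef & big}, and its multiplier ideal is still $\mathscr{I}(h)$ because $h_L$ is smooth. The Nadel--Demailly vanishing theorem (equivalently Theorem \ref{Ext NDBS V-thm for big for (n,q)} with $F$ the trivial line bundle) then gives $H^i=0$ for these $i$.

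The remaining, decisive case is $i=n$, namely $H^n\bigl(X,K_X\otimes B\otimes\mathscr{I}(h)\bigr)=0$. Here $B$ is merely pseudo-effective, so plain Nadel vanishing is unavailable, and this is exactly where the hypothesis $\mathrm{nd}(B,h)\ne 0$ is used. One invokes the Kawamata--Viehweg--Nadel vanishing governed by the numerical dimension (the K\"ahler version of [Dem10, Theorem 6.25], as used in [LMNWZ22], or Cao's numerical-dimension vanishing): for a pseudo-effective $(B,h)$ one has $H^q\bigl(X,K_X\otimes B\otimes\mathscr{I}(h)\bigr)=0$ whenever $q>n-\mathrm{nd}(B,h)$. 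Since $\mathrm{nd}(B,h)\ge 1$, the degree $q=n$ lies in this range, which yields the last vanishing. Hence $\mathscr{G}$ is $n$-regular, and Mumford's theorem gives that $\mathscr{G}\otimes L^{\otimes n}=K_X\otimes L^{\otimes n}\otimes B\otimes\mathscr{I}(h)$ is globally generated.

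The main obstacle is this top-degree vanishing: it is the only step that is not a routine application of regularity together with the standard ``big'' Nadel vanishing, and it is the place where the numerical-dimension hypothesis on $B$ genuinely enters (removing it would force one back to $L^{\otimes(n+1)}$). A secondary, more technical, point is that Mumford regularity is being applied to a sheaf twisted by the a priori arbitrary multiplier ideal $\mathscr{I}(h)$ and with $L$ only globally generated rather than very ample; this is harmless --- the usual Koszul-complex argument using base-point-freeness of $|L|$ goes through verbatim for coherent sheaves --- but it should be checked carefully.
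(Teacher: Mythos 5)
Your proposal is correct and follows essentially the same route as the paper: it is the Castelnuovo--Mumford regularity scheme that the author uses in Section 7 to prove the analogous Theorem \ref{Fujita Conj for Grif} (and that \cite{SY19} uses for this very statement), with Nadel--Demailly vanishing for the big twists $B\otimes L^{\otimes(n-i)}$ in degrees $1\le i\le n-1$ and Cao's numerical-dimension vanishing \cite{Cao14} supplying the top-degree case $i=n$ via $\mathrm{nd}(B,h)\ge 1$. The only point worth recording is that Cao's theorem is stated with the upper-regularized ideal $\mathscr{I}_+(h)$, which coincides with $\mathscr{I}(h)$ by the strong openness theorem \cite{GZ15}, so your invocation of it is legitimate as written.
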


\begin{theorem}$(\mathrm{cf.\,[35,\,Theorem\,1.4]})$ 
    Let $X$ be a compact \kah manifold and $L$ be an ample and globally generated line bundle. 
    Let $E$ be a holomorphic vector bundle which is Nakano semi-positive. 
    If $N$ is a nef but not numerically trivial line bundle, then the adjoint vector bundle $K_X\otimes L^{\otimes n}\otimes N\otimes E$ is globally generated.
\end{theorem}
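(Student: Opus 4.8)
\emph{Sketch of the approach.}
The plan is to prove global generation one point at a time, in the manner standard for Fujita-type statements. Fix $x\in X$ and set $\mathcal F:=K_X\otimes L^{\otimes n}\otimes N\otimes E$. It suffices to produce a zero-dimensional subscheme $Z\subset X$ containing $x$, whose ideal sheaf $\mathcal I_Z$ coincides with $\mathfrak m_x$ near $x$, and for which $H^1(X,\mathcal F\otimes\mathcal I_Z)=0$; for then the exact sequence $0\to\mathcal F\otimes\mathcal I_Z\to\mathcal F\to\mathcal F|_Z\to 0$ forces $H^0(X,\mathcal F)\twoheadrightarrow H^0(Z,\mathcal F|_Z)$, hence $H^0(X,\mathcal F)\twoheadrightarrow\mathcal F\otimes\mathbb C(x)$, i.e. $\mathcal F$ is globally generated at $x$, and $x$ was arbitrary.

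To build $Z$ I would use that $L$ is globally generated and ample, so that $h^0(X,L)\ge n+1$ and the sections of $L$ vanishing at $x$ span a subspace of dimension $\ge n$; choose $n$ general such sections $s_1,\dots,s_n$. A Bertini-type genericity argument makes the divisors $D_i:=(s_i)$ smooth, meeting transversally at $x$, with $Z:=D_1\cap\cdots\cap D_n$ a reduced finite set containing $x$. Fixing a smooth metric $h_0$ on $L$, the local weight $\varphi:=n\log\bigl(\sum_{i=1}^n|s_i|^2_{h_0}\bigr)$ defines a singular metric $h_{L^{\otimes n}}:=h_0^{\otimes n}e^{-\varphi}$ on $L^{\otimes n}$ with $\idd\varphi\ge 0$ as a current, and — since near each point of $Z$ the relevant $s_i$ form a regular system of parameters, so that $\nu(\varphi,\cdot)=2n$ there — Skoda's integrability criterion yields $\mathscr I(h_{L^{\otimes n}})=\mathcal I_Z$; in particular $\mathcal I_Z=\mathfrak m_x$ near $x$. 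Hence $\mathcal F\otimes\mathcal I_Z\cong K_X\otimes E\otimes(L^{\otimes n}\otimes N)\otimes\mathscr I\bigl(h_{L^{\otimes n}}\otimes h_N\bigr)$ for any smooth metric $h_N$ on $N$.

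There remains the vanishing $H^1\bigl(X,K_X\otimes E\otimes(L^{\otimes n}\otimes N)\otimes\mathscr I(h_{L^{\otimes n}}\otimes h_N)\bigr)=0$, which I would deduce from a Nadel--Nakano vanishing theorem of the type of Theorem \ref{Ext NDBS V-thm for big for (n,q)}, applied with $F=E$: being Nakano semi-positive, $E$ is $m$-semi-positive for every $m$, so as soon as $L^{\otimes n}\otimes N$ carries a \emph{singular positive} metric with multiplier ideal still equal to $\mathcal I_Z$, all $H^q$ with $q\ge 1$ vanish, in particular $H^1$. The genuine difficulty --- and the only place where ``$N$ nef but not numerically trivial'' is used --- lies in producing that metric: the natural candidate $h_{L^{\otimes n}}\otimes h_N$ has curvature only $\ge-\varepsilon\omega$, and one cannot interpolate with a smooth positive metric on $L^{\otimes n}\otimes N$ without pushing $\varphi$ below its integrability threshold and destroying the pole at $x$ (the obstruction is already visible on $\mathbb P^n$, where it is precisely the failure of $K_{\mathbb P^n}\otimes\mathcal O(n)=\mathcal O(-1)$ to be globally generated). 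I expect this to be the main obstacle. I would resolve it by induction on $n=\dim X$: pass to a general smooth $A\in|L|$ through $x$; then $L|_A$ is again ample and globally generated, $N|_A$ is again nef and not numerically trivial (since $N$ nef and not numerically trivial forces $N\cdot L^{\,n-1}>0$, whence $N|_A\cdot(L|_A)^{\,n-2}>0$), and by adjunction $\mathcal F|_A=K_A\otimes(L|_A)^{\otimes(n-1)}\otimes N|_A\otimes E|_A$, so restricting to $A$ and comparing cohomology reduces the desired vanishing to the same statement in dimension $n-1$; the base case $n=1$ is the elementary fact that on a curve $K_C\otimes L\otimes N\otimes E$ has no higher cohomology after twisting by $\mathcal O(-p)$, hence is globally generated. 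The induction has to be run purely through cohomology vanishings, since Nakano semi-positivity does not restrict to $A$, and the auxiliary divisors must be chosen compatibly with $x$; granting this, $H^1(X,\mathcal F\otimes\mathcal I_Z)=0$ and the theorem follows. (Alternatively, the positivity step can be handled by an Ohsawa--Takegoshi type $L^2$-extension argument in place of the dimension induction.)
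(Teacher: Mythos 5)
Your reduction to a cohomology vanishing is the right general shape, but the one vanishing that carries the whole weight, $H^1(X,\mathcal F\otimes\mathcal I_Z)=0$ with $\mathcal F=K_X\otimes L^{\otimes n}\otimes N\otimes E$, is never actually established, and the device you offer to obtain it does not close. You correctly observe that after spending all of $L^{\otimes n}$ on the isolated-singularity weight $\varphi=n\log\sum_i|s_i|^2_{h_0}$ there is no strict positivity left (since $N$ is only nef), so a Nadel--Nakano statement such as Theorem \ref{Ext NDBS V-thm for big for (n,q)} does not apply; but the proposed dimension induction is not a proof as written. The inductive statement is never formulated: restricting along $A\in|L|$ through $x$ gives $0\to\mathcal F\otimes\mathcal I_Z\otimes L^{-1}\to\mathcal F\otimes\mathcal I_Z\to(\mathcal F\otimes\mathcal I_Z)|_A\to0$ (up to torsion), whose left-hand $H^1$ is $H^1(X,K_X\otimes L^{\otimes(n-1)}\otimes N\otimes E\otimes\mathcal I_Z)$ --- not an instance of the theorem in lower dimension, because the subscheme twist persists while the power of $L$ drops; iterating, you end at $H^1(X,K_X\otimes N\otimes E\otimes\mathcal I_Z)$, which is exactly the vanishing that nefness of $N$ cannot give (your own $\mathbb P^n$ remark). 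Moreover the side claim used to justify this detour, that Nakano semi-positivity ``does not restrict to $A$'', is false: the curvature of $(E|_A,h|_A)$ is the restriction of $i\Theta_{E,h}$ and $T_A\otimes E|_A\subset T_X|_A\otimes E|_A$, so $E|_A$ is again Nakano semi-positive. Finally, the transversality of general $s_1,\dots,s_n$ vanishing at $x$ is not automatic from global generation alone (the morphism defined by $|L|$ need not be an immersion at $x$); this is repairable, since one only needs $\mathscr I(h_{L^{\otimes n}})\subset\mathfrak m_x$ with finite cosupport, but it is symptomatic of the difficulties of forcing auxiliary divisors through the prescribed point.

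The route the paper takes (for this theorem, following [SY19], and for its singular analogue Theorem \ref{Fujita Conj for Grif}) avoids the pointwise subscheme entirely: by Mumford's Lemma \ref{Castelnuovo-Mumford regularity} it suffices to prove $0$-regularity, i.e. $H^q(X,K_X\otimes L^{\otimes(n-q)}\otimes N\otimes E)=0$ for all $q>0$. For $0<q<n$ the twist $L^{\otimes(n-q)}\otimes N$ is ample and Nakano-type vanishing (the smooth case of Theorem \ref{V-thm of Griffiths semi-posi + (dual)-m-posi for (n,q)}) applies; for $q=n$ one uses that ``nef, not numerically trivial'' means $\nu(N)\geq1$ together with $H^q(X,K_X\otimes N\otimes E)=0$ for $q>n-\nu(N)$, proved by a hyperplane-section induction as in Lemma \ref{V-thm of nu(N) and L2-Nak for nu(h)<1 on X}; the case $q>n$ is trivial. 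Note two advantages over your scheme: the induction is run on an untwisted group (no $\mathcal I_Z$), so it genuinely reproduces the same statement in lower dimension, and the ample divisor there is chosen freely by Bertini, not forced through $x$. If you want to keep a hyperplane induction on global generation itself, drop $Z$: $E|_A$ is Nakano semi-positive, $\mathcal F|_A=K_A\otimes(L|_A)^{\otimes(n-1)}\otimes N|_A\otimes E|_A$, and surjectivity of $H^0(X,\mathcal F)\to H^0(A,\mathcal F|_A)$ needs only $H^1(X,K_X\otimes L^{\otimes(n-1)}\otimes N\otimes E)=0$ with $L^{\otimes(n-1)}\otimes N$ ample --- but then you must still justify a smooth member of $|L|$ through the given point, which is precisely what the Castelnuovo--Mumford argument sidesteps. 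As submitted, the central vanishing is missing, so the proof has a genuine gap.
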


Here, Theorem \ref{Fujita Conj thm for psef} holds with the addition of a Nakano semi-positive vector bundle (see [35,\,Theorem\,4.3]). 
In this section, as an extension of these theorems to singular Hermitian metric of holomorphic vector bundles, we show Theorem \ref{Fujita Conj for Grif} involving the $L^2$-subsheaf with respect to $L^2$-type Nakano semi-positive singular Hermitian metric.

We introduce a concept on numerical dimension for nef line bundles.

\begin{definition}$(\mathrm{cf.\,[7,\,Definition\,6.20]})$ 
    Let $X$ be a compact \kah manifold of dimension $n$ and $N$ be a nef line bundle over $X$. 
    The $\it{numerical}$ $\it{dimension}$ $\nu(N)$ of $N$ is defined as $\nu(N)=\max\{k=0,\ldots,n\mid c^k_1(N)\ne0 ~\mathrm{in}~ H^{2k}(X,\mathbb{R})\}$.
\end{definition}

These global generation conjecture type theorems are shown using the theory of Castelnuovo-Mumford regularity and vanishing theorems. 

\begin{definition}$(\mathrm{cf.\,[24,\,Definition\,1.8.4]})$ 
    Let $X$ be a projective manifold and $L$ be an ample and globally generated line bundle over $X$.
    A coherent sheaf $\mathcal{F}$ on $X$ is $m$-$\it{regular}$ $\it{with}$ $\it{respect}$ $\it{to}$ $L$ if $H^q(X,\mathcal{F}\otimes L^{\otimes(n-q)})=0$ for $q>0$.
\end{definition}

\begin{lemma}\label{Castelnuovo-Mumford regularity}$(\mathrm{Mumford,~cf.\,[24,\,Theorem\,1.8.5]})$ 
    Let $\mathcal{F}$ be a $0$-regular coherent sheaf on $X$ with respect to $L$, then $\mathcal{F}$ is generated by its global sections.
\end{lemma}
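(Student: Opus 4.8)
The plan is to prove the statement by induction on $n=\dim X$, reducing global generation to a pointwise assertion and using a general member of $|L|$ to drop the dimension. Throughout I unwind $0$-regularity as the vanishing $H^q(X,\mathcal{F}\otimes L^{\otimes(-q)})=0$ for all $q>0$, and I use the standard criterion (Nakayama) that $\mathcal{F}$ is globally generated if and only if the evaluation $H^0(X,\mathcal{F})\to\mathcal{F}(x):=\mathcal{F}_x/\mathfrak{m}_x\mathcal{F}_x$ is surjective for every $x\in X$. The base case $n=0$ is immediate, since a coherent sheaf on a finite reduced scheme is a sum of skyscrapers and is trivially globally generated.

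For the inductive step, fix $x\in X$. Since $L$ is globally generated over the infinite field $\mathbb{C}$, I would choose $s\in H^0(X,L)$ with $s(x)=0$ whose zero divisor $H=\{s=0\}$ avoids the associated points of $\mathcal{F}$, so that multiplication by $s$ yields the short exact sequence
\[
0\longrightarrow \mathcal{F}\otimes L^{\otimes(-1)}\xrightarrow{\ \cdot s\ }\mathcal{F}\longrightarrow \mathcal{F}|_H\longrightarrow 0 .
\]
First I would transfer regularity to $H$: tensoring this sequence by $L^{\otimes(-i)}$ (exact, as $L$ is locally free) and taking cohomology gives, for each $i>0$, the piece
\[
H^i\bigl(X,\mathcal{F}\otimes L^{\otimes(-i)}\bigr)\to H^i\bigl(H,\mathcal{F}|_H\otimes (L|_H)^{\otimes(-i)}\bigr)\to H^{i+1}\bigl(X,\mathcal{F}\otimes L^{\otimes(-i-1)}\bigr),
\]
whose outer terms vanish by $0$-regularity of $\mathcal{F}$; hence $\mathcal{F}|_H$ is $0$-regular on $H$ with respect to $L|_H$. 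As $\dim H=n-1$, the inductive hypothesis shows $\mathcal{F}|_H$ is globally generated on $H$, so in particular $H^0(H,\mathcal{F}|_H)\to\mathcal{F}|_H(x)=\mathcal{F}(x)$ is surjective. On the other hand the long exact sequence of the displayed short exact sequence (the $i=0$ case) contains
\[
H^0(X,\mathcal{F})\to H^0(H,\mathcal{F}|_H)\to H^1\bigl(X,\mathcal{F}\otimes L^{\otimes(-1)}\bigr),
\]
and the last group vanishes by $0$-regularity, so $H^0(X,\mathcal{F})\to H^0(H,\mathcal{F}|_H)$ is surjective. Composing the two surjections shows $H^0(X,\mathcal{F})\to\mathcal{F}(x)$ is surjective, which is global generation at $x$; since $x$ was arbitrary, Nakayama finishes the induction.

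The main obstacle is the choice of the cutting divisor $H$: one must produce a member of $|L|$ that simultaneously passes through $x$ and is a non-zerodivisor on $\mathcal{F}$, i.e. contains no associated prime of $\mathcal{F}$. When $x$ is not an associated point this is a Bertini-type prime-avoidance argument — the sections of $L$ vanishing at $x$ form a hyperplane in $H^0(X,L)$, and global generation of $L$ keeps a general such section nonvanishing on each of the finitely many associated subvarieties. The genuinely delicate case is when $x$ is itself an embedded (associated) point of $\mathcal{F}$, where no divisor through $x$ can be a non-zerodivisor; I would dispose of this by a Noetherian-induction reduction, checking that the cokernel sheaf $\mathcal{Q}=\mathrm{coker}\bigl(H^0(X,\mathcal{F})\otimes\mathcal{O}_X\to\mathcal{F}\bigr)$ has no associated points and hence vanishes, which reduces verification to the generic points of the components of $\mathrm{Supp}\,\mathcal{F}$. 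An alternative that sidesteps the pointwise choice is the classical route proving first the ascent of regularity ($0$-regular $\Rightarrow$ $m$-regular for all $m\ge0$) and the surjectivity of the multiplication maps $H^0(\mathcal{F}\otimes L^{\otimes m})\otimes H^0(L)\to H^0(\mathcal{F}\otimes L^{\otimes(m+1)})$ by the same induction on $\dim X$, and then descending from Serre vanishing (where $\mathcal{F}\otimes L^{\otimes m}$ is globally generated for $m\gg0$) down to $m=0$; this version requires in addition the surjectivity $H^0(L)\to H^0(L|_H)$, which is automatic after passing to $\mathbb{P}^N$ through the morphism $\phi_{|L|}$ defined by the globally generated $L$.
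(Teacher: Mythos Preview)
The paper does not give its own proof of this lemma; it is stated as a citation to Mumford's classical result (via Lazarsfeld's book) and used as a black box in the proof of Theorem~\ref{Fujita Conj for Grif}. So there is no ``paper's proof'' to compare against.

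Your argument is the standard inductive proof and is essentially correct. The one genuine soft spot, which you correctly flag, is the pointwise choice of the hyperplane $H$ through $x$: when $x$ is an associated point of $\mathcal{F}$ no such $H$ can be a non-zerodivisor, and your proposed fix via the cokernel sheaf $\mathcal{Q}$ is sketched rather than carried out. The alternative you outline at the end---first proving that $0$-regularity propagates to $m$-regularity for all $m\ge 0$ and that the multiplication maps $H^0(\mathcal{F}\otimes L^{\otimes m})\otimes H^0(L)\to H^0(\mathcal{F}\otimes L^{\otimes(m+1)})$ are surjective, then descending from Serre's theorem---is in fact the cleaner route and is how the result is proved in the cited reference; it avoids the pointwise divisor choice entirely because the single general $H\in|L|$ used in the induction need only avoid the finitely many associated points of $\mathcal{F}$, with no constraint to pass through a prescribed $x$. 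If you want a self-contained write-up, I would recommend committing to that second version.
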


To prove Theorem \ref{Fujita Conj for Grif}, we show the following vanishing theorem.

\begin{theorem}\label{V-thm of nu(N) and Grif for nu(h)<1 near A in 7}$(=\mathrm{Theorem}\,\ref{V-thm of nu(N) and Grif for nu(h)<1 near A})$
    Let $X$ be a compact \kah manifold of dimension $n$ and $E$ be a holomorphic vector bundle equipped with a singular Hermitian metric $h$. 
    Let $N$ be a nef line bundle which is neither big nor numerically trivial, i.e. $\nu(N)\notin\{0,n\}$. 
    If $h$ is Griffiths semi-positive and there exists a smooth ample divisor $A$ such that $\nu(-\log\mathrm{det}\,h|_A,x)<1$ for all points in $A$ and that $\nu(N|_A)=\nu(N)$, then we have that 
    \begin{align*}
        H^q(X,K_X\otimes N\otimes \mathscr{E}(h\otimes\mathrm{det}\,h))=0 \quad \mathit{for} \quad q>n-\nu(N).
    \end{align*}
\end{theorem}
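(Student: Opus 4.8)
The plan is to argue by induction on $n-\nu(N)$, using the smooth ample divisor $A$ to drop the dimension by one at each step, terminating once $N$ restricts to a big and nef line bundle. Write $k:=\nu(N)$ (so $1\le k\le n-1$), $\psi:=-\log\det h$ (which is quasi-plurisubharmonic since $h$ is Griffiths semi-positive), $V:=E\otimes\det E$ and $g:=h\otimes\det h$; note that $X$ is projective, as $\mathcal{O}_X(A)$ is ample. The preliminary observation I would record is that $\mathscr{E}(g)$ coincides with $\mathcal{O}_X(V)$ on a neighbourhood of $A$: Griffiths semi-positivity makes the entries of the adjugate $\widehat{h^{*}}$ locally bounded (see [PT18,\,Lemma\,2.2.4]), so $h=\det h\cdot\widehat{h^{*}}$ gives a local bound $|s|^{2}_{g}\le C(\det h)^{2}|s|^{2}$ and hence $\mathscr{E}(g)\supseteq\mathcal{O}_X(V)\otimes\mathscr{I}(2\psi)$; the hypothesis $\nu(\psi|_A,x)<1$ forces $\nu(\psi,x)<1$ for $x\in A$, so Skoda's integrability theorem makes $\mathscr{I}(2\psi)$ the full ideal there, and coherence of $\mathscr{E}(g)$ (\cite{HI20}, \cite{Ina22}) shows that the analytic locus $\{\mathscr{E}(g)\ne\mathcal{O}_X(V)\}$ avoids $A$. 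Running the same argument on $A$ gives $\mathscr{E}((h|_A)\otimes\det(h|_A))=\mathcal{O}_A(V|_A)$ on all of $A$.

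Next I would tensor $0\to\mathcal{O}_X(-A)\to\mathcal{O}_X\to\mathcal{O}_A\to0$ with the torsion-free sheaf $K_X\otimes N\otimes\mathcal{O}_X(A)\otimes\mathscr{E}(g)$ and use adjunction $(K_X\otimes\mathcal{O}_X(A))|_A=K_A$ together with the triviality of $\mathscr{E}(g)$ near $A$ to obtain the short exact sequence
\begin{align*}
 0\longrightarrow K_X\otimes N\otimes\mathscr{E}(g)\longrightarrow K_X\otimes N\otimes\mathcal{O}_X(A)\otimes\mathscr{E}(g)\longrightarrow K_A\otimes N|_A\otimes V|_A\longrightarrow0 .
\end{align*}
Since $N\otimes\mathcal{O}_X(A)$ is ample, hence $1$-positive, Theorem \ref{V-thm of Griffiths semi-posi + (dual)-m-posi for (n,q)}$(a)$ (applied with $N\otimes\mathcal{O}_X(A)$ as the positive line bundle there) gives $H^{q}(X,K_X\otimes N\otimes\mathcal{O}_X(A)\otimes\mathscr{E}(g))=0$ for all $q\ge1$; as $q>n-k$ implies $q\ge2$, the long exact sequence reduces the theorem to showing $H^{q-1}(A,K_A\otimes N|_A\otimes V|_A)=0$ for $q-1>(n-1)-\nu(N|_A)$, where $\nu(N|_A)=k$. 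If $k\le n-2$ I would then choose a general smooth ample divisor $A_{1}\subset A$; for general $A_{1}$ one has $\nu(N|_{A_{1}})=k$ and $\nu((-\log\det(h|_A))|_{A_{1}},x)<1$ for all $x\in A_{1}$ (numerical dimension and Lelong numbers being preserved under restriction to a general very ample divisor, which is legitimate here because $\dim A-1\ge k$), so $(A,E|_A,h|_A,N|_A,A_{1})$ satisfies the hypotheses of the theorem in dimension $n-1$; the inductive hypothesis, combined with $\mathscr{E}((h|_A)\otimes\det(h|_A))=\mathcal{O}_A(V|_A)$, then yields the required vanishing on $A$.

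The base case is $k=n-1$, where $\dim A=\nu(N|_A)=k$, so $N|_A$ is big and nef and I must show $H^{j}(A,K_A\otimes N|_A\otimes V|_A)=0$ for $j\ge1$. Writing $N|_A\equiv H_{0}+D_{0}$ with $H_{0}$ ample and $D_{0}$ effective $\mathbb{Q}$-divisors, and, for $t$ near $1$, $N|_A\equiv(tN|_A+(1-t)H_{0})+(1-t)D_{0}$ with the first summand ample, I would produce a singular Hermitian metric $h_{N}$ on $N|_A$ with $i\Theta_{N|_A,h_{N}}\ge\varepsilon\omega$ and with all Lelong numbers $<1$, so that $h_N$ is strictly $\delta_\omega$-positive and $\mathscr{I}(h_{N})=\mathcal{O}_A$. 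By [Ina22,\,Theorem\,1.3], $(h|_A)\otimes\det(h|_A)$ is $L^{2}$-type Nakano semi-positive, so by Corollary \ref{s d-posi * Nak semi-posi then s Nak d-posi}$(b)$ the product metric $((h|_A)\otimes\det(h|_A))\otimes h_{N}$ on $V|_A\otimes N|_A$ is $L^{2}$-type strictly Nakano $\delta_\omega$-positive, and a H\"older-inequality estimate combining $\nu(\psi|_A,\cdot)<1$ with the smallness of the Lelong numbers of $h_{N}$ identifies its $L^{2}$-subsheaf with $\mathcal{O}_A(V|_A\otimes N|_A)$. Then the vanishing theorem for strictly Nakano $\delta_\omega$-positive singular metrics on projective manifolds (\cite{Ina22}; cf. the remarks following Definition \ref{def sing strictly positive of Grif and (dual) Nakano}) gives $H^{j}(A,K_A\otimes N|_A\otimes V|_A)=0$ for $j\ge1$, which closes the induction.

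The step I expect to be the main obstacle is this base case, together with the restriction bookkeeping feeding the inductive step: one must build on the big and nef bundle $N|_A$ a singular metric that is simultaneously strictly positive and of trivial multiplier ideal, and one must verify that Griffiths semi-positivity, the smallness of the Lelong numbers of $\det h$, and the identity $\nu(N|_\bullet)=\nu(N)$ all persist under restriction to a general ample divisor at every stage. Granting these, what remains is the exact-sequence chase of the second paragraph and the bookkeeping on the cohomological degrees.
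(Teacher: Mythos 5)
Your proposal is correct and follows essentially the same route as the paper: vanishing for the ample twist $N\otimes\mathcal{O}_X(A)$ via $(a)$ of Theorem \ref{V-thm of Griffiths semi-posi + (dual)-m-posi for (n,q)}, triviality of $\mathscr{E}(h\otimes\mathrm{det}\,h)$ along $A$ (the paper gets this from Skoda plus the Ohsawa-Takegoshi extension theorem, you from the Lelong-number restriction inequality plus Skoda and coherence), the short exact sequence to pass to $A$, and then a dimension induction on $A$ whose big-and-nef base case is handled by a strictly positive singular metric with small Lelong numbers, a H\"older estimate identifying the $L^2$-subsheaf, and the vanishing theorem of \cite{Ina22} — precisely the content of the paper's Corollary \ref{m.m.sheaf for Grif * det Grif * nef big} and Lemma \ref{V-thm of nu(N) and Grif for nu(h)<1 on X}. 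The only difference is organizational: the paper isolates the induction as Lemma \ref{V-thm of nu(N) and Grif for nu(h)<1 on X}, applied once on $A$, whereas you fold it into the theorem statement itself; the persistence of $\nu(N|_\bullet)$ and of the Lelong bound under restriction to general ample divisors, which you flag as the delicate bookkeeping, is treated at the same level of detail in the paper's own inductive step.
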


We first consider the case where the condition for the Lelong number is the whole X using the following proposition.
This proposition is an example of when the equality of the subadditivity property (see [7,\,Theorem\,15.2]) to the $L^2$-subsheaf holds. 

\begin{proposition}\label{m.m.sheaf for Grif * nef big}
    Let $X$ be a projective manifold and $L$ be a nef and big line bundle.
    Let $E$ be a holomorphic vector bundle equipped with a singular Hermitian metric $h$.
    If $h$ is Griffiths semi-positive and $\nu(-\log \mathrm{det}\,h,x)<2$ for all points $x\in X$.
    Then there exists a singular Hermitian metric $h_L$ on $L$ such that $\mathscr{E}(h\otimes h_L)\cong\mathcal{O}_X(E\otimes L)$.
\end{proposition}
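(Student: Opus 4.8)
The plan is to build $h_L$ by interpolating between an almost-positive smooth metric coming from the nefness of $L$ and a strictly positive singular metric coming from its bigness, so that $h_L$ is singular positive while keeping its Lelong numbers uniformly small, and then to combine Skoda's integrability criterion with the pointwise estimate available for Griffiths semi-positive metrics.

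First I would isolate the analytic input. Since $h$ is Griffiths semi-positive, $\det h$ is a singular semi-positive metric on $\det E$ (the same fact already used in the proof of Theorem \ref{V-thm of s-dual Nak d-posi on proj alg}), so $\phi:=-\log\det h$ is locally plurisubharmonic; by upper semicontinuity of the Lelong number and compactness of $X$ the hypothesis $\nu(\phi,x)<2$ for all $x$ yields $c_0:=\max_{x\in X}\nu(\phi,x)<2$. Next, on any chart trivialising $E$, write $h=(\det h)\,\widehat{h^{*}}$ with $\widehat{h^{*}}$ the adjugate of the dual metric $h^{*}$; since $h^{*}$ is Griffiths semi-negative its matrix entries, hence those of $\widehat{h^{*}}$, are locally bounded (\cite{PT18}, cf. the proof of Theorem \ref{V-thm of s-dual Nak d-posi on proj alg}), so there is a locally bounded $C>0$ with $|s|^{2}_{h}\le C\,\det h\,|s|^{2}_{\mathrm{euc}}$ for every local holomorphic section $s$ of $E$.

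Then I would construct $h_L$. Fix a Kähler form $\omega$ on $X$. Since $L$ is nef, for every $\varepsilon>0$ there is a smooth Hermitian metric $h_\varepsilon$ on $L$ with $i\Theta_{L,h_\varepsilon}\ge-\varepsilon\omega$ (the standard characterisation of nef line bundles); being smooth, $\nu(-\log h_\varepsilon,x)=0$ for all $x$. Since $L$ is big, there is a singular Hermitian metric $h_{\mathrm{big}}$ on $L$ with $i\Theta_{L,h_{\mathrm{big}}}\ge\varepsilon_0\omega$ for some $\varepsilon_0>0$ (cf. Definition \ref{def of psef & big} and \cite{Dem93}), and $C_1:=\max_{x\in X}\nu(-\log h_{\mathrm{big}},x)<\infty$. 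Now choose $s\in(0,1)$ with $sC_1<2-c_0$ and then $\varepsilon>0$ with $(1-s)\varepsilon<s\varepsilon_0$, and set $h_L:=h_\varepsilon^{\,1-s}\cdot h_{\mathrm{big}}^{\,s}$. By additivity of curvature currents and of Lelong numbers, $i\Theta_{L,h_L}\ge\bigl(s\varepsilon_0-(1-s)\varepsilon\bigr)\omega>0$, so $h_L$ is singular positive, while $\nu(-\log h_L,x)=s\,\nu(-\log h_{\mathrm{big}},x)\le sC_1<2-c_0$ for every $x$.

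Finally I would verify $\mathscr{E}(h\otimes h_L)=\mathcal{O}_X(E\otimes L)$. The inclusion $\subseteq$ is the definition of the $L^2$-subsheaf. For the reverse, on a chart trivialising both $E$ and $L$, let $\varphi_L$ be the (strictly plurisubharmonic) weight of $h_L$; for a local holomorphic section $s=\sum_j s_j\,e_j\otimes e_L$ of $E\otimes L$ the bound above gives $|s|^{2}_{h\otimes h_L}\le C\,\det h\,e^{-\varphi_L}\sum_j|s_j|^{2}=C\,e^{-(\phi+\varphi_L)}\sum_j|s_j|^{2}$, and $\nu(\phi+\varphi_L,x)=\nu(\phi,x)+\nu(\varphi_L,x)\le c_0+sC_1<2$ at every point, so $e^{-(\phi+\varphi_L)}$ is locally integrable by Skoda's theorem (\cite{Sko72}); hence $|s|^{2}_{h\otimes h_L}$ is locally integrable, i.e. $s$ defines a germ of $\mathscr{E}(h\otimes h_L)$. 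Since moreover $\nu(-\log h_L,x)<2$ gives $\mathscr{E}(h_L)=\mathcal{O}_X(L)$, this is exactly the announced instance of equality in the subadditivity inequality for $L^2$-subsheaves. The one delicate point is the construction of $h_L$: one must keep the curvature strictly positive while holding all Lelong numbers below $2-c_0$, which is precisely where both the nefness and the bigness of $L$ are needed; the remainder is a formal combination of the adjugate estimate and Skoda's bound.
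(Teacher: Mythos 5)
Your proof is correct, and it reaches the conclusion by a genuinely different route than the paper at the key integrability step. The paper first uses Skoda to get local integrability of $\mathrm{det}\,h$, then invokes the strong openness property (Guan--Zhou) to upgrade this to integrability of $(\mathrm{det}\,h)^{1+\beta_x}$, and finally combines H\"older's inequality with a metric $h_L$ of sufficiently small Lelong numbers taken from [Dem10,\,Corollary\,6.19]; the uniformity in $x$ is handled through $\beta:=\min_x\beta_x$ (a point that itself needs a small compactness argument which the paper leaves implicit). You instead avoid strong openness entirely: you bound $c_0:=\max_x\nu(-\log\mathrm{det}\,h,x)<2$ by upper semicontinuity of Lelong numbers plus compactness, build $h_L$ by hand as the interpolation $h_\varepsilon^{1-s}h_{\mathrm{big}}^{s}$ (which is essentially the proof of the Demailly corollary the paper cites, so the nef and big hypotheses enter in the same way), and then conclude via additivity of Lelong numbers and Skoda applied to the single weight $\phi+\varphi_L$ with $\nu(\phi+\varphi_L,x)<2$; the adjugate bound $h=\mathrm{det}\,h\cdot\widehat{h^{*}}$ with $\widehat{h^{*}}$ locally bounded is used identically in both arguments. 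Your version buys a more elementary proof (Skoda plus standard properties of Lelong numbers, no openness theorem) and makes the uniformity issue explicit, while the paper's version via strong openness is shorter once that deep input is granted; both constructions produce an $h_L$ with strictly positive curvature current, which is what the later applications (e.g.\ Lemma \ref{V-thm of nu(N) and L2-Nak for nu(h)<1 on X}) actually require.
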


\begin{proof} 
    From Griffiths semi-positivity of $h$, $(\mathrm{det}\,E,\mathrm{det}\,h)$ is a pseudo-effective line bundle. By the assumption and Skoda's result \cite{Sko72}, the function $\mathrm{det}\,h$ is locally integrable, i.e. $1\in\mathscr{I}(\mathrm{det}\,h)_x$ for all points $x\in X$. 
    In other words, there exists $R>0$ such that $\int_{\mathbb{B}^n_R}\mathrm{det}\,h\,dV_{\mathbb{C}^n}<+\infty$,
    where $\mathbb{B}^n_R=\{z\in \mathbb{C}^n\mid |z|<R\}$ and $(z_1,\cdots,z_n)$ is a local coordinate around $x$.
    Since the strongly openness property (see. [14,\,20]), 
    for some $r\in(0,R)$ there exists $\beta_x>0$ such that $\int_{\mathbb{B}^n_r}(\mathrm{det}\,h)^{1+\beta_x}dV_{\mathbb{C}^n}<+\infty$.

    By the H\"older inequality, for any singular Hermitian metric $h_L$ on $L$ we get 
    \begin{align*}
        \int_{\mathbb{B}^n_r}\mathrm{det}\,h\cdot h_LdV_{\mathbb{C}^n}\leq\Bigl(\int_{\mathbb{B}^n_r}(\mathrm{det}\,h)^{1+\beta_x}dV_{\mathbb{C}^n}\Bigr)^{\frac{1}{1+\beta_x}}\Bigl(\int_{\mathbb{B}^n_r}h_L^{1+1/\beta_x}dV_{\mathbb{C}^n}\Bigr)^\frac{\beta_x}{1+\beta_x}.
    \end{align*}

    Since $L$ is nef and big, for every $\delta>0$, $L$ has a singular Hermitian metric $h_L$ such that $\max_{x\in X}\nu(-\log h_L,x)<\delta$ and $i\Theta_{L,h_L}\geq\varepsilon\omega$ for some $\varepsilon>0$ (see [7,\,Corollary\,6.19]), 
    where $\omega$ is a \kah metric.
    Let $\beta=\min_{x\in X}\beta_x>0$ and $\delta=2\beta/(1+\beta)$ then for any point $x\in X$, we get $\nu(-\log h_L,x)<\delta\leq2\beta_x/(1+\beta_x)$, i.e. $\nu(-\log h_L^{1+1/\beta_x},x)<2$.
    Therefore, $h_L^{1+1/\beta_x}$ is locally integrable at $x$ and $\mathrm{det}\,h\cdot h_L$ is also locally integrable. 

    From $h=\mathrm{det}\,h\cdot \widehat{h^*}$ and each element of $\widehat{h^*}$ is locally bounded [31,\,Lemma\,2.2.4], 
    for any local section $s\in C^{\infty}_x(E\otimes L)$ the function $|s|^2_{h\otimes h_L}$ is locally integrable. Here, $\widehat{h^*}$ is the adjugate matrix of $h^*$.
    Hence, the proof is complete from Definition \ref{def of multiplier submodule}.
\end{proof}

\begin{corollary}\label{m.m.sheaf for Grif * det Grif * nef big}
    Let $X$ be a projective manifold and $L$ be a nef and big line bundle.
    Let $E$ be a holomorphic vector bundle equipped with a singular Hermitian metric $h$.
    If $h$ is Griffiths semi-positive and $\nu(-\log \mathrm{det}\,h,x)<1$ for all points $x\in X$.
    Then there exists a singular Hermitian metric $h_L$ on $L$ such that $\mathscr{E}(h\otimes\mathrm{det}\,h\otimes h_L)\cong\mathcal{O}_X(E\otimes\mathrm{det}\,E\otimes L)$.
\end{corollary}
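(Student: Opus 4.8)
**Proof proposal for Corollary \ref{m.m.sheaf for Grif * det Grif * nef big}.**

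The plan is to reduce Corollary \ref{m.m.sheaf for Grif * det Grif * nef big} to Proposition \ref{m.m.sheaf for Grif * nef big} by replacing the bundle $E$ equipped with $h$ by the bundle $E' := E\otimes\mathrm{det}\,E$ equipped with $h' := h\otimes\mathrm{det}\,h$. The one point to check is that the hypotheses of Proposition \ref{m.m.sheaf for Grif * nef big} hold for $(E',h')$. First I would note that, by Theorem \ref{Grif * det Grif is dual Nakano}, $h'=h\otimes\mathrm{det}\,h$ is dual Nakano semi-positive; in particular it is Griffiths semi-positive (dual $1$-positivity and $1$-positivity coincide, as recorded after Definition \ref{Def of dual m-posi}), so the Griffiths semi-positivity hypothesis of Proposition \ref{m.m.sheaf for Grif * nef big} is satisfied for $h'$.

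Next I would compute $\mathrm{det}\,h'$ in terms of $\mathrm{det}\,h$. Since $E'=E\otimes\mathrm{det}\,E$ has rank $r=\mathrm{rank}\,E$, and $h'=h\otimes\mathrm{det}\,h$, one has $\mathrm{det}\,h'=\mathrm{det}(h)\cdot(\mathrm{det}\,h)^{r}=(\mathrm{det}\,h)^{r+1}$ as a (locally defined) positive function, so that $-\log\mathrm{det}\,h' = (r+1)(-\log\mathrm{det}\,h)$. Consequently, for every $x\in X$,
\begin{align*}
    \nu(-\log\mathrm{det}\,h',x) = (r+1)\,\nu(-\log\mathrm{det}\,h,x) < r+1.
\end{align*}
This is not yet the bound $<2$ required by Proposition \ref{m.m.sheaf for Grif * nef big} unless $r=1$, so the crude estimate is insufficient; here is where the genuine hypothesis $\nu(-\log\mathrm{det}\,h,x)<1$ must be used differently. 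The correct observation is that, just as in the proof of Proposition \ref{m.m.sheaf for Grif * nef big}, the condition $\nu(-\log\mathrm{det}\,h,x)<1$ gives (via Skoda's result \cite{Sko72}) that $(\mathrm{det}\,h)^{1}$ is locally integrable with a bit of room: by the strong openness property there is $\beta>0$ with $(\mathrm{det}\,h)^{1+\beta}$ locally integrable near every point (using compactness of $X$ to make $\beta$ uniform). Then $\mathrm{det}\,h' = (\mathrm{det}\,h)^{r+1}$ need not be integrable, but we do not need it to be: we only need $\mathscr{E}(h'\otimes h_L)\cong\mathcal{O}_X(E'\otimes L)$, and for that it suffices to run the argument of Proposition \ref{m.m.sheaf for Grif * nef big} directly, not as a black box.

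So the cleanest route is to repeat the proof of Proposition \ref{m.m.sheaf for Grif * nef big} with $h$ replaced by $h\otimes\mathrm{det}\,h$, observing that what is actually used there is (i) Griffiths semi-positivity of the metric (to apply \cite{PT18}, Lemma 2.2.4, that the adjugate $\widehat{(h\otimes\mathrm{det}\,h)^*}$ has locally bounded entries) and (ii) local integrability with room of the determinant. For (i) we have Griffiths semi-positivity of $h\otimes\mathrm{det}\,h$ from Theorem \ref{Grif * det Grif is dual Nakano}. For (ii), write $h\otimes\mathrm{det}\,h = \mathrm{det}(h\otimes\mathrm{det}\,h)\cdot\widehat{(h\otimes\mathrm{det}\,h)^*} = (\mathrm{det}\,h)^{r+1}\widehat{(h\otimes\mathrm{det}\,h)^*}$, and use that $\nu(-\log\mathrm{det}\,h,x)<1$ forces, for a suitable $h_L$ on $L$ with small Lelong numbers and $i\Theta_{L,h_L}\geq\varepsilon\omega$ (existing by \cite{Dem10}, Corollary 6.19, since $L$ is nef and big), the product $(\mathrm{det}\,h)^{r+1}h_L$ to be locally integrable. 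The main obstacle is exactly this integrability step: one cannot simply Hölder $(\mathrm{det}\,h)^{r+1}$ against $h_L$ as in Proposition \ref{m.m.sheaf for Grif * nef big} because $(\mathrm{det}\,h)^{r+1}$ need not even be $L^1$. The resolution is to subdivide the exponent: since $i\Theta_{L,h_L}\geq\varepsilon\omega$ one may replace $h_L$ by $h_L^{1/(r+1)}$ (a singular semi-positive metric on the $\mathbb{Q}$-line bundle $\tfrac{1}{r+1}L$, with Lelong numbers scaled down) and use the inequality of arithmetic and geometric means to bound $(\mathrm{det}\,h)^{r+1}h_L = \big((\mathrm{det}\,h)\,h_L^{1/(r+1)}\big)^{r+1}$, together with the local integrability of $\mathrm{det}\,h\cdot h_L^{1/(r+1)}$ provided by the Hölder estimate in the proof of Proposition \ref{m.m.sheaf for Grif * nef big} applied with the metric $h_L^{1/(r+1)}$ chosen so that $\nu(-\log h_L^{(1+1/\beta)/(r+1)},x)<2$. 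Finally, with $(\mathrm{det}\,h)^{r+1}h_L$ locally integrable and $\widehat{(h\otimes\mathrm{det}\,h)^*}$ locally bounded, any smooth local section $s$ of $E\otimes\mathrm{det}\,E\otimes L$ has $|s|^2_{h\otimes\mathrm{det}\,h\otimes h_L}$ locally integrable, hence $\mathscr{E}(h\otimes\mathrm{det}\,h\otimes h_L)_x = \mathcal{O}_X(E\otimes\mathrm{det}\,E\otimes L)_x$ by Definition \ref{def of multiplier submodule}, which is the claim.
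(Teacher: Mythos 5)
There is a genuine gap in the integrability step. After writing $h\otimes\mathrm{det}\,h=\mathrm{det}(h\otimes\mathrm{det}\,h)\cdot\widehat{(h\otimes\mathrm{det}\,h)^*}=(\mathrm{det}\,h)^{r+1}\,\widehat{(h\otimes\mathrm{det}\,h)^*}$ and bounding the adjugate factor by a constant, your argument stands or falls with the claim that $(\mathrm{det}\,h)^{r+1}h_L$ is locally integrable for a suitable $h_L$. That claim does not follow from $\nu(-\log\mathrm{det}\,h,x)<1$ and is in general false for $r\geq2$: locally take $h=\mathrm{diag}(1,\dots,1,|z_1|^{-2\alpha})$ with $1/(r+1)<\alpha<1/2$, so that $\nu(-\log\mathrm{det}\,h,x)=2\alpha<1$ but $(\mathrm{det}\,h)^{r+1}=|z_1|^{-2(r+1)\alpha}$ is not locally integrable, while $h_L$ is locally bounded below by a positive constant (its weight is locally bounded above). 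Your proposed remedy, replacing $h_L$ by $h_L^{1/(r+1)}$ and writing $(\mathrm{det}\,h)^{r+1}h_L=\bigl((\mathrm{det}\,h)\,h_L^{1/(r+1)}\bigr)^{r+1}$, does not repair this: local integrability of $(\mathrm{det}\,h)\,h_L^{1/(r+1)}$ gives no control whatsoever on its $(r+1)$-st power, and in the example above the base function is integrable while its power is not. So the step "hence $(\mathrm{det}\,h)^{r+1}h_L$ is locally integrable" fails, and with it the final conclusion of your argument.

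The missing idea is to take the adjugate of the dual of $h$ itself rather than of $h\otimes\mathrm{det}\,h$. Since $h=\mathrm{det}\,h\cdot\widehat{h^*}$, one has $h\otimes\mathrm{det}\,h\otimes h_L=(\mathrm{det}\,h)^{2}\,h_L\,\widehat{h^*}$, and the entries of $\widehat{h^*}$ are locally bounded by \cite{PT18} (Griffiths semi-positivity of $h$ alone suffices here; Theorem \ref{Grif * det Grif is dual Nakano} is not needed). With this splitting the only integrability required is that of $(\mathrm{det}\,h)^{2}h_L$, and since $\nu(-\log(\mathrm{det}\,h)^{2},x)=2\nu(-\log\mathrm{det}\,h,x)<2$, Skoda's theorem, strong openness and the H\"older argument from the proof of Proposition \ref{m.m.sheaf for Grif * nef big} (with $h_L$ chosen so that $i\Theta_{L,h_L}\geq\varepsilon\omega$ and its Lelong numbers are sufficiently small, which is possible because $L$ is nef and big) produce such an $h_L$; this is exactly the paper's proof. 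Your decomposition discards the factor $(\mathrm{det}\,h)^{1-r}$ hidden inside $\widehat{(h\otimes\mathrm{det}\,h)^*}$ too early, which is precisely the cancellation that brings the exponent down from $r+1$ to $2$ and makes the hypothesis $\nu(-\log\mathrm{det}\,h,x)<1$ the right one.
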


\begin{proof}
    By the assumption, i.e. $\nu(-\log (\mathrm{det}\,h)^2,x)<2$, the function $(\mathrm{det}\,h)^2$ is locally integrable. There exists a singular Hermitian metric $h_L$ such that $(\mathrm{det}\,h)^2\cdot h_L$ is locally integrable as in the proof of Proposition \ref{m.m.sheaf for Grif * nef big}.
    From $h\otimes\mathrm{det}\,h\otimes h_L=(\mathrm{det}\,h)^2\cdot h_L\cdot \widehat{h^*}$ and each element of $\widehat{h^*}$ is locally bounded [31,\,Lemma\,2.2.4], 
    for any local section $s\in C^{\infty}_x(E\otimes\mathrm{det}\,E\otimes L)$ the function $|s|^2_{h\otimes\mathrm{det}\,h\otimes h_L}$ is locally integrable. 
\end{proof}

Using Proposition \ref{m.m.sheaf for Grif * nef big} and Corollary \ref{m.m.sheaf for Grif * det Grif * nef big}, we obtain key lemmas and the proof of Theorem \ref{V-thm of nu(N) and Grif for nu(h)<1 near A in 7}.

\begin{lemma}\label{V-thm of nu(N) and L2-Nak for nu(h)<1 on X}
    Let $X$ be a projective manifold, $N$ be a nef line bundle and $E$ be a holomorphic vector bundle equipped with a singular Hermitian metric $h$.
    If $h$ is $L^2$-type Nakano semi-positive and that $\nu(-\log\mathrm{det}\,h,x)<2$ for all points $x\in X$, then we have 
    \begin{align*}
        H^q(X,K_X\otimes E\otimes N)=0 \quad \mathit{for} \quad q>n-\nu(N).
    \end{align*} 
\end{lemma}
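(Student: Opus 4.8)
The plan is to reduce the nef line bundle $N$ to a nef-and-big situation on a hyperplane section, then to invoke Proposition~\ref{m.m.sheaf for Grif * nef big} together with the $(n,q)$-vanishing theorems already established (Theorem~\ref{V-thm of L2-type Nakano semi-posi + m-posi}, or rather its compact-\kah incarnation via the $L^2$-type Dolbeault isomorphism of Theorem~\ref{L2-type Dolbeault isomorphism with sHm}). Concretely: set $k=\nu(N)$ and argue by induction on $n-k$. If $\nu(N)=n$ then $N$ is nef and big, and Proposition~\ref{m.m.sheaf for Grif * nef big} (applied with $L$ replaced by a suitable power, or directly once one notes that $L^2$-type Nakano semi-positivity of $h$ forces $\mathrm{det}\,h$ locally integrable under the Lelong hypothesis) gives a singular metric $h_N$ on $N$ with $\mathscr{E}(h\otimes h_N)\cong\mathcal O_X(E\otimes N)$ and $i\Theta_{N,h_N}\geq\varepsilon\omega$; then Theorem~\ref{Ext NDBS V-thm for big for (n,q)} (or the big case of Theorem~\ref{V-thm of L2-type Nakano semi-posi + m-posi} adapted to a singular positive metric) yields $H^q(X,K_X\otimes E\otimes N)=H^q(X,K_X\otimes \mathscr{E}(h\otimes h_N))=0$ for all $q>0=n-\nu(N)$.

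For the inductive step, pick a smooth very ample divisor $A$ in the linear system $|H|$ of some very ample $H$ such that $\nu(N|_A)=\nu(N)=k$; this is possible because $\nu$ can only drop or stay the same under restriction, and for a generic hyperplane section it is preserved (one should cite the standard fact, e.g. via intersection numbers $c_1(N)^k\cdot[A]\neq 0$ when $k<n$). Choose the metric data so that, after twisting $N$ by a small multiple of $H$ if necessary, the restriction $N|_A$ becomes nef with $\nu(N|_A)=k$ but now sitting inside a manifold of dimension $n-1$, so that $n-1-\nu(N|_A)=(n-k)-1$ is strictly smaller. The adjunction sequence
\begin{align*}
    0\longrightarrow K_X\otimes E\otimes N\longrightarrow K_X\otimes E\otimes N\otimes\mathcal O_X(A)\longrightarrow \bigl(K_A\otimes E|_A\otimes N|_A\bigr)\longrightarrow 0
\end{align*}
(using $K_X\otimes\mathcal O_X(A)|_A\cong K_A$) together with the induction hypothesis applied on $A$ to kill $H^{q-1}(A,K_A\otimes E|_A\otimes N|_A)$ for $q-1>n-1-\nu(N)$, i.e. $q>n-\nu(N)$, and a suitable positivity statement to kill $H^q(X,K_X\otimes E\otimes N\otimes\mathcal O_X(A))$ (here $N\otimes\mathcal O_X(A)$ is nef-and-big, so one is back in the base case), gives the desired vanishing on $X$ by the long exact cohomology sequence. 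One has to make sure the singular metric $h$ on $E$ restricts sensibly to $A$ and that $h\otimes\mathrm{det}\,h$ (when one later passes to Theorem~\ref{V-thm of nu(N) and Grif for nu(h)<1 near A in 7}) keeps its $L^2$-type Nakano semi-positivity; restriction of $L^2$-type Nakano semi-positive metrics to submanifolds is the delicate point, handled by the Stein-exhaustion/weak limit machinery of the proof of Theorem~\ref{psef * Nak semi-posi then Nak semi-posi}.

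The main obstacle I expect is precisely this compatibility of the $L^2$-type Nakano semi-positivity with restriction to the ample divisor $A$ and with the twist by the auxiliary singular metric $h_N$ on $N\otimes\mathcal O_X(A)$: one needs that $h|_A$ is still $L^2$-type Nakano semi-positive (or at least that $\mathscr E(h|_A)$ and the relevant $\overline\partial$-solvability survive), and that $\mathscr E(h\otimes h_N)\cong\mathcal O_X(E\otimes N\otimes A)$ holds globally, not just pointwise, which is where the Lelong-number hypothesis $\nu(-\log\mathrm{det}\,h,x)<2$ and Skoda's integrability theorem (\cite{Sko72}) enter decisively via Proposition~\ref{m.m.sheaf for Grif * nef big}. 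Once those coherence statements are in place, the cohomological bookkeeping via the adjunction exact sequence and induction on $n-\nu(N)$ is routine. A cleaner alternative, avoiding repeated restriction, would be to use Demailly's regularization (Theorem~\ref{Demailly smoothing of current}) applied to a singular metric on $N$ with small Lelong numbers together with the hard Lefschetz/Enoki-type argument that directly produces vanishing in degrees $q>n-\nu(N)$ for nef line bundles; but the divisor-restriction induction is more in the spirit of the present paper and reuses the $L^2$-estimates of Section~4 verbatim.
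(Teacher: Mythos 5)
Your proposal follows essentially the same route as the paper's proof: the base case $\nu(N)=n$ is handled by producing an auxiliary singular metric $h_N$ on $N$ via Proposition \ref{m.m.sheaf for Grif * nef big} so that $h\otimes h_N$ becomes $L^2$-type strictly Nakano $\delta_\omega$-positive with $\mathscr{E}(h\otimes h_N)\cong\mathcal{O}_X(E\otimes N)$, and the case $\nu(N)<n$ by induction on dimension through a smooth ample divisor $A$ with $\nu(N|_A)=\nu(N)$, the vanishing for the positive twist $N\otimes\mathcal{O}_X(A)$ (using $\mathscr{E}(h)=\mathcal{O}_X(E)$ from the Lelong hypothesis and Skoda's theorem), and the long exact sequence of the adjunction sequence. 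The one delicate point you flag, namely that $h|_A$ must remain $L^2$-type Nakano semi-positive with the Lelong bound on $A$ for the induction hypothesis, is likewise left implicit in the paper's own argument, so your write-up is at essentially the same level of completeness.
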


\begin{proof}
    First suppose that $\nu(N)=n$, i.e. $N$ is big. 
    Since Proposition \ref{m.m.sheaf for Grif * nef big} and $N$ is nef and big, there exists a singular Hermitian metric $h_N$ such that $i\Theta_{N,h_N}\geq\delta\omega$ for some $\delta>0$ 
    and we get $\mathscr{E}(h\otimes h_N)\cong\mathcal{O}_X(E\otimes N)$, where $\omega$ is a \kah metric on $X$.
    Therefore, $h\otimes h_N$ is $L^2$-type strictly Nakano $\delta_\omega$-positive by $(b)$ of Corollary \ref{s d-posi * Nak semi-posi then s Nak d-posi}.
    From [19,\,Theorem\,1.5], for any $q>0$ we have the following cohomologies vanishing 
    \begin{align*}
        0=H^q(X,K_X\otimes\mathscr{E}(h\otimes h_N))\cong H^q(X,K_X\otimes E\otimes N).
    \end{align*}

    Now, if $\nu(N)<n$, we use hyperplane sections and argue by induction on $n=\mathrm{dim}\,X$.
    We can select a nonsingular ample divisor $A$ such that $\nu(N|_A)=\nu(N)$, where $\mathcal{O}_X(A)$ is positive. 
    Here, compactness of $X$, the line bundle $\mathcal{O}_X(A)\otimes N$ is also positive. And we get $\mathscr{E}(h)=\mathcal{O}_X(E)$ by the assumption $\nu(-\log\mathrm{det}\,h,x)<2$.
    
    Thus, from Theorem \ref{V-thm of L2-type Nakano posi on w.p.c} for any $q>0$, we get the following cohomologies vanishing
    \begin{align*}
        0=H^q(X,K_X\otimes A\otimes N\otimes \mathscr{E}(h))\cong H^q(X,K_X\otimes A\otimes N\otimes E).
    \end{align*} 
    And the exact sequence $0\longrightarrow K_X\longrightarrow K_X(\log A)=K_X\otimes\mathcal{O}_X(A)\longrightarrow K_A\longrightarrow 0$
    twisted by $\mathcal{O}_X(N\otimes E)$ yields an isomorphism
    \begin{align*}
        H^q(A,K_A\otimes (N\otimes E)|_A)\cong H^{q+1}(X,K_X\otimes N\otimes E) \quad \mathrm{for} \quad 0<q<n.
    \end{align*}

    Hence, by the induction hypothesis, i.e. $H^q(A,K_A\otimes (N\otimes E)|_A)=0$ for $q>n-1-\nu(N|_A)$, we have that 
    $H^q(X,K_X\otimes N\otimes E)=0$ for $q>n-\nu(N|_A)=n-\nu(N)$.
\end{proof}


\begin{lemma}\label{V-thm of nu(N) and Grif for nu(h)<1 on X}
    Let $X$ be a projective manifold, $N$ be a nef line bundle and $E$ be a holomorphic vector bundle equipped with a singular Hermitian metric $h$.
    If $h$ is Griffiths semi-positive and that $\nu(-\log\mathrm{det}\,h,x)<1$ for all points $x\in X$, then we have that 
    \begin{align*}
        H^q(X,K_X\otimes N\otimes E\otimes\mathrm{det}\,E)=0 \quad \mathit{for} \quad q>n-\nu(N).
    \end{align*} 
\end{lemma}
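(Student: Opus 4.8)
The plan is to reduce this statement to Lemma \ref{V-thm of nu(N) and L2-Nak for nu(h)<1 on X} by replacing the singular metric $h$ on $E$ with the singular metric $h \otimes \mathrm{det}\,h$ on $E \otimes \mathrm{det}\,E$. First I would invoke Theorem \ref{Grif * det Grif is dual Nakano} together with the result of Inayama [Ina22,\,Theorem\,1.3] (quoted in the excerpt just after Theorem \ref{V-thm of Griffiths semi-posi + (dual)-m-posi for (n,q)}): since $h$ is Griffiths semi-positive, the metric $h \otimes \mathrm{det}\,h$ on $E \otimes \mathrm{det}\,E$ is $L^2$-type Nakano semi-positive. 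So the vector bundle $E' := E \otimes \mathrm{det}\,E$ carries the singular Hermitian metric $h' := h \otimes \mathrm{det}\,h$, which is $L^2$-type Nakano semi-positive on $X$.

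Next I would check the Lelong-number hypothesis transfers correctly. We have $\mathrm{det}\,h' = \mathrm{det}(h \otimes \mathrm{det}\,h) = (\mathrm{det}\,h) \cdot (\mathrm{det}\,h)^{\,\mathrm{rank}\,E} = (\mathrm{det}\,h)^{r+1}$ where $r = \mathrm{rank}\,E$, so $-\log \mathrm{det}\,h' = (r+1)(-\log \mathrm{det}\,h)$ and hence $\nu(-\log\mathrm{det}\,h', x) = (r+1)\,\nu(-\log\mathrm{det}\,h, x)$. The hypothesis $\nu(-\log\mathrm{det}\,h, x) < 1$ only gives $\nu(-\log\mathrm{det}\,h', x) < r+1$, which is weaker than the $<2$ needed in Lemma \ref{V-thm of nu(N) and L2-Nak for nu(h)<1 on X} when $r \geq 2$. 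This is the main obstacle: one cannot simply quote the previous lemma as a black box. The way around it is to note that what is actually used in the proof of Lemma \ref{V-thm of nu(N) and L2-Nak for nu(h)<1 on X} is (i) $\mathscr{E}(h') = \mathcal{O}_X(E')$, i.e. local integrability of $|s|^2_{h'}$ for smooth local sections $s$, and (ii) the existence, when $N$ is big, of a singular metric $h_N$ with $\mathscr{E}(h' \otimes h_N) \cong \mathcal{O}_X(E' \otimes N)$. Both of these are supplied for $h' = h \otimes \mathrm{det}\,h$ under the hypothesis $\nu(-\log\mathrm{det}\,h,x) < 1$ precisely by Corollary \ref{m.m.sheaf for Grif * det Grif * nef big} and by the argument preceding it (using $h \otimes \mathrm{det}\,h = (\mathrm{det}\,h)^2 \cdot \widehat{h^*}$ with $\widehat{h^*}$ locally bounded and $(\mathrm{det}\,h)^2$ locally integrable by Skoda's theorem since $\nu(-\log(\mathrm{det}\,h)^2,x) < 2$).

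Concretely, the argument runs as follows. If $\nu(N) = n$, then $N$ is nef and big, so by Corollary \ref{m.m.sheaf for Grif * det Grif * nef big} there is a singular metric $h_N$ on $N$ with $i\Theta_{N,h_N} \geq \delta\omega$ for some $\delta > 0$ and $\mathscr{E}(h \otimes \mathrm{det}\,h \otimes h_N) \cong \mathcal{O}_X(E \otimes \mathrm{det}\,E \otimes N)$; by $(b)$ of Corollary \ref{s d-posi * Nak semi-posi then s Nak d-posi} the metric $h \otimes \mathrm{det}\,h \otimes h_N$ is $L^2$-type strictly Nakano $\delta_\omega$-positive, and [Ina22,\,Theorem\,1.5] gives $H^q(X, K_X \otimes \mathscr{E}(h \otimes \mathrm{det}\,h \otimes h_N)) = 0$ for all $q > 0$, hence $H^q(X, K_X \otimes E \otimes \mathrm{det}\,E \otimes N) = 0$ for $q > 0 = n - \nu(N)$. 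If $\nu(N) < n$, I would argue by induction on $n = \dim X$: choose a nonsingular ample divisor $A$ with $\nu(N|_A) = \nu(N)$ and $\mathcal{O}_X(A)$ positive, so $\mathcal{O}_X(A) \otimes N$ is positive; since $\nu(-\log\mathrm{det}\,h,x) < 1 < 2$ we get $\mathscr{E}(h \otimes \mathrm{det}\,h) = \mathcal{O}_X(E \otimes \mathrm{det}\,E)$ (again via Corollary \ref{m.m.sheaf for Grif * det Grif * nef big}'s local-integrability computation, or directly), and Theorem \ref{V-thm of Griffiths semi-posi + (dual)-m-posi for (n,q)}$(a)$ applied to the $1$-positive line bundle $A \otimes N$ gives $H^q(X, K_X \otimes A \otimes N \otimes E \otimes \mathrm{det}\,E) = 0$ for $q > 0$. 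Then the adjunction exact sequence
\begin{align*}
    0 \longrightarrow K_X \longrightarrow K_X \otimes \mathcal{O}_X(A) \longrightarrow K_A \longrightarrow 0
\end{align*}
twisted by $\mathcal{O}_X(N \otimes E \otimes \mathrm{det}\,E)$ yields, for $0 < q < n$, an isomorphism
\begin{align*}
    H^q(A, K_A \otimes (N \otimes E \otimes \mathrm{det}\,E)|_A) \cong H^{q+1}(X, K_X \otimes N \otimes E \otimes \mathrm{det}\,E).
\end{align*}
Since $h|_A$ is still Griffiths semi-positive and $\nu(-\log\mathrm{det}(h|_A), x) < 1$ on $A$ while $(\det E)|_A = \det(E|_A)$, the induction hypothesis gives the vanishing of the left-hand side for $q > (n-1) - \nu(N|_A)$, whence $H^q(X, K_X \otimes E \otimes \mathrm{det}\,E \otimes N) = 0$ for $q > n - \nu(N|_A) = n - \nu(N)$, completing the induction.
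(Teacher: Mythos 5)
Your proposal is correct and follows essentially the same route as the paper's own argument: the big case ($\nu(N)=n$) via Corollary \ref{m.m.sheaf for Grif * det Grif * nef big}, Corollary \ref{s d-posi * Nak semi-posi then s Nak d-posi}$(b)$ and [Ina22, Theorem 1.5], and the case $\nu(N)<n$ by hyperplane-section induction using the identification $\mathscr{E}(h\otimes\mathrm{det}\,h)=\mathcal{O}_X(E\otimes\mathrm{det}\,E)$, Theorem \ref{V-thm of Griffiths semi-posi + (dual)-m-posi for (n,q)}$(a)$ and the twisted adjunction sequence. Your preliminary remark that Lemma \ref{V-thm of nu(N) and L2-Nak for nu(h)<1 on X} cannot be invoked as a black box, because $\mathrm{det}(h\otimes\mathrm{det}\,h)=(\mathrm{det}\,h)^{r+1}$ spoils the Lelong-number bound, is a useful clarification, but the argument you then carry out is the paper's.
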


$\textit{Proof of Theorem \ref{V-thm of nu(N) and Grif for nu(h)<1 near A in 7}}$.
    By positivity of $\mathcal{O}_X(A)$ and compactness of $X$, $\mathcal{O}_X(A)\otimes N$ is also positive. By $(a)$ of Theorem \ref{V-thm of Griffiths semi-posi + (dual)-m-posi for (n,q)}, for any $q>0$ we get the cohomology vanishing 
    \begin{align*}
        H^q(X,K_X(\log A)\otimes N\otimes \mathscr{E}(h\otimes\mathrm{det}\,h))=H^q(X,K_X\otimes A\otimes N\otimes \mathscr{E}(h\otimes\mathrm{det}\,h))=0.
    \end{align*}

    Here, for any point $x\in A$, $(\mathrm{det}\,h)^2|_A$ is locally integrable near $x$ by Skoda's result \cite{Sko72}. 
    From the Ohsawa-Takegoshi $L^2$-extension theorem, $(\mathrm{det}\,h)^2$ is also locally integrable near $x$. Then we get $\mathscr{E}(h\otimes\mathrm{det}\,h)|_A=(E\otimes\mathrm{det}\,E)|_A$. 
    
    Therefore, from this and the short exact sequence of $K_X(\log A)$, the natural map
    \begin{align*}
        H^q(A,K_A\otimes (N\otimes E\otimes\mathrm{det}\,E)|_A)\longrightarrow H^{q+1}(X,K_X\otimes N\otimes \mathscr{E}(h\otimes\mathrm{det}\,h))
    \end{align*} 
    is an isomorphism for $q\geq1$ and is surjective for $q=0$. 
    
    By properties of plurisubharmonic and Definition \ref{def Griffiths semi-posi sing}, $h|_A$ is also Griffiths semi-positive over $A$. 
    Hence, form Lemma \ref{V-thm of nu(N) and Grif for nu(h)<1 on X}, we get 
    \begin{align*}
        H^q(A,K_A\otimes (N\otimes E\otimes\mathrm{det}\,E)|_A)=0
    \end{align*} 
    for $q>n-1-\nu(N|_A)$, where $\nu(N|_A)=\nu(N)<n$ by $N$ is not big. \qed

\vspace{2mm}

From the proof of this theorem, we immediately obtain the following.

\begin{corollary}\label{V-thm of big and Grif for nu(h)<1 near A}
    Let $X$ be a compact \kah manifold of dimension $n$, $N$ be a nef and big line bundle and $E$ be a holomorphic vector bundle equipped with a singular Hermitian metric $h$. 
    If $h$ is Griffiths semi-positive and there exists a smooth ample divisor $A$ such that $\nu(-\log\mathrm{det}\,h|_A,x)<1$ for all points in $A$, then for any $q>1$ we have that 
    \begin{align*}
        H^q(X,K_X\otimes N\otimes \mathscr{E}(h\otimes\mathrm{det}\,h))=0.
    \end{align*}
\end{corollary}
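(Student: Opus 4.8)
The plan is to repeat the argument in the proof of Theorem~\ref{V-thm of nu(N) and Grif for nu(h)<1 near A in 7}, the only change being that here $N$ is \emph{big}, so $\nu(N)=n$; since $\dim A=n-1$ one necessarily has $\nu(N|_A)\leq n-1<\nu(N)$, which is precisely the configuration that theorem excludes, and this is what forces the conclusion down to $q>1$ instead of $q>0$. First I would note that the existence of a smooth ample divisor $A$ makes $X$ (and $A$) projective, that $\mathcal{O}_X(A)$ is positive, and hence that $\mathcal{O}_X(A)\otimes N$ is positive by compactness of $X$ and nefness of $N$. Applying part $(a)$ of Theorem~\ref{V-thm of Griffiths semi-posi + (dual)-m-posi for (n,q)} to the $1$-positive line bundle $\mathcal{O}_X(A)\otimes N$ and the Griffiths semi-positive metric $h$ yields
\begin{align*}
    H^q\bigl(X,K_X\otimes\mathcal{O}_X(A)\otimes N\otimes\mathscr{E}(h\otimes\mathrm{det}\,h)\bigr)=0\qquad(q\geq1).
\end{align*}

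Next I would localize near $A$. The hypothesis $\nu(-\log\mathrm{det}\,h|_A,x)<1$ gives $\nu(-\log(\mathrm{det}\,h)^2|_A,x)<2$, so by Skoda's theorem $(\mathrm{det}\,h)^2|_A$ is locally integrable near every $x\in A$; the Ohsawa-Takegoshi $L^2$-extension theorem then upgrades this to local integrability of $(\mathrm{det}\,h)^2$ near $x$ in $X$ (extend the constant section $1$ from $A$ and use that the extension is a unit at $x$). Since $h$ is Griffiths semi-positive, $h^*$ is Griffiths semi-negative, so the entries of $h^*$, hence those of the adjugate $\widehat{h^*}$, are locally bounded $[$PT18,\,Lemma\,2.2.4$]$; from $h\otimes\mathrm{det}\,h=(\mathrm{det}\,h)^2\cdot\widehat{h^*}$ one then gets $\mathscr{E}(h\otimes\mathrm{det}\,h)=\mathcal{O}_X(E\otimes\mathrm{det}\,E)$ on a neighborhood of $A$, so in particular $\mathscr{E}(h\otimes\mathrm{det}\,h)\big/\mathscr{E}(h\otimes\mathrm{det}\,h)(-A)\cong(E\otimes\mathrm{det}\,E)|_A$.

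Then I would tensor the structure sequence $0\to\mathcal{O}_X(-A)\to\mathcal{O}_X\to\mathcal{O}_A\to0$ by the torsion-free sheaf $\mathscr{E}(h\otimes\mathrm{det}\,h)$ (torsion-freeness keeps it exact) and by $K_X\otimes\mathcal{O}_X(A)\otimes N$, using the adjunction $\bigl(K_X\otimes\mathcal{O}_X(A)\bigr)|_A=K_A$, to obtain
\begin{align*}
    0&\to K_X\otimes N\otimes\mathscr{E}(h\otimes\mathrm{det}\,h)\to K_X\otimes\mathcal{O}_X(A)\otimes N\otimes\mathscr{E}(h\otimes\mathrm{det}\,h)\\
     &\to K_A\otimes(N\otimes E\otimes\mathrm{det}\,E)|_A\to0.
\end{align*}
Feeding the vanishing above into the long exact cohomology sequence gives, exactly as in the proof of Theorem~\ref{V-thm of nu(N) and Grif for nu(h)<1 near A in 7}, an isomorphism
\begin{align*}
    H^q\bigl(X,K_X\otimes N\otimes\mathscr{E}(h\otimes\mathrm{det}\,h)\bigr)\cong H^{q-1}\bigl(A,K_A\otimes(N\otimes E\otimes\mathrm{det}\,E)|_A\bigr)\qquad(q\geq2).
\end{align*}
Finally, since $N$ is nef and big on $X$ and $\mathcal{O}_X(A)$ is ample, $N|_A$ is nef and big on the $(n-1)$-dimensional manifold $A$ — e.g. from the Khovanskii-Teissier inequality $(N^{n-1}\cdot A)^n\geq(N^n)^{n-1}(A^n)>0$ — so $\nu(N|_A)=n-1$; moreover $h|_A$ is Griffiths semi-positive on $A$ (restriction of a Griffiths semi-positive metric) with $\nu(-\log\mathrm{det}(h|_A),x)<1$ for all $x\in A$. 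Lemma~\ref{V-thm of nu(N) and Grif for nu(h)<1 on X} applied on $A$ then gives $H^j\bigl(A,K_A\otimes(E\otimes\mathrm{det}\,E)|_A\otimes N|_A\bigr)=0$ for $j>(n-1)-\nu(N|_A)=0$, i.e. for $j\geq1$; combined with the isomorphism this yields the claimed vanishing for all $q\geq2$.

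The main obstacle is really a structural feature rather than a difficulty: the unavoidable loss of one cohomological degree. The long exact sequence needs the middle sheaf $K_X\otimes\mathcal{O}_X(A)\otimes N\otimes\mathscr{E}(h\otimes\mathrm{det}\,h)$ to be acyclic in \emph{both} degrees $q$ and $q-1$, which is guaranteed only for $q\geq2$; in degree $q=1$ one merely obtains a surjection from $H^0\bigl(A,K_A\otimes(N\otimes E\otimes\mathrm{det}\,E)|_A\bigr)$, which has no reason to vanish, so the statement cannot be pushed to $q=1$. Everything else — projectivity of $X$, positivity of $\mathcal{O}_X(A)\otimes N$, torsion-freeness of $\mathscr{E}(h\otimes\mathrm{det}\,h)$, bigness of $N|_A$, and restriction of Griffiths semi-positivity to $A$ — is routine.
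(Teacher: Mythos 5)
Your proposal is correct and follows essentially the same route as the paper: the paper derives this corollary by rerunning the proof of Theorem \ref{V-thm of nu(N) and Grif for nu(h)<1 near A in 7} (vanishing for $\mathcal{O}_X(A)\otimes N$ via Theorem \ref{V-thm of Griffiths semi-posi + (dual)-m-posi for (n,q)}, Skoda plus Ohsawa--Takegoshi to identify $\mathscr{E}(h\otimes\mathrm{det}\,h)$ along $A$, the adjunction/log sequence to shift to $A$, then Lemma \ref{V-thm of nu(N) and Grif for nu(h)<1 on X} on $A$), exactly as you do. Your explicit check that $N|_A$ is nef and big on $A$ (via Khovanskii--Teissier), so that the Lemma applies on $A$ with $\nu(N|_A)=n-1$ and the vanishing drops to $q>1$, is the point the paper leaves implicit in ``immediately obtain,'' and your degree bookkeeping matches the paper's.
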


Finally, the proof of Theorem \ref{Fujita Conj for Grif} is obtained using the Castelnuovo-Mumford regularity and Theorem \ref{V-thm of Griffiths semi-posi + (dual)-m-posi for (n,q)} and \ref{V-thm of nu(N) and Grif for nu(h)<1 near A in 7}.

\vspace{2mm}

$\textit{Proof of Theorem \ref{Fujita Conj for Grif}}$.
    By 
    Lemma \ref{Castelnuovo-Mumford regularity}, we only need to prove $K_X\otimes L^{\otimes n}\otimes N\otimes \mathscr{E}(h\otimes\mathrm{det}\,h)$ is $0$-regular with respect to $L$.
    Hence, it suffices to show 
    \begin{align*}
        H^q(X,K_X\otimes L^{\otimes(n-q)}\otimes N\otimes \mathscr{E}(h\otimes\mathrm{det}\,h))=0 \quad \mathrm{for~\,all} \quad q>0.
    \end{align*}

    For $0<q<n$, by positivity of $L^{\otimes (n-q)}$ and compactness of $X$, $L^{\otimes (n-q)}\otimes N$ is also positive. Therefore, we have the desired vanishing cohomologies from Theorem \ref{V-thm of Griffiths semi-posi + (dual)-m-posi for (n,q)}.

    When $q=n$, we need to show $H^n(X,K_X\otimes N\otimes \mathscr{E}(h\otimes\mathrm{det}\,h))=0$. It is well-know that for a nef line bundle $N$, $\nu(N)=0$ if and only if $N$ is numerically trivial. This is assured by $\nu(N)\ne0$ and Theorem \ref{V-thm of nu(N) and Grif for nu(h)<1 near A in 7} and Corollary \ref{V-thm of big and Grif for nu(h)<1 near A}.

    The case when $q>n$ is obvious and we complete the proof.\qed

\vspace{2mm}

From the above proof and Lemma \ref{V-thm of nu(N) and L2-Nak for nu(h)<1 on X}, we obtain the following corollary.

\begin{corollary}
    Let $X$ be a compact \kah manifold of dimension $n$ and $E$ be a holomorphic vector bundle equipped with a singular Hermitian metric $h$. 
    Let $L$ be an ample and globally generated line bundle and $N$ be a nef but not numerically trivial line bundle.
    If $h$ is $L^2$-type Nakano semi-positive and that $\nu(-\log\mathrm{det}\,h,x)<2$ for all points $x\in X$, then the adjoint vector bundle
    $K_X\otimes L^{\otimes n}\otimes N\otimes E$ is globally generated.
\end{corollary}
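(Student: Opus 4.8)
The plan is to combine the Castelnuovo--Mumford regularity machinery (Lemma \ref{Castelnuovo-Mumford regularity}) with the vanishing theorems already established for $L^2$-type Nakano semi-positive singular Hermitian metrics. By Lemma \ref{Castelnuovo-Mumford regularity}, it suffices to show that the coherent sheaf $K_X\otimes L^{\otimes n}\otimes N\otimes E$ is $0$-regular with respect to $L$; that is,
\begin{align*}
    H^q\bigl(X,K_X\otimes L^{\otimes(n-q)}\otimes N\otimes E\bigr)=0 \quad \text{for all } q>0.
\end{align*}
Since $E$ here carries an $L^2$-type Nakano semi-positive singular Hermitian metric $h$ and the condition $\nu(-\log\mathrm{det}\,h,x)<2$ holds everywhere, Skoda's result gives $\mathscr{E}(h)=\mathcal{O}_X(E)$, so at every stage we may freely replace $\mathcal{O}_X(E)$ by $\mathscr{E}(h)$ and invoke the $L^2$-subsheaf vanishing theorems. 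The proof then splits into three ranges of $q$, exactly as in the proof of Theorem \ref{Fujita Conj for Grif}.

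For $0<q<n$: by compactness of $X$ and ampleness of $L$, the line bundle $L^{\otimes(n-q)}$ is positive, hence so is $L^{\otimes(n-q)}\otimes N$ since $N$ is nef. Thus $L^{\otimes(n-q)}\otimes N$ is a $1$-positive line bundle, and case $(a)$ of Theorem \ref{V-thm of L2-type Nakano semi-posi + m-posi} (with $k=1$, applicable for all $q\geq1$) gives
\begin{align*}
    H^q\bigl(X,K_X\otimes (L^{\otimes(n-q)}\otimes N)\otimes \mathscr{E}(h)\bigr)=0,
\end{align*}
which is the desired vanishing. For $q>n$ the vanishing is automatic for dimension reasons (and the sheaf is coherent). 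The remaining and essential case is $q=n$: we must show $H^n(X,K_X\otimes N\otimes \mathscr{E}(h))=0$. Here the hypothesis $\nu(N)\neq0$ (equivalently, $N$ not numerically trivial) is crucial. If $N$ is big, i.e. $\nu(N)=n$, then by Proposition \ref{m.m.sheaf for Grif * nef big} (applied with $L=N$) there is a singular Hermitian metric $h_N$ on $N$ with $i\Theta_{N,h_N}\geq\delta\omega$ and $\mathscr{E}(h\otimes h_N)\cong\mathcal{O}_X(E\otimes N)$; then $h\otimes h_N$ is $L^2$-type strictly Nakano $\delta_\omega$-positive by $(b)$ of Corollary \ref{s d-posi * Nak semi-posi then s Nak d-posi}, and [Ina22,\,Theorem\,1.5] yields $H^n(X,K_X\otimes\mathscr{E}(h\otimes h_N))\cong H^n(X,K_X\otimes N\otimes E)=0$. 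If $0<\nu(N)<n$, one applies Lemma \ref{V-thm of nu(N) and L2-Nak for nu(h)<1 on X} directly with the given $N$: it asserts $H^q(X,K_X\otimes E\otimes N)=0$ for $q>n-\nu(N)$, and since $\nu(N)\geq1$ this includes $q=n$.

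The main obstacle, and the step requiring the most care, is the $q=n$ case — specifically verifying that Lemma \ref{V-thm of nu(N) and L2-Nak for nu(h)<1 on X} applies cleanly. That lemma is stated for projective $X$ and uses an induction on $\dim X$ via hyperplane sections; one must check that the hypotheses ($h$ is $L^2$-type Nakano semi-positive, $\nu(-\log\mathrm{det}\,h,x)<2$ everywhere) are inherited appropriately and that the numerical dimension $\nu(N)$ behaves well under restriction to a suitably chosen ample divisor. For a compact \kah $X$ that is not a priori projective, one may instead note that Lemma \ref{V-thm of nu(N) and L2-Nak for nu(h)<1 on X} is the $L^2$-type-Nakano analogue of Lemma \ref{V-thm of nu(N) and Grif for nu(h)<1 on X}, and that the proof of Theorem \ref{V-thm of nu(N) and Grif for nu(h)<1 near A in 7} already handles the transition from the divisor-adapted hypothesis to the whole of $X$ in the \kah setting; the argument there transfers verbatim with $h\otimes\mathrm{det}\,h$ replaced by the $L^2$-type Nakano semi-positive metric $h$ and $E\otimes\mathrm{det}\,E$ replaced by $E$. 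Once the $q=n$ vanishing is secured, assembling the three ranges and invoking Lemma \ref{Castelnuovo-Mumford regularity} completes the proof that $K_X\otimes L^{\otimes n}\otimes N\otimes E$ is globally generated. \qed
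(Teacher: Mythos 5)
Your proof is correct and takes essentially the same route as the paper: Castelnuovo--Mumford $0$-regularity via Lemma \ref{Castelnuovo-Mumford regularity}, the intermediate vanishings $H^q(X,K_X\otimes L^{\otimes(n-q)}\otimes N\otimes E)=0$ for $0<q<n$ from the $L^2$-type Nakano vanishing theorem together with $\mathscr{E}(h)=\mathcal{O}_X(E)$ (Skoda, since $\nu(-\log\mathrm{det}\,h,x)<2$), and the $q=n$ case from Lemma \ref{V-thm of nu(N) and L2-Nak for nu(h)<1 on X} using $\nu(N)\geq1$. Your concern about applying that lemma on a compact K\"ahler manifold is moot: the ample line bundle $L$ forces $X$ to be projective by Kodaira embedding, so the lemma applies directly and neither your separate treatment of the big case nor the proposed ``transfer'' argument is needed.
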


\vspace{2mm}

{\bf Acknowledgement. } 
I would like to thank my supervisor Professor Shigeharu Takayama for guidance and helpful advice.

\end{document}